\renewcommand\section{\@startsection {section}{1}{\z@}%
	{-3.5ex \@plus -1ex \@minus -.2ex}%
	{2.3ex \@plus.2ex}%
	{\normalfont\fontfamily{phv}\fontsize{16}{19}\bfseries}}
\renewcommand\subsection{\@startsection{subsection}{2}{\z@}%
	{-3.25ex\@plus -1ex \@minus -.2ex}%
	{1.5ex \@plus .2ex}%
	{\normalfont\fontfamily{phv}\fontsize{14}{17}\bfseries}}
\renewcommand\subsubsection{\@startsection{subsubsection}{3}{\z@}%
	{-3.25ex\@plus -1ex \@minus -.2ex}%
	{1.5ex \@plus .2ex}%
	{\normalfont\normalsize\fontfamily{phv}\fontsize{14}{17}\selectfont}}
\newcommand{\Grs}[1]{G_{#1,2}}
\newcommand{\D}[1]{\Delta_{#1,2}}
\newcommand{\CPA}{\mathbb{C}P^{1}_{A}}
\newcommand{\Ins}[1]{\stackrel{\circ}{#1}}
\newtheorem{theorem}{Theorem}[section]
\newtheorem{definition}{Definition}[section]
\newtheorem{lemma}[theorem]{Lemma}
\newtheorem{cor}{Corollary}[theorem]
\newtheorem{example}{Example}
\newtheorem{prop}[theorem]{Proposition}
\theoremstyle{remark}
\newtheorem{rem}{Remark}
\newcommand{\C}{\mathbb{C}}
\newcommand{\Z}{\mathbb{ Z}}
\newenvironment{dedication}
{\vspace{4ex}\begin{quotation}\begin{center}\begin{em}}
			{\par\end{em}\end{center}\end{quotation}}
\DeclareMathOperator{\s}{\mathfrak{s}}
\renewcommand*{\eqref}[1]{%
	\hyperref[{#1}]{\textup{\tagform@{\ref*{#1}}}}%
}
\begin{document}
	
	\def\spacingset#1{\renewcommand{\baselinestretch}%
		{#1}\small\normalsize} \spacingset{1}

		\title{\bfseries \emph{$ \Z_{2} $- homology of the orbit spaces $ \Grs{n}\slash T^{n} $}}
			\author{Vladimir Ivanović and Svjetlana Terzić
					  \endnote{Vladimir Ivanović and Svjetlana Terzić\\
					  	Faculty of Science and Mathematics, University of Montenegro\\
					  	Dzordza Vasingtona bb, 81000 Podgorica, Montenegro\\
					  	vladimir.i@ucg.ac.me; sterzic@ucg.ac.me}	
				  }
			\date{}	
				\maketitle
				\begin{dedication}
					To Victor Matveevich Buchstaber with sincere gratitude and appreciation on the occasion of his 80th birthday.
				\end{dedication}
	\begin{abstract}
		We study the $\Z_2$-homology groups of the orbit space $X_n =
		G_{n,2}/T^n$ for the canonical action of the compact torus $T^n$ on a
		complex Grassmann manifold $G_{n,2}$. Our starting point is the model
		$(U_n, p_n)$ for $X_n$ constructed in~\cite{buchstaber2020resolution}, where
		$U_n = \Delta _{n,2}\times \mathcal{F}_{n}$ for a hypersimplex $\Delta
		_{n,2}$ and an universal space of parameters $\mathcal{F}_{n}$ defined
		in~\cite {buchstaber2019foundations},~\cite{buchstaber2020resolution}.  It is proved
		in~\cite{buchstaber2021orbit} that $\mathcal{F}_{n}$ is diffeomorphic to the
		moduli space $\mathcal{M}_{0,n}$ of stable $n$-pointed  genus zero
		curves. We exploit the results from~\cite{keel1992intersection},~\cite{ceyhan2009chow} on
		homology groups of $\mathcal{M}_{0,n}$ and express them in terms of the
		stratification of $\mathcal{F}_{n}$ which are incorporated in the model
		$(U_n, p_n)$.
		\par  In the result we provide the description of cycles
		in $X_n$, inductively on $ n. $  We obtain as well explicit formulas for $\Z_2$-homology
		groups for $X_5$ and $X_6$. The results for $X_5$ recover by different method the results
		from~\cite{buchstaber2021orbit} and~\cite{suss2020toric} .
		The results for $X_6$ we consider to be new.
		
		\noindent%
		{\it \bfseries Keywords:}  Torus action,  Grassmann manifold,spaces of
		parameters\footnote{
			2020 Mathematics Subject Classification: 57S25, 57N65, 53D20, 14M25,
			52B11, 14B05 }

		\spacingset{1.5} 
	\end{abstract}
	\tableofcontents
	\section{Introduction}
	
	\par The study of the orbit space $X_{n}=\Grs{n}/T^{n}$ of a complex Grassmann
	manifold $\Grs{n}$, which represents the two-dimensional complex subspaces in
	$\mathbb{C}^{n}$ under the canonical  action of the compact torus
	$T^{n}$, has garnered considerable mathematical interest from various
	perspectives including algebraic topology, algebraic geometry, group
	actions theory, matroid theory and combinatorics.
	\par The investigation of orbit spaces $X_{n}$ together with the canonical
	moment map $\hat{\mu}:X_{n}\rightarrow\D{n}$, where $\D{n}$ denotes a
	hypersimplex, has been motivated by the challenge of extending
	toric topology methods to torus actions of positive complexity, as
	discussed by Goresky-MacPherson \cite{goresky2006topology}, Gelfand-Serganova \cite{gel1987combinatorial}, Buchstaber-Terzić \cite{buchstaber2019foundations}, Ayzenberg-Masuda \cite{ayzenberg2023orbit}.
	\par In recent works, such as those by Süess \cite{suss2020toric} and Buchstaber-Terzić
	\cite{buchstaber2019toric}
	, the integral homology groups of $X_{5}$ are described using advanced algebraic
	topology theories and geometric invariant theory, respectively.
	\par We study the homology with $\Z_2$-coefficients  of the orbit space $X_n$. 
	Our starting point is the model $(U_n , p_n)$  for
	$G_{n,2}/T^n$  constructed in ~\cite{buchstaber2020resolution},
	where  $U_n= \D{n}\times \mathcal{F}_{n}$  for the smooth manifold
	$\mathcal{F}_{n}$, called the universal space of parameters, and  $p_n :
	U_n\to X_n$  a continuous projection.  The space  $X_n$ is the  quotient
	space of $U_n$ by the map $p_n$.

	\par The basic input in the construction of this model is the result
	from~\cite{buchstaber2020resolution}, which states that
	$X_n$ can be represented as the
	disjoint union of spaces  $\{C_{\omega}\times F_{\omega}\}$. Here,
	$\C_{\omega}$ are the
	chambers in the hypersimplex $\Delta _{n,2}$ which correspond to its
	decomposition given by all possible intersections of matroids, that is
	admissible polytopes.  The spaces $F_{\omega}$ are the orbit spaces of
	$\hat{\mu}^{-1}(C_{\omega})$ by the canonical action of the algebraic
	torus $(\C ^{\ast})^{n}$, where $\hat{\mu} : G_{n,2}/T^n \to \Delta
	_{n,2}$ is the map induced by the standard moment map $\mu : G_{n,2}\to
	\Delta _{n,2}$. The corresponding cell decomposition  of $X_n$ is given
	by the cell decomposition of each $F_{\omega}$.  The characteristic maps are
	given by the characteristic maps for the cell decomposition of $\Delta
	_{n,2}$ into $C_{\omega}$'s, by the characteristic maps  of the cell
	decomposition of $F_{\omega}$'s and by the maps which define the closure
	of a strata in $G_{n,2}$.  Namely, in~\cite{gel1987combinatorial}, it is shown that the closure of a stratum $W_{\sigma}$ is the union of strata
	$W_{\sigma ^{'}}$ for some $\sigma ^{'}\subset \sigma$. On the other hand, it follows from \cite{buchstaber2019toric} that $\mu (W_{\sigma})=\Ins{P}_{\sigma}$ which is the relative interior of polytope of $ P_{\sigma}\subset \D{n} $, called an admissible polytope. In particular, if $P_{\bar{\sigma}}$ is a facet of admissible polytope $P_{\sigma}$, then the stratum
	$W_{\bar{\sigma}}$ such that $\mu (W_{\bar{\sigma}})=\Ins{P}_{\bar{\sigma}}$,
	belongs to the boundary of $W_{\sigma}$. In addition, it is proved in~\cite{buchstaber2019foundations} that there exists continuous surjection $\eta _{\sigma,\bar{\sigma}} : F_{\sigma}\to F_{\bar{\sigma}}$. We prove that this map induce the continuous surjection $\eta _{\omega, \bar{\omega}} : F_{\omega}\to
	F_{\bar{\omega}}$.	
	\par The universal space of parameters $\mathcal{F}_{n}$ is defined
	in~\cite{buchstaber2019foundations} for general $T^k$-action on a smooth
	manifold $M^{2n}$ and studied in detail for $T^n$-action on $G_{n,2}$
	in~\cite{buchstaber2019toric},~\cite{buchstaber2020resolution},~\cite{buchstaber2021orbit}.	In particular, it is proved in~\cite{buchstaber2021orbit} that
	$\mathcal{F}_{n}$ is a	smooth manifold diffeomorphic to the moduli space $\mathcal{M}_{0,n}$ of	stable genus zero curves with $n$ marked ordered distinct points. Moreover, it is	proved	in~\cite{buchstaber2020resolution} that for any $\omega$ there exists continuous surjection $p_{\omega} : \mathcal{F}_{n}\to F_{\omega}$ . This map induces the relations from homology groups generators of $\mathcal{F}_{n}$ to those for  homology groups of $F_{\omega}$. The
	homology groups generators for $ \mathcal{F}_{n} $, as well as their relations, are determined by Keel in~\cite{keel1992intersection} and his results are further generalized	in~\cite{manin2000new} and~\cite{ceyhan2009chow}.
	\par
	The complexity of the study of the orbit spaces  $M^{2n}/T^k$ and their homology structure shows up to follow the complexity of torus action. The homology of quasitoric manifolds  $M^{2n}/T^n$, which belong to the
	class of manifolds with complexity zero torus action, is determined
	by the combinatorics of the moment polytope $P^k.$ It si proved in~\cite{buchstaber2014topology}  that the orbit space
	$X_4$ is homeomorphic to $S^{5}$, Later on, more general result is proved in
	\cite{karshon2020topology} stating that the orbit space of a Hamiltonian complexity
	one torus action in general position is homeomorphic to	a sphere. This is improved in~\cite{ayzenberg2023orbit}  by showing that a manifold $M^{2n}$ with complexity one torus action in	general position and finite nonempty set of fixed points, is a homology $(n+1)$-sphere if its odd homology are trivial. In~\cite{ayzenberg2023orbit} this result is developed
	further by introducing the notion of $j$-generality of weights for
	$T^k$-action on $M^{2n}$. Precisely,
	it is proved that if $X$ is equivariantly formal and the action is with
	isolated fixed points, then $j$-generality of weights implies
	$(j+1)$-acyclicity of the orbit space $M^{2n}/T^k$.
	\par The integral
	homology of  orbit space $X_5$, which is an example of
	complexity $2$ torus action, are computed in~\cite{buchstaber2021orbit}
	and~\cite{suss2020toric}.
	\par
	We recover the computation of homology groups with $\Z _2$ coefficients
	for $X_5$ by the  method different from those used in~\cite{buchstaber2021orbit} and~\cite{suss2020toric}. In addition, we
	compute the homology groups with $\Z_2$-coefficients for $X_6$
	which are, to our knowledge, not previously known. The space $X_6$ is an example
	of complexity $3$ torus action.
	\par
	We believe that the general results obtained in Section \ref{Sec:On_Gn2_homology} describing inductively the structure
	of cycles in $X_n$, may lead to  successful
	application of the presented method  for the explicit  computation  of
	homology groups for	$X_n$ with $\Z_2$-coefficients for higher $n$ as well.
	\par The context of the paper is as follows: In Section \ref{Sec:Definitions}, we provide a summary of the relevant combinatorial structure and algebraic topology of the orbit space	$X_{n}$, following the descriptions from \cite{buchstaber2019toric} and	\cite{buchstaber2020resolution}. We particularly focus on the quotient space nature of $X_{n}$, obtained from a manifold with generalized corners	$\Delta_{n,2}\times\mathcal{F}_{n}$ via an explicitly described equivalence relation involving the chamber decomposition of
	$\Delta_{n,2}$ and corresponding stratifications of the manifold
	$\mathcal{F}_{n}$.
	\par In Section \ref{Sec:On_Gn2_homology}, we study  in detail the homology
	groups  $H_{k}(X_{n};\mathbb{Z}_{2})$ for arbitrary $k$, by making use
	of the model  $(\Delta_{n,2}\times\mathcal{F}_n, p_n)$ for $X_n$. The
	structural insight that $X_{n}$ decomposes into specific subspaces
	$\{C_{\omega}\times F_{\omega}\}$ facilitates an inductive study of
	$\mathbb{Z}_{2}$-cycles within $X_{n}$. Using diffeomorphism established
	in~\cite{buchstaber2021orbit} between $\mathcal{F}_n$ and
	$\mathcal{M}_{0,n}$ and the results from
	\cite{keel1992intersection},~\cite{ceyhan2009chow} on homology of
	$\mathcal{M}_{0,n}$ , we deduce that    the homology groups  for
	$\mathcal{F}_{n}$ are spanned by the divisors outgrowing in the
	compactification of $F_n$ to $\mathcal{F}_{n}$.  Moreover, for any
	$F_{\omega}$ being compactification of $F_n$,  we show that the 
	homology groups of  $F_{\omega}$ are   spanned as well  by the divisors
	outgrowing in the compactification of $F_n$ to $F_{\omega}$ . This is in
	the line with the results from~\cite{buchstaber2024weighted} which
	relate $F_{\omega}$ with moduli spaces of weighted stables genus zero
	curves and the results from~\cite{ceyhan2009chow} on homology groups of such
	moduli spaces.
	\par Section \ref{Sec:ModuliSpace} builds upon the diffeomorphism between the
	universal space $\mathcal{F}_{n}$ and the moduli space $\mathcal{M}_{0,n}$
	established in \cite{buchstaber2021orbit}.
	\par We exploit the knowledge of the homology groups of $\mathcal{M}_{0,n}$
	generated by divisors $D_{I}$ as outlined in
	\cite{keel1992intersection}, and explicitly  relate it for $n=5,6$  to 
	the stratifications   of
	$\mathcal{F}_n$ into virtual space of parameters which correspond to the
	chambers $C_{\omega}$ of  $\Delta _{n,2}$, which is established in~\cite{buchstaber2020resolution}.
	\par  We obtain the following results:
	\begin{itemize}
		\item  $ \Z_{2} $- homology groups of $ X_{5} $ are:
		\begin{equation*}
			H_{k}(X_{5};\Z _{2})
			\begin{cases}
				\Z_{2}, & \text{if}\ k=0,5,6,8 \\
				0, & \text{otherwise}
			\end{cases}
		\end{equation*}
		
		\item  $ \Z_{2} $- homology groups of $ X_{6} $ are:
		\begin{equation*}
			H_{k}(X_{6};\Z _{2})
			\begin{cases}
				\Z_{2}, & \text{if}\ k=0,5,8,9,11 \\
				\Z_{2}^{11}, & \text{if}\ k=7\\
				\Z_{2}^{3}, & \text{if}\ k=6\\
				0, & \text{otherwise}
			\end{cases}
		\end{equation*}
	\end{itemize}

\section{Grassmann manifolds $ \Grs{n} $}\label{Sec:Definitions}
For the convenience of a reader we recall the relevant description of the
combinatorial structure and algebraic topology of the orbit space $ \Grs{n}\slash T^{n} $ from \cite{buchstaber2019toric} and \cite{buchstaber2020resolution}
without proofs, thus making our exposition self-contained.
\par We denote by $ \Grs{n} $ the complex Grassmann manifold of the two-dimensional complex subspaces in $ \C^{n} $. The coordinate-wise action of the compact torus $ T^{n} $ on $ \mathbb{C}^{n} $ induces the canonical action of $ T^{n} $ on $ \Grs{n}. $ The standard Plücker charts on $ \Grs{n} $ are defined by $ M_{I}=\{L\in \Grs{n}:\, P^{I}(L)\neq 0 \}$, where $ I\subset\{1,\dots,n \},\,|I|=2$ and $ P^{I} $ are the Plücker coordinates. Let $ Y_{ij}=\Grs{n}\backslash M_{ij}, $ a $ \textit{stratum} $ is defined by non-empty set
\[ W_{\sigma}=(\bigcap_{\{i,j\}\in \sigma} M_{ij})\cap(\bigcap_{\{i,j\}\notin \sigma} Y_{ij}), \] where $ \sigma \subset  \binom{n}{2}. $ For simplicity the elements of $ \sigma $ we further denote just by $ ij $ assuming that $ i<j. $ The stratum $ W:=\bigcap_{\text{all}} M_{ij} $ is said to be \textit{the main stratum}.

\par Let $ \mu_{n}:\Grs{n}\to\Delta_{n,2}$ be the standard moment map, that is
\[ \mu_{n}(L)=\frac{1}{\sum|P^{I}(L)|}\sum|P^{I}(L)|^{2}\Lambda_{I}, \] where $ \Lambda_{I}\in\mathbb{R}^{n},\,\Lambda_{i}=1 $ for $ i\in I $, while $ \Lambda_{i}=0 $ for $ i\notin I. $
The image of $ \Grs{n} $ by the moment map $ \mu_{n} $ is the hypersimplex $ \Delta_{n,2}, $ which is $ (n-1) $-dimensional polytope that belongs to the hyperplane $ x_{1}+\dots+x_{n}=2 $ and is defined as the convex hull of the points $ \Lambda_{ij},\, 1\leq i<j\leq n. $ 
\par The action of $ T^{n} $ on $ \Grs{n} $ is not effective, but the quotient torus $ T^{n-1}=T^{n}/S^{1} $ by the diagonal circle $ S^{1}, $ acts effectively on $ \Grs{n} $. Note that the dimension of $ \Delta_{n,2} $ is equal to the dimension of the effectively acting torus. It follows that $ \dim X_{n}=\dim \Grs{n}-(n-1)=4(n-2)-(n-1)=3n-7. $ 
\par It holds that $ \mu_{n}(W_{\sigma})=\stackrel{\circ}{P}_{\sigma} $, where  $ P_{\sigma} $ is convex hull of the vertices $ \Lambda_{ij}$ such that $ ij\in \sigma. $ A polytope $ P_{\sigma} $ is called an $ \textit{admissible polytope} $ for $ \Grs{n}. $

\par The strata $ W_{\sigma} $ are invariant under the algebraic torus $ (\mathbb{C}^{*})^{n} $-action and $ \Grs{n}=\bigcup W_{\sigma}. $ Therefore, they induce the stratification of the orbit space $ X_n=\Grs{n}/T^{n} $ by
\begin{equation}\label{Eq:stratification}
	X_{n}=\bigcup_{\sigma}W_{\sigma}/T^{n}.
\end{equation} The orbit space $ F_{\sigma}=W_{\sigma}\backslash(\mathbb{C}^{*})^{n} $ is called \textit{the spaces of parameters} of a stratum $ W_{\sigma} $.

 \par Since the boundary $ \partial \Delta_{n,2} $ consists of $ n $ copies of the hypersimplex $ \Delta_{n-1,2} $ and $ n $ copies of the simplex $ \Delta_{n-1,1}$,see also \cite{ziegler2012lectures}, we have \[ \mu^{-1}(\partial \Delta_{n,2})=\bigcup_{1\leq i\leq n}\mu^{-1}(\Delta_{n-1,2}(i))\cup \bigcup_{1\leq i\leq n}\mu^{-1}(\Delta_{n-1,1}(i))=\bigcup_{1\leq i\leq n}\Grs{n-1}(i)\cup \bigcup_{1\leq i\leq n}\C P^{n-2}(i), \] where $ \Delta_{n-1,2}(i)=\D{n}\cap\{x_{i}=0\}$ and $ \Delta_{n-1,1}(i)=\D{n}\cap\{x_{i}=1\} $.
 Thus, the admissible polytopes for $ \Grs{n} $ can be described inductively. The results on admissible polytope from \cite{buchstaber2020resolution} can be summarized as follows: 
 
\begin{theorem}\label{Th:admissiblepolytope}
	\begin{itemize}
		\item 	Any admissible polytope for $ \Grs{n} $ of dimension less or equal than $ n-3 $ \\belongs to the boundary $ \partial\Delta_{n,2}.$ 
		\item The admissible polytopes $ P_{\sigma} $ for $ \Grs{n} $ of dimension $ n-2 $ which have non-empty intersection with $ \Ins{\Delta_{n,2}} $ are given by the intersection of $ \Delta_{n,2} $ with the planes of the form:
		\[ \sum_{i\in S,\,||S||=p}x_{i}=1, \] 
		where $ S \subset \{1,\dots,n\},\, 2\leq p\leq [\frac{n}{2}].$
			\item The admissible polytopes $ P_{\sigma} $ for $\Grs{n} $ of the maximal dimension are $ \D{n} $ and the intersections with $ \D{n} $ of all collections of the half-spaces of the form 
		\begin{equation}\label{Eq:AdmissPoly}
			H_{S}: \sum_{i\in S}x_{i}\leq 1,
		\end{equation}
		for $ S \subset \{1,\dots,n\},\, ||S||=k,\, k\in\{2,\dots,n-2\}$ such that if $ H_{S_{1}},H_{S_{2}} $ belong to a collection then $ S_{1}\cap S_{2}=\emptyset. $ 
	\end{itemize}
\end{theorem}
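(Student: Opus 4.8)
The plan is to recast the statement in matroid-theoretic terms and then carry out a dimension count for matroid polytopes, using the Gelfand--Serganova description of the strata recalled above. A subset $\sigma\subset\binom{n}{2}$ satisfies $W_\sigma\neq\emptyset$ exactly when $\sigma$ is the set of bases of a matroid on $[n]=\{1,\dots,n\}$, and this matroid automatically has rank $2$ (a $2$-plane has a non-vanishing Plücker coordinate); realizability over $\C$ is automatic in rank $2$, since one may assign to the elements of each parallel class a common nonzero column of $\C^2$ with pairwise distinct directions. Moreover $\mu(W_\sigma)=\Ins{P}_{\sigma}$ with $P_\sigma=\operatorname{conv}\{\Lambda_{ij}:ij\in\sigma\}$ the base polytope of that matroid, so ``admissible polytope'' means ``base polytope of a rank-$2$ matroid on $[n]$''. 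I would first record the classification: such a matroid $M$ is determined by its loop set $Z$ together with a partition of $[n]\setminus Z$ into $m\geq2$ parallel classes $A_1,\dots,A_m$, and by Edmonds' description of base polytopes
\[
P_M=\Bigl\{x\in\R^n:\ x_i=0\ (i\in Z),\ x_i\geq0\ (i\notin Z),\ \sum_{i=1}^n x_i=2,\ \sum_{i\in A_a}x_i\leq1\ (1\leq a\leq m)\Bigr\}.
\]

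The heart of the proof is the dimension computation. If $Z\neq\emptyset$, then $P_M$ lies in the supporting hyperplane $\{x_i=0\}$ of $\Delta_{n,2}$ for any loop $i$, hence $P_M\subset\partial\Delta_{n,2}$; in fact $P_M$ is then an admissible polytope for the sub-Grassmannian $\Grs{[n]\setminus Z}$ sitting in that facet, which is the inductive content of $\mu^{-1}(\partial\Delta_{n,2})=\bigcup_i\Grs{n-1}(i)\cup\bigcup_i\C P^{n-2}(i)$. If $Z=\emptyset$, I distinguish $m=2$ from $m\geq3$. For $m=2$ the constraints $\sum_{A_1}x_i\leq1$, $\sum_{A_2}x_i\leq1$ together with $\sum_i x_i=2$ force $\sum_{i\in A_1}x_i=1$, so $P_M$ is the product of the simplices on $A_1$ and $A_2$ and $\dim P_M=(|A_1|-1)+(|A_2|-1)=n-2$. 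For $m\geq3$ one exhibits a relative interior point with all inequalities strict --- for instance $x_i=\varepsilon$ for every $i$ with the leftover mass $2-n\varepsilon$ split equally among the classes, valid for small $\varepsilon$ because each class total tends to $2/m\leq\tfrac{2}{3}<1$ --- whence $\dim P_M=n-1$. So a loopless rank-$2$ matroid polytope has dimension exactly $n-1$ (if $m\geq3$) or $n-2$ (if $m=2$).

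Parts 1--3 then follow by reading off cases. Part 1: if $\dim P_\sigma\leq n-3$, the dimension count forces the matroid to have a loop, so $P_\sigma\subset\partial\Delta_{n,2}$. Part 2: a $P_\sigma$ of dimension $n-2$ meeting $\Ins{\Delta_{n,2}}$ is neither loopy (loopy polytopes sit in a facet $\{x_i=0\}$ of $\Delta_{n,2}$) nor of type $m\geq3$ (those have dimension $n-1$), so it is the two-class polytope $\{\sum_{i\in A_1}x_i=1\}\cap\Delta_{n,2}$; putting $S=A_1$ and $p=|S|$, it avoids $\partial\Delta_{n,2}$ iff neither $S$ nor $[n]\setminus S$ is a singleton, i.e.\ $2\leq p\leq n-2$, and since $S$ and $[n]\setminus S$ cut the same plane we may take $p\leq[n/2]$. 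Part 3: a full-dimensional $P_\sigma$ comes from a loopless matroid with $m\geq3$; singleton classes contribute $x_i\leq1$, already valid on $\Delta_{n,2}$, while each class $A_a$ with $|A_a|\geq2$ contributes exactly the half-space $H_{A_a}$, and these classes form a family of pairwise disjoint subsets of $[n]$ of sizes in $\{2,\dots,n-2\}$ (size $n-1$ or $n$ would force $m\leq2$). Conversely any such family, completed by singletons, yields a rank-$2$ matroid with $m\geq3$ --- hence a maximal-dimensional $P_\sigma=\Delta_{n,2}\cap\bigcap_a H_{A_a}$ --- unless it is the degenerate case of two complementary sets exhausting $[n]$, which instead falls under Part 2; and the all-singleton matroid is the main stratum with $P_\sigma=\Delta_{n,2}$.

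The only genuinely delicate step is the dimension count, and within it the bookkeeping of which matroid-polytope inequalities are automatically tight: this is precisely what separates $m=2$ (codimension one) from $m\geq3$ (full-dimensional), hence what turns Parts 1--3 into a clean trichotomy, and it also produces the one caveat in Part 3 that a parallel class of size $n-1$ --- two complementary half-spaces --- collapses the intersection back to codimension one. Everything else is the standard matroid dictionary together with the already-recalled identity $\mu(W_\sigma)=\Ins{P}_{\sigma}$.
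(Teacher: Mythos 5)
Your argument is correct. Note first that the paper itself gives no proof of this statement: Section 2 explicitly recalls it ``without proofs'' as a summary of results from the cited reference, so there is no in-paper argument to compare against. Your route --- identify nonempty strata with realizable rank-$2$ matroids via Gelfand--Serganova, classify such matroids by their loop set and partition into parallel classes, write $P_\sigma$ as the base polytope via the inequalities $\sum_{i\in A_a}x_i\le 1$, and run the dimension trichotomy (loops $\Rightarrow$ boundary; $m=2$ $\Rightarrow$ a product of simplices of dimension $n-2$; $m\ge 3$ $\Rightarrow$ full dimension, witnessed by your explicit interior point) --- is essentially the standard derivation and the one underlying the cited source. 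The details check out: realizability is indeed automatic in rank $2$, the $m=2$ polytope is $\Delta^{|A_1|-1}\times\Delta^{|A_2|-1}$, and your interior point for $m\ge 3$ works since each class total tends to $2/m<1$. You are also right to flag the caveat in Part 3: a collection consisting of two complementary sets partitioning $[n]$ satisfies the stated disjointness condition but yields $m=2$ and hence only an $(n-2)$-dimensional polytope; this is a genuine (minor) imprecision in the theorem as stated, and your reading --- that such collections are implicitly excluded because Part 3 concerns only maximal-dimensional polytopes --- is the correct one.
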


The space of parameters of the main stratum $ W $ is denoted by  $F_{n} $ or $F$. If we want to emphasize a chart $ M_{ij} $ in which the record of a space of parameters $ F_{\sigma} $ or a stratum $ W_{\sigma} $ is written, we denote it by $ F_{\sigma,ij}\text{ or }W_{\sigma,ij}. $

\subsection{The universal space of parameters and virtual spaces of parameters for $ \Grs{n} $}\label{SSec:UniversalSpace}
The main stratum $ W $, in the chart $ M_{12} $, is given by the equations\begin{align}\label{Def:mainstratumG}
	c_{ij}^{\prime}a_{i}b_{j}=c_{ij}a_{j}b_{i},\quad 3\leq i<j\leq n,
\end{align}  where $ 	c_{ij},c_{ij}^{\prime}\neq0,\,c_{ij}\neq c_{ij}^{\prime} ,\, (c_{ij}:c_{ij}^{\prime})\in\C P^{1}.$ It follows that the parameters $ c_{ij} $ satisfy the equations
\begin{align}\label{Def:mainspaceG}
	c_{ij}^{\prime}c_{ik}c_{jk}^{\prime}=c_{ij}c_{ik}^{\prime}c_{jk}.
\end{align}

\par The space of parameters $ F_{n} $ is an open algebraic manifold in $ (\C P^{1})^{N} $, for $ N=\binom{n-2}{2}, $ given by the intersection of cubics \eqref{Def:mainspaceG} and the condition $ (c_{ij}^{\prime}:c_{ij})\in \CPA, $  where $ \CPA=\C P^{1}\backslash A$ and $ A=\{(1:0),(0:1),(1:1)\}. $ The dimension of $ F_{n} $ is $ 2(n-3). $

\begin{rem}\label{cellspaceparam}
	It follows from~\cite{buchstaber2024weighted} that $F_n$ is homeomorphic to $(\C P^{1}_{A})^{n-3}$.  The cell decomposition of $\C P^{1}_{A}$ is given  $2$-dimensional and $1$-dimensional cells, which produces the cell decomposition of $F_n$.  This implies  that $F_{n}$ has a cell decomposition without cells in dimensions $\leq n-4$ which further gives that
	\begin{equation}\label{Fhom(n-4)}
		H_{k}(F_n) =0, \; \text{for}\; 0< k\leq n-4.
	\end{equation}
	It is not difficult to verify, see \cite{buchstaber2021orbit}, that  any $F_{\sigma}$ is either a point or it is homeomorphic to $F_{m}$, $4\leq m\leq n-1$, which  gives  the cell decomposition for $F_{\sigma}$ and an  analogue result to \eqref{Fhom(n-4)}  for its homology groups.
\end{rem}
\begin{prop}{(\cite{buchstaber2019toric})}\label{Prop:ArbitraryStratumG}
	A stratum $ W_{\sigma} $ is defined in the chart $ M_{12} $ by $ P^{1i}=0, P^{2j}=0 $ or $ P^{pq}=0 $, for some $ 3\leq i,j,p,q\leq n$ or in the local coordinates by $ a_{i}=b_{j}=0 $ and $ a_{p}b_{q}=a_{q}b_{p}. $ The space of parameters $ F_{\sigma}=W_{\sigma}\slash(\C^{*})^{n} $ that is not a point is given by restriction of the equations \eqref{Def:mainspaceG} to some factors $ \C P^{1}\backslash B $ in $ (\C P^{1})^{N} $ where $ B=\{(1:0),(0:1)\}. $
\end{prop}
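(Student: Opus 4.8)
\emph{Idea of proof.}
The plan is to do everything in the affine chart $M_{12}$ and to translate every assertion into the local coordinates $a_3,\dots,a_n,b_3,\dots,b_n$. First I would fix the unique reduced representative
\[
\begin{pmatrix}
1 & 0 & a_3 & \cdots & a_n\\
0 & 1 & b_3 & \cdots & b_n
\end{pmatrix}
\]
of a point $L\in M_{12}$, obtained by Gauss elimination on the first two columns, and expand the $2\times 2$ minors: $P^{12}=1$, $P^{1i}=b_i$, $P^{2i}=-a_i$ for $i\ge 3$, and $P^{pq}=a_pb_q-a_qb_p$ for $3\le p<q\le n$. Since $W_\sigma\cap M_{12}$ is by definition the locus where $P^{ij}$ vanishes exactly for $ij\notin\sigma$, it follows at once that it is cut out by $b_i=0$ whenever $1i\notin\sigma$, by $a_j=0$ whenever $2j\notin\sigma$, by $a_pb_q=a_qb_p$ whenever $\{p,q\}\notin\sigma$ with $p,q\ge 3$, together with the open conditions $P^{ij}\neq 0$ for $ij\in\sigma$; this is the first assertion.

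For the space of parameters the next step is to compute the $(\C^{*})^{n}$-action on these coordinates: letting $t=(t_1,\dots,t_n)$ rescale the columns and then restoring the reduced form by $\mathrm{diag}(t_1^{-1},t_2^{-1})\in\GL_2$, one gets $a_i\mapsto (t_i/t_1)a_i$, $b_i\mapsto (t_i/t_2)b_i$. Hence $a_ib_j$ and $a_jb_i$ are rescaled by the same factor $t_it_j/(t_1t_2)$, so the point $(a_ib_j:a_jb_i)\in\C P^1$ is $(\C^{*})^{n}$-invariant whenever at least one of $a_ib_j$, $a_jb_i$ is nonzero; writing it as $(c_{ij}:c'_{ij})$ exactly as in \eqref{Def:mainstratumG} and collecting these over $3\le i<j\le n$ one gets a map $F_\sigma=W_\sigma/(\C^{*})^{n}\to(\C P^1)^{N}$, $N=\binom{n-2}{2}$. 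When $W_\sigma$ is a single orbit the image is a point; otherwise I would identify the image explicitly.

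The last step is to read off the range and the relations. The identity $(a_ib_j)(a_jb_k)=(a_jb_j)(a_ib_k)$ gives, on every triple $3\le i<j<k\le n$ on which the three ratios are defined (and not at $(1:0),(0:1)$), the cubic \eqref{Def:mainspaceG} for $(c_{ij}:c'_{ij})$, $(c_{ik}:c'_{ik})$, $(c_{jk}:c'_{jk})$; this is precisely the restriction of \eqref{Def:mainspaceG} to the surviving factors. For the range, on a factor $\{p,q\}$ with $p,q\ge 3$: if $b_p=0$ or $a_q=0$, i.e.\ if $1p\notin\sigma$ or $2q\notin\sigma$, then $(c_{pq}:c'_{pq})$ is frozen at $(1:0)$ or $(0:1)$ and may be dropped; on the other factors $a_p,a_q,b_p,b_q$ are all nonzero, so $c_{pq}\neq 0$ and $c'_{pq}\neq 0$, i.e.\ the coordinate takes values in $\C P^1\setminus B$, $B=\{(1:0),(0:1)\}$, with the value $(1:1)$ occurring precisely where $P^{pq}=0$. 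Specializing to the $\sigma$ containing all pairs recovers $F_n$ with all factors in $\C P^1\setminus A$ and the full system \eqref{Def:mainspaceG}, as consistency demands.

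The main obstacle is the bookkeeping in this last step: one has to keep track of exactly which factors are frozen and which are free, note that a frozen factor can still couple free ones via \eqref{Def:mainspaceG}, and determine when enough factors collapse that $F_\sigma$ degenerates to a point rather than to a copy of some $F_m$, $4\le m\le n-1$ — which is the dichotomy already recorded in Remark~\ref{cellspaceparam}.
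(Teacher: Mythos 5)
The paper does not prove this proposition: it is quoted from \cite{buchstaber2019toric} as background (the section explicitly recalls these facts ``without proofs''), so there is no in-paper argument to compare against. Your route is the natural one and, as far as the source goes, the standard one: reduced representative in $M_{12}$, explicit Pl\"ucker minors $P^{1i}=b_i$, $P^{2i}=-a_i$, $P^{pq}=a_pb_q-a_qb_p$, and the induced torus action $a_i\mapsto (t_i/t_1)a_i$, $b_i\mapsto(t_i/t_2)b_i$, from which the first assertion and the invariance of the ratios $(a_ib_j:a_jb_i)$ follow at once. These computations are correct (modulo a harmless index swap: with your normalization $P^{1i}=0$ reads $b_i=0$ and $P^{2j}=0$ reads $a_j=0$, while the statement writes $a_i=b_j=0$; this is purely a matter of convention), and the multiplicative identity you invoke does reproduce the cubics \eqref{Def:mainspaceG}.

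The one genuine gap is in the second assertion. You construct an invariant map $W_\sigma\to(\C P^1)^N$, show that its non-frozen components land in $\C P^1\setminus B$ and satisfy \eqref{Def:mainspaceG}, and then declare $F_\sigma$ to be the image --- but you never argue that the ratios separate $(\C^{*})^{n}$-orbits, that every point of the cut-out variety is attained, or that the induced bijection is a homeomorphism; that is exactly where the content of ``$F_\sigma$ \emph{is given by} the restriction of \eqref{Def:mainspaceG}'' lives, and you defer it (``otherwise I would identify the image explicitly''). The fix is short and should be included: on the coordinates not forced to vanish, normalize the nonzero $a_i$ to $1$ using $t_i/t_1$ and one nonzero $b_{i_0}$ to $1$ using $t_1/t_2$; the surviving ratios $(a_{i_0}b_j:a_jb_{i_0})=(b_j:1)$ then recover the remaining coordinates, giving a continuous section and hence the claimed identification. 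Two smaller bookkeeping points: a factor can also degenerate to the undefined value $(0:0)$ (e.g.\ when $a_p=a_q=0$), not only to $(1:0)$ or $(0:1)$, and must likewise be discarded; and your criterion ``$b_p=0$ or $a_q=0$'' freezes the ratio at $(1:0)$ in both cases, so the two frozen values should be matched to the correct vanishing patterns.
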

\par The closure of $ F_{n} $ in $ (\C P^{1})^{N} $ is denoted by $ \overline{F}_{n}. $  It is given by intersection of cubics $ \eqref{Def:mainspaceG} $ in $ (\C P^{1})^{N}. $ The space $ \overline{F}_{n}  $ is proved in  \cite{buchstaber2019toric} to be a smooth manifold. The points from the boundary of $ F_{n} $ in $ (\C P^{1})^{N} $ are obtained if in \eqref{Def:mainspaceG} is allowed that some $ c_{lm}=0,c^{\prime}_{lm}=0\, $ or $ c_{lm}=c^{\prime}_{lm} $ for $  3\leq l<m\leq n.$ 
\par The universal space of parameters $  \mathcal{F}_{n}  $ is a compactification of the space of parameters $ F_{n} $ of the main stratum, see \cite{buchstaber2020resolution} and the references given there. 

%
It is defined as a wonderful compactification of the space $ \bar{F}_{n} $. This compactification requires that the homeomorphisms of $ F_{n} $ defined by transition functions between the charts for $ \Grs{n} $ extend to the homeomorphisms for $ \mathcal{F}_{n} $ in an arbitrary chart. 

\begin{theorem}{(\cite{buchstaber2021orbit} and \cite{buchstaber2020resolution})}\label{Th:UniversalSpaceG}
	Let the manifold $ \mathcal{F}_{n} $ is obtained by the wonderful compactification of $ \overline{F}_{n} $ with the generating set of subvarieties $ \overline{F}_{I}\subset\overline{F}_{n} $, defined by $ (c_{ik}:c_{ik}^{\prime})=(c_{il}:c_{il}^{\prime})=(c_{kl}:c_{kl}^{\prime})=(1:1) $ for $ ikl\in I $, where $ I\subset \{ikl\,|\,3\leq i<k<l\leq n\} $.
	Then any homeomorphism $ f_{ij,kl}:F_{n}\to F_{n} $ induced by the transition functions between the charts $ M_{ij} $ and $M_{kl}  $ extends to the homeomorphism of $ \mathcal{F}_{n} $.
\end{theorem}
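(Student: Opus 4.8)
The plan is to combine an explicit description of the transition homeomorphisms with the functoriality of the wonderful compactification; the real work is a combinatorial verification that these homeomorphisms preserve the arrangement generated by the subvarieties $\overline F_I$.

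First I would reduce to the case where $M_{ij}$ and $M_{kl}$ share one index, since a general transition is a composition of such ones (the graph on $2$-element subsets of $\{1,\dots,n\}$ in which two subsets are joined when they intersect is connected). For such an elementary transition, a direct computation with the Pl\"ucker coordinates and with equations \eqref{Def:mainstratumG}--\eqref{Def:mainspaceG} shows that $f_{ij,kl}$ is the restriction to $\overline F_n$ of a birational self-map of $(\C P^1)^N$ assembled from M\"obius transformations of the individual $\C P^1$-factors that permute the distinguished triple $A=\{(1:0),(0:1),(1:1)\}$, together with a relabeling of the factors and the recoding dictated by the cross-ratio relations \eqref{Def:mainspaceG}; for example, for the transition $M_{12}\to M_{13}$ one finds that the parameters attached to the pairs $\{3,b\}$ and $\{2,b\}$ are exchanged by a M\"obius map of the form $(x:y)\mapsto(x:x-y)$, which interchanges $(1:0)$ and $(1:1)$, fixes $(0:1)$, and so preserves $A$. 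In particular $f_{ij,kl}$ is a birational automorphism of $\overline F_n$ that is biregular on the open dense subset $F_n$; here I use that $\overline F_n$ is smooth, which is \cite{buchstaber2019toric}.

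The key step is to show that the arrangement formed by the subvarieties $\overline F_I$ and their intersections is carried into itself by $f_{ij,kl}$. The geometric reason is that $\overline F_{\{ikl\}}$ is the locus on which the three ``marked points'' indexed by $\{i,k,l\}$ collide, and such a collision is intrinsic to a point of $G_{n,2}$, hence insensitive to the chart in which its space of parameters is recorded, so $f_{ij,kl}$ has to permute these loci. I expect the bookkeeping needed to make this precise to be the main obstacle: when a collision involves one of the two indices distinguished by a chart it is recoded by a parameter taking the value $(1:0)$ or $(0:1)$ rather than $(1:1)$, so the relabeling of factors forced by the change of chart must be matched carefully against the M\"obius and cross-ratio parts of $f_{ij,kl}$, and one has to check that precisely the codimension $\ge 2$ members of the arrangement --- those actually blown up in the wonderful construction --- are permuted among themselves, with the divisorial members matching as well. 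It is here that one also verifies that the indeterminacy locus of $f_{ij,kl}$ on $\overline F_n$ lies inside the union of the blown-up centers.

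Granting this, the conclusion is formal. Realize $\mathcal F_n$ as the iterated blow-up of $\overline F_n$ along the strict transforms of the subvarieties $\overline F_I$, ordered by increasing dimension, as in the construction of a wonderful compactification. Since the collection of centers is stable under $f_{ij,kl}$ and its indeterminacies lie inside that collection, $f_{ij,kl}$ lifts across each blow-up in turn, and at the final stage one obtains a biregular --- hence homeomorphic --- self-map of $\mathcal F_n$ extending $f_{ij,kl}$. Equivalently, one appeals to the functoriality of the De Concini--Procesi wonderful model: an automorphism of $\overline F_n$ that preserves the building set and whose indeterminacy locus lies in that building set induces an automorphism of the associated wonderful model.
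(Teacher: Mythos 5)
The paper does not prove this statement: Theorem~\ref{Th:UniversalSpaceG} is quoted from \cite{buchstaber2021orbit} and \cite{buchstaber2020resolution} without proof (the whole of Section~\ref{Sec:Definitions} is declared to be a recollection ``without proofs''), so there is no in-paper argument to compare yours against. Your outline does follow the strategy of those references: describe the transition maps explicitly on $\overline{F}_n$, identify their indeterminacy, check that the building set $\{\overline{F}_I\}$ is preserved, and lift through the wonderful compactification. The reduction to charts sharing an index and the final lifting step via functoriality of the wonderful model are both fine once the middle step is in place.

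The genuine gap is exactly the step you flag as ``the main obstacle,'' and it is not mere bookkeeping --- it is the entire content of the theorem. If $f_{ij,kl}$ really were assembled from factor-wise M\"obius transformations permuting $A$ plus a relabeling of factors, it would be biregular on all of $(\C P^{1})^{N}$ and no compactification beyond $\overline{F}_n$ would be needed. The difficulty sits entirely in what you call the ``recoding dictated by the cross-ratio relations'': in the new chart some parameters are ratios of products of the old ones (compare the last coordinate in Example~\ref{ex:map05}), and these are honest rational maps with nonempty indeterminacy locus on $\overline{F}_n$. One must compute that locus for each elementary transition, check that it is precisely the union of the loci $\overline{F}_I$ where triples of coordinates equal $(1:1)$ (and not, say, loci involving $(1:0)$ or $(0:1)$, which also look ``special'' in a given chart), verify that the lifted maps permute the strict transforms of the $\overline{F}_I$ among themselves, and confirm that the resulting self-map of $\mathcal{F}_n$ is a homeomorphism rather than merely a birational map (the elementary transitions can a priori contract a divisor to a point of $\overline F_{I}$, which is why the exceptional divisors are needed on both sides). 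Your heuristic that ``a collision of marked points is intrinsic to a point of $G_{n,2}$'' is a good guide but is not itself an argument, since the whole point is that the same collision is recorded by $(1:1)$ in one chart and by $(1:0)$ or $(0:1)$ in another. Until that explicit computation is done, the proof is a plausible plan rather than a proof.
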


The main stratum $ W $ is a dense set in $ \Grs{n} $ and according to \cite{buchstaber2020resolution} and \cite{buchstaber2019foundations} to any stratum can be assigned the \textit{virtual space of parameters}  $ \tilde{F}_{\sigma}\subset \mathcal{F}_{n} $, in a fixed chart $ M_{12}, $ on the following way:
\begin{enumerate}
	\item If $ W_{\sigma}\subset M_{12} $, the virtual space $ \tilde{F}_{\sigma} $ is assigned using the description \eqref{Def:mainspaceG} of the main stratum
	\item If $ W_{\sigma}\cap M_{12}=\emptyset $,  we first find a chart $ M_{ij} $ such that $ W_{\sigma} \subset M_{ij}$ and then we assign to $ W_{\sigma} $ the space $ \tilde{F}_{\sigma,ij} $ using description \eqref{Def:mainspaceG}. Finally, using homeomorphism $ f_{ij,12} $ for $ \mathcal{F}_{n} $ defined in Theorem \ref{Th:UniversalSpaceG} we assign to $ W_{\sigma} $ the subspace $ \tilde{F}_{\sigma,12}. $
\end{enumerate} 
\begin{theorem}\label{Th:VirtualSpace}
	For a chart $ M_{ij} $ it holds:
	\begin{enumerate}
		\item $ \bigcup_{\sigma}\tilde{F}_{\sigma,ij}=\mathcal{F}_{n} $
		\item  There exists a canonical projection $p_{\sigma,ij}:\tilde{F}_{\sigma,ij}\to F_{\sigma} $ for any admissible set $ \sigma $.
	\end{enumerate}
\end{theorem}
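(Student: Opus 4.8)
The plan is to deduce both statements from the construction of the virtual spaces of parameters together with the properties of the wonderful compactification $\mathcal{F}_n$ recorded in Theorem \ref{Th:UniversalSpaceG}. I would first fix a chart $M_{ij}$, and reduce to the case $ij=12$ by transporting everything along the homeomorphism $f_{ij,12}$: since this homeomorphism is defined on all of $\mathcal{F}_n$ and carries $F_n$ to $F_n$, it carries the collection $\{\tilde F_{\sigma,12}\}$ to $\{\tilde F_{\sigma,ij}\}$ and intertwines the projections, so it suffices to prove (1) and (2) in the chart $M_{12}$.

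For part (1), the key observation is that every stratum $W_\sigma$ of $G_{n,2}$ meets at least one Plücker chart $M_{ij}$ (the charts cover $G_{n,2}$), and for each such chart the virtual space $\tilde F_{\sigma,12}$ is, by construction, the closure in $\mathcal{F}_n$ of the locus obtained by setting the relevant Plücker coordinates to zero in the system \eqref{Def:mainspaceG} — equivalently, by Proposition \ref{Prop:ArbitraryStratumG}, by restricting the cubics to the sublocus where some factors degenerate. I would argue that the union $\bigcup_\sigma \tilde F_{\sigma,12}$ contains the open dense subset $F_n\subset\mathcal{F}_n$ (this is $\tilde F_W$ itself, from case 1 of the construction) and is closed, being a finite union of closed subvarieties; since $\mathcal{F}_n$ is irreducible and $F_n$ is dense, closedness plus density forces $\bigcup_\sigma\tilde F_{\sigma,12}=\mathcal{F}_n$. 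The only genuine content here is that the degeneration loci appearing as boundary strata of $\overline F_n$ in $(\C P^1)^N$ — where some $c_{lm}=0$, $c'_{lm}=0$, or $c_{lm}=c'_{lm}$ — together with their proper transforms under the wonderful blow-up, exhaust the boundary $\mathcal{F}_n\setminus F_n$; this follows from the explicit description of the generating set $\{\overline F_I\}$ in Theorem \ref{Th:UniversalSpaceG} together with the fact that the boundary divisors of a wonderful compactification are indexed by the building set.

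For part (2), I would build the projection $p_{\sigma,12}:\tilde F_{\sigma,12}\to F_\sigma$ directly from the defining equations. Recall $F_\sigma$ is (by Proposition \ref{Prop:ArbitraryStratumG}) cut out from a product of copies of $\C P^1\setminus B$ by the restricted cubics \eqref{Def:mainspaceG}, while $\tilde F_{\sigma,12}$ sits inside $\mathcal{F}_n$ as the closure of the analogous locus; the map simply remembers the coordinates $(c_{ij}:c'_{ij})$ that survive in the definition of $W_\sigma$, i.e. it is induced by the coordinate projection $(\C P^1)^N\to(\C P^1)^{N_\sigma}$ restricted to the relevant factors. I would verify that this projection is well-defined (the forgotten coordinates are precisely those forced to a value in $A$ or degenerated along the blow-up, so no indeterminacy arises on $\tilde F_{\sigma,12}$), continuous (it is the restriction of a morphism of projective varieties composed with the blow-down $\mathcal{F}_n\to\overline F_n$), and surjective (its image is closed and contains the dense main part of $F_\sigma$, hence is all of $F_\sigma$). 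Canonicity is the statement that this map does not depend on auxiliary choices in the construction, which again reduces via $f_{ij,12}$ to checking compatibility of the coordinate projections with the transition homeomorphisms.

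The main obstacle I anticipate is part (1): showing the virtual spaces genuinely cover $\mathcal{F}_n$ rather than just a Zariski-dense subset. The subtlety is that the wonderful compactification introduces exceptional divisors, and one must confirm that every point of every exceptional divisor lies in some $\tilde F_{\sigma,12}$ — that is, that the combinatorics of the building set $\{\overline F_I\}$ matches the combinatorics of the strata $W_\sigma$ of $G_{n,2}$ and their faces (via the facet-degeneration relation between admissible polytopes recalled in the introduction). This is where I would lean most heavily on the explicit results of \cite{buchstaber2020resolution}, \cite{buchstaber2019foundations}, tracing through how each generating subvariety $\overline F_I$ corresponds to a stratum whose moment-map image is a face of the hypersimplex, and how the induced surjections $\eta_{\sigma,\bar\sigma}:F_\sigma\to F_{\bar\sigma}$ assemble the boundary pieces consistently. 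The remaining verifications — closedness, continuity, surjectivity of the projections — are formal once the covering statement is in hand.
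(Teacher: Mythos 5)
This theorem is stated in the paper's survey section, which explicitly recalls results from \cite{buchstaber2020resolution} and \cite{buchstaber2019foundations} ``without proofs,'' so there is no in-paper argument to compare against; I can only judge your proposal on its own terms. Your treatment of part (2) is essentially the standard one: the projection $p_{\sigma,ij}$ is the coordinate projection forgetting the degenerated factors, which is exactly how the paper later uses it (in the proof of Theorem~\ref{Th:homology_F} it invokes the decomposition $\tilde{F}_{\sigma}\cong \bar{F}_{l}\times\bar{F}_{s}\times F_{m}$ with $F_{m}\cong F_{\sigma}$, and $p_{\sigma}$ is projection onto the last factor). That part is fine.

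Part (1), however, contains a genuine gap. You describe each $\tilde{F}_{\sigma,12}$ as ``the closure in $\mathcal{F}_n$'' of a degeneration locus and then argue that $\bigcup_{\sigma}\tilde{F}_{\sigma,12}$ is closed ``being a finite union of closed subvarieties,'' so that density of $F_n$ forces the union to be everything. But the virtual spaces of parameters are \emph{not} closed: they are locally closed strata. The paper's own examples make this explicit --- for $n=5$, $\tilde{F}_{K_{ij}}\simeq \C P^{1}_{A}=\C P^{1}\setminus\{3\text{ points}\}$ and $\tilde{F}_{\Delta_{5,2}}=F$ itself (Corollary~\ref{Corr:VirtualspaceX5}) --- and Theorem~\ref{Lem:ProjG} asserts that the sub-collection $\{\tilde{F}_{\sigma}: C_{\omega}\subset\stackrel{\circ}{P}_{\sigma}\}$ partitions $\mathcal{F}_n$ \emph{disjointly}, which would be impossible for a cover of a connected space by more than one nonempty closed set. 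So the ``closed plus dense'' mechanism collapses, and with it your reduction of part (1) to a formality. You do correctly identify where the real work lies (every point of $\mathcal{F}_n\setminus F_n$, including points of the exceptional divisors, must be matched to some stratum $W_{\sigma}$), but the substitute you offer --- that boundary divisors of a wonderful compactification are indexed by the building set --- only accounts for the exceptional divisors over the loci $\bar{F}_I$, not for the proper transforms of the loci where some $c_{lm}=0$, $c'_{lm}=0$ or $c_{lm}=c'_{lm}$, nor for their mutual intersections. What is actually needed is the direct combinatorial bookkeeping: for each admissible degeneracy pattern of the coordinates $(c_{lm}:c'_{lm})$ (and of the blow-up coordinates), exhibit via \eqref{Def:mainstratumG} the set $\sigma$ of non-vanishing Pl\"ucker coordinates it parametrizes, and check that every pattern occurs. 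That verification is the substance of the theorem and is not supplied by your argument.
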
   
\subsection{The model for $ X_{n}=\Grs{n}/T^{n} $}
\par Let the hyperplane arrangement $ \mathcal{G}_{n} $ in $ \mathbb{R}^{n} $ be defined by 
\begin{equation}\label{Eq:ArrangGn2}
	\Pi \cup \{x_{i}=0,1 |\,1\leq i\leq n \},
\end{equation} where $ \Pi $ is the set of hyperplanes given by 
\[ \sum_{i\in S,\,||S||=p}x_{i}=1, \] 
for $ S \subset \{1,\dots,n\},\, 2\leq p\leq [\frac{n}{2}].$ Let $ L(\mathcal{G}_{n}) $ be its intersection poset. In what follows we consider the hyperplane arrangement induced by this arrangement in $ \mathbb{R}^{n-1}=\{(x_{1},\dots,x_{n})\in \mathbb{R}^{n}|\,x_{1}+\dots+x_{n}=2\} $.
\par If restrict the poset $ L(\mathcal{G}_{n}) $ to $ \Delta_{n,2} $ we get the poset $ L(\mathcal{G}(n,2))= L(\mathcal{G}_{n})\cap\Delta_{n,2} $. From now on, we call an element $ C $ of the poset $ L(\mathcal{G}(n,2)) $ \textit{a chamber} and the decomposition $ L(\mathcal{G}(n,2)) $ of $ \Delta_{n,2} $ \textit{the chamber decomposition.}

In \cite{buchstaber2020resolution} it is proved:
\begin{lemma}
	Any chamber $ C\in L(\mathcal{G}(n,2)) $ can be obtained as the intersection of the relative interiors of all admissible polytopes which contain $ C. $
\end{lemma}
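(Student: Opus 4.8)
The plan is to show both inclusions: every chamber is contained in the intersection of relative interiors of the admissible polytopes containing it, and conversely that intersection is contained in a single chamber (namely $C$ itself).

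First I would recall the precise combinatorial setup. A chamber $C$ is, by definition, an element of the intersection poset $L(\mathcal{G}(n,2))$, i.e. a nonempty face of the subdivision of $\Delta_{n,2}$ cut out by the hyperplane arrangement $\mathcal{G}_n$ restricted to $\Delta_{n,2}$; concretely $C$ is determined by specifying, for each hyperplane $h$ in the arrangement $\eqref{Eq:ArrangGn2}$, whether $C$ lies in $h$, on its positive side, or on its negative side (with at least one such choice being realized by a nonempty set). The admissible polytopes of maximal dimension are, by Theorem~\ref{Th:admissiblepolytope}, exactly $\Delta_{n,2}$ and the intersections $\Delta_{n,2}\cap\bigcap_S H_S$ over collections of pairwise-disjoint subsets $S$, where $H_S:\sum_{i\in S}x_i\le 1$; note each defining hyperplane $\sum_{i\in S}x_i=1$ of such an $H_S$ is one of the hyperplanes of $\Pi$. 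So the facet hyperplanes of the maximal admissible polytopes are precisely the hyperplanes of the arrangement $\mathcal{G}_n$ that are relevant in the interior of $\Delta_{n,2}$.

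For the easy inclusion: if $P$ is an admissible polytope with $C\subseteq P$, then since $C$ is a relatively open cell of the arrangement subdivision and $P$ is a closed cell of that same subdivision (its facet hyperplanes belong to $\mathcal{G}_n$), the cell $C$ either lies in the relative interior $\Ins{P}$ or in a proper face of $P$; but $C$ cannot lie in a proper face because that face is cut out by an additional hyperplane equality that $C$ does not satisfy (otherwise $C$ would be a smaller chamber). Hence $C\subseteq \Ins{P}$ for every admissible polytope $P\supseteq C$, giving $C\subseteq\bigcap_{P\supseteq C}\Ins{P}$.

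The substantive direction is the reverse inclusion, and this is where I expect the main obstacle. I would argue that the maximal-dimensional admissible polytopes, together with the coordinate hyperplanes $x_i=0,1$ appearing in $\eqref{Eq:ArrangGn2}$, suffice to separate any two distinct chambers. Concretely: pick $x\in\bigcap_{P\supseteq C}\Ins{P}$ and let $C'$ be the chamber containing $x$; I want $C'=C$. For each hyperplane $h\in\mathcal{G}_n$ I must show $C$ and $C'$ lie on the same side (or both on $h$). If $h$ is a coordinate hyperplane $x_i=0$ or $x_i=1$, these only meet $\partial\Delta_{n,2}$ in the relevant way and I would handle them by an inductive/boundary argument using the description of $\mu^{-1}(\partial\Delta_{n,2})$ as copies of $\Delta_{n-1,2}$ and $\C P^{n-2}$. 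If $h:\sum_{i\in S}x_i=1$ is one of the hyperplanes of $\Pi$, then $h$ is a facet hyperplane of the maximal admissible polytope $P_S=\Delta_{n,2}\cap H_S$, and also of $P_S'=\Delta_{n,2}\cap H_{S^c\cap\{\cdot\}}$-type polytopes on the other side; whichever closed side of $h$ the chamber $C$ lies in, that side is (the $\Delta_{n,2}$-trace of) an admissible polytope or an intersection thereof containing $C$, so $x$ lies in its relative interior and hence strictly on the same open side — forcing $C'$ to the same side of $h$. The key technical point to verify carefully is that for every such hyperplane $h$ of $\Pi$, the closed half-space $H_S\cap\Delta_{n,2}$ on the side containing $C$ is genuinely an admissible polytope (or that $C$ lies on $h$ itself and then $h$ is a supporting hyperplane of admissible polytopes on both sides whose interiors still contain $x$ only if $x\in h$); this is exactly the content of Theorem~\ref{Th:admissiblepolytope} combined with the disjointness condition on the sets $S$, and reconciling a point that lies in several interiors simultaneously with the disjointness constraint is the delicate part of the argument. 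Once every hyperplane of $\mathcal{G}_n$ is shown to treat $C$ and $C'$ identically, $C=C'$ and the reverse inclusion — hence the lemma — follows.
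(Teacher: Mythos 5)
The paper itself gives no proof of this lemma --- it is quoted from \cite{buchstaber2020resolution} and stated without argument --- so there is nothing in-paper to compare against; the following assesses your proposal on its own. Your ``easy inclusion'' rests on a reading of the statement under which the lemma would actually be false: if ``admissible polytopes which contain $C$'' meant all $P_{\sigma}$ with $C\subseteq P_{\sigma}$, then for a chamber $C$ lying on a hyperplane $\sum_{i\in S}x_{i}=1$ both closed half-space polytopes $\Delta_{n,2}\cap\{\sum_{i\in S}x_{i}\leq 1\}$ and $\Delta_{n,2}\cap\{\sum_{i\in S}x_{i}\geq 1\}$ contain $C$ while their relative interiors are disjoint, so the intersection would be empty. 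The reading forced by the displayed formula $C_{\omega}=\bigcap_{\sigma\subset\omega}\Ins{P}_{\sigma}$ with $C_{\omega}\cap\Ins{P}_{\sigma}=\emptyset$ for $\sigma\notin\omega$ is that one intersects over those $P_{\sigma}$ with $C\subseteq\Ins{P}_{\sigma}$, under which the inclusion $C\subseteq\bigcap\Ins{P}_{\sigma}$ is a tautology. Your justification --- that $C$ cannot lie in a proper face of an admissible polytope containing it because that face ``is cut out by an additional hyperplane equality that $C$ does not satisfy'' --- is false for chambers of dimension $<n-1$: such chambers do satisfy hyperplane equalities and do lie in proper faces of the full-dimensional admissible polytopes containing them.

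For the substantive direction your separating-hyperplane strategy is the right idea, but two steps are left genuinely open. When $C$ lies on a hyperplane $h:\sum_{i\in S}x_{i}=1$, neither half-space polytope has $C$ in its relative interior, so neither contributes to the intersection; what forces a point $x$ of the intersection to satisfy $x\in h$ is the $(n-2)$-dimensional admissible polytope $\Delta_{n,2}\cap h$ of Theorem~\ref{Th:admissiblepolytope}, whose relative interior does contain $C$, together with the fact that no admissible polytope of dimension $\leq n-3$ meets $\Ins{\Delta}_{n,2}$ (so these are the only equality constraints one must realize by admissible polytopes). Your parenthetical remark gestures at this but never actually invokes these $(n-2)$-dimensional polytopes, and as written that sentence does not parse into an argument. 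Second, the case of chambers on $\partial\Delta_{n,2}$, where the hyperplanes $x_{i}=0,1$ and the admissible polytopes inside $\Delta_{n-1,2}(i)$ and $\Delta_{n-1,1}(i)$ must do the separating, is only announced as ``an inductive/boundary argument'' and is not carried out. Until both points are supplied, the reverse inclusion is not proved.
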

\par Thus, a chamber from $ L(\mathcal{G}(n,2)) $ is defined by 
\[ C_{\omega}:=\bigcap_{\sigma\subset \omega}\stackrel{\circ}{P}_{\sigma}\quad \text{ and } C_{\omega}\cap\stackrel{\circ}{P}_{\sigma}=\emptyset  \text{ if }\sigma\notin \omega.\]  
  According to \cite{buchstaber2020resolution} the orbit space $ X_{n} = G_{n,2}/T^n $ can be represented by
  \begin{equation}\label{Eq:space_X_n}
  	 X_{n}=\bigcup_{1\leq i\leq n} X_{n-1}(i) \cup \bigcup_{1\leq i\leq n}\Delta_{n-1,1}(i) \cup \hat{\mu}^{-1}(\Ins{\D{n}}),
  \end{equation} where 
   \begin{equation}\label{CelijaC}
   	\hat{\mu}^{-1}(\Ins{\D{n}})=\bigcup\hat{C}_{\omega},\, \hat{C}_{\omega}= \hat{\mu}^{-1}(C_{\omega})
   \end{equation}  for a chamber $C_{\omega}\subset \stackrel{\circ}{\Delta}_{n,2}$. In \cite{buchstaber2019toric} it is showed that there exists the canonical homeomorphism 
\[h_{\sigma}:W_{\sigma}/T^{\sigma}\to \Ins{P_{\sigma}}\times F_{\sigma} \text{ given by } h_{\sigma}=(\hat{\mu},p_{\sigma}) ,\]
 where $ \hat{\mu}_{\sigma}:W_{\sigma}/T^{\sigma}\to \Ins{P_{\sigma}} $ is induced by the moment map $ \hat{\mu}: \Grs{n}/T^{n}\to\Delta_{n,2}  $ and $ p_{\sigma}:W_{\sigma}/T^{\sigma}\to F_{\sigma} $ is induced by the natural projection $ \Grs{n}\to \Grs{n}/(\C^{\star})^{n}. $ In particular, $ W/T^{n}\simeq\Delta_{n,2}\times F $ for the main stratum $ W. $ 
  \begin{lemma}{(\cite{buchstaber2020resolution})}
  	For any $ C_{\omega}\in L(\mathcal{G}(n,2)) $ such that $ \dim C_{\omega}=n-1 $ there exists canonical homeomorphism 
  	\[ h_{\omega}:\hat{C}_{\omega}\to C_{\omega}\times F_{\omega}, \]
  	where the manifold $ F_{\omega} $ is a compactification of $ F_{n} $ given by the spaces $ F_{\sigma} $ such that $ C_{\omega}\subset P_{\sigma} $,
  	\[ F_{\omega}=\bigcup_{C_{\omega}\subset \Ins{P_{\sigma}}} F_{\sigma}. \]
  \end{lemma}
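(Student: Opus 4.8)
The plan is to build $h_\omega$ out of the stratification \eqref{Eq:stratification} together with the canonical homeomorphisms $h_\sigma = (\hat\mu, p_\sigma)$, to upgrade the resulting set bijection to a homeomorphism by a chart-wise analysis, and separately to check that $F_\omega$ is a compact smooth manifold. First I would record the underlying bijection. Since $\dim C_\omega = n-1 = \dim\Delta_{n,2}$ and $C_\omega\subset\Ins{\D{n}}$, the chamber $C_\omega$ is open in $\Delta_{n,2}$, hence $\hat C_\omega = \hat\mu^{-1}(C_\omega)$ is open in $X_n$. By \eqref{Eq:stratification}, $\hat C_\omega = \bigcup_\sigma\big((W_\sigma/T^n)\cap\hat\mu^{-1}(C_\omega)\big)$; since $\hat\mu(W_\sigma/T^n) = \mu(W_\sigma) = \Ins{P_\sigma}$ and, by the defining property of a chamber, $C_\omega\cap\Ins{P_\sigma}\neq\emptyset$ is equivalent to $C_\omega\subset\Ins{P_\sigma}$ and to $\sigma\in\omega$, only the strata with $\sigma\in\omega$ contribute, and for each of them $h_\sigma$ identifies the part of $W_\sigma/T^n$ lying over $C_\omega$ with $C_\omega\times F_\sigma$. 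Putting $F_\omega := \bigsqcup_{\sigma\in\omega} F_\sigma$ and $p_\omega|_{W_\sigma/T^n} := p_\sigma$, this yields a bijection $h_\omega := (\hat\mu, p_\omega)\colon\hat C_\omega\to C_\omega\times F_\omega$. The main stratum index lies in $\omega$, because $C_\omega\subset\Ins{\D{n}} = \Ins{P_W}$; so $F_n = F_W$ is one of the pieces, and, as $W$ is dense in $G_{n,2}$ while $\hat C_\omega$ is open, $C_\omega\times F_n$ is dense in $\hat C_\omega$, whence $F_n$ is dense in $F_\omega$.

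Next I would pin down $F_\omega$ as a space. I would give it the quotient topology of the continuous surjection $p_\omega\colon\mathcal{F}_n\to F_\omega$; then $F_\omega$ is compact (the continuous image of the compact manifold $\mathcal{F}_n$), is Hausdorff once identified as a fibre of $\hat\mu$ inside $X_n$, and contains $F_n$ as a dense open subset, so it is a compactification of $F_n$. That $F_\omega$ is moreover a smooth manifold is established in~\cite{buchstaber2020resolution} by realizing it as a partial wonderful compactification of $\overline{F}_n$ built from a subfamily of the generating set $\{\overline{F}_I\}$ of Theorem~\ref{Th:UniversalSpaceG}, smoothness being preserved through the successive blow-ups; by Remark~\ref{cellspaceparam} its local model is a product of copies of $\C P^1$ and $\CPA$.

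It remains to upgrade the bijection to a homeomorphism, and here is the real work. Continuity of $h_\omega$ is immediate from that of $\hat\mu$ and $p_\omega$; the delicate point is continuity of $h_\omega^{-1}$, i.e. that the data attaching the boundary pieces $F_\sigma$ ($\sigma\in\omega$, $\sigma\neq W$) to $F_n$ are constant along the chamber $C_\omega$. I would verify this chart by chart. Fixing a Plücker chart $M_{ij}$ meeting $\mu^{-1}(C_\omega)$, the $(\C^*)^n$-orbit space of $\mu_n^{-1}(C_\omega)\cap M_{ij}$ splits as a product, because in the local coordinates of Section~\ref{SSec:UniversalSpace} the $\hat\mu$-coordinate is a function of the moduli $|P^I|$ alone, the classes of the parameters $(c_{lm}:c'_{lm})$ are transverse to it, and the equations cutting out the strata in the chart ($a_i = b_j = 0$, $a_pb_q = a_qb_p$) together with the cubics \eqref{Def:mainspaceG} never couple the two groups of coordinates. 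Hence a sequence in $C_\omega\times F_n$ degenerating towards a boundary stratum $W_\sigma$ keeps its $\hat\mu$-coordinate free inside $C_\omega$ while its parameters converge to a point of $F_\sigma$ independently of that coordinate; and the hypothesis $C_\omega\subset\Ins{P_\sigma}$ for every $\sigma\in\omega$ is exactly what guarantees that each such $W_\sigma$ lies over the whole of $C_\omega$, so that every boundary point of $F_\omega$ is approached uniformly over the chamber. Patching the local product decompositions over a cover of $C_\omega$ by Plücker charts then gives continuity of $h_\omega^{-1}$. Finally, $\hat\mu$ and all the $p_\sigma$, hence $p_\omega$, are canonical, so $h_\omega$ is canonical. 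I expect the main obstacles to be precisely this uniform-degeneration statement — which forces one to match the combinatorics of $C_\omega$ with that of the admissible strata $\sigma\in\omega$ — and the verification that $F_\omega$ is a smooth manifold.
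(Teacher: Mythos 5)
This lemma is quoted by the paper from~\cite{buchstaber2020resolution} and is stated here explicitly \emph{without proof} (see the opening of Section~\ref{Sec:Definitions}), so there is no in-paper argument to compare yours against; I can only judge your reconstruction against the machinery the paper sets up. Your route is the natural one and matches that machinery: the set-level bijection via the stratification \eqref{Eq:stratification} and the stratum-wise homeomorphisms $h_{\sigma}=(\hat\mu,p_{\sigma})$, the observation that the chamber condition ($C_{\omega}\cap\Ins{P}_{\sigma}\neq\emptyset$ iff $C_{\omega}\subset\Ins{P}_{\sigma}$ iff $\sigma\in\omega$) selects exactly the pieces $F_{\sigma}$, $\sigma\in\omega$, the density of $F_n$ in $F_{\omega}$, and the identification of the topology on $F_{\omega}$ via the surjection $p_{\omega}$. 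This is all correct and is how the cited source proceeds.

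The one place where your argument as written does not hold up is the continuity of $h_{\omega}^{-1}$. You assert that in a Plücker chart the $\hat\mu$-coordinate and the parameters $(c_{lm}:c'_{lm})$ "never couple", but this is false before quotienting: the moment map is built from the moduli $|P^{I}|^{2}$, which depend on all local coordinates $a_i,b_j$, and the $c$'s are themselves ratios of Plücker coordinates. The decoupling is precisely the content of the homeomorphisms $h_{\sigma}$ and holds only stratum by stratum after passing to the $(\C^{*})^{n}$-quotient; what remains to be shown is that these identifications fit together continuously \emph{across} strata over the fixed chamber, i.e.\ that for a sequence in $C_{\omega}\times F_n$ degenerating to $W_{\sigma}/T^n$ the limit of $(\hat\mu,p)$ is $(\hat\mu,p_{\sigma})$ of the limit. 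You name this "uniform degeneration" as the main obstacle but do not actually prove it, and one cannot shortcut it by appealing to the continuity of $p_n$ in Theorem~\ref{Th:MainModel}, since that theorem presupposes the present lemma. So the proposal is a sound skeleton with the genuinely delicate step — the cross-strata continuity of the attaching of the $F_{\sigma}$'s to $F_n$ over $C_{\omega}$ — left as an assertion supported by an incorrect coordinate-level claim.
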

  \begin{lemma}
	For any $ C_{\omega}\in L(\mathcal{G}(n,2)) $ such that $ \dim C_{\omega}=n-2 $ there exists canonical homeomorphism 
	\[ h_{\omega}:\hat{C}_{\omega}\to C_{\omega}\times F_{\omega}, \]
	where the manifold $ F_{\omega} $ is a compactification of the space $ F_{n} $ given by the spaces $ F_{\sigma} $ such that $ C_{\omega}\subset \Ins{P_{\sigma}} $, where $ \dim P_{\sigma}=n-1 $ and a point $ F_{\sigma} $ such that $ C_{\omega}\subset \Ins{P_{\sigma}} $, where $ \dim P_{\sigma}=n-2. $
\end{lemma}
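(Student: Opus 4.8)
The plan is to run the argument of the previous lemma (the one for top\nobreakdash-dimensional chambers), the only new feature being the unique $(n-2)$-dimensional admissible polytope through $C_{\omega}$; throughout I would assume $C_{\omega}\subset\Ins{\Delta}_{n,2}$ (the case relevant to the decomposition \eqref{CelijaC} of $\hat{\mu}^{-1}(\Ins{\Delta}_{n,2})$, a chamber lying in $\partial\Delta_{n,2}$ being top\nobreakdash-dimensional in its facet and handled by the previous lemma inside $X_{n-1}(i)$ or $\Delta_{n-1,1}(i)$). First I would decompose $\hat C_{\omega}=\hat{\mu}^{-1}(C_{\omega})$ by the stratification \eqref{Eq:stratification}. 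Since $\hat{\mu}(W_{\sigma}/T^{n})=\Ins{P}_{\sigma}$, the piece $\hat{\mu}^{-1}(C_{\omega})\cap(W_{\sigma}/T^{n})$ is non-empty exactly when $C_{\omega}\cap\Ins{P}_{\sigma}\neq\emptyset$, which, by $C_{\omega}=\bigcap_{\sigma\subset\omega}\Ins{P}_{\sigma}$ and $C_{\omega}\cap\Ins{P}_{\sigma}=\emptyset$ for $\sigma\notin\omega$, means $C_{\omega}\subset\Ins{P}_{\sigma}$. By the first item of Theorem~\ref{Th:admissiblepolytope} an admissible polytope of dimension $\leq n-3$ lies in $\partial\Delta_{n,2}$, hence its relative interior is contained in $\partial\Delta_{n,2}$ and cannot contain the interior set $C_{\omega}$; so only $\sigma$ with $\dim P_{\sigma}\in\{n-1,n-2\}$ contribute. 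Since the hyperplanes $\sum_{i\in S}x_{i}=1$ of the arrangement $\Pi$ are pairwise non-parallel inside $\{x_{1}+\dots+x_{n}=2\}$, an $(n-2)$-dimensional interior $C_{\omega}$ lies on exactly one of them, so by the second item of Theorem~\ref{Th:admissiblepolytope} there is exactly one contributing $\sigma_{0}$ with $\dim P_{\sigma_{0}}=n-2$, all other contributing $\sigma$ (including the main stratum) having $\dim P_{\sigma}=n-1$.

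Next I would feed in the local product structure: for each contributing $\sigma$ the canonical homeomorphism $h_{\sigma}=(\hat{\mu},p_{\sigma})$ of~\cite{buchstaber2019toric} restricts, over $C_{\omega}\subset\Ins{P}_{\sigma}$, to a homeomorphism of $\hat{\mu}^{-1}(C_{\omega})\cap(W_{\sigma}/T^{n})$ onto $C_{\omega}\times F_{\sigma}$. For $\sigma=\sigma_{0}$ one checks that $F_{\sigma_{0}}$ is a single point: $P_{\sigma_{0}}$ is the product of simplices $\Delta_{|S_{0}|,1}\times\Delta_{|S_{0}^{c}|,1}$, the stratum $W_{\sigma_{0}}$ consists of the decomposable $2$-planes $\ell_{1}\oplus\ell_{2}$ with $\ell_{1}\subset\C^{S_{0}}$, $\ell_{2}\subset\C^{S_{0}^{c}}$ and all coordinates $P^{ij}$ with $i\in S_{0}$, $j\in S_{0}^{c}$ non-zero, and $(\C^{*})^{S_{0}}$, resp.\ $(\C^{*})^{S_{0}^{c}}$, acts transitively on the open orbit of $\mathbb{P}(\C^{S_{0}})$, resp.\ $\mathbb{P}(\C^{S_{0}^{c}})$; this is also in line with Proposition~\ref{Prop:ArbitraryStratumG} and Remark~\ref{cellspaceparam}. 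Assembling the $h_{\sigma}$, the set $\hat C_{\omega}$ becomes $C_{\omega}\times F_{\omega}$ with $F_{\omega}=\bigcup_{C_{\omega}\subset\Ins{P}_{\sigma}}F_{\sigma}$, the union of $F_{n}$, of the $F_{\sigma}$ with $\dim P_{\sigma}=n-1$ and $C_{\omega}\subset\Ins{P}_{\sigma}$, and of the single point $F_{\sigma_{0}}$.

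The hard part, exactly as in the previous lemma, is to upgrade this set\nobreakdash-theoretic identification to a homeomorphism, i.e.\ to equip $F_{\omega}=\bigcup F_{\sigma}$ with the topology of a compactification of $F_{n}$ for which $h_{\omega}$ is bicontinuous. Here I would invoke the machinery of~\cite{buchstaber2020resolution}: the pieces $C_{\omega}\times F_{\sigma}$ glue along their boundaries according to the closure relations of the strata of $\Grs{n}$ (by~\cite{gel1987combinatorial} the closure of $W_{\sigma}$ is a union of $W_{\bar\sigma}$ with $P_{\bar\sigma}$ a facet of $P_{\sigma}$), which by~\cite{buchstaber2019foundations} are carried by the continuous surjections $\eta_{\sigma,\bar\sigma}\colon F_{\sigma}\to F_{\bar\sigma}$; all of these come from the $(\C^{*})^{n}$-equivariant transition functions of $\Grs{n}$ and hence are independent of the point of $C_{\omega}$, so the gluing produces a genuine product $C_{\omega}\times F_{\omega}$ with $F_{\omega}$ a well-defined compact space, realized as a quotient of the wonderful compactification via the surjection $p_{\omega}\colon\mathcal{F}_{n}\to F_{\omega}$. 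The only genuinely new check is that the point $F_{\sigma_{0}}$ coming from the $(n-2)$-dimensional polytope is attached as a limit point of $F_{n}$ inside $F_{\omega}$ and compatibly with the product over $C_{\omega}$; I expect this to be the main obstacle, and it should follow from the fact that $P_{\sigma_{0}}$ meets $\Ins{\Delta}_{n,2}$, so that $W_{\sigma_{0}}$ lies in the closure of the main stratum through the same facet/closure correspondence that governs the $(n-1)$-dimensional strata. Modulo this, the proof is the word\nobreakdash-for\nobreakdash-word analogue of the preceding lemma.
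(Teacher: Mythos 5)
The paper never actually proves this lemma: Section~\ref{Sec:Definitions} is declared to be a recollection ``without proofs'' from \cite{buchstaber2019toric} and \cite{buchstaber2020resolution}, and this statement sits between the cited $\dim C_{\omega}=n-1$ and $\dim C_{\omega}\leq n-3$ versions, so there is no in-paper argument to compare yours against. Judged on its own, your plan is sound and isolates exactly the new content. The decomposition of $\hat{C}_{\omega}$ into the pieces over strata with $C_{\omega}\subset\Ins{P}_{\sigma}$, the exclusion of polytopes of dimension $\leq n-3$ via Theorem~\ref{Th:admissiblepolytope}, and the restriction of $h_{\sigma}=(\hat{\mu},p_{\sigma})$ over $C_{\omega}$ are all correct, as is your reading that the lemma concerns chambers $C_{\omega}\subset\Ins{\Delta}_{n,2}$ (otherwise $F_{\omega}$ could not compactify $F_{n}$). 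The computation that $F_{\sigma_{0}}$ is a single point for the unique $(n-2)$-dimensional $P_{\sigma_{0}}=\Delta_{n,2}\cap\{\sum_{i\in S}x_{i}=1\}$ is right: here $\sigma_{0}=\{ij:\,i\in S,\ j\in S^{\complement}\}$, the stratum consists of decomposable planes $\ell_{1}\oplus\ell_{2}$ with the relevant coordinates nonzero, and this is a single $(\C^{*})^{n}$-orbit, in agreement with the explicit lists for $n=5,6$ in Sections~\ref{Sec:HomX5} and~\ref{Sec:HomX6}.

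Two remarks. First, the uniqueness of $\sigma_{0}$ should be justified not by non-parallelism of the hyperplanes of $\Pi$ but by the fact that any two distinct hyperplanes of the induced arrangement in $\{x_{1}+\cdots+x_{n}=2\}$ intersect in dimension at most $n-3$, so an $(n-2)$-dimensional chamber lies on exactly one of them; one should also observe that $\sum_{i\in S}x_{i}=1$ and $\sum_{i\in S^{\complement}}x_{i}=1$ cut out the same hyperplane and the same $\sigma_{0}$, so no double counting occurs. Second, the step you flag as the ``main obstacle'' --- upgrading the stratumwise identification to a homeomorphism onto $C_{\omega}\times F_{\omega}$ with $F_{\omega}$ carrying the compactification topology --- is indeed the substantive point, and your proposal only gestures at it through the closure relations, the maps $\eta_{\sigma,\bar{\sigma}}$ and the projection $p_{\omega}:\mathcal{F}_{n}\to F_{\omega}$; but this is precisely the part the present paper also delegates to \cite{buchstaber2020resolution}, so the deferral matches the level of detail the text itself supplies rather than constituting a gap relative to it.
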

Similar result is proved in \cite{buchstaber2020resolution} for a chambers $ C_{\omega} $ where $ \dim C_{\omega}\leq n-3.$
    \begin{lemma}
  	For any $ C_{\omega}\in L(\mathcal{G}(n,2)) $ such that $ \dim C_{\omega}\leq n-3 $ there exists canonical homeomorphism 
  	\[ h_{\omega}:\hat{C}_{\omega}\to C_{\omega}\times F_{\omega}, \]
  	where the manifold $ F_{\omega} $ is a compactification of the space $ F_{n} $ given by the spaces $ F_{\sigma} $ such that $ C_{\omega}\subset \Ins{P_{\sigma}} ,$ where $ \dim P_{\sigma}=n-1 $ and $ q $ points, where $ q\geq2 $ is the number of polytopes $ P_{\sigma} $ such that $ C_{\omega}\subset \Ins{P_{\sigma}} $ and $ \dim P_{\sigma}=n-2. $
  \end{lemma}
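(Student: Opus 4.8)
The plan is to extend the argument of \cite{buchstaber2020resolution} that treats chambers of dimension $n-1$ and $n-2$; the only new ingredient is Theorem~\ref{Th:admissiblepolytope}, which lists the admissible polytopes whose relative interior can contain a chamber of dimension $\le n-3$. First I would unwind the stratification \eqref{Eq:stratification}: since $\hat\mu(W_\sigma/T^\sigma)=\Ins{P}_\sigma$ and, by the definition of a chamber, $C_\omega\cap\Ins{P}_\sigma$ equals $C_\omega$ when $C_\omega\subset\Ins{P}_\sigma$ and is empty otherwise, one obtains the set-theoretic decomposition
\[
\hat C_\omega=\bigsqcup_{C_\omega\subset\Ins{P}_\sigma}\bigl(W_\sigma/T^\sigma\cap\hat\mu^{-1}(C_\omega)\bigr),
\]
and the homeomorphism $h_\sigma=(\hat\mu,p_\sigma)$ carries each summand onto $C_\omega\times F_\sigma$, the projection to the first factor being the restriction of $\hat\mu$. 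It then remains to say which $F_\sigma$ occur and how these slices fit together in the $F$-direction.

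Next I would classify the relevant $\sigma$. The only hyperplanes of the arrangement $\mathcal G_n$ meeting $\Ins{\D{n}}$ are those of $\Pi$ (the planes $x_i=0$ and $x_i=1$ are facets of $\D{n}$), so $C_\omega$ is cut out inside $\Ins{\D{n}}$ by the $q$ planes of $\Pi$ passing through it, and $\dim C_\omega\le n-3$ forces $q\ge 2$. By Theorem~\ref{Th:admissiblepolytope} no admissible polytope of dimension $\le n-3$ meets $\Ins{\D{n}}$, hence every $\sigma$ occurring above has $\dim P_\sigma\in\{n-1,n-2\}$. The maximal-dimensional ones are $\D{n}$ itself, contributing $F_\sigma=F_n$, and the polytopes $\D{n}\cap\bigcap_S H_S$ taken over families of pairwise-disjoint $S$ with $\sum_{i\in S}x_i<1$ on $C_\omega$, whose spaces of parameters are points or homeomorphic to $F_m$, $4\le m\le n-1$, by Remark~\ref{cellspaceparam}. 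Those of dimension $n-2$ are exactly the $q$ slices $\D{n}\cap\{x_S=1\}$ through $C_\omega$; for each of these the associated matroid forces $L\in\Grs{n}$ to split as a sum of two lines spanning complementary coordinate subspaces, and after the $(\C^{\ast})^{n}$-quotient no parameters remain, so $F_\sigma$ is a single point (\cite{buchstaber2019toric},~\cite{buchstaber2020resolution}).

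Finally I would glue the slices $C_\omega\times F_\sigma$ along their closures. Whenever $P_{\bar\sigma}$ is a face of $P_\sigma$ one has $W_{\bar\sigma}\subset\overline{W_\sigma}$ and, by \cite{buchstaber2019foundations}, a continuous surjection $\eta_{\sigma,\bar\sigma}:F_\sigma\to F_{\bar\sigma}$, which records how $F_{\bar\sigma}$ lies in the closure of $F_\sigma$. Using the charts $h_\sigma,h_{\bar\sigma}$, the transition homeomorphisms of Theorem~\ref{Th:UniversalSpaceG}, and the canonical projections $p_{\sigma,ij}:\tilde F_{\sigma,ij}\to F_\sigma$ of Theorem~\ref{Th:VirtualSpace}, one checks that the transition $h_\sigma\circ h_{\bar\sigma}^{-1}$ preserves the $\hat\mu$-coordinate and acts on the parameter factor through the $\eta$-maps alone; in particular, inside $\hat C_\omega$ the closures of the pieces $C_\omega\times F_\sigma$ overlap only along subsets of the form $C_\omega\times(\text{boundary stratum})$, and these overlaps are independent of the point of $C_\omega$. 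Assembling the identifications yields the homeomorphism $h_\omega:\hat C_\omega\to C_\omega\times F_\omega$, where $F_\omega$ is the union of the pieces $F_\sigma$ with $C_\omega\subset\Ins{P}_\sigma$, glued along the maps $\eta_{\sigma,\bar\sigma}$; by the classification above this is the union of the $F_\sigma$ with $\dim P_\sigma=n-1$ together with the $q$ points coming from the $(n-2)$-dimensional slices, and it is a compactification of $F_n$ with $F_n$ the piece over $\D{n}$, exactly as for $\dim C_\omega\in\{n-1,n-2\}$. I expect the main obstacle to be precisely this last step --- verifying that the chart transitions keep the $\hat\mu$-direction rigid and act on parameters through the maps $\eta_{\sigma,\bar\sigma}$ --- which is where the wonderful-compactification structure of $\mathcal F_n$ from \cite{buchstaber2020resolution} is needed.
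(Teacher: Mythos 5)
The paper contains no proof of this lemma: Section~2 explicitly recalls these structural facts from \cite{buchstaber2019toric} and \cite{buchstaber2020resolution} ``without proofs,'' and the sentence immediately preceding the statement attributes the $\dim C_{\omega}\leq n-3$ case to \cite{buchstaber2020resolution}. So there is no in-paper argument to compare against, and I can only assess your reconstruction on its own terms.

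Your outline follows the route one would expect from the cited construction and from the two preceding lemmas (the cases $\dim C_{\omega}=n-1$ and $n-2$): decompose $\hat{C}_{\omega}$ along the strata $W_{\sigma}$ with $C_{\omega}\subset \stackrel{\circ}{P}_{\sigma}$, apply $h_{\sigma}=(\hat{\mu},p_{\sigma})$ to identify each piece with $C_{\omega}\times F_{\sigma}$, use Theorem~\ref{Th:admissiblepolytope} to conclude that only $\dim P_{\sigma}\in\{n-1,n-2\}$ can occur over $\stackrel{\circ}{\Delta}_{n,2}$ and that the $(n-2)$-dimensional contributions are exactly the $q\geq 2$ hyperplane slices through $C_{\omega}$, whose spaces of parameters are points; this combinatorial part is correct and matches the statement. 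The step you name but do not carry out --- that the identifications between the closures of the pieces $C_{\omega}\times F_{\sigma}$ are constant along $C_{\omega}$, so that the assembled space is globally a product $C_{\omega}\times F_{\omega}$ with $F_{\omega}$ compact, rather than merely a decomposition of $\hat{C}_{\omega}$ into products --- is precisely the substantive content of the lemma. That verification, via the maps $\eta_{\sigma,\bar{\sigma}}$ and the projections $p_{\omega,ij}$ of Theorem~\ref{Lem:ProjG}, is what \cite{buchstaber2020resolution} supplies; your proposal is therefore a correct skeleton with its hardest step deferred to the same source the paper itself cites.
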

  
   \par In particular, $\dim F_{\omega}=\dim F_{n}$ for any $\omega$ and $\dim F_{\sigma}<\dim F_{n}$ for any $\sigma$ such that $F_{\sigma}\neq F_{n}$.
   \par Let $ \mathcal{F}_{n} $ be an universal space of parameters for $ \Grs{n} $ and  fix a chart $ M_{ij}. $ By Theorem \ref{Th:VirtualSpace} to any stratum $ W_{\sigma} $ is assigned the virtual space of parameters $ \tilde{F}_{\sigma} = \tilde{F}_{\sigma, ij} $ and the projection $ p_{\sigma,ij}:\tilde{F}_{\sigma, ij}\to F_{\sigma}. $  
\begin{theorem}{(\cite{buchstaber2020resolution})}\label{Lem:ProjG}
	For any chamber $ C_{\omega} $ and a chart $ M_{ij} $ it holds that
		\[ \mathcal{F}_{n}=\bigcup_{C_{\omega}\stackrel{\circ}{ P}_{\sigma}} \tilde{F}_{\sigma} \]
		and this union is disjoint . Moreover, it is defined the projection $ p_{\omega,ij}:\mathcal{F}_{n}\to F_{\omega} $ by $ p_{\omega,ij}(y)=p_{\sigma,ij}(y) $ where $ y\in \tilde{F}_{\sigma,ij}. $
\end{theorem}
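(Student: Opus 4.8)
The plan is to first establish the disjoint decomposition $\mathcal{F}_n=\bigcup_{C_\omega\subset\Ins{P}_\sigma}\tilde F_{\sigma,ij}$, and then to read off the projection $p_{\omega,ij}$ from it together with the canonical projections $p_{\sigma,ij}\colon\tilde F_{\sigma,ij}\to F_\sigma$ supplied by Theorem~\ref{Th:VirtualSpace}. By the construction of the virtual spaces preceding Theorem~\ref{Th:VirtualSpace}, the family $\{\tilde F_{\sigma,ij}\}_\sigma$ is obtained from $\{\tilde F_{\sigma,12}\}_\sigma$ by applying the homeomorphism $f_{12,ij}$ of $\mathcal{F}_n$ from Theorem~\ref{Th:UniversalSpaceG}, and $f_{12,ij}$ carries $p_{\sigma,12}$ to $p_{\sigma,ij}$; hence it suffices to argue in the fixed chart $M_{12}$ and transport the conclusion by $f_{12,ij}$.

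For the decomposition, recall that by the lemma of \cite{buchstaber2020resolution} quoted above a chamber is the intersection of the relative interiors of the admissible polytopes containing it, so for the fixed $C_\omega$ every admissible $\sigma$ satisfies exactly one of $C_\omega\subset\Ins{P}_\sigma$ or $C_\omega\cap\Ins{P}_\sigma=\emptyset$: $C_\omega$ cannot meet $\partial P_\sigma$. For disjointness, if $C_\omega\subset\Ins{P}_\sigma\cap\Ins{P}_\tau$ with $\sigma\ne\tau$, then $P_\sigma$ and $P_\tau$ both occur in the chamber decomposition over $C_\omega$, and the homeomorphism $h_\omega\colon\hat C_\omega\to C_\omega\times F_\omega$ together with the decomposition $F_\omega=\bigcup_{C_\omega\subset\Ins{P}_\rho}F_\rho$ into the strata of $F_\omega$ forces $F_\sigma\cap F_\tau=\emptyset$; since $\tilde F_{\sigma,12}$ is cut out in $\mathcal F_n$ by prescribing exactly the coordinates $(c_{lm}\colon c_{lm}')$ forced by the equations of $W_\sigma$ in the description \eqref{Def:mainspaceG}, and for $\sigma\ne\tau$ both dominating $C_\omega$ those prescriptions are incompatible, we get $\tilde F_{\sigma,12}\cap\tilde F_{\tau,12}=\emptyset$. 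For completeness, by Theorem~\ref{Th:VirtualSpace}(1) the full family $\{\tilde F_{\sigma,12}\}$ over all admissible $\sigma$ already covers $\mathcal F_n$, so one must show that a $\tilde F_{\sigma_0,12}$ with $C_\omega\cap\Ins{P}_{\sigma_0}=\emptyset$ lies in $\bigcup_{C_\omega\subset\Ins{P}_\sigma}\tilde F_{\sigma,12}$; this follows once one checks that the boundary stratification of $\mathcal F_n$ — equivalently, via the diffeomorphism $\mathcal F_n\cong\mathcal M_{0,n}$ of \cite{buchstaber2021orbit}, the Keel stratification of $\mathcal M_{0,n}$ by the divisors $D_I$ \cite{keel1992intersection} — refines the family $\{\tilde F_{\sigma,12}\colon C_\omega\subset\Ins{P}_\sigma\}$, each stratum sitting in $\tilde F_{\sigma,12}$ for a unique relevant $\sigma$. (For $n=5$ this is a direct check: for the central chamber the relevant $\sigma$ reproduce $F_5$, the ten boundary divisors and the fifteen deepest points of $\mathcal M_{0,5}$, while for a non-central chamber some of these strata get grouped into a single $\tilde F_\sigma$; in each case the relevant $\tilde F_\sigma$ exhaust $\mathcal M_{0,5}$.) Together with disjointness this gives the asserted partition.

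With the partition in hand, for $y\in\mathcal F_n$ let $\sigma(y)$ be the unique relevant index with $y\in\tilde F_{\sigma(y),12}$ and set $p_{\omega,12}(y):=p_{\sigma(y),12}(y)\in F_{\sigma(y)}\subset F_\omega$; this is well defined, and since each $p_{\sigma,12}$ is onto $F_\sigma$ and $F_\omega=\bigcup F_\sigma$ it is surjective (and realizes the surjection $p_\omega$ of \cite{buchstaber2020resolution}). Continuity is checked on the closures of the pieces: if $F_{\bar\sigma}$ lies in the closure of $F_\sigma$ in $F_\omega$, which by \cite{gel1987combinatorial},\cite{buchstaber2019toric} happens exactly when $P_{\bar\sigma}$ is a face of $P_\sigma$, then on the overlap the two partial definitions of $p_{\omega,12}$ agree up to the continuous surjection $\eta_{\sigma,\bar\sigma}\colon F_\sigma\to F_{\bar\sigma}$ of \cite{buchstaber2019foundations}; gluing these continuous maps along the closed sets where they agree yields a continuous $p_{\omega,12}\colon\mathcal F_n\to F_\omega$, and finally $f_{12,ij}$ gives the statement for an arbitrary chart $M_{ij}$.

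The main obstacle is the completeness step: the combinatorial identification, for each fixed chamber $C_\omega$, of the boundary strata of $\mathcal F_n$ (equivalently of $\mathcal M_{0,n}$) with the family $\{\tilde F_{\sigma,12}\colon C_\omega\subset\Ins{P}_\sigma\}$, i.e. verifying that the admissible polytopes dominating $C_\omega$ together see every stratum of $\mathcal F_n$ exactly once. For small $n$ this is a finite verification, as indicated for $n=5$; in general it is where the structural results of \cite{buchstaber2020resolution} on the chamber decomposition and the virtual spaces of parameters carry the argument, and is the part requiring genuine work.
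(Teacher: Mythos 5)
This statement is quoted in the paper from \cite{buchstaber2020resolution} and the paper itself gives no proof of it (Section~\ref{Sec:Definitions} explicitly recalls these results ``without proofs''), so there is no in-paper argument to compare yours against; I can only assess your proposal on its own terms.

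Your outline correctly identifies the two things that need proving (that the $\tilde F_{\sigma,ij}$ with $C_\omega\subset\Ins{P}_\sigma$ partition $\mathcal F_n$, and that the resulting piecewise definition of $p_{\omega,ij}$ is a well-defined surjection), and the reduction to the chart $M_{12}$ via $f_{12,ij}$ is fine. But the proposal has a genuine gap exactly where you flag it: the completeness step --- that every $y\in\mathcal F_n$ lies in $\tilde F_{\sigma,12}$ for some (and, combined with disjointness, exactly one) $\sigma$ dominating $C_\omega$ --- is the actual content of the theorem, and your argument for it is ``this follows once one checks\ldots'' plus a finite verification for $n=5$ and a deferral back to \cite{buchstaber2020resolution}. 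That is a statement of the problem, not a proof. The disjointness argument is also not sound as written: you first invoke $F_\sigma\cap F_\tau=\emptyset$ (which is automatic, since distinct strata are disjoint by definition, so this buys nothing about the virtual spaces), and then assert that the coordinate prescriptions cutting out $\tilde F_{\sigma,12}$ and $\tilde F_{\tau,12}$ are ``incompatible'' for $\sigma\neq\tau$ both dominating $C_\omega$ --- but for strata not contained in $M_{12}$ the virtual space is defined by transporting through $f_{kl,12}$, not by equations of the form \eqref{Def:mainspaceG} in the chart $M_{12}$, so this incompatibility needs an argument and is where the combinatorics of admissible polytopes versus points of $\mathcal F_n$ really enters. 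Finally, the continuity argument glues the partial maps ``along the closed sets where they agree,'' but the pieces $\tilde F_{\sigma,12}$ of the partition are not closed in $\mathcal F_n$ (e.g.\ $\tilde F_\sigma=F_n$ for the main stratum is open and dense), so the pasting lemma does not apply in the form you use it; continuity of $p_{\omega,ij}$ has to be checked on sequences approaching the boundary strata, which is a separate nontrivial verification.
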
	

Altogether, this leads to the model for the orbit space $ X_{n}. $ Let us denote
\begin{equation}\label{Model:U_n}
	U_{n}=\Delta_{n,2}\times \mathcal{F}_{n}.
\end{equation}

\begin{theorem}{(\cite{buchstaber2020resolution})}\label{Th:MainModel}
	For any chart $ M_{ij}\subset \Grs{n} $ the map 
	\[p_{n}:U_{n}\to \Grs{n}/T^{n},\, \, p_{n}(x,y)=h_{\omega}^{-1}(x,p_{\omega,ij}(y))  \] if and only if $ x\in C_{\omega} $ is correctly defined. In addition, the map $ p_{n} $ is a continuous surjection and the orbit space $ \Grs{n}/T^{n} $ is homeomorphic to the quotient of the space $ U_{n} $ by the map $ p_{n}. $
\end{theorem}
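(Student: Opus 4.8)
The plan is to check, in this order, that $p_n$ is well defined on $U_n$, that it is onto, that it is continuous, and finally that it is a quotient map; the only genuinely delicate point is continuity.
\textit{Well-definedness.} The chamber decomposition $L(\mathcal{G}(n,2))$ partitions $\Delta_{n,2}$: each $x\in\Delta_{n,2}$ lies in exactly one chamber $C_\omega$, so the clause ``$x\in C_\omega$'' determines $\omega$ unambiguously. For $(x,y)\in U_n$ with $x\in C_\omega$, Theorem~\ref{Lem:ProjG} gives $p_{\omega,ij}(y)\in F_\omega$, hence $(x,p_{\omega,ij}(y))\in C_\omega\times F_\omega$, and $C_\omega\times F_\omega$ is exactly the image of the homeomorphism $h_\omega$ furnished by the lemma appropriate to $\dim C_\omega$; therefore $h_\omega^{-1}(x,p_{\omega,ij}(y))$ is a well-defined point of $\hat C_\omega=\hat\mu^{-1}(C_\omega)\subset X_n$, using the decomposition \eqref{Eq:space_X_n} and \eqref{CelijaC}. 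Replacing the chart $M_{ij}$ by $M_{kl}$ changes $p_{\omega,ij}$ into $p_{\omega,kl}=p_{\omega,ij}\circ f_{kl,ij}$, where the transition homeomorphism $f_{kl,ij}$ extends to $\mathcal{F}_n$ by Theorem~\ref{Th:UniversalSpaceG}; thus the two maps differ only by the homeomorphism $\mathrm{id}\times f_{kl,ij}$ of $U_n$, so the construction is chart-independent up to this relabelling and produces the same quotient.

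\textit{Surjectivity.} Let $z\in X_n$. By the stratification \eqref{Eq:stratification} there is $\sigma$ with $z\in W_\sigma/T^\sigma$, and the canonical homeomorphism $h_\sigma$ gives $h_\sigma(z)=(x,f)$ with $x=\hat\mu(z)\in\Ins{P}_\sigma$ and $f\in F_\sigma$. If $C_\omega$ is the chamber containing $x$, then $x\in\Ins{P}_\sigma$ forces $\sigma\in\omega$, so $z\in\hat C_\omega$ and $h_\omega(z)=(x,f')$ for some $f'\in F_\omega$. The projection $p_{\omega,ij}:\mathcal{F}_n\to F_\omega$ is surjective, being assembled from the surjections $p_{\sigma',ij}:\tilde F_{\sigma'}\to F_{\sigma'}$ of Theorem~\ref{Th:VirtualSpace} along the disjoint decomposition of $\mathcal{F}_n$ of Theorem~\ref{Lem:ProjG}, matched with $F_\omega=\bigcup F_{\sigma'}$. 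Picking $y$ with $p_{\omega,ij}(y)=f'$ gives $p_n(x,y)=h_\omega^{-1}(x,f')=z$.

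\textit{Continuity.} On each slab $C_\omega\times\mathcal{F}_n$ the map $p_n$ is the composite $h_\omega^{-1}\circ(\mathrm{id}\times p_{\omega,ij})$ of continuous maps, hence continuous; the issue is that these local formulas must agree across the walls of the chamber decomposition. Suppose $C_{\bar\omega}$ lies in the closure of $C_\omega$. Every admissible polytope whose relative interior contains $C_\omega$ has $C_{\bar\omega}$ in its closure, and on the facets $P_{\bar\sigma}$ of such polytopes that meet $C_{\bar\omega}$ one has the facet-degeneration surjections $\eta_{\sigma,\bar\sigma}:F_\sigma\to F_{\bar\sigma}$ of \cite{buchstaber2019foundations}; assembling them produces a continuous surjection $\eta_{\omega,\bar\omega}:F_\omega\to F_{\bar\omega}$ fitting into a commuting square with $h_\omega,h_{\bar\omega},p_{\omega,ij},p_{\bar\omega,ij}$ over $\overline{C_\omega}\cap\overline{C_{\bar\omega}}$. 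Feeding a sequence $x_k\to x$ with $x_k\in C_\omega$, $x\in C_{\bar\omega}$ and $y_k\to y$ into this square and using continuity of $\hat\mu$, of the homeomorphisms $h_\bullet$, of $p_{\bullet,ij}$, and of $\eta_{\omega,\bar\omega}$ yields $p_n(x_k,y_k)\to p_n(x,y)$; an arbitrary convergent sequence in $U_n$ splits into finitely many such subsequences, possibly passing through a chain of intermediate walls of strictly decreasing dimension. Establishing this wall-crossing compatibility is the main obstacle; it ultimately rests on the explicit local normal form of $\hat\mu^{-1}$ near a stratum (\cite{gel1987combinatorial},~\cite{buchstaber2019toric}) and on how the wonderful compactification behaves under the transition maps. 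An alternative organisation runs by induction on $n$ through the boundary decomposition \eqref{Eq:space_X_n}: the parts $X_{n-1}(i)$ and $\Delta_{n-1,1}(i)$ are handled by the inductive model $(U_{n-1},p_{n-1})$, leaving only $\hat\mu^{-1}(\Ins{\Delta}_{n,2})$ and the gluing along $\partial\Delta_{n,2}$ to treat directly.

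\textit{The quotient.} Finally $U_n=\Delta_{n,2}\times\mathcal{F}_n$ is compact, the hypersimplex being compact and $\mathcal{F}_n$ compact as a wonderful compactification (equivalently, by \cite{buchstaber2021orbit}, as $\mathcal{M}_{0,n}$), while $X_n=G_{n,2}/T^n$ is compact Hausdorff. A continuous surjection of a compact space onto a Hausdorff space is closed, hence a quotient map, so $X_n$ is homeomorphic to $U_n$ modulo the equivalence relation $(x,y)\sim(x',y')\iff p_n(x,y)=p_n(x',y')$, as claimed.
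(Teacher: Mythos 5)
First, a point of comparison: the paper itself states Theorem~\ref{Th:MainModel} as a quoted result from \cite{buchstaber2020resolution} and gives no proof (Section~\ref{Sec:Definitions} is explicitly presented ``without proofs''), so there is no in-paper argument to measure your proposal against; it can only be judged on its own terms.

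Your treatment of well-definedness, surjectivity, and the final quotient step (compact source, Hausdorff target, so a continuous surjection is closed and hence a quotient map) is sound. The genuine gap is exactly where you flag it yourself: continuity across chamber walls. You posit a commuting square relating $h_\omega$, $h_{\bar\omega}$, $p_{\omega,ij}$, $p_{\bar\omega,ij}$ via a map $\eta_{\omega,\bar\omega}$, but that square is the entire analytic content of the theorem and you do not construct it. Two concrete difficulties are left untouched. (i) If $C_{\bar\omega}$ is a face of $\overline{C_\omega}$ inside $\Ins{\Delta}_{n,2}$, the index set $\bar\omega$ typically contains admissible sets $\sigma'$ --- lower-dimensional polytopes $P_{\sigma'}$ with $C_{\bar\omega}\subset\Ins{P}_{\sigma'}$ but $C_\omega\not\subset P_{\sigma'}$ --- that have no counterpart in $\omega$, so $F_{\bar\omega}$ is not simply assembled from the facet-degeneration maps $\eta_{\sigma,\bar\sigma}$ applied to the pieces of $F_\omega$; one must check that for $y_k\to y$ with $y\in\tilde F_{\sigma'}$ the points $h_\omega^{-1}(x_k,p_{\omega,ij}(y_k))$ converge in $G_{n,2}/T^n$ to $h_{\bar\omega}^{-1}(x,p_{\sigma',ij}(y))$, which requires the explicit description of closures of strata, not merely the existence of the $\eta$'s. (ii) Even along a single chamber, the limit $y$ of the $\mathcal{F}_n$-coordinates may lie in a different virtual space of parameters than the $y_k$, so the value $p_{\omega,ij}(y)$ and the value $p_{\bar\omega,ij}(y)$ land in different strata of $F_\omega$ and $F_{\bar\omega}$ respectively, and matching them under $h_\omega^{-1}$ and $h_{\bar\omega}^{-1}$ is precisely the compatibility that must be proved. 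Your sentence ``Establishing this wall-crossing compatibility is the main obstacle; it ultimately rests on \dots'' concedes that the core of the argument is deferred to the references rather than carried out, so as written the proposal is an outline with the decisive step missing.
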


\section{The moduli space $ \mathcal{M}_{0,n} $ and the universal space of parameters $ \mathcal{F}_{n} $}\label{Sec:ModuliSpace}
We denote by $ \mathcal{M}_{0,n} $ the moduli space of stable genus zero curves with $ n $ marked ordered pairwise distinct points. In \cite{tavakol2017chow} it is presented the construction of the moduli space $ \mathcal{M}_{0,n} $ based on the iterated blowing-up of the space $ (\C P^{1})^{n-3}$. We recall the basic facts about this construction and demonstrate it for the cases $ n=5,6. $ 

\par Let $ X = (\C P^{1})^{n-3} $ and $ I=\{i_{1},\dots,i_{k}\} $ be a subset of the set $ \{1,\dots,n\} $ such that 
\[ |I\cap\{n-2,n-1,n\} |\leq 1 \quad \text{ and }\,2\leq k\leq n-2. \] The subvariety $ X_{I} $ of $ X $ is given as follows:  if the set $ I\cap\{n-2,n-1,n\} $ is empty, then $ X_{I} $ is the set of all points $ (x_{1},\dots,x_{n-3}) $ in $ X $ such that coordinates with indices $ i_{j} $ are equal to each other, where $ 1\leq j\leq k. $ Otherwise, elements $ n-2,n-1,n $ are considered to be special elements with the following correspondence:\[ n-2\leftrightarrow 0,\quad n-1\leftrightarrow 1,\quad n\leftrightarrow \infty.\] In the case $ |I\cap\{n-2,n-1,n\} |=1 $ the subvariety $ X_{I} $ of $ X $ is the set of all points $ (x_{1},\dots,x_{n-3}) $ in $ X $ such that coordinates with non-special indices $ i_{j} $  are equal to the special value coordinate as defined above, where $ 1\leq j\leq k. $
\par For any other subset $ I\subset \{1,\dots,n \}$ the subvariety $ X_{I} $ is defined following the rule $ X_{I}=X_{I^{\complement}}, $ where $ I^{\complement}=\{1,\dots,n\}\backslash I $.
\begin{example}
	For $ n=5, $ we get $ X_{12}=\{(x,x)|\,x\in \C P^{1}\} $, $ X_{34}=X_{125}=\{(\infty,\infty)\}. $
\end{example}

\par The moduli space of curves of genus $ 0 $ with $ n $ marked distinct points, denoted by $ M_{0,n} $, is identified in \cite{tavakol2017chow} with the subset of $ X $:\[ M_{0,n}=\{(x_{1},\dots,x_{n-3})\in (\C P^{1})^{n-3}|\, x_{i}\neq\{0,1,\infty\},\,x_{i}\neq x_{j}\}, \] where $ 0=(0:1),1=(1:1) $ and $ \infty=(1:0). $

The moduli space $M_{0,n}  $ coincides by \cite{buchstaber2021orbit} with the space of parameters of the main stratum $ F_{n}. $ In the paper \cite{tavakol2017chow} Tavakol described the space $  \mathcal{M}_{0,n} $ which is the compactification of $ M_{0,n} $, similarly as Keel did in \cite{keel1992intersection}, starting from $ X $ by the sequence of blow-ups.  First one blows-up three points $ X_{1,\dots,n-3,n-2},$ $X_{1,\dots,n-3,n-1},X_{1,\dots,n-3,n}. $ This blow-up separates all lines $ X_{I} $ for which $ |I|=n-3. $ Second, one blows-up the proper transform of all these lines. The process continues for $ |I|\geq 2. $ We note that $ X_{I} $ for $ |I|=2 $ is a complex $ (n-4) $-dimensional submanifold in the manifold obtained by the previous blow-up construction on $ X $, so the further blowing up at $ X_{I} $ does not change anything.
\par  The exceptional divisor of the blow-up along the subvariety $ X_{I} $ is denoted by $ D_{I}. $ \par Keel in \cite{keel1992intersection} showed that the Chow ring of  $  \mathcal{M}_{0,n} $ is generated by all divisor classes $ D_{I} $ and they satisfy following relations: \begin{align}\label{relacije}
	\sum_{\substack{i,j\in I\\p,q\notin I} }D_{I}=\sum_{\substack{i,p\in I\\j,q\notin I} }D_{I}=\sum_{\substack{i,q\in I\\j,p\notin I} }D_{I} 
\end{align} for any four distinct elements $ \{i,j,p,q\}\subset \{1,\dots,n\} $ an it d holds \[D_{I}=D_{I^{\complement}}, \quad  D_{I}D_{J}=0  \text{ unless }  I\subset J,I\subset J, I^{\complement}\subset J \text{ or } J^{\complement}\subset I.  \]
In addition, he proved that the canonical map from Chow groups to homology induces an isomorphism $ A_{\star}(\mathcal{M}_{0,n})\to H_{\star}(\mathcal{M}_{0,n}). $
\par In \cite{ceyhan2009chow} Ceyhan proved this isomorphism for the moduli spaces $ \mathcal{M}_{\mathcal{A}} $ of weighted pointed stable curves of genus zero generalizing Keel's result, but using significantly different method which appeals on the construction of the stratification for $ \mathcal{M}_{\mathcal{A}}. $ More precisely, he proved that the Chow groups $ A_{i}(\mathcal{M}_{\mathcal{A}}) $ and the homology groups $ H_{2i}(\mathcal{M}_{\mathcal{A}}) $ are isomorphic. We note that for some special weights $ \mathcal{A} $ similar results are proved by Losev and Manin as well in \cite{losev2000new}, \cite{manin2000new}.
\par In the next examples we describe the divisors $ D_{I} $ in $ \mathcal{M}_{0,n} $ for $ n=5,6. $
\begin{example}\label{example:M05}
	The moduli space $  \mathcal{M}_{0,5} $ is obtained as the blow-up of $ X=\C P^{1}\times \C P^{1} $ at the points $ X_{123}=(0,0),\,X_{124}=(1,1)\text{ and }X_{125}=(\infty,\infty). $  Let $ U $ be a neighborhood of the point $ ((0:1),(0:1)) $ defined by $ \{(c_{1}:1),(c_{2}:1)\}. $ We can take $ (c_{1},c_{2})\in U(0,0)\subset \C^{2} $ as coordinates in $ U. $ The blow-up $ bl_{X_{123}}X $ of the variety $ X $ in the neighborhood $ U $ belongs to $ U\times \C P^{1} $. Let $ \{(c_{1},c_{2}),(x_{1}:x_{2})\} $ be coordinates in  $ U\times \C P^{1} $ and $ \pi:bl_{X_{123}}X\to X $ a natural projection morphism. Then the coordinates of points in $ \pi^{-1}(U-X_{123}) $ are given by the equation \[ c_{1}x_{2}=c_{2}x_{1}. \] The exceptional set satisfies $ \pi^{-1}(X_{123})\simeq\C P^{1}$ and we put $D_{123}=X_{123}\times\C P^{1}=0\times0\times X_{123}. $ Similarly, we construct blow-ups for points $ X_{124} $ and $ X_{125}. $  
\end{example}
\begin{example}\label{example:M06}
	The moduli space $  \mathcal{M}_{0,6} $ is obtained first as blow-up of $ X=\C P^{1}\times \C P^{1}\times \C P^{1} $ at three points $ X_{1234},X_{1235},X_{1236} $ and then by blowing-up of proper transform of lines $ X_{I} $ where $ |I|=3. $  
	
	\par The blow-up of $ X $ at the point $ X_{1235}=(1,1,1) $ is similar to the case $ n=5.$ Let $ U $ be a neighborhood of the point $  X_{1235} $ defined by $ \{(c_{1}:1),(c_{2}:1),(c_{3}:1)\}. $ The coordinates of the  neighborhood $ U $ are given by $ (c_{1},c_{2},c_{3})\in U(1,1,1)\subset \C^{3} $ and the coordinates of the points in  $ \pi^{-1}(U-X_{1235}) $ satisfy
	\[ (1-c_{i})x_{j}=(1-c_{j})x_{i},\, \text{ for } 1\leq i,j\leq 3. \]
	The exceptional set satisfies  $ \pi^{-1}(X_{1235})=X_{1235}\times\C P^{2}. $
	\par Similarly, we construct blow-up at the point $ X_{1234}. $ Let a neighborhood of the point $  X_{1234} $ be defined  by $ \{(c_{1}:1),(c_{2}:1),(c_{3}:1)\}. $ The coordinates of points in  $ \pi^{-1}(U-X_{1234}) $ satisfy 
	\[ c_{i}x_{j}=c_{j}x_{i},\, \text{ for } 1\leq i,j\leq 3. \]
	
	\par After blowing-up $ X $ at these points, we proceed with blow-up of the lines $ X_{I} $, where $ |I|=3. $ In doing that we consider, for example, the proper transform of $ X_{124} $ denoted by $ X^{\prime}_{124}. $ By classical definition, we obtain $ X^{\prime}_{124}=\overline{\pi^{-1}(X_{123}-X_{1234})}, $ which is the closure of $ \pi^{-1}(X_{123}-X_{1234}). $ Since $ \pi^{-1}(X_{124}-X_{1234})=\{(0:1),(0:1),(c:c^{\prime}),(0:0:1)|c\neq0\},$ its closure is $ \{(0:1),(0:1),(c:c^{\prime}),(0:0:1)\}. $ We continue with blow-up at $ X^{\prime}_{124}. $ Let $ U $ be a neighborhood of $ X^{\prime}_{124} $ defined by $ \{(c_{1}:1),(c_{2}:1),(c:c^{\prime},(0:0:1))\} $. Blowing-up further at $ X^{\prime}_{124} $ in the neighborhood $ U $ we add the coordinates $ (x_{1}:x_{2})\in \C P^{1} $ and the points from $ \pi^{-1}(U-X_{124}) $ satisfy \[ c_{i}x_{j}=c_{j}x_{i},\, \text{ for } 1\leq i,j\leq 2. \] The exceptional set is  $ \pi^{-1}(X_{124})=X_{124}\times \C P^{1}. $ Thus, the divisors $ D_{I},\, |I|=3 $ are disjoint and each of them is isomorphic to $ \C P^{1}. $
	
	\par The divisors are:
	\begin{itemize}
		\item$ D_{I}\simeq X_{I}\times\C P^{2}\times (\C P^{1})^{4} $ for $ |I|=4. $ The term $ \C P^{2} $ corresponds to the blow-up $ bl_{X_{I}} $ and the terms $ (\C P^{1})^{4} $ to the blow-ups at $ X_{J},\, |J|=3$ such that $ X_{I}\subset X_{J}. $  
		\item$ D_{I}\simeq X_{I}\times\C P^{1}$ for $ |I|=3 $. The term $ \C P^{1} $ corresponds to the blow-up at the $ X_{I}. $
		\item  $ D_{I} $ for $ |I|=2 $ is the proper transform of $ X_{I} $ after final blow-up.
	\end{itemize}
	
\end{example}

There exists the correlation between divisors $ D_{I} $ and a virtual spaces of parameters $ \tilde{F}_{\sigma} $. We will describe this correlation explicitly for $ n=5,6 $ in Sections \ref{Sec:HomX5} and \ref{Sec:HomX6}. In establishing this correlation we use the next important result proved in \cite{buchstaber2021orbit}.
\begin{theorem}\label{Th:ModuliUniversalSpace}
	The universal space of parameters $ \mathcal{F}_{n} $ is diffeomorphic to the space $ \mathcal{M}_{0,n}. $
\end{theorem}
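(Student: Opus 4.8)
The plan is to realise both $\mathcal{F}_n$ and $\mathcal{M}_{0,n}$ as compactifications of one and the same smooth open variety and then to show that the two compactification recipes are built from the same combinatorial data, so that the resulting smooth projective varieties coincide.

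The common open part is the space of parameters $F_n$ of the main stratum. Reading the coordinates $c_{ij}$ of \eqref{Def:mainstratumG} as cross-ratios of the $i$-th, $j$-th and the first two columns of a matrix representative, $F_n$ becomes the configuration space $M_{0,n}$ inside $(\C P^1)^{n-3}$, with three of the columns playing the role of $0,1,\infty$; this is the identification $F_n=M_{0,n}$ of \cite{buchstaber2021orbit}, the one input I would quote without comment. Now $\mathcal{M}_{0,n}$ is, by \cite{keel1992intersection} and \cite{tavakol2017chow}, obtained from $X=(\C P^1)^{n-3}$ by the iterated blow-up along (proper transforms of) the subvarieties $X_I$, performed in order of decreasing $|I|$; whereas $\mathcal{F}_n$ is, by Theorem~\ref{Th:UniversalSpaceG}, the wonderful compactification of $\overline{F}_n$ with the generating family $\{\overline{F}_I\}$, and $\overline{F}_n$ is itself already a blow-up of $(\C P^1)^{n-3}$ — project $\overline{F}_n\subset(\C P^1)^N$ onto $n-3$ suitably chosen $\C P^1$-factors. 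So both $\mathcal{F}_n$ and $\mathcal{M}_{0,n}$ are presented as towers of blow-ups over $(\C P^1)^{n-3}$.

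The heart of the argument is the dictionary between the two indexing conventions. A triple-set $I\subset\{ikl:3\le i<k<l\le n\}$ and the subvariety $\overline{F}_I$, cut out by $c_{ab}=c_{ab}'$ for the pairs occurring in $I$, corresponds under the cross-ratio identification to a subset $S\subseteq\{1,\dots,n\}$ of marked points forced to collide — with the normalisation $n-2,n-1,n\leftrightarrow 0,1,\infty$ absorbing the "special element" clause in the definition of $X_I$ — hence to $X_S$ (equivalently $X_{S^\complement}$). The symmetry $D_I=D_{I^\complement}$ and the nesting rule "$D_ID_J=0$ unless $I\subseteq J$, $I\subseteq J^\complement$, $\dots$" of \eqref{relacije} then say precisely that $\{\overline{F}_I\}$ and $\{X_I\}$ determine the same building set of blow-up centres; one checks in addition that Keel's decreasing-$|I|$ order agrees with the De Concini--Procesi order of the wonderful model (increasing dimension) and that after every step the next centre is smooth and meets the accumulated exceptional divisors transversally — for $n=5,6$ this is precisely the content of Examples~\ref{example:M05} and \ref{example:M06}, and in general it is underwritten by the smoothness of $\overline{F}_n$ from \cite{buchstaber2019toric}. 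Since a wonderful compactification is determined up to canonical isomorphism by its building set, one then obtains an isomorphism $\mathcal{F}_n\to\mathcal{M}_{0,n}$ restricting to the identity on $F_n=M_{0,n}$; being biregular it is, in particular, a diffeomorphism.

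I expect the main obstacle to be the second half of the previous paragraph: making the dictionary between the triple-indexed $\overline{F}_I$ and Keel's subset-indexed $X_I$ fully precise — in particular tracking how the three reference columns and the $(\C^{*})^{n}$-quotient turn "all cross-ratios in a block equal $(1:1)$" into "the marked points of that block collide" — and then verifying that the wonderful compactification of $\overline{F}_n$ along $\{\overline{F}_I\}$ and Keel's blow-up tower over $(\C P^1)^{n-3}$ genuinely land on the same variety, not merely on varieties with isomorphic stratification posets.
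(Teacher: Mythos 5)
The paper does not actually prove this statement: it is quoted from \cite{buchstaber2021orbit}, and the text only exhibits the diffeomorphism explicitly for $n=5$ and $n=6$ in Examples~\ref{ex:map05} and~\ref{ex:map06}. Measured against those examples, your plan is the same mechanism, abstracted. The (rational) cross-ratio map $f$ from $(\C P^1)^{n-3}$ to $(\C P^1)^{N}$, $N=\binom{n-2}{2}$, realises $\overline{F}_n$ as a blow-up of $(\C P^1)^{n-3}$ along exactly those $X_I$ attached to the special values $0$ and $\infty$ (the indeterminacy loci of $f$), and the wonderful compactification of $\overline{F}_n$ along the $\overline{F}_I$ then supplies the remaining Keel--Tavakol centres (the pure diagonals and the centres attached to the value $1$, i.e.\ the $(1:1)$-loci). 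Your dictionary between triple-indexed $\overline{F}_I$ and subset-indexed $X_S$ is the correct one: $c_{ik}=c_{ik}'$ is equivalent to $P^{ik}=0$, hence to the collision of the corresponding marked points, and this is precisely what the tables in Examples~\ref{ex:map05} and~\ref{ex:map06} record.

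That said, the step you defer is where essentially all the content sits, and your phrasing understates it in three places. First, ``project $\overline{F}_n$ onto $n-3$ suitably chosen factors'' gives a birational morphism, but the assertion that this morphism is an iterated blow-up along smooth centres, meeting the accumulated exceptional divisors transversally, is not formal: for general $n$ it needs the smoothness of $\overline{F}_n$ (quoted from \cite{buchstaber2019toric}) together with a local analysis of the cubic relations \eqref{Def:mainspaceG} along the indeterminacy loci; the paper carries this out only for $n=5,6$. Second, the composite tower blows up its centres in an order (all indeterminacy loci of $f$ first, then all $(1:1)$-loci) that is \emph{not} Keel's decreasing-$|I|$ order, so one must justify commuting blow-ups; this works because the two families of centres become disjoint once the points $X_{1\cdots(n-3)k}$ are blown up (for $n=6$, the proper transforms of $X_{124},\dots$ are disjoint from $X_{1235}$ and from the proper transforms of $X_{123},X_{125},\dots$), but it has to be verified rather than absorbed into ``the building sets agree.'' Third, the uniqueness of a wonderful compactification given its building set only applies after both towers have been exhibited as wonderful models of the same arrangement in the same ambient variety, which is exactly the conjunction of the previous two points. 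None of this invalidates the plan --- it is the plan implemented in \cite{buchstaber2021orbit} and in the paper's two examples --- but as written your argument is an outline whose crux is the part you yourself flag as the obstacle.
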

We explicitly demonstrate this diffeomorphism for the cases $ n=5 $ and $ n=6 $, see also \cite{buchstaber2021orbit}, in the next examples.
\begin{example}\label{ex:map05}
	We follow notations from the example $ \ref{example:M05}. $ Let us consider map $ f:(\C P^{1})^{2}\to(\C P^{1})^{3} $ given by\[ f((c_{34}:c_{34}^{\prime}),(c_{35}:c_{35}^{\prime}))=((c_{34}:c_{34}^{\prime}),(c_{35}:c_{35}^{\prime}),(c_{34}^{\prime}c_{35}:c_{34}c_{35}^{\prime})). \] This map is not defined at the points $ X_{124},X_{126}. $ After we blow-up $ (\C P^{1})^{2} $ at the points $ X_{124},X_{126} $ as in Example \ref{example:M05} we obtain the variety denoted by  $ Y=bl_{126}bl_{X_{124}} X. $  The map $ f $ extends to the homeomorphism $ \overline{f} $ between $ Y $ and $ \overline{F}_{5}. $ The divisor $ D_{124}\simeq X_{124}\times\{(x_{1}:x_{2})\subset \C P^{1}\} $ maps to $ ((0:1),(0:1),(x_{1}:x_{2})) $ and similar for the divisor $ D_{126}. $ Now we blow-up $ Y $ at $ X_{125} $ and $ \overline{F}_{5} $ at $ (1:1)^{3} $ and conclude the homeomorphism between $ \mathcal{F}_{5} $ and $ \mathcal{M}_{0,5}. $ 
\end{example}

\begin{example}\label{ex:map06}
	Let us consider the map $ f:(\C P^{1})^{3}\to(\C P^{1})^{6} $ given by
	\begin{align*}
		&f((c_{34}:c_{34}^{\prime}),(c_{35}:c_{35}^{\prime}),(c_{36}:c_{36}^{\prime}))=\\&=((c_{34}:c_{34}^{\prime}),(c_{35}:c_{35}^{\prime}),(c_{36}:c_{36}^{\prime}),(c_{34}^{\prime}c_{35}:c_{34}c_{35}^{\prime}),(c_{34}^{\prime}c_{36}:c_{34}c_{36}^{\prime}),(c_{35}^{\prime}c_{36}:c_{35}c_{36}^{\prime}))
	\end{align*}
	
	This map is not defined at the sub-varieties $ \{X_{I}|I\in S\}$ where $S=\{124,134,234,126,136,236\} $ neither at the points $ X_{1234},X_{1236}. $ We denote by $ Y $ the blow-up of $ (\C P^{1})^{3} $ at this sub-varieties according to Example $ \ref{example:M06} $. The extension $ \overline{f} $ of the map $ f $ between $ Y $ and $ \overline{F}_{6} $ can be defined as follows:
	
	\begin{itemize}
		\item  The points of the divisors $ D_{1234} $ and $ D_{1236}, $ apart from the points for which the corresponding $ \C P^{2}$ component is $ (1:0:0)$ or $(0:1:0)$ or $(0:0:1) $ and which are blown-up by blowing up the lines $ X_{I}^{\prime},$ we map by \[  D_{1234}=((0:1),(0:1),(0:1),(x_{1}:x_{2}:x_{3}))\to ((0:1),(0:1),(0:1),(x_{1}:x_{2}),(x_{1}:x_{3}),(x_{2}:x_{3})),\]
		\[  D_{1236}=((1:0),(1:0),(1:0),(x_{1}:x_{2}:x_{3}))\to ((1:0),(1:0),(1:0),(x_{1}:x_{2}),(x_{1}:x_{3}),(x_{2}:x_{3})).\] It follows from Example \ref{example:M06} that the points of  $ \overline{f}(D_{1234}) $ belong to $ \overline{F}_{6}. $
		\item The divisor $ D_{I} $, for example if $ I=124 $ maps by \[  D_{124}=((0:1),(0:1),(c:c^{\prime}),(x_{1}:x_{2}))\to ((0:1),(0:1),(c:c^{\prime}),(x_{1}:x_{2}),(1:0),(1:0)).\]
		
	\end{itemize}
	It is left to sequentially blow-up the variety $ Y $ at $ X_{1235} $ and than at $ X_{125},X_{135},X_{235},X_{123} $ and in the result we obtain the space $ \mathcal{M}_{0,6}. $ On the other side, the blow-up of the space $ \overline{F}_{6} $ at the varieties $ (1:1)^{6} $ and $ \overline{F}_{345},\overline{F}_{346},\overline{F}_{356},\overline{F}_{456} $ gives $ \mathcal{F}_{6} $.
\end{example}

	\section{On $\Z _{2}$-homology of $G_{n,2}/T^n$}\label{Sec:On_Gn2_homology}
	\par	Using the theory presented in previous sections on description of topology of the orbit space $G_{n,2}/T^n$ and in particular the model~\eqref{Model:U_n}, we prove some general results on description of  $\Z _{2}$-homology of $G_{n,2}/T^n$. It follows from \eqref{Eq:space_X_n} that the  $\Z_2$-cycles in $G_{n,2}/T^n$ can be studied inductively on $n$. 
		
	\par	We consider the  cell decomposition of $F_{\omega}$ given by a cell decomposition of $F_{\sigma}$'s, $\sigma \in \omega$ obtained following Remark~\ref{cellspaceparam}. We denote  further  by $P_{\sigma ^{'}}$ a facet  in $\stackrel{\circ}{\Delta} _{n,2}$ of an admissible polytope $P_{\sigma}$  and by $C_{\omega ^{'}}$ a facet in $\stackrel{\circ}{\Delta}_{n,2}$  of a chamber $C_{\omega}$, and by $P_{\bar{\sigma}}, C_{\bar{\sigma}}$ those facets which belong to $\partial \Delta _{n,2}$. 
		For a cell $C_{\omega}\times  e_{\omega}^{\sigma }$, where $e_{\omega}^{\sigma }\subset  F_{\sigma }$,   $\sigma \in \omega$  we first emphasize the following.
		\begin{rem}\label{Rem:ManjeDIm}
			If $C_{\omega ^{'}}\subset \stackrel{\circ}{P}_{\sigma}$  is a facet of $C_{\omega}$, than  only the cell $C_{\omega ^{'}}\times e_{\omega}^{\sigma}$ which belongs to $W_{\sigma}/T^n$ contributes to the boundary of $C_{\omega}^{\sigma}\times e_{\omega}^{\sigma}$.  This is because of the following: if  a cell  $C_{\omega ^{'}}\times e_{\omega ^{'}}^{\tilde{\sigma}}$ is  in the closure of $C _{\omega}^{\sigma}\times e_{\omega}^{\sigma}$, then   the points from $W_{\tilde{\sigma}}$ have  zero all  P\"ucker coordinates which are zero for the points from  $W_{\sigma}$, and some more. In particular,   $P_{\tilde{\sigma}}\subset P_{\sigma}$.
			Thus, taking into account  the cell decompositions  for    $F_{\sigma}$ and $F_{\tilde{\sigma}}$  from Remark~\ref{cellspaceparam}, this implies that if a cell $C_{\tilde{\omega}}\times e_{\tilde{\omega}}^{\tilde{\sigma}}$ is in the closure of a cell $C_{\omega}\times e_{\sigma}^{\omega}$,  then $\dim e_{\tilde{\omega}}^{\tilde{\sigma}} < \dim e_{\omega}^{\sigma}$.  It follows that $C_{\omega ^{'}}\times e_{\omega ^{'}}^{\tilde{\sigma}}$ can not contribute to $\partial ( C_{\omega}\times e_{\omega}^{\sigma})$. 
		\end{rem}
		Therefore,  the boundary operator  for $C _{\omega}^{\sigma}\times e_{\omega}^{\sigma}$ is defined  by differentiating the following cases. 
		\begin{itemize}
			\item If $C_{\omega}$ has no facet  on $\partial \Delta _{n,2}$ and either  has no facets on $\partial P_{\sigma}$ or $e_{\omega}^{\sigma}$ is a point,  then 
			\[\partial (C_{\omega}\times e_{\omega}^{\sigma } )= C_{\omega}\times \partial ^{\omega}e_{\omega}^{\sigma } + \sum _{ C_{\omega ^{'}}\; \text{facet of}\; C_{\omega}} C_{\omega ^{'}}\times e_{\omega}^{\sigma },\]
			\item If $C_{\omega}$ has no facet on $\partial \Delta _{n,2}$,  but is has a facet  $C_{\omega^{'}}^{\sigma}$ on $\partial P_{\sigma}$  then 
			
			\[\partial (C_{\omega}\times e_{\omega}^{\sigma } )= C_{\omega}\times \partial ^{\omega}e_{\omega}^{\sigma } + \sum _{\substack{ C_{\omega ^{'}}\; \text{facet of}\; C_{\omega}\\
					C_{\omega ^{'}}\neq C_{\omega ^{'}}^ {\sigma}}}C_{\omega} \times e_{\omega}^{\sigma }.\] 
				\begin{rem}\label{Nagranici}
				If $C_{\omega}$ has a facet $C_{\bar{\omega}}$  on $\partial  \Delta _{n,2}$ then $C_{\bar{\omega}}=\cap  P_{\bar{\sigma}}$ for the  facets  $P_{\bar{\sigma}}$ of $P_{\sigma}$, where  $\sigma\in   \omega$.  In addition, in~\cite{buchstaber2019foundations}, page 618, it is defined the continuous surjective map   $\eta_{\sigma, \bar{\sigma}} : F_{\sigma}\to F_{\bar{\sigma}}$, which gives continuous surjection $\eta _{\omega, \bar{\omega}}: F_{\omega} \to F_{\bar{\omega}}$.  Thus,  the chain map  $\eta ^{\#}_{\omega, \bar{\omega}} : C_{\#}(F_{\omega}) \to C_{\#}(F_{\bar{\omega}})$ induced by $\eta _{\omega, \bar{\omega}} : F_{\omega}\to F_{\bar{\omega}}$ is surjective by  Remark~\ref{cellspaceparam}
			\end{rem}
			\item  If  $C_{\omega}$ has a facet on $\partial \Delta _{n,2}$ 
				we obtain that 
				\[\partial (C_{\omega}\times e_{\omega}^{\sigma } )= \partial ^{0}(C_{\omega}\times e_{\omega}^{\sigma})   + \sum_{C_{\bar{\omega}}\; \text{facet of}\; C_{\omega}}  C_{\bar{\omega}}\times \eta_{\omega, \bar{\omega}}(e_{\omega} ^{\sigma}), \]
		\end{itemize}

		where $\partial ^{0}$ denotes the boundary component indicated in the previous two cases.
		
		For the space of parameters $F_{\omega}$ we note the following.
		\begin{theorem}\label{Th:homology_F}
			The homology groups of non-maximal dimension of the space of parameters $F_{\omega}$ are spanned by the outgrows $F_{\sigma}$, $\sigma \in \omega$  in the compactification of $F_{n}$ to $F_{\omega}$.
		\end{theorem}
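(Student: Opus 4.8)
===BEGIN PROOF PLAN===

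\textbf{Approach.} The plan is to prove Theorem~\ref{Th:homology_F} by induction on $n$, using the inductive structure of the compactification $F_\omega$ together with the known description of $H_*(\mathcal{F}_n;\Z_2)$ coming from Keel's computation of $A_*(\mathcal{M}_{0,n})$ and the diffeomorphism $\mathcal{F}_n\cong\mathcal{M}_{0,n}$ of Theorem~\ref{Th:ModuliUniversalSpace}. The starting observation is Remark~\ref{cellspaceparam}: both $F_n$ and each $F_\sigma$ have cell decompositions with no cells in small dimensions, so $H_k(F_n)=0$ for $0<k\le n-4$, and analogously for $F_\sigma$. Consequently the only homology of $F_\omega$ below the top degree must come from the strata $F_\sigma\subsetneq F_n$, $\sigma\in\omega$, that are added in passing from $F_n$ to its compactification $F_\omega$ — these are exactly the ``outgrowths''. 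The goal is to promote this numerical coincidence to an actual statement about generators.

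\textbf{Key steps.} First I would set up the stratification of $F_\omega$ by the locally closed pieces $F_\sigma$, $\sigma\in\omega$, noting (Remark~\ref{cellspaceparam}) that each $F_\sigma$ is either a point or homeomorphic to some $F_m$ with $4\le m\le n-1$, hence by induction its own non-top homology is spanned by outgrowths internal to it. Second, I would run the associated spectral sequence (or, equivalently, the long exact sequences of the filtration by skeleta of the stratification) with $\Z_2$-coefficients: the $E^1$-page is built from $H_*$ of the open strata $F_\sigma$ (or their one-point compactifications / Borel–Moore homology), and since each open stratum $F_\sigma$ other than the main one has dimension $<\dim F_n$, the classes they contribute sit in degrees strictly below the top. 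Third — and this is where the input from $\mathcal{M}_{0,n}$ enters — for $F_\omega=\mathcal{F}_n$ itself the theorem is precisely Keel's statement that $A_*(\mathcal{M}_{0,n})$, hence $H_*(\mathcal{M}_{0,n};\Z_2)$, is generated by the divisor classes $D_I$, which under the diffeomorphism of Theorem~\ref{Th:ModuliUniversalSpace} correspond to the boundary strata, i.e.\ the outgrowths $\overline{F}_\sigma$. For a general $F_\omega$ (a partial compactification of $F_n$, compactifying only along the facets $P_\sigma$ containing the chamber $C_\omega$, by the Lemmas in Section~\ref{Sec:Definitions}), I would use the surjections $p_{\omega,ij}:\mathcal{F}_n\to F_\omega$ of Theorem~\ref{Lem:ProjG}: the induced chain map is surjective (Remark~\ref{cellspaceparam}, Remark~\ref{Nagranici}), so $H_*(F_\omega;\Z_2)$ is a quotient of $H_*(\mathcal{F}_n;\Z_2)$, and the image of a divisor class $[\overline{F}_\sigma]$ is either zero (if that stratum is collapsed in $F_\omega$) or the corresponding outgrowth class in $F_\omega$. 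This identifies a spanning set for the non-top homology of $F_\omega$ with outgrowths $F_\sigma$, $\sigma\in\omega$, as claimed. Finally I would record that this is consistent with, and can alternatively be deduced from, the results of~\cite{buchstaber2024weighted} identifying $F_\omega$ with a moduli space of weighted stable genus-zero curves together with Ceyhan's theorem~\cite{ceyhan2009chow} that $A_i(\mathcal{M}_\mathcal{A})\cong H_{2i}(\mathcal{M}_\mathcal{A})$ is generated by boundary divisors.

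\textbf{Main obstacle.} The delicate point is the general (non-maximal) $F_\omega$: unlike $\mathcal{F}_n$ it is not literally $\mathcal{M}_{0,n}$ but only a partial compactification, so Keel's theorem does not apply verbatim, and the surjection $p_{\omega,ij}$ only gives that $H_*(F_\omega;\Z_2)$ is generated by \emph{images} of divisor classes — I must then check that the image of each $[\overline{F}_\sigma]$ is either trivial or the fundamental class of the outgrowth stratum in $F_\omega$, and that no ``new'' cycles (not of outgrowth type) appear in $F_\omega$ that are not already hit from $\mathcal{F}_n$. Handling this cleanly requires tracking which $F_\sigma$ survive in $F_\omega$ via the polytope condition $C_\omega\subset\stackrel{\circ}{P}_\sigma$, and invoking the dimension statement $\dim F_\sigma<\dim F_n$ for $F_\sigma\ne F_n$ to keep all such classes strictly below the top degree; the weighted–moduli identification of~\cite{buchstaber2024weighted} is the conceptually cleanest way to close this gap, and I would lean on it for the cases $n=5,6$ treated explicitly in the later sections.

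===END PROOF PLAN===
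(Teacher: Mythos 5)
Your core argument coincides with the paper's: both rest on the surjection $p_{\omega,ij}:\mathcal{F}_n\to F_\omega$ of Theorem~\ref{Lem:ProjG}, the surjectivity of the induced map on cycles (which the paper justifies via the product structure $\tilde{F}_\sigma=\bar{F}_l\times\bar{F}_s\times F_m$ with $F_m\cong F_\sigma$), and Keel's theorem that $H_*(\mathcal{F}_n)\cong H_*(\mathcal{M}_{0,n})$ is spanned by the boundary divisors, so that the images of these classes span $H_*(F_\omega)$ and are exactly the outgrowth classes. The additional scaffolding you propose (induction on $n$, the spectral sequence of the stratification) is not used in the paper's proof and is not needed once the surjectivity on cycles is in hand; your fallback via~\cite{buchstaber2024weighted} and Ceyhan is precisely what the paper records in the remark following the theorem.
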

		\begin{proof}
			As we pointed out the  space of parameters of a chamber $C_{\omega}$ is a compactification of the space of parameters of the main stratum $F_n$ by the spaces of parameters $F_{\sigma}$ of the strata $W_{\sigma}$, where $\sigma \in \omega$. By Theorem~\ref{Lem:ProjG} there exists continuous projection $p_{\omega} : \mathcal{F}_{n}\to F_{\omega}$ defined by the projections  
			$p_{\sigma} : \tilde{F}_{\sigma}\to F_{\sigma}$. It follows from~\cite{buchstaber2020resolution}, proof of Proposition 6.1 or~\cite{buchstaber2019toric}   that $\tilde{F}_{\sigma} = \bar{F}_{l} \times \bar{F}_{s}\times F_{m}$, where $F_{m} \cong F_{\sigma}$ and   $l+s+m\leq n+4,\,0\leq l,s,m\leq n-1.$ It implies that the the induced homomorphism between  the cycles on $\mathcal{F}_{n}$ and the cycles on $F_{\omega}$ is surjective. Since by~\cite{keel1992intersection}, the homology groups  for $\mathcal{F}_{n}$ are spanned by the divisors outgrowing in the compactification of $F_n$ to $\mathcal{F}_{n}$, it follows that  the homology groups for $F_{\omega}$ are spanned by the outgrows in the compactification of $F_{n}$ to $F_{\omega}$.
		\end{proof}
		\begin{rem}
			In~\cite{buchstaber2024weighted} it is proved that
			$F_{\omega} \cong \mathcal{M}_{\mathcal{A}} =\mathcal{M}_{0,\mathcal{A}}$, which is a moduli space of weighted stable genus zero curves with $n$ marked distinct points with a set of weights $\mathcal{A}$, defined in~\cite{hassett2003moduli} in general setting. It is also showed that the projection $p_{\omega}:\mathcal{F}_{n}\cong\mathcal{M}_{0,n} \to F_{\omega}$ coincides with the reduction morphism $\rho _{\mathcal{A}} : \mathcal{M}_{0,n}\to \mathcal{M}_{\mathcal{A}}$, defined in \cite{hassett2003moduli}.
			\par On the other side, in~\cite{ceyhan2009chow} it is obtained the
			description of the homology groups $H_{\ast}(\mathcal{M}_{\mathcal{A}})$
			in terms of a stratification for $\mathcal{M}_{\mathcal{A}}$. Moreover,
			it is pointed  that the generators and	relations in  $H_{\ast}(\mathcal{M}_{A})$ are induced from those in $H_{\ast}(\mathcal{M}_{0, n})$ by the reduction morphism
			$\rho _{\mathcal{A}} $. 

				\par Therefore, Theorem~\ref{Th:homology_F} provides the formulation and the proof of
				this result in terms of the topological  ingredients of the model $(U_n,
				p_n)$ for $X_n$.
		\end{rem}
		In  what follows we consider $\Z_2$-chains in $G_{n,2}/T^n$ which map  by the induced moment map $\hat {\mu}$ to the interior $\stackrel{\circ}{\Delta _{n,2}}$. Any such  chain has the form
		\[
		c= \sum _{\omega}C_{\omega} \times l_{\omega},\;\; C_{\omega}\subset \stackrel{\circ}{\Delta _{n,2}},
		\]
		and $l_{\omega}$ is a chain in $F_{\omega}$.   The chain  $l_{\omega}$ can be represented by $l_{\omega} =\sum _{\sigma\in\omega} l_{\omega} ^{\sigma}$ for a chain $l_{\omega}^{\sigma}$ in $F_{\sigma}$. For $F_{\sigma}=F$ we put $l_{\omega}^{\sigma}=l_{\omega}^{F}$.
		
		For  the boundary of a chain  $c$  we note  that $\partial _{p}c = \partial ^{\circ}_{p}c + \partial _{p}^{'}c$, where $\hat{\mu} (\partial _{p}^{\circ}c)\subset \stackrel{\circ}{\Delta} _{n,2}$ and  $\hat{\mu} (\partial _{p}^{'}c)\subset \partial \Delta _{n,2}$ and $p=\dim c -1$. We describe this decomposition  of $\partial _{p}c$ more explicitly using Remark \ref{Rem:ManjeDIm}:
		\begin{lemma}
		The boundary component
		
		\begin{itemize}
			\item  $\partial _{p}^{'}c$ of $c$ has the form
			\begin{equation}
					\partial _{p}^{'}c =  \sum_{\bar{\omega}}C_{\bar{\omega}}\times \eta^{\#}_{\omega, \bar{\omega}}(l_{\omega}).
			\end{equation}
			
			
			\item 
			$\partial ^{\circ}_{p}c$ has the form
			\begin{equation}\label{boundch}
				\partial _{p}^{\circ} c = \sum   _{\omega} \sum _{\substack{\omega ^{'} \\ C_{\omega ^{'}}\subset \stackrel{\circ}{\Delta}_{n,2} }}\sum _{\substack{\sigma \in \omega  \\ C_{\omega ^{'}}\subset \stackrel{\circ}{P_{\sigma}}}}C_{\omega ^{'}}\times l_{\omega}^{\sigma} + \sum _{\omega} C_{\omega}\times \partial ^{\omega}l_{\omega},
			\end{equation}
			where $\partial ^{\omega}l_{\omega}$ denotes the boundary of $l_{\omega}$ in $F_{\omega}$.
		\end{itemize}
	\end{lemma}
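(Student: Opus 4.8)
The plan is to reduce the computation of $\partial_p c$ to the single-cell boundary formula recorded in the three cases immediately above, and then to sort the resulting cells by whether their $\hat\mu$-image lies in $\stackrel{\circ}{\Delta}_{n,2}$ or on $\partial\Delta_{n,2}$. Writing $c=\sum_{\omega}C_{\omega}\times l_{\omega}$ with $l_{\omega}=\sum_{\sigma\in\omega}l_{\omega}^{\sigma}$ and each $l_{\omega}^{\sigma}$ a $\Z_2$-combination of cells $e_{\omega}^{\sigma}\subset F_{\sigma}$, linearity of $\partial_p$ lets me work one cell $C_{\omega}\times e_{\omega}^{\sigma}$ at a time. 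The facets of $C_{\omega}$ split into those $C_{\omega'}$ interior to $\Delta_{n,2}$ and those $C_{\bar\omega}$ on $\partial\Delta_{n,2}$; the former, together with the $F_{\omega}$-direction term $C_{\omega}\times\partial^{\omega}e_{\omega}^{\sigma}$, will contribute to $\partial^{\circ}_p$, while the latter will contribute to $\partial'_p$.

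For the interior part I would invoke the first two of the displayed cases. For a facet $C_{\omega'}\subset\stackrel{\circ}{\Delta}_{n,2}$ of $C_{\omega}$ there are two possibilities: if $C_{\omega'}\subset\stackrel{\circ}{P}_{\sigma}$ the cell stays in the stratum $W_{\sigma}/T^n$ and contributes $C_{\omega'}\times e_{\omega}^{\sigma}$ with coefficient $1$ modulo $2$; if $C_{\omega'}$ lies on $\partial P_{\sigma}$, then by Remark~\ref{Rem:ManjeDIm} it passes to a stratum $W_{\tilde\sigma}$ with $P_{\tilde\sigma}\subsetneq P_{\sigma}$, and by the cell decompositions of Remark~\ref{cellspaceparam} this forces a strict drop in the $F$-dimension, so no such term contributes to the $p$-dimensional boundary. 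Summing the surviving terms over $\omega$, over $\sigma\in\omega$, and over admissible facets $C_{\omega'}\subset\stackrel{\circ}{P}_{\sigma}$ gives $\sum_{\omega}\sum_{\omega'}\sum_{\sigma}C_{\omega'}\times l_{\omega}^{\sigma}$, and collecting the terms $C_{\omega}\times\partial^{\omega}e_{\omega}^{\sigma}$ over $\sigma\in\omega$, using $\partial^{\omega}l_{\omega}=\sum_{\sigma\in\omega}\partial^{\omega}l_{\omega}^{\sigma}$, produces $\sum_{\omega}C_{\omega}\times\partial^{\omega}l_{\omega}$; together these are exactly \eqref{boundch}.

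For the boundary part I would use the third displayed case together with Remark~\ref{Nagranici}: a facet $C_{\bar\omega}$ of $C_{\omega}$ on $\partial\Delta_{n,2}$ equals an intersection $\cap P_{\bar\sigma}$ of facets of the polytopes $P_{\sigma}$, $\sigma\in\omega$, and the continuous surjection $\eta_{\omega,\bar\omega}:F_{\omega}\to F_{\bar\omega}$, assembled from the maps $\eta_{\sigma,\bar\sigma}$ of~\cite{buchstaber2019foundations}, induces the chain map $\eta^{\#}_{\omega,\bar\omega}$; the contribution of $C_{\omega}\times e_{\omega}^{\sigma}$ along $C_{\bar\omega}$ is $C_{\bar\omega}\times\eta_{\omega,\bar\omega}(e_{\omega}^{\sigma})$. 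Passing to chains, summing over $\sigma\in\omega$, and then over all $\omega$ and their boundary facets $\bar\omega$ yields $\partial'_p c=\sum_{\bar\omega}C_{\bar\omega}\times\eta^{\#}_{\omega,\bar\omega}(l_{\omega})$, where distinct $\omega$ sharing a common $C_{\bar\omega}$ contribute their $\eta^{\#}_{\omega,\bar\omega}(l_{\omega})$ additively in $\Z_2$. The step I expect to be the main obstacle is precisely this suppression argument — showing that a facet $C_{\omega'}$ of $C_{\omega}$ interior to $\Delta_{n,2}$ but lying on $\partial P_{\sigma}$ makes no contribution to $\partial_p(C_{\omega}\times e_{\omega}^{\sigma})$; this is the dimension drop $\dim e_{\tilde\omega}^{\tilde\sigma}<\dim e_{\omega}^{\sigma}$ of Remark~\ref{Rem:ManjeDIm}, and once it is applied in each branch of the case analysis the rest is a routine assembly of the three boundary formulas over the index set $\{(\omega,\sigma):\sigma\in\omega\}$.
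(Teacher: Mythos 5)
Your proposal is correct and follows the paper's route: the paper states this lemma without a separate proof, treating it as the direct assembly of the three cell-wise boundary formulas displayed just above it together with the suppression argument of Remark~\ref{Rem:ManjeDIm}, which is precisely what you carry out. The step you flag as the main obstacle (no contribution from a facet $C_{\omega'}\subset\stackrel{\circ}{\Delta}_{n,2}$ lying on $\partial P_{\sigma}$) is indeed exactly the dimension-drop observation of Remark~\ref{Rem:ManjeDIm}, so nothing is missing.
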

		
		\begin{rem}\label{Rem:Cycle_dim_0}
Note that $H_{0}(X_n; \Z_2) \cong \Z_2$ as $X_n$ is path connected.
		\end{rem}
		
		The following known result~\cite{buchstaber2019toric} is easy to deduce:
		
		\begin{lemma}\label{lemma_3n-7}
			\[
			H_{3n-7} (X_n, \Z _2) \cong \Z _2.
			\]
		\end{lemma}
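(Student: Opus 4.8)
The plan is to exhibit the orbit space $X_n$ as a compact, connected $(3n-7)$-dimensional space whose top $\Z_2$-homology is generated by its fundamental class. First I would note that $\dim X_n = 3n-7$ was already computed in Section~\ref{Sec:Definitions}, and that $X_n = G_{n,2}/T^n$ is compact (being the continuous image of the compact manifold $G_{n,2}$) and connected (Remark~\ref{Rem:Cycle_dim_0}). The natural candidate for the generator is the image under $p_n$ of the top cell $\Delta_{n,2} \times e$, where $e$ is the unique top-dimensional cell of $\mathcal{F}_n$; concretely, from the model $(U_n, p_n)$ of Theorem~\ref{Th:MainModel}, the cells of $X_n$ of dimension $3n-7$ are the products $C_\omega \times e_\omega^{F}$ where $C_\omega$ ranges over the top-dimensional chambers (all lying in $\Ins{\Delta}_{n,2}$) and $e_\omega^{F}$ is the top cell of $F_\omega \cong F_n$, which has real dimension $2(n-3)$; indeed $(n-1) + 2(n-3) = 3n-7$.

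Next I would compute the boundary of the sum $z = \sum_\omega C_\omega \times e_\omega^{F}$ over all top chambers and check it vanishes mod $2$. Using the explicit boundary formula~\eqref{boundch} together with Remark~\ref{Nagranici}: the interior contribution $\partial^{\circ}$ consists of terms $C_{\omega'} \times e_\omega^{F}$ where $C_{\omega'} \subset \Ins{P_\sigma}$ is an interior facet shared by exactly two top chambers $C_\omega, C_{\omega''}$ (each interior wall of the chamber decomposition of $\Ins{\Delta}_{n,2}$ separates exactly two top chambers), so these terms cancel in pairs over $\Z_2$; and the term $C_\omega \times \partial^\omega e_\omega^{F}$ vanishes because $e_\omega^F$ is a top-dimensional cell of the closed manifold $\overline{F}_\omega$ — more precisely, because $F_\omega$ is a manifold-with-structure whose top cell has boundary lying in lower strata and the mod-$2$ boundary of its fundamental class is $0$. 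For the boundary-of-$\Delta_{n,2}$ contribution $\partial^{'}$, the terms $C_{\bar\omega} \times \eta^{\#}_{\omega,\bar\omega}(e_\omega^F)$ land in $X_{n-1}(i)$ or in $\Delta_{n-1,1}(i)$; here I would either argue by an inductive set-up (the pieces over $\partial\Delta_{n,2}$ have dimension $\le 3(n-1)-7 + 1 = 3n-9 < 3n-7$, so they cannot support a $(3n-7)$-cycle and in fact the $(3n-8)$-chains landing there must cancel because each facet $C_{\bar\omega}$ of $\partial\Delta_{n,2}$ is adjacent to exactly one top chamber, forcing the coefficient structure to match that of the well-defined fundamental class of the compact quotient), or invoke compactness directly.

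Cleaner still, and what I would actually write: since $X_n$ is a compact connected space of dimension $3n-7$ that admits a finite CW structure (from Theorem~\ref{Th:MainModel} and Remark~\ref{cellspaceparam}), we have $H_{3n-7}(X_n;\Z_2) \cong \Z_2$ if $X_n$ is a $\Z_2$-homology manifold of that dimension, or at least $H_{3n-7}(X_n;\Z_2)$ is generated by the sum of its top cells carrying a cycle. The key point is that $X_n$ carries a $\Z_2$-fundamental class: $G_{n,2}$ is a closed orientable manifold, the orbit map $G_{n,2} \to X_n$ has generic orbits of constant dimension over the main stratum (which is open and dense), and the quotient by the free-in-codimension-zero torus action yields a space whose top $\Z_2$-homology is $\Z_2$, generated by the image of the fundamental class of the closure of the main stratum's orbit space $\Delta_{n,2}\times F_n$. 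I expect the main obstacle to be the careful bookkeeping of the $\partial^{'}$ boundary terms over $\partial\Delta_{n,2}$ — showing that no nonzero $\Z_2$-combination of top cells has those boundary contributions surviving — and the cleanest resolution is the inductive dimension count: $\dim(\hat\mu^{-1}(\partial\Delta_{n,2})) = 3n-8$, so a $(3n-7)$-cycle is determined by its interior part, whose boundary we have already shown cancels, giving injectivity of the fundamental class and, by the upper bound from $\dim X_n = 3n-7$ plus connectivity, that $H_{3n-7}(X_n;\Z_2)$ is exactly $\Z_2$.
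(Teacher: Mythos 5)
Your overall strategy --- work in the model $(U_n,p_n)$, identify the top cells as $C_\omega\times e$ with $\dim C_\omega=n-1$ and $e$ a top-dimensional cell of $F$, kill the boundary over $\partial\Delta_{n,2}$ by a dimension count, and cancel the interior walls in pairs --- is exactly the paper's, and your verification that $z=\sum_\omega C_\omega\times e^{F}_\omega$ is a non-bounding cycle is sound, modulo two minor slips: $F\cong(\C P^{1}_{A})^{n-3}$ has several top cells, so the generator is the sum of all of them (the fundamental $\Z_2$-cycle of $\mathcal{F}_n$ restricted to $F$), not ``the unique top cell''; and $\dim\hat{\mu}^{-1}(\partial\Delta_{n,2})=3n-10$, not $3n-8$ or $3n-9$, which only strengthens your dimension count.

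The genuine gap is the upper bound $H_{3n-7}(X_n;\Z_2)\leq\Z_2$. Every general principle you invoke for it fails: a compact connected finite CW complex of dimension $d$ can have arbitrarily large $H_d$ (a wedge of $d$-spheres); $X_n$ is not a $\Z_2$-homology manifold in general (the paper computes $H_9(X_6)\cong\Z_2$ while $H_2(X_6)=0$, so Poincar\'e duality fails); and the orbit space of a torus action on a closed manifold need not carry a top fundamental class ($S^2/S^1\cong[0,1]$ has vanishing $H_1$). What is actually needed is to show that $z$ is the \emph{only} nonzero top cycle, i.e.\ to run your boundary computation on an arbitrary top chain $\sum_\omega C_\omega\times l_\omega$: the interior-wall cancellation forces $l_\omega=l$ to be independent of $\omega$ and forces all top-dimensional chambers to appear (connectivity of the chamber graph), while the vanishing of $\sum_\omega C_\omega\times\partial l_\omega$ forces $l$ to be a top cycle of $F$; since $\mathcal{F}_n\setminus F$ has real codimension $2$, the boundary of $l$ in $F$ equals its boundary in $\mathcal{F}_n$, so the group of admissible $l$ is the group of top cycles of the closed manifold $\mathcal{F}_n$, namely $H_{2n-6}(\mathcal{F}_n;\Z_2)\cong\Z_2$. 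This is precisely how the paper closes the argument; all the ingredients are already present in your first two paragraphs, but your third paragraph replaces this step with appeals that do not hold.
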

		\begin{proof}
			The top degree chain in $X_n$ is of the form $c= \sum C_{\omega}\times l_{\omega}$, where $C_{\omega}$ is a chamber of maximal dimension $n-1$ and $l_{\omega}$ is a chain in $F_{\omega}$ of maximal dimension $2n-6$. Since $\dim F_{\sigma}< \dim F_{\omega}$ for any $\sigma \in \omega$ and $F_{\sigma}\neq F$, it follows that $l_{\omega} = l_{\omega}^{F}$ is a chain in $F$. Using~\eqref{boundch} we obtain that
			\[
			\partial ^{\circ} _{3n-8} c= \sum _{\omega}\sum _{\omega ^{\lq{}}\subset \omega, \; C_{\omega ^{'}}\subset \stackrel{\circ}{\Delta _{n,2}}}C_{\omega ^{\lq{}}}\times l_{\omega}^{F} + \sum _{\omega}C_{\omega}\times \partial _{2n-7}^{\omega} l_{\omega}^{F}.
			\]
			Note that the space of parameters of any  chamber from the boundary of $\Delta _{n,2}$  has the dimension less then 2n-6, which implies that $\partial _{3n-8}^{'}c=0$.  Thus, $c$ is a cycle if and only if the following conditions are satisfied:
			\begin{enumerate}
				\item
				$\partial _{2n-7}^{\omega}l_{\omega}^{F} =0$ for any $\omega$;
				\item 
				$l_{\omega}^{F}=l^{F}$,  that is $l_{\omega}$ does not depend on $\omega$ since any $C_{\omega}^{\lq{}}$ contributes  twice in $\partial _{3n-8}c$ as the  chambers $C_{\omega}$ are of maximal dimension;
				\item chambers $C_{\omega}$ which contribute to $c$ give all chambers in $\Delta _{n,2}$ of dimension $n-1$.
			\end{enumerate}
			It follows that $c=  (\sum _{\omega} C_{\omega})\times l^{F}$. We have that $ \partial^{\mathcal{F}_n} l^{F}\subset \bar{F}$ as $ \mathcal{F}_n $ is a wonderful compactification of $ \bar{F}. $ Since $ \dim l^{F}=\dim F $ and $ \dim(\bar{F}\backslash F)=\dim F-2 $ it follows that $ \partial^{\mathcal{F}_n} l^{F}=\partial^{F} l^{F}$. The fact that $ F\subset F_{\omega} $ and $H_{2n-6}(\mathcal{F}_n,\Z_{2})\cong \Z_{2}$ concludes the proof. 
		\end{proof} 
		We further omit an index $ p $ in $ \partial_{p} c, $ as well as $ \omega $ in $ \partial^{\omega} l_{\omega} $ to simplify notation.
		\subsection{$\Z _2$ - cycles over $\stackrel{\circ}{\Delta}_{n,2}$}
		
		\begin{theorem}\label{main}
			Any $\Z_2$-cycle $c=\sum C_{\omega}\times l_{\omega}$ in $G_{n,2}/T^n$ such that $\hat{\mu}(c)\subset \stackrel{\circ}{\Delta}_{n,2}$ is homologous to a cycle  which does not contain a summand  for which  either one of the following conditions is satisfied
			\begin{enumerate}
				\item $\dim C_{\omega}< n-1$;
				\item  $l_{\omega}^{F}$ is a chain in $F$ and $\dim l^{F}_{\omega}<\dim F$;
				\item $l_{\omega}^{\sigma}$ is a chain  in some $F_{\sigma}$ and $\dim l_{\omega}^{\sigma}<\dim F_{\sigma}$.
			\end{enumerate}
		\end{theorem}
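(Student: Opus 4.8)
The plan is to reduce the dimension of a general $\Z_2$-cycle $c=\sum C_\omega\times l_\omega$ in three successive passes, one for each listed condition, each time modifying $c$ within its homology class by adding a suitable boundary. The guiding principle is that all three "smaller-than-maximal" phenomena are boundaries of explicit chains built from the product structure of the model $(U_n,p_n)$, so that after subtracting them the surviving cycle is concentrated on top-dimensional chambers and top-dimensional chains in each $F_\sigma$.

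First I would handle condition (1). Suppose a summand $C_\omega\times l_\omega$ has $\dim C_\omega\le n-2$, i.e.\ $C_\omega\subset\partial P_\sigma$ for some $\sigma\in\omega$, equivalently $C_\omega=C_{\omega'}$ is a facet of some higher chamber. Using the decomposition of $\partial(C_{\omega}\times e_\omega^\sigma)$ recorded before Theorem~\ref{Th:homology_F} (the second bullet, where a facet $C_{\omega'}^\sigma$ on $\partial P_\sigma$ \emph{does not} contribute to the boundary), one sees that a chamber $C_{\omega'}$ of dimension $n-2$ arising as a facet of a top chamber $C_\omega$ appears in $\partial(C_\omega\times l_\omega^\sigma)$ with the chain $l_\omega^\sigma$ attached. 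Thus I would add to $c$ the boundaries of chains $\sum C_\omega\times l$ over all $n-1$-dimensional chambers $C_\omega$ whose closure contains the offending $C_{\omega'}$; since $c$ is a cycle, the coefficients of the lower-dimensional chambers in $\partial_p c$ vanish, which forces these lower-dimensional contributions to cancel in pairs exactly as in the proof of Lemma~\ref{lemma_3n-7}. Iterating downward on $\dim C_\omega$ removes all summands with $\dim C_\omega<n-1$.

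Next, for conditions (2) and (3): once $c$ is supported on top-dimensional chambers, write $l_\omega=\sum_{\sigma\in\omega}l_\omega^\sigma$. Here I would invoke Theorem~\ref{Th:homology_F}: the homology of $F_\omega$ in non-maximal dimension is spanned by the outgrows $F_\sigma$, and $\dim F_\sigma<\dim F_n$ for $F_\sigma\ne F$. For the component $l_\omega^F$ in the open part $F$, if $\dim l_\omega^F<\dim F$ then $l_\omega^F$, being forced by the cycle condition (the analysis of $\partial^\circ c$ in \eqref{boundch}) to be a cycle in $F_\omega$ up to the chamber-boundary terms, is a boundary in $F_\omega$ by Theorem~\ref{Th:homology_F} together with $H_k(F_n;\Z_2)=0$ for $0<k\le n-4$ from Remark~\ref{cellspaceparam}; I would realize it as $\partial^\omega m_\omega$ and subtract $\sum_\omega C_\omega\times m_\omega$ from $c$, using again that each top chamber's interior facets cancel pairwise. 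Condition (3) is the same argument applied inside each $F_\sigma$, using that $F_\sigma$ is itself homeomorphic to some $F_m$ with the analogous vanishing of low-dimensional homology (Remark~\ref{cellspaceparam}) and that the chain map $\eta^\#_{\omega,\bar\omega}$ is surjective (Remark~\ref{Nagranici}) so that the boundary contributions toward $\partial\Delta_{n,2}$ in $\partial'_p c$ do not obstruct the modification.

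The main obstacle I anticipate is bookkeeping the interaction between the three passes: subtracting a boundary to fix condition~(2) or~(3) in one $F_\omega$ may, through the first sum in \eqref{boundch}, reintroduce lower-dimensional chamber terms and hence reopen condition~(1). I would resolve this by ordering the reduction carefully — first make the chamber support top-dimensional, then reduce the $F$-component, then the $F_\sigma$-components — and by checking at each step that the newly added boundary $\sum C_\omega\times m_\omega$ only involves \emph{top} chambers $C_\omega$, so that its own $\partial^\circ$ contributes lower-dimensional chambers only in cancelling pairs (again by the parity argument of Lemma~\ref{lemma_3n-7}), leaving the chamber support untouched. A secondary subtlety is ensuring that when we pass to a facet $C_{\omega'}\subset\stackrel{\circ}{P}_\sigma$ the chain attached is genuinely $l_\omega^\sigma$ and not some mixture over $\sigma\in\omega$; this is exactly Remark~\ref{Rem:ManjeDIm}, which isolates the single $\sigma$ with $C_{\omega'}\subset\stackrel{\circ}{P}_\sigma$, and I would lean on it to keep the inductive step well-defined.
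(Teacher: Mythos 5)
Your overall strategy---three successive passes, each modifying $c$ by an explicit boundary built from the product structure, with Theorem~\ref{Th:homology_F} supplying the fact that non-top homology of $F_{\omega}$ lives on the divisors---is the same as the paper's, and your treatment of condition~(2) is essentially correct up to one imprecision: $l_{\omega}^{F}$ is not itself a boundary in $F_{\omega}$; it is only the $F$-component of $\partial\bar{l}_{\omega}^{F}$ for a chain $\bar{l}_{\omega}^{F}$ in $F$, so subtracting $\partial(\sum_{\omega}C_{\omega}\times\bar{l}_{\omega}^{F})$ removes the $F$-part while deliberately dumping new terms into the $F_{\sigma}$'s, to be cleaned up in pass~(3). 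The real problem is in your pass for condition~(1). Adding $\partial(\sum C_{\omega}\times l)$ over the top chambers adjacent to an offending $C_{\omega'}$ does kill that one summand, but it simultaneously creates $C_{\omega''}\times l$ for \emph{every other} facet $C_{\omega''}$ of each $C_{\omega}$ lying in the relevant $\stackrel{\circ}{P}_{\sigma}$, i.e.\ new offending summands of the same dimension. Your appeal to the pairwise cancellation in Lemma~\ref{lemma_3n-7} does not apply: that cancellation occurs only when \emph{all} top chambers carry the \emph{same} chain $l^{F}$, which is not the situation here. The missing idea is the geometric one the paper uses: the cycle condition forces the offending chambers carrying a common $l_{\omega}^{\sigma}$ to assemble into a chain $c_{\omega,\sigma}=\sum C_{\tilde{\omega}}\subset\stackrel{\circ}{P}_{\sigma}$ with $\partial c_{\omega,\sigma}\subset\partial P_{\sigma}\cong S^{n-2}$; since $\dim\partial c_{\omega,\sigma}<n-2$ this boundary bounds a chain $\tilde{c}_{\omega,\sigma}$ on the sphere, and then $c_{\omega,\sigma}+\tilde{c}_{\omega,\sigma}$ bounds a chain $\hat{c}_{\omega,\sigma}$ in the ball $P_{\sigma}$, whose interior part $\hat{c}_{\omega,\sigma}^{0}\times l_{\omega}^{\sigma}$ is the correction that raises the chamber dimension by exactly one. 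Without this capping-off argument your process has no reason to terminate, and it also does not reach chambers of dimension $\leq n-3$ (which are not facets of top chambers at all), nor the degenerate cases where $C_{\omega}$ is a point or $\dim P_{\sigma}=n-2$.

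A second, smaller gap is in condition~(3). Surjectivity of $\eta^{\#}_{\omega,\bar{\omega}}$ is not what makes the boundary terms over $\partial\Delta_{n,2}$ harmless. The correction chain $\bar{d}_{\sigma_{0}}=P_{\sigma_{0},n-1}\times\bar{l}_{\omega}^{\sigma_{0}}$ produces a term $\sum C_{\bar{\omega}}\times\eta^{\#}_{\sigma_{0},\bar{\sigma}_{0}}(\bar{l}_{\omega}^{\sigma_{0}})$ over each facet $P_{\bar{\sigma}_{0}}\subset\partial\Delta_{n,2}$, and one must show these cancel against the analogous terms coming from the neighbouring admissible polytope $P_{\tilde{\sigma}}$ sharing that facet. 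This requires the identification $F_{\sigma_{0}}\cong F_{\tilde{\sigma}}$ across the common boundary stratum (obtained by deleting the $i$-th row of the matrix record) so that the chains $\bar{l}_{\omega}^{\sigma_{0}}$ and $\bar{l}_{\tilde{\omega}}^{\tilde{\sigma}}$ can be chosen to agree, as in~\eqref{granicabd}; you should make that compatibility explicit rather than relying on surjectivity of the chain map.
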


		\begin{proof}
			
			(1)  Let  $\dim C_{\omega}<n-1$ and consider such $C_{\omega}$ of lowest degree assuming it is not zero. Let $l_{\omega} = \sum _{\sigma\in \omega} l_{\omega}^{\sigma}$, and consider $\sigma\in \omega$ such that $l_{\omega}^{\sigma}\neq 0$ for $ \dim P_{\sigma}=n-1. $ Then $C_{\omega}\subset \stackrel{\circ}{P}_{\sigma}$ and there exists a facet $C_{\omega ^{'}}$ of $C_{\omega}$ such that $C_{\omega ^{'}}\subset \stackrel{\circ}{P}_{\sigma}$.  It follows that  it is defined $C_{\omega ^{'}}\times l_{\omega}^{\sigma}$ and it is a summand in $\partial  c=0$. So, there exists a chamber 
			$C_{\tilde{\omega}}\subset \stackrel{\circ}{P}_{\sigma}$ such that $C_{\omega ^{'}}$
			is a facet of $C_{\tilde{\omega}}$ and 
			$C_{\tilde{\omega}}\times l_{\omega}^{\sigma}$ is a summand in $c$. If $C_{\omega}$ is a point and by Remark \ref{Rem:Cycle_dim_0} we can assume that $ l_{\omega}^{\sigma} $ is not a point, then there obviously  exists  an $1$-dimensional chamber $C_{\tilde{\omega}} \subset \stackrel{\circ}{P}_{\sigma}$ such that $C_{\omega}$ is its vertex. Continuing for the other vertex of $ C_{\tilde{\omega}} $ and so on we can achieve to obtain a chain $ \sum C_{\tilde{\omega}} $ whose other vertex is from $ P_{\sigma^{\prime}}\subset \Ins{\Delta}_{n,2}$ that is a facet of $ P_{\sigma}. $ Then $ \partial (\sum C_{\tilde{\omega}}\times l_{\omega}^{\sigma})=C_{\omega}\times l_{\omega}^{\sigma}+\sum C_{\tilde{\omega}}\times \partial l_{\omega}^{\sigma}. $ So the lowest dimension of chambers in $ c $ can be assumed to be $ \geq 1. $
			
			If we continue in this way we obtain that $c$ contains the  chain $ d_{\omega, \sigma}= c_{\omega, \sigma}\times l_{\omega}^{\sigma}$, where $c_{\omega, \sigma} = \sum C_{\tilde{\omega}} \subset \stackrel{\circ}{P}_{\sigma}$ and  $\partial c_{\omega, \sigma}=0$ either $\partial c_{\omega, \sigma}\subset \partial P_{\sigma}$.     Thus, $c^{'}_{\omega, \sigma}=\partial c_{\omega, \sigma}$ is a cycle in $\partial P_{\sigma}\cong S^{n-2}$, while $\dim c_{\omega, \sigma}^{'}<n-2$. It follows that there exist a chain $\tilde{c}_{\omega, \sigma}$ on $\partial P_{\sigma}$ such that $\partial \tilde{c}_{\omega, \sigma} = c^{'}_{\omega, \sigma}$. Thus, $c_{\omega, \sigma} + \tilde{c}_{\omega, \sigma} =\partial \hat{c}_{\omega, \sigma}$  for some chain $\hat{c}_{\omega, \sigma}$ in $P_{\sigma}$. For $c_{\omega, \sigma}$ being a cycle we have that $\tilde{c}_{\omega, \sigma}=0$.
			If  $l_{\omega}^{\sigma}\neq 0$ only for $\dim P_{\sigma} =n-2$, then $ l_{\omega}^{\sigma} $ is a point. For   $\dim C_{\omega}< n-2 $ the same argument can be applied.  For  $\dim C_{\omega}=n-2$ we obtain the chain $d_{\omega, \sigma}=\sum C_{\omega}$ which contains all chambers of dimension $n-2$ in $\stackrel{\circ}{P}_{\sigma}$ and $\partial d_{\omega, \sigma} \subset \partial P_{\sigma}\subset \partial \Delta _{n,2}\simeq S^{n-2}$, where the last inclusion follows from  the description  of $P_{\sigma}$  given in Theorem~\ref{Th:admissiblepolytope}, so the same argument applies.
			\par If $\partial ^{'}d_{\omega, \sigma} \neq 0$, then there exists $C_{\omega}^{'}$ in a facet  $P_{\sigma ^{'}}$  of $P_{\sigma}$ such that $C_{\omega}^{'} \times l_{\omega ^{'}}^{\sigma ^{'}}$ is contained in $\partial ^{'}d_{\omega, \sigma}$. Thus, there exists summand  $C_{\hat{\omega}} \times l_{\hat{\omega}}^{\hat{\sigma}}$ in $c$  which contains $C_{\omega}^{'} \times l_{\omega ^{'}}^{\sigma ^{'}}$  in its boundary.  Repeating the same   argument for $C_{\hat{\omega}} \times l_{\hat{\omega}}^{\hat{\sigma}}$ we obtain the chain $d_{\hat{\omega}, \hat{\sigma}}$ and the chains $\tilde{c}_{\hat{\omega}, \hat{\sigma}}$, 
			$\hat{c}_{\hat{\omega}, \hat{\sigma}}$. Continuing this procedure for  all non zero boundary summands for $d_{\omega, \sigma}$ and then the  same for the chain $d_{\omega,\sigma}+\sum_{\hat{\omega},\hat{\sigma}}d_{\hat{\omega}, \hat{\sigma}}$ until we obtain a chain  $d = \sum _{\omega, \sigma} d_{\omega, \sigma}$ such that $\partial ^{'}d=0$.
			\par	Consider the chain $\hat{C} = \sum _{\omega, \sigma}  \hat{c}_{\omega,\sigma}^{0} \times l_{\omega}^{ \sigma}$, where  $\hat{c}_{\omega,\sigma}^{0}$ means that we consider only those chambers from the chain $\hat{c}_{\omega,\sigma}$ which belong to $\stackrel{\circ}{P}_{\sigma}$.  Then $\partial ^{0}\hat{C} = d +  \sum _{\omega, \sigma}  \hat{c}_{\omega,\sigma} \times \partial l_{\omega}^{ \sigma}$. Note also that  $\partial ^{'} \hat{C} =0$ since in the second repeating step of the previous procedure  we can always choose a chain $\tilde{c}_{\hat{\omega}, \hat{\sigma}}$  such that in $P_{\sigma ^{'}}$ it  overlaps with $\tilde{c}_{\omega, \sigma}$. 
			\par Altogether we obtain that  $\partial \hat{C}=  d + \sum _{\omega, \sigma}  \hat{C}_{\omega,\sigma} \times l_{\omega}^{ \sigma}$, which means that $d$ is homologous to a chain whose chambers have dimension greater by one. In the case $\partial ^{'}d_{\omega, \sigma} = 0$ we note that the chain $\hat{c}_{\omega, \sigma}$ place the role of $\hat {C}$.  Repeating this argument we prove the statement. 
			

			\par (2) Let us consider a cycle $c= \sum C_{\omega}\times l_{\omega}$, where $\dim C_{\omega}=n-1$. It follows from~\eqref{boundch} that $\partial l_{\omega}=0$ for any $\omega$. If $ l_{\omega}^{F}\neq 0 $ then $c$ contains all $(n-1)$-dimensional chambers and $l_{\omega}^{F}=l^{F}$ for  all $ \omega $, that is $c$ contains the chain $d= \sum _{\omega}C_{\omega}\times l^{F}$.
		\par Assume that  $\dim l^{F}  < \dim F$. By Theorem \ref{Th:homology_F} we have that $l_{\omega}= \tilde{l}_{\omega} + \partial \bar{l}_{\omega}$, where $\tilde{l}_{\omega}$ is a chain of fundamental cycles determined by divisors.  It follows that $l_{\omega}^{F}\subset \partial \bar{l}_{\omega}$. Now, $\bar{l}_{\omega}= \bar{l}_{\omega}^{F} + \sum _{\sigma}\bar{l}_{\omega}^{\sigma}$, so $\partial \bar{l}_{\omega} = (\partial \bar{l}_{\omega}^{F})^{F} + \sum _{\sigma}\partial(\bar{l}_{\omega}^{F})^{\sigma} + \partial (\sum _{\sigma} \bar{l}_{\omega}^{\sigma})$. Thus, $l_{\omega}^{F}= (\partial  \bar{l}_{\omega}^{F})^{F}$. 
			\par For the chain $\bar{d}= \sum _{\omega} C_{\omega}\times \bar{l}_{\omega}^{F}$,  we obtain that  
			\[
			\partial \bar{d}= d + \sum _{\sigma\in \omega} C_{\omega}\times  (\partial  \bar{l}_{\omega}^{F})^{\sigma} =  d+ \tilde{d},
			\]
			where the chain $\tilde{d}$ does  not contain a summand from the main stratum. It follows that $c$ is homologous  to the chain $c+d+\tilde{d}$ which does not contain a chain from the main stratum.

			(3) We again consider a cycle $c= \sum C_{\omega}\times l_{\omega}$, $\dim C_{\omega}=n-1$, which implies that $\partial ^{\omega} l_{\omega}=0$ for any $\omega$.    Assume that there exists $ l_{\omega}^{\sigma}\neq 0$ such that $\dim l_{\omega}^{\sigma} < \dim F_{\sigma}$ and consider such  $\sigma _{0}$ for which $\dim F_{\sigma _{0}}$ is maximal among all $ \omega $'s. It follows that $l_{\omega}$ is a chain in $ F_{<\sigma_0>} = \cup _{\sigma \in \omega} F_{\sigma}$, where  $\dim F_{\sigma}\leq \dim F_{<\sigma_0>}$. Note that $F_{<\sigma_0>}$ is a compact space which is a compactification of all $F_{\sigma}$, $\sigma \in \omega$ and $\dim F_{\sigma}=\dim F_{<\sigma_0>}$. Since by \cite{buchstaber2024weighted} any $F_{\sigma}$ is homeomorphic to some $F_m$, it follows by Theorem \ref{Th:homology_F} that the homology groups of $F_{<\sigma_0>}$ are spanned by the fundamental cycles of  contributing $F_{\sigma}$. Thus, $l_{\omega} = \tilde{l}_{\omega} + \partial \bar{l}_{\omega}$, where $\tilde{l}_{\omega}$ are a chain  of the fundamental cycles spanning homology of $F_{<\sigma_0>}$ and $ \bar{l}_{\omega}$ is a chain in $F_{<\sigma_0>}$. We deduce that $l_{\omega}^{\sigma _0} \subset \partial \bar{l}_{\omega}$, that is $l_{\omega}^{\sigma_{0}} = (\partial \bar{l}_{\omega}^{\sigma _0})^{\sigma _0}$.
			\par On the other hand, for the summand  $C_{\omega}\times l_{\omega}^{\sigma_0}$  in the cycle $c,$ there must exist a facet, so we obtain the chain $d_{\sigma _0} =\sum _{\omega} C_{\omega}\times l _{\omega}^{\sigma _0}$, where $C_{\omega}$'s  give  all chambers of dimension $(n-1)$  in $P_{\sigma_{0}}$ and $l_{\omega}^{\sigma_{0}}$ does not depend on $\omega$, that is $ d_{\sigma_{0}}=P_{\sigma_0,n-1}\times l_{\omega}^{\sigma _0}. $
			\par For the chain $\bar{d}_{\sigma _0}= P_{\sigma_0,n-1}\times \bar{l}_{\omega}^{\sigma _0}$,  we obtain that  
			\[
			\partial \bar{d} _{\sigma _0}= d_{\sigma _0} + \sum _{\sigma\in \omega} C_{\omega}\times  (\partial  \bar{l}_{\omega}^{\sigma _0})^{\sigma}  + \sum _{C_{\bar{ \omega}}\in P_{\bar{\sigma_0}}} C_{\bar{\omega}}\times \eta^{\#}_{\sigma_{0}, \bar{\sigma_{0}}}(\bar{l}_{\omega}^{\sigma _0}) =  d_{\sigma _0} +  d^{'}_{\sigma _0} + \bar{d}_{\sigma _0},  
			\]
			Therefore $c$ is homologous to the chain $c +d_{\sigma _0} + d^{'}_{\sigma _0} +\bar{d}_{\sigma _0}$, which does not have a summand with a chain from $F_{\sigma _0}$. Moreover, if $l_{\omega}^{\sigma}$ contributes to $d^{'}_{\sigma _0}$  then $\dim F_{\sigma} < \dim F_{\sigma _0}$, while  $ \hat{\mu}(\bar{d}_{\sigma _0})\subset \partial \Delta _{n,2}.$

			Note that $\partial d_{\sigma _0}  = \sum _{\bar{\sigma_{0}}} P_{\bar{\sigma_{0}},n-2}\times \eta^{\#}_{\sigma_{0}, \bar{\sigma_{0}}}(l_{\omega}^{\sigma _0}) $, where $P_{\bar{\sigma},n-2}\subset \partial \Delta _{n,2}$ is determined by a facet $ P_{\bar{\sigma}} $ of $P_{\sigma}$. Since $\partial d_{\sigma _0}\subset \partial c=0$,  it follows that  for any $\bar{\sigma_{0}}$ there exists a chain $d_{\tilde{\sigma}} = P_{\tilde{\sigma},n-1}\times l^{\tilde{\sigma}}_{\tilde{\omega}}$ where $ \tilde{\omega} $ is arbitrary such that $ C_{\tilde{\omega}}\subset P_{\tilde{\sigma},n-1},\, \dim( C_{\tilde{\omega}})=n-1 $, then $P_{\bar{\sigma_{0}}}$ is a  facet of $P_{\tilde{\sigma}}$ and $\partial ^{'}d_{\tilde{\sigma}} = \partial ^{'}d_{\sigma _0}$ on $P_{\bar{\sigma_{0}}}$. If continue in this way we obtain the cycle  $d= \sum _{\sigma}d_{\sigma}.$
			\par Note further the following:  a facet $P_{\bar{\sigma}}$  belongs to some hypersimplex $\Delta _{n-1,2}$ on $\partial \Delta _{n,2}$ as   its space of parameters is not a point. Therefore, $P_{\bar{\sigma}}$  is  the intersection of $P_{\sigma}$ and a hyperplane $x_i=0$ for some $i$.  The stratum $W_{\bar{\sigma}}$  is a common boundary stratum for $W_{\sigma}$ and $W_{\tilde{\sigma}}$ and it  is obtained  from them by removing the $i$-th row from their matrix record. This  gives that   $F_{\sigma}\cong F_{\tilde{\sigma}}$.  Therefore, we can take $l_{\omega}^{\sigma_{0}} = l_{\tilde{\omega}}^{\tilde{\sigma}}$ and $\bar{l}_{\omega}^{ \sigma_{0}}=\bar{l}_{\tilde{\omega}}^{ \tilde{\sigma}},$ which implies that $\eta^{\#}_{\sigma_{0}, \bar{\sigma_{0}}}(\bar{l}_{\omega}^{\sigma _0}) =\eta^{\#}_{\tilde{\sigma_{0}}, \bar{\sigma_{0}}}(\bar{l}_{\tilde{\omega}}^{\tilde{\sigma}})$.
			\par Consider the chain  $\bar{d}= \sum _{\sigma}d_{\sigma} = \sum _{\sigma} P_{\sigma,n-1}\times \bar{l}_{\omega}^ {\sigma}$.  Then 
			\begin{equation}\label{granicabd}
				\partial \bar{d}= d + \sum_{\sigma}\sum_{\bar{\sigma}} P_{\bar{\sigma},n-2}\times\eta ^{\#}_{\sigma, \bar{\sigma}}( \bar{l}_{\omega}^ {\sigma}) =d. 
			\end{equation}	
			
		\end{proof}

		We emphasize that the proofs of the second  and third statements in Theorem~\ref{main} imply:
		
		\begin{cor}
			Any chain  $c=\sum C_{\omega}\times l_{\omega}$ in $X_n$ such that $\dim C_{\omega} = n-1$ for any $\omega$   and $\partial^{0}c=0$ is homologous to a  chain which does not contain a summand for which $l_{\omega}^{F}\neq 0$ and $\dim l_{\omega}^{F}<\dim F$.
		\end{cor}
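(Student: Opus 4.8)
The plan is to isolate, from the proofs of Theorem~\ref{main}(2) and (3), the part of the argument that concerns the main-stratum component, and to check that it needs only the hypothesis $\partial^{0}c=0$ rather than $\partial c=0$. First I would exploit the vanishing of $\partial^{0}c$: since every chamber occurring in $c$ has maximal dimension $n-1$, its facets $C_{\omega^{'}}$ in $\stackrel{\circ}{\Delta}_{n,2}$ have dimension $n-2$, and each such interior facet is shared by exactly two top chambers. Reading off the two parts of \eqref{boundch} separately --- the sum over top cells $\sum_{\omega}C_{\omega}\times\partial l_{\omega}$ and the sum over interior facets --- the condition $\partial^{0}c=0$ forces $\partial l_{\omega}=0$ for every $\omega$ and forces the component $l_{\omega}^{F}$ to agree across every interior facet of $\stackrel{\circ}{\Delta}_{n,2}$. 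As the top chambers covering $\stackrel{\circ}{\Delta}_{n,2}$ form a connected complex, $l_{\omega}^{F}=l^{F}$ is then independent of $\omega$, and if $l^{F}\neq 0$ every $(n-1)$-dimensional chamber occurs in $c$, so $c$ contains the subchain $d=(\sum_{\omega}C_{\omega})\times l^{F}$.

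Next, assuming $\dim l^{F}<\dim F$, I would run the homological reduction of Theorem~\ref{main}(2). Because $\partial l_{\omega}=0$, each $l_{\omega}$ is a cycle in $F_{\omega}$, so by Theorem~\ref{Th:homology_F} we may write $l_{\omega}=\tilde{l}_{\omega}+\partial\bar{l}_{\omega}$ with $\tilde{l}_{\omega}$ a combination of fundamental classes of the outgrowths $F_{\sigma}$, $\sigma\in\omega$. Those outgrowths lie in $\bar{F}\setminus F$ and have dimension strictly below $\dim F$; since $\dim l^{F}<\dim F$ as well, the main-stratum component of $\tilde{l}_{\omega}$ vanishes, whence $l^{F}=(\partial\bar{l}_{\omega}^{F})^{F}$, where $\bar{l}_{\omega}^{F}$ is the main-stratum part of $\bar{l}_{\omega}$. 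Since $l^{F}$ does not depend on $\omega$, the key move is to fix a single $\omega_{0}$ and put $\bar{l}^{F}:=\bar{l}_{\omega_{0}}^{F}$, so that $(\partial\bar{l}^{F})^{F}=l^{F}$ with $\bar{l}^{F}$ independent of $\omega$; this is what makes the cancellation below work.

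Finally I would form $\bar{d}=(\sum_{\omega}C_{\omega})\times\bar{l}^{F}$, the sum over all $(n-1)$-dimensional chambers, and compute $\partial\bar{d}$. Its main-stratum part consists of $\sum_{\omega}C_{\omega}\times(\partial\bar{l}^{F})^{F}=d$ together with the interior-facet terms $\sum C_{\omega^{'}}\times\bar{l}^{F}$, which cancel in pairs since every interior facet is shared by two top chambers carrying the common chain $\bar{l}^{F}$; the remaining contributions --- namely $\sum_{\sigma\in\omega}C_{\omega}\times(\partial\bar{l}^{F})^{\sigma}$, lying in the spaces $F_{\sigma}$ of proper strata, and the boundary-facet terms $C_{\bar{\omega}}\times\eta^{\#}_{\omega,\bar{\omega}}(\bar{l}^{F})$ over $\partial\Delta_{n,2}$ produced as in Remark~\ref{Nagranici} --- contain no main-stratum summand over $\stackrel{\circ}{\Delta}_{n,2}$. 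Hence $c+\partial\bar{d}$ is homologous to $c$, has $d$ deleted, and carries no summand with $l_{\omega}^{F}\neq 0$ and $\dim l_{\omega}^{F}<\dim F$. The step needing the most care is this last one: one must verify that $(\partial C_{\omega})\times\bar{l}^{F}$ restricted to a facet lying on $\partial\Delta_{n,2}$ lands only in the spaces $F_{\bar{\sigma}}$ and never back in $F=F_{n}$ --- which follows from the product description $h_{\omega}$ of $\hat{C}_{\omega}$ and from $\eta_{\sigma,\bar{\sigma}}$ mapping $F$ into spaces of parameters of strictly smaller strata, exactly the bookkeeping already used in part~(3) of Theorem~\ref{main}.
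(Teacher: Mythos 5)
Your proposal is correct and follows essentially the same route as the paper: the paper derives this corollary precisely by observing that the proof of part (2) of Theorem~\ref{main} uses only $\partial^{0}c=0$ (to force $\partial^{\omega}l_{\omega}=0$, the $\omega$-independence of $l_{\omega}^{F}$, and the presence of the subchain $d=(\sum_{\omega}C_{\omega})\times l^{F}$), and then cancels $d$ by the boundary of $\bar{d}=\sum_{\omega}C_{\omega}\times\bar{l}_{\omega}^{F}$ exactly as you do. Your explicit choice of a single $\omega$-independent $\bar{l}^{F}$ and your check that the boundary-facet terms over $\partial\Delta_{n,2}$ contribute no main-stratum summand make the cancellation slightly more careful than the paper's write-up, but the argument is the same.
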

		
		\begin{cor}
			Any chain $c=\sum C_{\omega}\times l_{\omega}$ in $X_n$ such that  $\dim C_{\omega} = n-1$ for any $\omega$   and $\partial^{0}c=0$  is homologous to  a chain $c^{'} + c_0$, where $\hat{\mu}(c^{'})\subset \partial \Delta _{n,2}$ and $\hat{\mu}(c_0) \subset \stackrel{\circ}{\Delta}_{n,2}$ and $c_0$   does not contain a summand  for which   $l_{\omega}^{\sigma}\neq0$ and $\dim l_{\omega}<\dim F_{\sigma}$.
		\end{cor}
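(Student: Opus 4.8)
The plan is to re-run the arguments in parts (2) and (3) of the proof of Theorem~\ref{main}, with two modifications: the hypothesis is weakened from ``$c$ is a cycle'' to $\partial^{0}c=0$, and the pieces that the reduction pushes onto $\partial\Delta_{n,2}$ are collected into $c^{'}$ rather than being made to cancel.

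First I would extract from $\partial^{0}c=0$ the two structural facts that the proof of Theorem~\ref{main} actually used. Since every $C_{\omega}$ has dimension $n-1$ it lies in $\stackrel{\circ}{\Delta}_{n,2}$, so $\hat{\mu}(c)\subset\stackrel{\circ}{\Delta}_{n,2}$ and $\partial^{0}c$ is computed by \eqref{boundch}. The two sums there are carried by chambers of dimensions $n-2$ and $n-1$ respectively, hence vanish separately: the second gives $\partial^{\omega}l_{\omega}=0$ for every $\omega$, and the first gives, for each admissible $\sigma$ and each two maximal chambers of $\stackrel{\circ}{P}_{\sigma}$ sharing a facet, that their $F_{\sigma}$-components coincide. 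Propagating over the connected set of maximal chambers of $\stackrel{\circ}{P}_{\sigma}$, whenever $l_{\omega}^{\sigma}\neq0$ the chain $l_{\omega}^{\sigma}=:l^{\sigma}$ is independent of $\omega$, and $c$ contains $d_{\sigma}=P_{\sigma,n-1}\times l^{\sigma}$ as its entire $F_{\sigma}$-component. These are exactly the inputs used in the proof of Theorem~\ref{main} beyond the cell structure of Remark~\ref{cellspaceparam}; no stronger hypothesis on $\partial c$ is needed.

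Next I would induct downward on $d^{\ast}:=\max\{\dim F_{\sigma}\ :\ C_{\omega}\times l_{\omega}^{\sigma}\text{ is a summand of }c,\ l_{\omega}^{\sigma}\neq0,\ \dim l_{\omega}^{\sigma}<\dim F_{\sigma}\}$. If there is no such summand, take $c_{0}=c$ and $c^{'}=0$. Otherwise fix $\sigma_0$ with $\dim F_{\sigma_0}=d^{\ast}$; since $\dim F_{\sigma}<\dim F$ for all $F_{\sigma}\neq F$, the case $F_{\sigma_0}=F$, if it occurs, is the first step, and one applies the computation of part (2): by Theorem~\ref{Th:homology_F} together with $\partial^{\omega}l_{\omega}=0$, write $l^{F}=(\partial\bar{l}_{\omega}^{F})^{F}$, set $\bar{d}=\sum_{\omega}C_{\omega}\times\bar{l}_{\omega}^{F}$, and replace $c$ by $c+\partial\bar{d}$; this deletes the $F$-component of $c$ and adds only $F_{\sigma}$-summands with $\dim F_{\sigma}<\dim F$, still over maximal chambers. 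If $\dim F_{\sigma_0}<\dim F$, one applies the computation of part (3): by Theorem~\ref{Th:homology_F} for the compact space $F_{<\sigma_0>}$ and $\partial^{\omega}l_{\omega}=0$, write $l_{\omega}=\tilde{l}_{\omega}+\partial\bar{l}_{\omega}$ with $\tilde{l}_{\omega}$ a sum of fundamental cycles, so $l^{\sigma_0}=(\partial\bar{l}_{\omega}^{\sigma_0})^{\sigma_0}$; form $\bar{d}_{\sigma_0}=P_{\sigma_0,n-1}\times\bar{l}_{\omega}^{\sigma_0}$ and, exactly as in the displayed computation of $\partial\bar{d}_{\sigma_0}$ in the proof of Theorem~\ref{main}(3), obtain
\[
\partial\bar{d}_{\sigma_0}=d_{\sigma_0}+d^{'}_{\sigma_0}+\bar{d}^{'}_{\sigma_0},
\]
where $d^{'}_{\sigma_0}$ involves only $F_{\sigma}$ with $\dim F_{\sigma}<\dim F_{\sigma_0}$ and $\hat{\mu}(\bar{d}^{'}_{\sigma_0})\subset\partial\Delta_{n,2}$. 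Replacing $c$ by $c+\partial\bar{d}_{\sigma_0}$ deletes the $F_{\sigma_0}$-component of $c$, adds interior summands only of strictly smaller $F_{\sigma}$-dimension, and moves $\bar{d}^{'}_{\sigma_0}$ onto $\partial\Delta_{n,2}$. Doing this for all $\sigma_0$ realizing $d^{\ast}$ (which do not interfere, as each $d^{'}_{\sigma_0}$ avoids $F_{\sigma}$ of dimension $d^{\ast}$) strictly lowers $d^{\ast}$, and the interior part of the modified chain again satisfies $\partial^{0}(\cdot)=0$ (using that $\partial^{0}\partial X=0$ for any chain $X$, being the interior part of $\partial\partial X=0$, and that $\partial^{0}$ vanishes on chains carried by $\partial\Delta_{n,2}$). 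After finitely many steps the interior part $c_{0}$ carries no deficient summand, and the accumulated contributions over $\partial\Delta_{n,2}$ form the required $c^{'}$.

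The step I expect to be more than bookkeeping is the coherent choice of the null-homologies $\bar{l}_{\omega}^{\sigma}$ at distinct strata of equal $F_{\sigma}$-dimension: when two such strata $W_{\sigma_0}$, $W_{\tilde{\sigma}}$ share a facet $P_{\bar{\sigma_0}}\subset\partial\Delta_{n,2}$ one has $F_{\sigma_0}\cong F_{\tilde{\sigma}}$, and the choices must satisfy $\eta^{\#}_{\sigma_0,\bar{\sigma_0}}(\bar{l}_{\omega}^{\sigma_0})=\eta^{\#}_{\tilde{\sigma_0},\bar{\sigma_0}}(\bar{l}_{\tilde{\omega}}^{\tilde{\sigma}})$, so that the terms appearing over $\partial\Delta_{n,2}$ assemble into a genuine chain $c^{'}$ and do not obstruct the induction. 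This is handled precisely as in the corresponding passage of the proof of Theorem~\ref{main}(3); the rest is just tracking which summands lie over $\stackrel{\circ}{\Delta}_{n,2}$ and which over $\partial\Delta_{n,2}$.
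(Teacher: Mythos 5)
Your proposal is correct and follows essentially the same route as the paper, which derives this corollary directly by observing that the arguments in parts (2) and (3) of the proof of Theorem~\ref{main} use only $\partial^{\omega}l_{\omega}=0$ and the constancy of $l_{\omega}^{\sigma}$ over $\stackrel{\circ}{P}_{\sigma}$ (both consequences of $\partial^{0}c=0$ when all chambers are maximal), with the boundary contributions collected into $c^{'}$ rather than cancelled. Your explicit downward induction on $d^{\ast}$ and the check that $\partial^{0}$ of the interior part is preserved are just a careful write-up of what the paper leaves implicit.
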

		
		Theorem~\ref{main} leads to the following explicit result.		
		\begin{cor}\label{lemma_3n-8}
			\[
			H_{3n-8}(X_n, \Z_2) =0
			\]
		\end{cor}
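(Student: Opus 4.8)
The plan is to deduce this from Theorem~\ref{main} by a dimension-and-parity count, after first noticing that a cycle of this degree automatically lies over the interior of the hypersimplex. Recall that $\dim X_n = 3n-7$, so $3n-8 = \dim X_n - 1$ and we are looking at the codimension-one homology.

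\textbf{Reduction to the interior and to Theorem~\ref{main}.} Let $c$ be a $\Z_2$-cycle with $\dim c = 3n-8$. By~\eqref{Eq:space_X_n} the preimage $\hat{\mu}^{-1}(\partial\Delta_{n,2})$ is the subcomplex $\bigcup_{i}X_{n-1}(i)\cup\bigcup_{i}\Delta_{n-1,1}(i)$, whose dimension is $\max(3(n-1)-7,\,n-2) = 3n-10 < 3n-8$; hence $c$ contains no cell lying over $\partial\Delta_{n,2}$, so $\hat{\mu}(c)\subset\stackrel{\circ}{\Delta}_{n,2}$ and Theorem~\ref{main} applies to $c$. Thus $c$ is homologous to a cycle $c' = \sum_{\omega} C_{\omega}\times l_{\omega}$ none of whose summands is of type~(1), (2) or~(3).

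\textbf{The count.} By the exclusion of type~(1), every chamber occurring in $c'$ has $\dim C_{\omega}=n-1$, so each fibre chain is homogeneous of degree $\dim l_{\omega} = (3n-8)-(n-1) = 2n-7$. Write $l_{\omega} = l_{\omega}^{F} + \sum_{\sigma} l_{\omega}^{\sigma}$ with $\sigma$ ranging over admissible sets having $F_{\sigma}\neq F$. Since $\dim F = 2(n-3) = 2n-6$ and $2n-7 < 2n-6$, the exclusion of type~(2) forces $l_{\omega}^{F}=0$; and by Remark~\ref{cellspaceparam} every $F_{\sigma}$ with $F_{\sigma}\neq F$ is a point or is homeomorphic to some $F_m$ with $4\le m\le n-1$, so $\dim F_{\sigma}\le 2n-8 < 2n-7$ and a degree-$(2n-7)$ chain in $F_{\sigma}$ must vanish (equivalently, condition~(3) would force $\dim l_{\omega}^{\sigma}=\dim F_{\sigma}$, an even integer, contradicting the odd number $2n-7$). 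Hence $l_{\omega}=0$ for every $\omega$, i.e.\ $c'=0$, so $c$ bounds and $H_{3n-8}(X_n;\Z_2)=0$.

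\textbf{Where the work is.} The only substantive ingredient is Theorem~\ref{main} itself; once it is available the rest is bookkeeping, the two auxiliary facts being the estimate $\dim\hat{\mu}^{-1}(\partial\Delta_{n,2}) = 3n-10$ (so that the hypothesis $\hat{\mu}(c)\subset\stackrel{\circ}{\Delta}_{n,2}$ of Theorem~\ref{main} is met for free in this degree) and the parity observation that every space of parameters $F_{\sigma}$ has even real dimension while $2n-7$ is odd. I do not expect any genuine obstacle in carrying this out.
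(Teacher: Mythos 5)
Your proof is correct and follows essentially the same route as the paper: reduce to a cycle over $\stackrel{\circ}{\Delta}_{n,2}$ of the form $\sum C_{\omega}\times l_{\omega}$ with $\dim C_{\omega}=n-1$ and $\dim l_{\omega}=2n-7$, note that $\dim F_{\sigma}\le 2n-8$ for $F_{\sigma}\neq F$ forces $l_{\omega}=l_{\omega}^{F}$ to live in $F$, and then invoke part (2) of Theorem~\ref{main} since $2n-7<\dim F=2n-6$. Your version is in fact slightly more careful than the paper's (you justify explicitly that $\hat{\mu}(c)\subset\stackrel{\circ}{\Delta}_{n,2}$ via the dimension count $\dim\hat{\mu}^{-1}(\partial\Delta_{n,2})=3n-10$, which the paper leaves implicit).
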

		
		\begin{proof}
			The cycles of dimension $3n-8 $ are of the form $c= \sum _{\omega}C_{\omega}\times l_{\omega}$, where $\dim C_{\omega}=n-1$ and $l_{\omega}$ is a chain in $F_{\omega}$ and $\dim l_{\omega}= 2n-7$. 
			Since  $\dim F_{\sigma}\leq 2n-6$ it follows that $l_{\omega}=l_{\omega}^{F}$ is a chain in $F$.  As $\dim l_{\omega}^{F} <\dim F$ we deduce from  Theorem~\ref{main}  that $c$ is homologous to zero. 
		\end{proof}
				
		Using Theorem \ref{main} we deduce more explicit expressions for the cycles in $X_n$.

\begin{cor}\label{admcycle}
Any cycle $c$  in $X_n$ such that $\hat{\mu}(c)\subset \stackrel{\circ}{\Delta}_{n,2}$ is homologous to 
\[
c = \sum _{\sigma} P_{\sigma, n-1}\times l^{\sigma},
\]
where   $P_{\sigma, n-1}$ is the union of the chambers of dimension $n-1$ from  $\stackrel{\circ}{P}_{\sigma}$ of an admissible polytope   $P_{\sigma}$,  $\dim P_{\sigma}=n-1$ and  $l^{\sigma}$ is a chain in $F_{\sigma}$ of maximal dimension $\dim F_{\sigma}$. 
\end{cor}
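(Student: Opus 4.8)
The plan is to read this statement off Theorem~\ref{main} together with the facet bookkeeping already carried out in parts~(2) and~(3) of its proof. First I would apply Theorem~\ref{main} to replace $c$ by a homologous cycle, still denoted $c=\sum C_{\omega}\times l_{\omega}$, no summand of which satisfies any of the conditions~(1)--(3). Hence $\dim C_{\omega}=n-1$ for every $\omega$ occurring; and writing $l_{\omega}=\sum_{\sigma\in\omega}l_{\omega}^{\sigma}$ with $l_{\omega}^{\sigma}$ a chain in $F_{\sigma}$ (and $l_{\omega}^{F}$ the main-stratum part), each nonzero $l_{\omega}^{\sigma}$ has $\dim l_{\omega}^{\sigma}=\dim F_{\sigma}$. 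Every chamber factor occurring in a summand of $c$ lies in $\stackrel{\circ}{\Delta}_{n,2}$, and since it now has dimension $n-1=\dim\Delta_{n,2}$, the inclusion $C_{\omega}\subset\stackrel{\circ}{P}_{\sigma}$ (which holds whenever $l_{\omega}^{\sigma}\neq0$, as $\sigma\in\omega$) forces $\dim P_{\sigma}=n-1$. In particular $\hat{\mu}(c)\subset\stackrel{\circ}{\Delta}_{n,2}$ is maintained and only admissible polytopes of dimension $n-1$ are involved.

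Next I would group $c$ by $\sigma$, writing $c=\sum_{\sigma}c_{\sigma}$ with $c_{\sigma}=\sum_{C_{\omega}\subset\stackrel{\circ}{P}_{\sigma}}C_{\omega}\times l_{\omega}^{\sigma}$, and exploit $\partial c=0$ through the interior boundary formula~\eqref{boundch}. Fix an interior chamber $C_{\omega'}$ of dimension $n-2$; it is a facet of exactly two $(n-1)$-dimensional chambers $C_{\omega_1},C_{\omega_2}$, and for every admissible $P_{\tau}$ with $C_{\omega'}\subset\stackrel{\circ}{P}_{\tau}$ one has $C_{\omega_1},C_{\omega_2}\subset\stackrel{\circ}{P}_{\tau}$ as well, since a neighbourhood in $\stackrel{\circ}{\Delta}_{n,2}$ of a relative-interior point of $C_{\omega'}$ lies in $\stackrel{\circ}{P}_{\tau}$ and the chamber decomposition refines the admissible-polytope decomposition. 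Consequently the part of $\partial^{\circ}c$ over $C_{\omega'}$ equals $\sum_{\tau}C_{\omega'}\times\bigl(l_{\omega_1}^{\tau}+l_{\omega_2}^{\tau}\bigr)$, and since cells lying in distinct spaces of parameters $F_{\tau}$ are linearly independent in the cellular chain group of $X_{n}$, its vanishing forces $l_{\omega_1}^{\tau}=l_{\omega_2}^{\tau}$ for each such $\tau$. As $P_{\tau}$ is convex, the adjacency graph of its top-dimensional interior chambers is connected, so $l_{\omega}^{\tau}$ is independent of $\omega$ among $C_{\omega}\subset\stackrel{\circ}{P}_{\tau}$; denote this common chain $l^{\tau}$. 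Then $c_{\tau}=\bigl(\sum_{C_{\omega}\subset\stackrel{\circ}{P}_{\tau}}C_{\omega}\bigr)\times l^{\tau}=P_{\tau,n-1}\times l^{\tau}$, and summing over $\tau$ gives $c=\sum_{\tau}P_{\tau,n-1}\times l^{\tau}$ with $\dim l^{\tau}=\dim F_{\tau}$ by the first step.

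The substantive content is really Theorem~\ref{main} itself; the remaining obstacle here is purely organizational: to be careful that a codimension-one interior chamber contributes to $\partial^{\circ}c$ exactly its two adjacent top chambers, that contributions in different $F_{\tau}$'s cannot cancel one another, and that convexity of $P_{\tau}$ propagates the common-chain conclusion to all of $P_{\tau,n-1}$. All three facts are already implicit in the proof of Theorem~\ref{main}, where the chains $d_{\sigma_0}=P_{\sigma_0,n-1}\times l^{\sigma_0}$ are produced, so the write-up amounts to consolidating those reductions.
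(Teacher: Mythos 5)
Your proposal is correct and follows essentially the same route as the paper: first invoke Theorem~\ref{main} to reduce to summands with $\dim C_{\omega}=n-1$ and $\dim l_{\omega}^{\sigma}=\dim F_{\sigma}$, then use the cycle condition on interior codimension-one walls to propagate a common chain $l^{\sigma}$ across all top-dimensional chambers of $\stackrel{\circ}{P}_{\sigma}$. The paper's write-up is terser, but the facet-adjacency and non-cancellation arguments you spell out are exactly what it is implicitly using.
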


\begin{proof}
By Theorem~\ref{main} we can assume that $c =  \sum C _{\omega}\times l_{\omega}^{\sigma}$, where $\dim C_{\omega}=n-1$ and $\dim l_{\omega}^{\sigma} = \dim F_{\sigma}$. Then $C_{\omega}\subset \stackrel{\circ}{P}_{\sigma}$ for $\sigma\in \omega$ and since $\partial C_{\omega}$ has at least one facet  $C_{\omega^{\prime}}$ which belongs to $\stackrel{\circ}{P}_{\sigma}$ we obtain the chain $P_{\sigma, n-1}\times l^{\sigma}$, where $l^{\sigma}=l_{\omega}^{\sigma}$. This chain contains all summands from $c$ of the form $C_{\omega}\times l_{\omega}^{\sigma}$. Repeating the argument with left summands in $c$, we prove the statement.
\end{proof} 

\begin{rem}
We further use notation $l^{\sigma}= F_{\sigma}$ since $\dim l^{\sigma}= \dim F_{\sigma}$.
\end{rem}

\par The following result is in the line with the result of \cite{buchstaber2019toric} for $ n=5. $
\begin{cor}\label{Cycles_3n-9}
The group of cycles  in $X_n$ of dimension $3n-9$ has a basis of the from 
\begin{equation}\label{cyclebasis}
	K_{mi, n-1}\times F_{mi}+ K_{mj, n-1}\times F_{mj}+ K_{ij, n-1}\times F_{ij},
\end{equation}
where $1\leq m\leq n-2$  and $m+1\leq i<j\leq n$ and  $K_{ij}$ is an admissible polytope which does not contain just one vertex $\Lambda _{ij}$ of $\Delta _{n,2}$, while  $F_{ij}$ it the chain  of maximal dimension  in  the space of parameters $F_{ij}$ of the corresponding stratum $W_{ij}$.
\end{cor}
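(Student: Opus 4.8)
The plan is to use Corollary~\ref{admcycle} to put the cycle in normal form, to compute the boundary of each summand of that form, and to recognize the resulting condition as membership in the $\Z_2$-cycle space of the complete graph $K_n$ on $\{1,\dots,n\}$.

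First I would note that a chain of dimension $3n-9$ has no cell over $\partial\D{n}$: by~\eqref{Eq:space_X_n} such cells lie in some $X_{n-1}(i)$ or $\Delta_{n-1,1}(i)$, of dimensions $3n-10$ and $n-2$, both strictly smaller than $3n-9$. Hence every $(3n-9)$-cycle is of the type treated by Corollary~\ref{admcycle}, so up to homology $c=\sum_\sigma P_{\sigma,n-1}\times l^\sigma$ with $\dim P_\sigma=n-1$ and $\dim l^\sigma=\dim F_\sigma$; comparing dimensions gives $\dim F_\sigma=2n-8=\dim F_{n-1}$. By Remark~\ref{cellspaceparam} this forces $F_\sigma\cong F_{n-1}$, which occurs precisely when $\sigma$ is obtained from $\binom n2$ by removing one pair $\{i,j\}$; in that case $P_\sigma=\D{n}\cap\{x_i+x_j\le 1\}$ is the admissible polytope $K_{ij}$ of dimension $n-1$ not containing the single vertex $\Lambda_{ij}$ (admissibility by Theorem~\ref{Th:admissiblepolytope} with $S=\{i,j\}$). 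Writing $F_{ij}$ for the top chain $l^\sigma$ in $F_\sigma$, I would thus reduce $c$ to $\sum_{ij}\epsilon_{ij}\,K_{ij,n-1}\times F_{ij}$ with $\epsilon_{ij}\in\Z_2$.

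The key computation is $\partial(K_{ij,n-1}\times F_{ij})$. Its interior component vanishes: $\partial^{\omega}F_{ij}=0$ since $F_{ij}$ is a top cycle in the fibre, the internal facets of $K_{ij,n-1}$ cancel in pairs, and the remaining interior facet $\D{n}\cap\{x_i+x_j=1\}$ lies on $\partial P_\sigma$, so by Remark~\ref{Rem:ManjeDIm} the cells over it with fibre in $F_\sigma$ are not in $W_\sigma/T^n$ and contribute nothing. For the component over $\partial\D{n}$ I would run through the facets of $K_{ij}$ lying in $\partial\D{n}$: over a simplex facet $\Delta_{n-1,1}(k)$ the space of parameters is a point, so $\eta^{\#}$ annihilates the positive-dimensional chain $F_{ij}$; over a facet $\Delta_{n-1,2}(k)$ with $k\notin\{i,j\}$ the boundary stratum still has $P^{ij}=0$, so its space of parameters is $\cong F_{n-2}$ of dimension $2n-10<2n-8$ and again $\eta^{\#}(F_{ij})=0$; the only surviving contributions come from $k\in\{i,j\}$, since on $\{x_i=0\}$ the inequality $x_i+x_j\le 1$ is vacuous, so $K_{ij}\cap\{x_i=0\}=\Delta_{n-1,2}(i)$ is the whole facet carrying on its interior the main stratum of $\Grs{n-1}(i)$, whose space of parameters $F_{n-1}(i)$ has dimension $2n-8$, and $\eta_{\sigma,\bar\sigma}$ is the diffeomorphism that deletes the $i$-th row (cf.\ the last paragraph of the proof of Theorem~\ref{main}), so $\eta^{\#}(F_{ij})=F_{n-1}(i)$. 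This gives
\[
\partial(K_{ij,n-1}\times F_{ij})=\Delta_{n-1,2}(i)\times F_{n-1}(i)+\Delta_{n-1,2}(j)\times F_{n-1}(j).
\]

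Summing, $\partial c=\sum_{i}\bigl(\sum_{j\neq i}\epsilon_{ij}\bigr)\Delta_{n-1,2}(i)\times F_{n-1}(i)$, and these chains are $\Z_2$-independent as they lie over distinct facets of $\partial\D{n}$. So $c$ is a cycle if and only if $\sum_{j\neq i}\epsilon_{ij}=0$ for every $i$, i.e.\ if and only if $\{ij:\epsilon_{ij}=1\}$ is an even subgraph of $K_n$, an element of the $\Z_2$-cycle space $\mathcal Z(K_n)$ of dimension $\binom n2-(n-1)=\binom{n-1}2$. Each triangle $\{m,i,j\}$ has all degrees even, hence yields a cycle of the form~\eqref{cyclebasis}; the triangles span $\mathcal Z(K_n)$ (for instance the $\binom{n-1}2$ triangles through the vertex $1$ form a basis, being the fundamental cycles of the star spanning tree at $1$), so a basis of the group of $(3n-9)$-cycles can be selected among the cycles~\eqref{cyclebasis}. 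The main obstacle is the boundary computation of the third paragraph --- in particular the asymmetry between $k\in\{i,j\}$, where the facet of $K_{ij}$ is the full hypersimplex facet carrying the main stratum and $\eta$ is a diffeomorphism (a nonzero contribution), and $k\notin\{i,j\}$, where the fibre has smaller dimension and $\eta^{\#}$ vanishes --- together with the verification that the interior boundary is zero; the rest is dimension bookkeeping and the standard structure of $\mathcal Z(K_n)$.
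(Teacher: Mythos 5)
Your proposal is correct and follows essentially the same route as the paper: reduce to the normal form of Corollary~\ref{admcycle}, identify the only contributing admissible polytopes as the $K_{ij}$, compute $\partial(K_{ij,n-1}\times F_{ij})=S_i+S_j$, and solve the resulting linear system over $\Z_2$. You supply more detail than the paper on why the remaining facets of $K_{ij}$ contribute nothing to the boundary, and you are more careful in noting that the triangles~\eqref{cyclebasis} merely span the $\binom{n-1}{2}$-dimensional cycle space (an actual basis being, e.g., the triangles through a fixed vertex), but the substance is identical.
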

\begin{proof}
A cycle $c$ in $X_n$ of dimension $3n-9$ maps by the induced moment map to $\stackrel{\circ}{\Delta}_{n,2}$ since $\dim \hat{\mu}^{-1}(\partial \Delta_{n,2}) = 3n-10$.  Thus,  by Corollary \ref{admcycle} it is of the form   $c = \sum _{\sigma} P_{\sigma, n-1}\times l^{\sigma}$, where $\dim l^{\sigma}= 2n-8$. 
The only strata whose spaces of parameters are of this dimension are the strata whose admissible polytopes are $K_{ij}$. In addition,  the spaces of parameters $F_{ij}$ are all homeomorphic to $F_{n-1}$, see~\cite{buchstaber2019toric}. There are two facets $K_{ij}(i)$ and $K_{ij}(j)$  of type $\Delta _{n-1, 2}$ of $K_{ij}$  and they are given by $x_i=0$ and $x_j=0$ respectively. The  strata corresponding to these facets are the main strata for $G_{n-1,2}(i)$ and $G_{n-1, 2}(j)$ and they are in the boundary of the strata in $G_{n,2}$ whose admissible polytope is $K_{ij}$. Therefore,
\[
\partial (K_{ij}\times F_{ij}) = K_{ij}(i)\times F_{ij}(i) + K_{ij}(j)\times F_{ij}(j) = S_i + S_j,
\]
where $F_{ij}(i) \cong F_{ij}(j)\cong F_{ij}$ and $S_i$ and $S_j$ are chains of maximal dimension in $G_{n-1,2}(i)/T^{n-1}$ and $G_{n-1, 2}(j)/T^{n-1}$
Therefore, $c= \sum _{1\leq i<j\leq n} a_{ij}(K_{ij}\times F_{ij})$ is a cycle if and only if
\[
\sum_{1\leq i<j\leq n}a_{ij}(S_i+S_j) = 0,
\]
Since $S_i$, $1\leq i\leq n$ are linearly independent, this is equivalent to
\[
\sum_{1\leq j\leq n, j\neq i}a_{ij}=0, \;\; 1\leq i\leq n.
\]
The solution space is of dimension $\frac{(n-2)(n-1)}{2}$, that is the basis for the cycles is given by~\eqref{cyclebasis}.
\end{proof}  
We  formulate as well the following results, the first one is direct generalization of Corollary \ref{admcycle}, while the second one is the direct generalization of the third statement in Theorem~\ref{main} and can be proved in the analogous way using the fact that the homology groups of any $F_{\omega}$ are evenly generated.
\begin{cor}\label{Cor:Chains_with_boundary_on_boundary}
Let $c = \sum C_{\omega}\times l_{\omega}$ be a chain in $X_n$ such that $\dim C_{\omega}=n-1$ and $\partial^{0}c=0$. Then 
\[
c = \sum _{\sigma} P_{\sigma, n-1}\times l^{\sigma},
\]
where   $P_{\sigma, n-1}$ is the union of the chambers of dimension $n-1$ from  $\stackrel{\circ}{P}_{\sigma}$ of an admissible polytope   $P_{\sigma}$,  $\dim P_{\sigma}=n-1$ and  $l^{\sigma}$ is a chain in $F_{\sigma}$ of maximal dimension $\dim F_{\sigma}$.
\end{cor}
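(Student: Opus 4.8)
The plan is to run the argument of Corollary~\ref{admcycle} essentially unchanged, the only modification being that the cycle hypothesis $\partial c=0$ used there is weakened to $\partial^{0}c=0$; so wherever that proof invokes Theorem~\ref{main}, we instead invoke the two corollaries stated immediately after it, which are precisely the $\partial^{0}c=0$ analogues and are already available. The outcome is that, after replacing $c$ by a homologous chain, it acquires the asserted form.

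First I would extract $\partial^{\omega}l_{\omega}=0$ for every $\omega$. Since $\dim C_{\omega}=n-1$ for all $\omega$, the two sums in~\eqref{boundch} are supported over chambers of dimensions $n-2$ and $n-1$ respectively, hence cannot cancel one another; $\partial^{0}c=0$ therefore forces $\sum_{\omega}C_{\omega}\times\partial^{\omega}l_{\omega}=0$, and distinctness of the top-dimensional $C_{\omega}$ gives $\partial^{\omega}l_{\omega}=0$. Thus each $l_{\omega}$ is a cycle in $F_{\omega}$, so Theorem~\ref{Th:homology_F} applies to its pieces $l_{\omega}^{\sigma}$. Next, the first corollary following Theorem~\ref{main} replaces $c$ by a homologous chain with no summand for which $l_{\omega}^{F}\neq 0$ and $\dim l_{\omega}^{F}<\dim F$, and the second corollary then arranges, in addition, that over $\stackrel{\circ}{\Delta}_{n,2}$ no summand has $l_{\omega}^{\sigma}\neq 0$ with $\dim l_{\omega}^{\sigma}<\dim F_{\sigma}$. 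After these reductions every remaining interior summand is of the form $C_{\omega}\times l_{\omega}^{\sigma}$ with $\dim C_{\omega}=n-1$, $\sigma\in\omega$ and $\dim l_{\omega}^{\sigma}=\dim F_{\sigma}$.

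The final step assembles the summands into slabs, exactly as in Corollary~\ref{admcycle}. Given such a summand $C_{\omega}\times l_{\omega}^{\sigma}$, we have $C_{\omega}\subset\stackrel{\circ}{P}_{\sigma}$ since $\sigma\in\omega$, and $C_{\omega}$ has at least one facet $C_{\omega'}$ lying in $\stackrel{\circ}{P}_{\sigma}$. By Remark~\ref{Rem:ManjeDIm}, over $C_{\omega'}$ the only cell contributing to $\partial^{0}(C_{\omega}\times l_{\omega}^{\sigma})$ is $C_{\omega'}\times l_{\omega}^{\sigma}$, which lies in $W_{\sigma}/T^{n}$; as $C_{\omega'}\subset\stackrel{\circ}{\Delta}_{n,2}$, the identity $\partial^{0}c=0$ forces a further summand $C_{\tilde{\omega}}\times l_{\omega}^{\sigma}$ of $c$ that shares the facet $C_{\omega'}$ and carries the same chain in $F_{\sigma}$. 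Iterating over interior facets — the process stopping only at facets of the chambers involved that lie on $\partial P_{\sigma}$ — propagates a single chain $l^{\sigma}:=l_{\omega}^{\sigma}$ over every $(n-1)$-dimensional chamber of $\stackrel{\circ}{P}_{\sigma}$, producing the slab $P_{\sigma,n-1}\times l^{\sigma}$ and absorbing every summand of $c$ attached to $W_{\sigma}$. Running over all admissible $\sigma$ with $\dim P_{\sigma}=n-1$ gives $c=\sum_{\sigma}P_{\sigma,n-1}\times l^{\sigma}$.

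I expect the assembling step to be the delicate one. Driven only by $\partial^{0}c=0$, which is strictly weaker than $\partial c=0$, one must verify that the propagation of $l^{\sigma}$ across interior facets is simultaneously exhaustive — it really fills out $\stackrel{\circ}{P}_{\sigma}$ — and consistent, i.e. no interior facet is forced to carry two different chains of the same $F_{\sigma}$. This is exactly the role of Remark~\ref{Rem:ManjeDIm}, which singles out the cell of $F_{\sigma}$ (rather than of some lower $F_{\tilde{\sigma}}$ in its closure) contributing to the boundary; once this is granted, the remaining bookkeeping duplicates that of the proofs of Theorem~\ref{main} and Corollary~\ref{admcycle}.
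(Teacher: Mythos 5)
Your proof is correct and follows exactly the route the paper intends: the paper offers no separate argument for this corollary, describing it only as a ``direct generalization of Corollary~\ref{admcycle}'' to be proved analogously, and your reduction via the two $\partial^{0}$-corollaries following Theorem~\ref{main}, followed by the facet-propagation assembly over $\stackrel{\circ}{P}_{\sigma}$, is precisely that analogue. The only imprecision --- suppressing the boundary remainder $c^{'}$ with $\hat{\mu}(c^{'})\subset \partial \Delta_{n,2}$ produced by the second of those corollaries, and reading the stated equality as ``is homologous to'' --- is inherited from the paper's own formulation rather than introduced by you.
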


\begin{cor}\label{odd}
Let $c = \sum C_{\omega}\times l_{\omega}$ be a chain in $X_n$ such that $\dim C_{\omega}=n-1$,  $\partial^{0}c=0$ and $\dim l_{\omega}$ is odd. Then $c$ is homologous to zero.
\end{cor}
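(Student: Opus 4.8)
The quickest route is to feed the parity constraint straight into Corollary~\ref{Cor:Chains_with_boundary_on_boundary}: that corollary asserts that $c$ is homologous to a chain of the form $\sum_{\sigma}P_{\sigma,n-1}\times l^{\sigma}$ in which $l^{\sigma}$ is a chain of maximal dimension $\dim F_{\sigma}$. But $\dim l^{\sigma}=\dim c-(n-1)=\dim l_{\omega}$ is odd, whereas $\dim F_{\sigma}=2(m-3)$ for some $4\le m\le n-1$ by Remark~\ref{cellspaceparam}, hence is even. So no such summand can occur, the chain in question is empty, and $c$ is homologous to zero. To make this self-contained and in the spirit of the third statement of Theorem~\ref{main}, I would unwind the argument as follows.

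First, reading the hypothesis $\partial^{0}c=0$ through \eqref{boundch} shows that $\partial^{\omega}l_{\omega}=0$ for every $\omega$: the summand $\sum_{\omega}C_{\omega}\times\partial^{\omega}l_{\omega}$ of $\partial^{\circ}c$ is supported over the chambers $C_{\omega}$ of maximal dimension $n-1$ and cannot be cancelled by the other summand, which is supported over chambers of dimension $n-2$. So each $l_{\omega}$ is a cycle in $F_{\omega}$, and likewise in each intermediate compactification $F_{<\sigma_{0}>}=\bigcup_{\sigma\in\omega}F_{\sigma}$ appearing below (which is again a compactification of $F_{n}$ by spaces $F_{\sigma}$, so Theorem~\ref{Th:homology_F} applies to it). By Theorem~\ref{Th:homology_F} — equivalently by \cite{keel1992intersection},\cite{ceyhan2009chow} together with the identification $F_{\omega}\cong\mathcal{M}_{\mathcal{A}}$ of \cite{buchstaber2024weighted} — the homology of these spaces in non-maximal dimension is spanned by the fundamental classes of the even-dimensional divisorial outgrowths $F_{\sigma}$. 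Since $\dim l_{\omega}$ is odd and $1\le\dim l_{\omega}<\dim F_{\omega}=2(n-3)$, it follows that $H_{\dim l_{\omega}}(F_{\omega};\Z_{2})=0$ and $H_{\dim l_{\omega}}(F_{<\sigma_{0}>};\Z_{2})=0$; hence $l_{\omega}=\partial\bar{l}_{\omega}$ for chains $\bar{l}_{\omega}$ in these spaces.

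Now I would repeat the reduction of Theorem~\ref{main}(3) essentially verbatim. By the assembling argument of Corollary~\ref{admcycle} (which uses only $\partial^{0}c=0$, not that $c$ is a cycle) we may take $c=\sum_{\sigma}P_{\sigma,n-1}\times l^{\sigma}$; choose $\sigma_{0}$ with $\dim F_{\sigma_{0}}$ maximal, extract $l^{\sigma_{0}}=(\partial\bar{l}^{\sigma_{0}})^{\sigma_{0}}$ from $l_{\omega}=\partial\bar{l}_{\omega}$ in $F_{<\sigma_{0}>}$, and set $\bar{d}_{\sigma_{0}}=P_{\sigma_{0},n-1}\times\bar{l}^{\sigma_{0}}$; then
\[
\partial\bar{d}_{\sigma_{0}} = P_{\sigma_{0},n-1}\times l^{\sigma_{0}} + \sum_{\sigma\in\omega} C_{\omega}\times(\partial\bar{l}^{\sigma_{0}})^{\sigma} + \sum_{C_{\bar{\omega}}\in P_{\bar{\sigma_{0}}}} C_{\bar{\omega}}\times\eta^{\#}_{\sigma_{0},\bar{\sigma_{0}}}(\bar{l}^{\sigma_{0}}),
\]
so $c$ is replaced by a homologous chain whose $P_{\sigma_{0}}$-summand is gone and whose remaining interior summands involve spaces of parameters of strictly smaller dimension. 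Iterating, the dimension of the top space of parameters strictly decreases at each step, and since — by the parity observation of the first paragraph — no summand can ever reach the terminal state $\dim l^{\sigma}=\dim F_{\sigma}$, the process empties the chain and $c$ is homologous to zero. The one delicate point, exactly as in the proof of Theorem~\ref{main}(3) and in \eqref{granicabd}, is to check that all boundary contributions landing on $\partial\Delta_{n,2}$ cancel; this uses the surjections $\eta_{\sigma,\bar{\sigma}}:F_{\sigma}\to F_{\bar{\sigma}}$ of \cite{buchstaber2019foundations} and the identifications $F_{\sigma}\cong F_{\tilde{\sigma}}$ across a shared facet $P_{\bar{\sigma}}$, so that the $\eta^{\#}$-images match in pairs. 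The main obstacle is thus this boundary bookkeeping over $\partial\Delta_{n,2}$ rather than anything new: it is identical to the step already carried out in Theorem~\ref{main}(3) and Corollary~\ref{Cor:Chains_with_boundary_on_boundary}.
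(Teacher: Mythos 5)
Your proposal is correct and follows exactly the route the paper intends: the paper gives no written proof for this corollary beyond the remark that it "can be proved in the analogous way [to Theorem~\ref{main}(3)] using the fact that the homology groups of any $F_{\omega}$ are evenly generated," which is precisely your argument (odd-degree cycles in $F_{\omega}$ and in the intermediate compactifications $F_{<\sigma_0>}$ are boundaries, then run the reduction of Theorem~\ref{main}(3) with the $\eta^{\#}$ boundary bookkeeping). Your opening shortcut via Corollary~\ref{Cor:Chains_with_boundary_on_boundary} plus the parity of $\dim F_{\sigma}$ is also a legitimate one-line packaging of the same content.
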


\subsection{$\Z_2$-chains over $\stackrel{\circ}{\Delta}_{n,2}$ with boundary on $\partial \Delta _{n,2}$}
We first prove few preparatory statements by considering chains for which all contributing chambers have the same dimension.

\begin{lemma}\label{Lem:c_csigma}
Let  $ c=\sum_{\omega}C_{\omega}\times l_{\omega} $ be a chain in $ X_{n}, $ such that  the following is satisfied:
\begin{itemize}
	\item  all chambers $C_{\omega}$ which contribute to $c$ have the  same dimension $p\leq n-2$,
	\item  $ \hat{\mu}(c)\subset \stackrel{\circ}{\Delta}_{n,2}$,
	\item  $ \partial c$ does not contain a chamber $C_{\omega^{'}}\subset \stackrel{\circ}{\Delta}_{n,2}$ of dimension $p-1$.
\end{itemize}

Let $([\omega], \sigma)$ denotes the set of all $\tilde{\omega}$ which contributes to $c$ such that $\sigma \in \tilde{\omega}$ and $l_{\omega}^{\sigma}= l_{\tilde{\omega}}^{\sigma}$. Then $ c $ can be written in the form 
\[ c=\sum_{([\omega], \sigma)} c_{([\omega], \sigma)}, \; \text{ where  } \;  \hat{\mu}(\partial c_{([\omega], \sigma)})\subset \partial P_{\sigma}. \]
\end{lemma}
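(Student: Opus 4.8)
First I would pass from $c=\sum_{\omega}C_{\omega}\times l_{\omega}$ to the refined form $c=\sum_{\omega}\sum_{\sigma\in\omega}C_{\omega}\times l_{\omega}^{\sigma}$ and, after fixing once and for all a cell decomposition of every $F_{\sigma}$ as in Remark~\ref{cellspaceparam}, expand each $l_{\omega}^{\sigma}$ into cells so that every summand has the form $C_{\omega}\times e$ with $e$ a single cell of $F_{\sigma}$. For a fixed admissible $\sigma$ the index sets $([\omega],\sigma)$ partition the chambers $C_{\tilde\omega}$ contributing to $c$ with $\sigma\in\tilde\omega$ according to the value of $l_{\tilde\omega}^{\sigma}$; since all $\tilde\omega$ in one class carry the same parameter value, setting $c_{([\omega],\sigma)}=\big(\sum_{\tilde\omega\in([\omega],\sigma)}C_{\tilde\omega}\big)\times l_{\omega}^{\sigma}$ makes $\sum_{([\omega],\sigma)}c_{([\omega],\sigma)}=\sum_{\omega}\sum_{\sigma\in\omega}C_{\omega}\times l_{\omega}^{\sigma}=c$. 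This part is pure bookkeeping over $\Z_{2}$: for fixed $\sigma$ the blocks partition the $C_{\omega}$'s with $\sigma\in\omega$, each contributing its own term $C_{\omega}\times l_{\omega}^{\sigma}$.

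The content is the assertion $\hat{\mu}(\partial c_{([\omega],\sigma)})\subset\partial P_{\sigma}$, which I read as a statement about the $(p-1)$-dimensional, i.e.\ chamber-facet, part of $\partial c_{([\omega],\sigma)}$: no $(p-1)$-chamber contained in $\stackrel{\circ}{P}_{\sigma}$ occurs in it. (The parameter-direction boundary $\big(\sum_{\tilde\omega}C_{\tilde\omega}\big)\times\partial l_{\omega}^{\sigma}$ projects into $\stackrel{\circ}{P}_{\sigma}$, so this reading is the one to take.) Since every $C_{\tilde\omega}$ with $\sigma\in\tilde\omega$ satisfies $C_{\tilde\omega}\subset\stackrel{\circ}{P}_{\sigma}$ by the defining formula $C_{\omega}=\bigcap_{\sigma\in\omega}\stackrel{\circ}{P}_{\sigma}$, and since by Remark~\ref{Rem:ManjeDIm} only the interior facets $C_{\omega'}\subset\stackrel{\circ}{P}_{\sigma}$ of $C_{\tilde\omega}$ transport the cell $e$ unchanged to the boundary (facets lying on $\partial P_{\sigma}$ contribute nothing to the $F_{\sigma}$-piece or only terms already supported over $\partial P_{\sigma}$, and facets on $\partial\Delta_{n,2}$ are converted via the surjection $\eta_{\sigma,\bar\sigma}$ of Remark~\ref{Nagranici} and land in $\partial\Delta_{n,2}\subset\partial P_{\sigma}$), the claim reduces to a purely combinatorial one: \emph{every $(p-1)$-chamber $C_{\omega'}$ with $C_{\omega'}\subset\stackrel{\circ}{P}_{\sigma}$ is a codimension-one face of an even number of the chambers $C_{\tilde\omega}$ with $\tilde\omega\in([\omega],\sigma)$.}

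To establish this parity I would invoke hypothesis (3). By Remark~\ref{Rem:ManjeDIm} the coefficient with which a fixed interior $(p-1)$-chamber $C_{\omega'}\subset\stackrel{\circ}{\Delta}_{n,2}$ carries a cell $e\subset F_{\sigma}$ in $\partial c$ equals, modulo $2$, the number of $\tilde\omega$ contributing to $c$ with $C_{\omega'}\prec C_{\tilde\omega}$, $\sigma\in\tilde\omega$, $C_{\omega'}\subset\stackrel{\circ}{P}_{\sigma}$ and $e$ appearing in $l_{\tilde\omega}^{\sigma}$; here I use that contributions coming from different strata $W_{\tau}/T^{n}$ involve cells of pairwise distinct parameter spaces and hence cannot cancel against one another. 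Hypothesis (3) forces every such coefficient to vanish, for all $C_{\omega'}$ and all cells $e$. For fixed $\sigma$ and $e$ there is at most one class $([\omega],\sigma)$ with that value, so this vanishing is exactly the evenness needed for the block indexed by $(\sigma,e)$; summing over the cells $e$ of $F_{\sigma}$ then gives the statement for a block with a general parameter chain.

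The step I expect to be the main obstacle is the careful unwinding of the boundary operator in the three cases recalled before Theorem~\ref{Th:homology_F}: one has to check that over an interior facet $C_{\omega'}\subset\stackrel{\circ}{P}_{\sigma}$ of a chamber the parameter cell is carried through unchanged (so that the coefficients attached to distinct $F_{\tau}$ really decouple and Remark~\ref{Rem:ManjeDIm} isolates the $\sigma$-contribution), while over a facet lying on $\partial P_{\sigma}$ the corresponding cell either disappears or degenerates to a cell of a lower-dimensional parameter space and so cannot produce a $(p-1)$-chamber inside $\stackrel{\circ}{P}_{\sigma}$. Once this bookkeeping is in place hypothesis (3) is applied verbatim and the parity count, hence the decomposition $c=\sum_{([\omega],\sigma)}c_{([\omega],\sigma)}$ with $\hat{\mu}(\partial c_{([\omega],\sigma)})\subset\partial P_{\sigma}$, follows.
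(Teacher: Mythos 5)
Your argument is essentially the paper's own proof: the paper likewise forms the blocks $c_{([\omega],\sigma)}=\bigl(\sum_{\tilde{\omega}}C_{\tilde{\omega}}\bigr)\times l_{\omega}^{\sigma}$ over the classes $([\omega],\sigma)$ and uses hypothesis (3) together with Remark~\ref{Rem:ManjeDIm} to conclude, by $\Z_2$-cancellation, that every facet $C_{\omega^{'}}\subset \stackrel{\circ}{P}_{\sigma}$ appearing in $\partial\bigl(\sum_{\tilde{\omega}}C_{\tilde{\omega}}\bigr)$ must cancel, so that this boundary lies in $\partial P_{\sigma}$. Your write-up only makes explicit the cell-by-cell parity count and the fact that contributions from distinct strata cannot cancel against one another, which the paper leaves implicit.
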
    
\begin{proof}
The boundary of the chain $ c $ is given by $ \partial c=\sum_{\omega}C_{\omega}\times \partial l_{\omega}+ \bar{c}$, where all chambers which contribute to $\bar{c}$ have dimension $\leq p-1$.   The third condition implies that $\hat{\mu}(\bar{c})\subset \partial \Delta _{n,2}$.

Let us fix a chamber $  C_{\omega} $ in $c$ and an admissible polytope  $ {P}_{\sigma}  $ such that $ \sigma\in\omega $ and $ C_{\omega}\in \stackrel{\circ}{P}_{\sigma} $ and  $l_{\omega}^{\sigma}\neq 0$.  There exists a  facet $ C_{\omega^{\prime}} $ of $ C_{\omega} $ such that  $ C_{\omega^{\prime}}\subset \stackrel{\circ}{P}_{\sigma} $, so $ C_{\omega^{\prime}}\times l_{\omega}^{\sigma} $  contributes to $\partial c$, so we obtain the chain  \[ c_{([\omega], \sigma)}=(\sum_{\tilde{\omega}}C_{\tilde{\omega}})\times l_{\omega}^{\sigma},  \] where $ \sigma \in  \tilde{\omega}$ and $l_{\tilde{\omega}}^{\sigma} = l_{\omega}^{\sigma}$ and it holds  $ \partial(\sum_{\tilde{\omega}}C_{\tilde{\omega}})\subset \partial \stackrel{\circ}{P_{\sigma}}.$ 
\par For another choice of $ \sigma\in\omega $ or a chamber $ \omega $ we obtain another such chain.
\end{proof}     

\begin{lemma}\label{Th:HomologChain_n-2}
Let $ c=\sum_{\omega}C_{\omega}\times l_{\omega} $ be a chain  in $ X_{n}, $ such that the following is satisfied:
\begin{itemize}
	\item  all chambers $C_{\omega}$ which contributes to $c$ have the  same  dimension $p\leq n-2$,
	\item  $ \hat{\mu}(c)\subset \stackrel{\circ}{\Delta}_{n,2}$,
	\item $ \partial c$ does not contain a a chamber $C_{\omega^{'}}\subset \stackrel{\circ}{\Delta}_{n,2}$ of dimension $p-1$.
\end{itemize}
Then $ c $ is homologous to a chain $ c_{p+1}+c^{'}$ such that all chambers in $c_{p+1}$ have dimension $p+1$ and $\hat{\mu}(c^{'}) \subset \partial \Delta_{n,2}. $

\end{lemma}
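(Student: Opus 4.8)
The plan is to imitate the argument of Theorem~\ref{main}(1), pushing the chain off the $p$-dimensional chambers onto the $(p+1)$-dimensional ones one admissible polytope at a time, while collecting into a separate summand $c'$ everything that ends up over $\partial\Delta_{n,2}$. First apply Lemma~\ref{Lem:c_csigma} to write $c=\sum_{([\omega],\sigma)}c_{([\omega],\sigma)}$ with $c_{([\omega],\sigma)}=b_{([\omega],\sigma)}\times l_{\omega}^{\sigma}$, where $b:=b_{([\omega],\sigma)}$ is a $p$-chain of chambers of $\stackrel{\circ}{P}_{\sigma}$ with $\hat{\mu}(\partial b)\subset\partial P_{\sigma}$ and $l_{\omega}^{\sigma}$ is a chain in $F_{\sigma}$. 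Since $\hat{\mu}(c)\subset\stackrel{\circ}{\Delta}_{n,2}$ every contributing chamber lies in $\stackrel{\circ}{\Delta}_{n,2}$, so by Theorem~\ref{Th:admissiblepolytope} only polytopes $P_{\sigma}$ of dimension $n-1$ or $n-2$ occur (those of dimension $\le n-3$ lie on $\partial\Delta_{n,2}$).

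Fix a piece with $\dim P_{\sigma}=n-1$; then $P_{\sigma}$ is an $(n-1)$-ball, $\partial P_{\sigma}\cong S^{n-2}$, and $\stackrel{\circ}{P}_{\sigma}\subset\stackrel{\circ}{\Delta}_{n,2}$ (the boundary facets of $P_{\sigma}$ split into those lying on $\partial\Delta_{n,2}$ and those lying on a slicing hyperplane $\sum_{i\in S}x_i=1$, which sits in $\stackrel{\circ}{\Delta}_{n,2}$). The chamber chain $\partial b$ is a $(p-1)$-cycle supported on $\partial P_{\sigma}$; as $p-1\le n-3<n-2$ it bounds a $p$-chain $\tilde{b}$ on $\partial P_{\sigma}$ (for $p=1$ this uses that $\partial b$ lies in the augmentation ideal of the connected complex $\partial P_{\sigma}$). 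Then $b+\tilde{b}$ is a $p$-cycle in the ball $P_{\sigma}$, hence $b+\tilde{b}=\partial\hat{c}$ for a $(p+1)$-chain $\hat{c}$ in $P_{\sigma}$. Write $\hat{c}=\hat{c}^{0}+\hat{c}'$, separating the $(p+1)$-chambers lying in $\stackrel{\circ}{P}_{\sigma}$ from those lying on $\partial P_{\sigma}$ (the latter occur only if $p+1\le n-2$); since $\partial\hat{c}'$ and $\tilde{b}$ are supported on $\partial P_{\sigma}$, the part of $\partial\hat{c}^{0}$ lying in $\stackrel{\circ}{P}_{\sigma}$ equals $b$.

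Now set $\hat{C}:=\sum\hat{c}^{0}_{([\omega],\sigma)}\times l_{\omega}^{\sigma}$, summing over the pieces with $\dim P_{\sigma}=n-1$, and compute $\partial\hat{C}$ with the cellular boundary operator of Section~\ref{Sec:On_Gn2_homology}. For a $(p+1)$-chamber $\tilde{C}$ of $\hat{c}^{0}$ and a cell $e$ of $l_{\omega}^{\sigma}$: the facets of $\tilde{C}$ contained in $\stackrel{\circ}{P}_{\sigma}$ contribute $C'\times e$; the facets of $\tilde{C}$ on $\partial P_{\sigma}$ that remain in $\stackrel{\circ}{\Delta}_{n,2}$ contribute nothing by Remark~\ref{Rem:ManjeDIm}, since the cells over such a facet in the closure of $\tilde{C}\times e$ have strictly smaller fibre dimension and hence are not codimension one; the facets of $\tilde{C}$ on $\partial\Delta_{n,2}$ contribute terms $C_{\bar{\omega}}\times\eta^{\#}_{\sigma,\bar{\sigma}}(e)$ with $\hat{\mu}$-image in $\partial\Delta_{n,2}$ (the third case of the boundary operator, Remark~\ref{Nagranici}); and the fibre boundary contributes $\tilde{C}\times\partial^{\omega}e$, whose chamber again has dimension $p+1$. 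Summing over $\tilde{C}\in\hat{c}^{0}$, the interior facets shared by two chambers cancel mod $2$, leaving exactly $b$, so
\[
\partial\hat{C}=\sum_{([\omega],\sigma)}c_{([\omega],\sigma)}+c_{p+1}+c',
\]
with every chamber of $c_{p+1}$ of dimension $p+1$ and $\hat{\mu}(c')\subset\partial\Delta_{n,2}$; thus the part of $c$ over the $(n-1)$-dimensional polytopes is homologous to $c_{p+1}+c'$. The pieces with $\dim P_{\sigma}=n-2$, for which $\partial P_{\sigma}=P_{\sigma}\cap\partial\Delta_{n,2}$, are treated the same way when $p<n-2$ (the spurious terms now land automatically in $c'$); when $p=n-2$ the chain $b$ fills $\stackrel{\circ}{P}_{\sigma}$ and its chamber boundary already lies on $\partial\Delta_{n,2}$, and the piece is absorbed into $c'$ by pushing it onto the adjacent full-dimensional polytope $\Delta_{n,2}\cap\{\sum_{i\in S}x_i\le 1\}$ through the reduction map relating the two spaces of parameters. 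Collecting the pieces gives $c$ homologous to $c_{p+1}+c'$.

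The step I expect to be most delicate is the boundary computation for $\hat{C}$: one must verify carefully that crossing a facet of $P_{\sigma}$ lying inside $\stackrel{\circ}{\Delta}_{n,2}$ produces no codimension-one contribution — this is exactly the content of Remark~\ref{Rem:ManjeDIm}, and it is the reason the filling chain $\hat{c}$ can be chosen without creating new interior $p$-chambers — together with the bookkeeping for the $(n-2)$-dimensional polytope pieces in the final step. Note that, unlike in Theorem~\ref{main}(1), no compatibility between the auxiliary chains $\tilde{b}$ on shared boundary facets is needed here, since the surplus contributions over $\partial\Delta_{n,2}$ are allowed to remain in $c'$.
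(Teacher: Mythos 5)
Your proof follows essentially the same route as the paper's: decompose $c$ via Lemma~\ref{Lem:c_csigma}, bound the $(p-1)$-cycle $\partial C_{([\omega],\sigma)}$ on $\partial P_{\sigma}\cong S^{n-2}$, cap the resulting $p$-cycle inside the ball $P_{\sigma}$, and take the boundary of the interior part of the capping chain multiplied by $l_{\omega}^{\sigma}$, so that the surplus lands over $\partial\Delta_{n,2}$ and the fibre-boundary term supplies $c_{p+1}$. The only minor divergence is the subcase $p=n-2$ over an $(n-2)$-dimensional $P_{\sigma}$, which the paper dispatches by noting $\partial P_{\sigma}\subset\partial\Delta_{n,2}$ and reusing the same filling argument, whereas you push that piece onto an adjacent full-dimensional polytope; both treatments are terse, and neither affects the correctness of the main construction.
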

\begin{proof}

By Lemma~\ref{Lem:c_csigma} we have that $c$ is the sum of the chains $c_{([\omega], \sigma)} = C_{([\omega], \sigma)}\times l_{\omega}^{\sigma}$, where $C_{([\omega], \sigma)} = \sum C_{\tilde{\omega}}$, the sum goes over those $\tilde{\omega}$ such that $C_{\tilde{\omega}}\subset\stackrel{\circ}{P}_{\sigma}$ and $l_{\tilde{\omega}}^{\sigma}=l_{\omega}^{\sigma}$. It holds $\partial  C_{([\omega], \sigma)}\subset \partial P_{\sigma}$. For $ \dim(P_{\sigma})=n-1 $, we have that  $\partial  C_{([\omega], \sigma)}$ is a cycle of dimension $p-1\leq n-3$ on $S^{n-2}$, so there exists a chain $\tilde{C}_{([\omega], \sigma)}$ on  $ \partial P_{\sigma}$ such that $\partial \tilde{C}_{([\omega], \sigma)} = \partial  C_{([\omega], \sigma)}$. We obtain the cycle  $ \tilde{C}_{([\omega], \sigma)} +  C_{([\omega], \sigma)}$ in $P_{\sigma}$,  so there exists a chain  $\hat{C}_{([\omega], \sigma)}$ in $P_{\sigma}$ such that  $\partial \hat{C}_{([\omega], \sigma)} = \tilde{C}_{([\omega], \sigma)} +  C_{([\omega], \sigma)}$.  Consider the chain $ \hat{C}_{([\omega], \sigma)}^{0}$ containing   those chambers from $ \hat{C}_{([\omega], \sigma)}$ which belong to $\stackrel{\circ}{P}_{\sigma}$. Let $\hat{d}_{([\omega], \sigma)}= \hat{C}_{([\omega], \sigma)}^{0} \times l_{\omega}^{\sigma}$. Then 
\[
\partial \hat{d}_{([\omega], \sigma)} = c_{([\omega], \sigma)} + \tilde{C}_{([\omega], \sigma)}^{0}\times l_{\omega}^{\sigma} +  \hat{C}_{([\omega], \sigma)}^{0} \times \partial  l_{\omega}^{\sigma},
\]
where $\tilde{C}_{([\omega], \sigma)}^{0}$ contains those  chambers from $\tilde{C}_{([\omega], \sigma)}$ for which the product with $l_{\omega}^{\sigma}$ exists. Obviously, such chambers must  belong to $\partial \Delta _{n,2}$, that is the chain  $\tilde{C}_{([\omega], \sigma)}^{0}$ belongs to $\partial \Delta _{n,2}$. If $ \dim(P_{\sigma})=n-2 $ then $ l_{\omega}^{\sigma} $ is a point and $ \partial  C_{([\omega], \sigma)}\subset \partial P_{\sigma}\subset \partial \D{n} $ by Theorem \ref{Th:admissiblepolytope}, so the previous argument can be applied.
\par It follows that
\[
c_{([\omega], \sigma)} =  \partial \hat{d}_{([\omega], \sigma)}+ \tilde{C}_{([\omega], \sigma)}^{0}\times l_{\omega}^{\sigma} + \hat{C}_{([\omega], \sigma)}^{0} \times \partial  l_{\omega}^{\sigma},
\] so $c$ can be written as 
\[
c= \partial \sum  \hat{d}_{([\omega], \sigma)}   + \sum   \tilde{C}_{([\omega], \sigma)}^{0}\times l_{\omega}^{\sigma} + \sum \hat{C}_{([\omega], \sigma)}^{0} \times \partial  l_{\omega}^{\sigma}= \partial d + c^{'} + c_{p+1},
\]
where $\hat{\mu}(c^{'})\subset \partial \Delta _{n,2}$ and $\hat{\mu}(c_{p+1})\subset \stackrel{\circ}{\Delta} _{n,2}$ and all chambers in $ c_{p+1} $ have dimension $ p+1 $.
\end{proof}

By successive application of Lemma \ref{Th:HomologChain_n-2} we obtain :

\begin{prop}
Let $ c=\sum_{\omega}C_{\omega}\times l_{\omega} $ be a chain  in $ X_{n}, $ such that the following is satisfied:
\begin{itemize}
	\item  all chambers $C_{\omega}$ which contributes to $c$ have the  same  dimension $p\leq n-2$,
	\item  $ \hat{\mu}(c)\subset \stackrel{\circ}{\Delta}_{n,2}$ and  $\partial^{0} c=0$.
\end{itemize}
Then $ c $ is homologous to a chain $ c_{n-1}+c^{'}$ such that all chambers in $c_{n-1}$ have dimension $n-1$ and $\hat{\mu}(c^{'}) \subset \partial \Delta_{n,2}.$ 
\end{prop}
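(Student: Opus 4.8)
The plan is to prove the statement by a finite iteration of Lemma~\ref{Th:HomologChain_n-2}, raising the common chamber dimension by one at each stage. First I would check that the hypotheses of the Proposition imply those of the Lemma: since $\partial^{0}c=0$, in particular $\partial c$ contains no chamber $C_{\omega'}\subset\stackrel{\circ}{\Delta}_{n,2}$ of dimension $p-1$. Hence, provided $p\le n-2$, Lemma~\ref{Th:HomologChain_n-2} applies and gives $c=\partial d+c'_{1}+c_{p+1}$, where $\hat{\mu}(c'_{1})\subset\partial\Delta_{n,2}$, $\hat{\mu}(c_{p+1})\subset\stackrel{\circ}{\Delta}_{n,2}$, and every chamber contributing to $c_{p+1}$ has dimension $p+1$.

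The next step is to verify that $c_{p+1}$ again satisfies the hypotheses of the Proposition, so that the Lemma can be re-applied. It is a chain over $\stackrel{\circ}{\Delta}_{n,2}$ all of whose chambers share the single dimension $p+1$, so it remains only to see that $\partial^{0}c_{p+1}=0$. For this I would take the boundary of the relation $c=\partial d+c'_{1}+c_{p+1}$, obtaining $\partial c=\partial c'_{1}+\partial c_{p+1}$, and separate each term into its interior and boundary parts; since $\hat{\mu}(c'_{1})\subset\partial\Delta_{n,2}$ we have $\hat{\mu}(\partial c'_{1})\subset\partial\Delta_{n,2}$, hence $\partial^{0}c'_{1}=0$, and comparing the interior parts on the two sides with $\partial^{0}c=0$ forces $\partial^{0}c_{p+1}=0$.

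I would then iterate: as long as the current chamber dimension is at most $n-2$, apply Lemma~\ref{Th:HomologChain_n-2} to the interior-supported summand, raising the dimension by one and splitting off one further piece supported over $\partial\Delta_{n,2}$. After $n-1-p$ steps the chambers reach the maximal dimension $n-1$, producing a chain $c_{n-1}$ with all chambers of dimension $n-1$, together with the accumulated pieces $c'_{1},\dots,c'_{n-1-p}$ and an exact term. Setting $c'=c'_{1}+\dots+c'_{n-1-p}$, which still satisfies $\hat{\mu}(c')\subset\partial\Delta_{n,2}$ because a finite union of subsets of $\partial\Delta_{n,2}$ lies in $\partial\Delta_{n,2}$, we conclude that $c$ is homologous to $c_{n-1}+c'$.

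The only real obstacle is the persistence check in the second step: one must ensure that splitting off a summand over $\partial\Delta_{n,2}$ does not reintroduce an interior boundary chamber of the dimension currently being processed. This is precisely the identity $\partial^{0}c_{p+1}=0$ above, and it works because the interior and boundary components of the boundary operator do not interact across $\stackrel{\circ}{\Delta}_{n,2}$ and $\partial\Delta_{n,2}$. Once this is in place the remainder of the argument is a purely formal, terminating induction, so I do not anticipate further difficulty.
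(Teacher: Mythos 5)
Your proof is correct and takes essentially the same approach as the paper, which states only that the Proposition follows ``by successive application of Lemma~\ref{Th:HomologChain_n-2}''. Your explicit verification that $\partial^{0}c_{p+1}=0$ persists after each application of the Lemma (via $\partial c=\partial c'_{1}+\partial c_{p+1}$ and the fact that chains supported over $\partial\Delta_{n,2}$ have boundary supported there) is exactly the detail the paper leaves implicit, and it is handled correctly.
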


A chain $c$ on $X_n$ can be written as $c = c_{0} + c_{1}$, where $\hat{\mu}(c_{0}) \subset \stackrel{\circ}{\Delta _{n,2}}$ and $\hat{\mu}(c_{1})\subset \partial \Delta _{n,2}$.

\begin{cor}
Let $ c $ be a chain  in $ X_{n}, $ such that  all chambers which contributes to $c_{0}$ have the  same  dimension $p\leq n-2$  and $ \partial c$ does not contain a chamber $C_{\omega^{'}}\subset \stackrel{\circ}{\Delta}_{n,2}$ such that $\dim C_{\omega^{'}} = p-1$.  Then $ c $ is homologous to a chain $c_{n-1}+c^{'}$ such that $\hat{\mu}(c^{'}) \subset \partial \Delta_{n,2} $ and $c_{n-1}$ contains just chambers of dimension $n-1$.
\end{cor}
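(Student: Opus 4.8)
The plan is to peel off the part of $c$ that lies over the boundary of the hypersimplex and then push up the chamber dimension of what remains. Write $c=c_{0}+c_{1}$ with $\hat{\mu}(c_{0})\subset\stackrel{\circ}{\Delta}_{n,2}$ and $\hat{\mu}(c_{1})\subset\partial\Delta_{n,2}$. The summand $c_{1}$ is already of the shape demanded of $c'$, so the whole problem reduces to showing that $c_{0}$ is homologous, modulo chains whose $\hat{\mu}$-image lies in $\partial\Delta_{n,2}$, to a chain all of whose chambers have dimension $n-1$; all the boundary pieces split off in the process will be gathered, together with $c_{1}$, into the final $c'$.

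First I would verify that $c_{0}$ satisfies the three hypotheses of Lemma \ref{Th:HomologChain_n-2}. That all chambers of $c_{0}$ have the common dimension $p\le n-2$ and that $\hat{\mu}(c_{0})\subset\stackrel{\circ}{\Delta}_{n,2}$ are immediate from the construction. For the third, that $\partial c_{0}$ contains no $(p-1)$-dimensional chamber over $\stackrel{\circ}{\Delta}_{n,2}$: the chain $\partial c_{1}$ is the boundary of a chain all of whose cells map into $\partial\Delta_{n,2}$, hence is itself supported over $\partial\Delta_{n,2}$, so every chamber of $\partial c=\partial c_{0}+\partial c_{1}$ that lies over $\stackrel{\circ}{\Delta}_{n,2}$ already occurs in $\partial c_{0}$, and the hypothesis on $\partial c$ then rules such a chamber of dimension $p-1$ out. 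Applying Lemma \ref{Th:HomologChain_n-2} yields $c_{0}\sim c_{p+1}+c_{0}'$ with $\hat{\mu}(c_{0}')\subset\partial\Delta_{n,2}$ and $c_{p+1}$ a chain over $\stackrel{\circ}{\Delta}_{n,2}$ all of whose chambers have dimension $p+1$; moreover, from the explicit form of the output in that proof, namely $c_{p+1}=\sum\hat{C}_{([\omega],\sigma)}^{0}\times\partial l_{\omega}^{\sigma}$, the fibre chains occurring in $c_{p+1}$ are boundaries, hence cycles, in the corresponding spaces of parameters $F_{\sigma}$, and their dimension has dropped by one.

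The next step is to iterate. Before each further application one must re-establish the hypotheses of Lemma \ref{Th:HomologChain_n-2}, or eventually those of the Proposition above. From $c_{0}=\partial d+c_{0}'+c_{p+1}$ and the fact that $\partial c_{0}'$ is supported over $\partial\Delta_{n,2}$ one gets $\partial^{0}c_{p+1}=\partial^{0}c_{0}$, which by the hypothesis equals the $\sum_{\omega}C_{\omega}\times\partial l_{\omega}$ part of $\partial c_{0}$ over the interior: a chain with chambers of dimension $p$, fibre chains that are cycles, and vanishing $\partial^{0}$. I would run the iteration as a secondary induction on the fibre dimension of the interior chain: each use of Lemma \ref{Th:HomologChain_n-2} strictly lowers this fibre dimension, and once it reaches zero all fibre boundaries vanish, so the interior chain has $\partial^{0}=0$ and the Proposition above applies, raising the chamber dimension all the way to $n-1$. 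The lower-fibre-dimensional residue produced at each stage is fed back into the induction; the chains that end up over $\partial\Delta_{n,2}$ are absorbed into $c'$, and the chambers of dimension $n-1$ that survive form $c_{n-1}$.

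The part that needs genuine care is exactly this iteration: a single application of Lemma \ref{Th:HomologChain_n-2} raises the chamber dimension by one but leaves behind fibre-boundary terms whose chambers sit only one dimension below the new chambers, which on its face violates the third hypothesis required to continue. The way around it is to organize the bookkeeping so that these terms, being of strictly smaller fibre dimension, are disposed of before the top-fibre-dimensional part is pushed up, so that the residual interior boundary is genuinely zero when the Proposition is finally invoked; checking that this ordering is consistent with the three cases of the boundary operator in $\stackrel{\circ}{\Delta}_{n,2}$ and with the surjections $\eta_{\omega,\bar{\omega}}$ governing the behaviour over $\partial\Delta_{n,2}$ is where the real work lies.
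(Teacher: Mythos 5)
Your setup is the one the paper intends: split $c=c_{0}+c_{1}$, observe that $\partial c_{1}$ is supported over $\partial\Delta_{n,2}$ so that $c_{0}$ inherits the hypotheses of Lemma~\ref{Th:HomologChain_n-2}, apply that lemma once, and iterate. The difficulty you flag in the iteration is real, but you do not resolve it, and the repair you sketch cannot work. After one application you have $c_{0}\sim c_{p+1}+c_{0}'$ with
$\partial^{0}c_{p+1}=\partial^{0}c_{0}=\sum_{\omega}C_{\omega}\times\partial l_{\omega}$
(the facet terms, which carry $(p-1)$-dimensional chambers, are killed by the hypothesis), i.e.\ a chain supported on chambers of dimension $p$. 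The boundary of a chain does not change when you add a boundary to it, and adding pieces supported over $\partial\Delta_{n,2}$ does not change its interior part either; hence \emph{every} chain you could pass to in the next step has the same interior boundary. If some $\partial l_{\omega}\neq 0$, no reordering of the bookkeeping, no ``disposing of'' the lower fibre-dimensional residue first, can restore the hypothesis of Lemma~\ref{Th:HomologChain_n-2} for the $(p+1)$-chambered chain: its boundary will always contain $p$-dimensional interior chambers. Worse, for $p<n-2$ the conclusion itself is then unreachable, since $\partial^{0}(c_{n-1}+c^{'})$ can only be supported on interior chambers of dimension $n-1$ and $n-2$, whereas $\partial^{0}c$ is supported in dimension $p$. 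So your argument (and the statement) only goes through under the additional condition that the fibre-boundary part $\sum_{\omega}C_{\omega}\times\partial l_{\omega}$ vanishes — for instance under the hypothesis $\partial^{0}c=0$ of the preceding Proposition, in which case each $\partial l_{\omega}=0$, every stage of the iteration again satisfies $\partial^{0}=0$, and your ``secondary induction on fibre dimension'' is unnecessary.

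Two further points. Your claim that once the fibre dimension reaches zero ``all fibre boundaries vanish, so the interior chain has $\partial^{0}=0$'' is not correct: $\partial^{0}$ of a chain with point fibres still contains the facet terms $\sum C_{\omega^{'}}\times l_{\omega}^{\sigma}$, which need not cancel. And you explicitly defer ``the real work'' of the iteration, so even granting the strategy the proof is incomplete. What your write-up does make visible is that the Corollary, with only the $(p-1)$-dimensional interior chambers excluded from $\partial c$, does not follow from Lemma~\ref{Th:HomologChain_n-2} by naive iteration; the stronger condition $\partial^{0}c=0$ is what is actually needed.
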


\begin{prop}\label{Cor:No_Chambers_leq_d-1}
		Let $c_{0} \subset \hat{\mu}^{-1}(\stackrel{\circ}{\Delta _{n,2}})$ be a chain, such that   $ \partial c_{0}$ does not contain a chamber $C_{\omega^{'}}\subset \stackrel{\circ}{\Delta}_{n,2}$ such that $\dim C_{\omega^{'}}\leq p-1$, for some $ p\leq n-2. $  Then $ c_{0} $ is homologous to a chain $c_{\geq p+1}+c^{'}$ such that $\hat{\mu}(c^{'}) \subset \partial \Delta_{n,2} $ and $c_{\geq p+1}$ contains just chambers of dimension $\geq p+1$.
	\end{prop}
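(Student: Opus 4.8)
The plan is to reduce to the single-dimension case already handled and then induct downward on the lowest chamber dimension appearing in $c_0$. First I would write $c_0 = \sum_{q} c_0^{(q)}$, where $c_0^{(q)}$ collects exactly those summands $C_\omega\times l_\omega$ of $c_0$ with $\dim C_\omega = q$; by hypothesis the lowest such $q$ with $c_0^{(q)}\neq 0$ satisfies $q\geq p$ (if $q\leq p-1$ then there would be chambers of dimension $\leq p-1$ in $c_0$, and one checks via~\eqref{boundch} that such a summand, unless cancelled, would force a chamber of dimension $\leq p-2$ into $\partial c_0$; more carefully, the hypothesis is on $\partial c_0$, so I should phrase the base of the induction as: let $p_0\geq p$ be the lowest dimension of a chamber occurring in $c_0$). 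The goal is to push $c_0^{(p_0)}$ up to dimension $\geq p_0+1$ modulo a boundary and modulo a chain supported over $\partial\Delta_{n,2}$, without creating new chambers of dimension $\leq p_0$ over $\stackrel{\circ}{\Delta}_{n,2}$ in the process.

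Next I would apply Lemma~\ref{Th:HomologChain_n-2} to the piece $c_0^{(p_0)}$. To do so I must check its three hypotheses: all its chambers have the same dimension $p_0\leq n-2$; $\hat\mu(c_0^{(p_0)})\subset\stackrel{\circ}{\Delta}_{n,2}$ by construction; and $\partial c_0^{(p_0)}$ contains no chamber $C_{\omega'}\subset\stackrel{\circ}{\Delta}_{n,2}$ of dimension $p_0-1$. For the third point, a chamber of dimension $p_0-1$ over $\stackrel{\circ}{\Delta}_{n,2}$ in $\partial c_0^{(p_0)}$ would have to be cancelled inside $\partial c_0$; but $\partial c_0^{(q)}$ for $q>p_0$ produces, over $\stackrel{\circ}{\Delta}_{n,2}$, only chambers of dimension $q-1\geq p_0$, so it cannot cancel a $(p_0-1)$-dimensional contribution, hence by the hypothesis on $\partial c_0$ there is no such contribution. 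Lemma~\ref{Th:HomologChain_n-2} then gives $c_0^{(p_0)} \simeq \partial d + c' + c_{p_0+1}$ with $\hat\mu(c')\subset\partial\Delta_{n,2}$, $\hat\mu(c_{p_0+1})\subset\stackrel{\circ}{\Delta}_{n,2}$ and all chambers of $c_{p_0+1}$ of dimension $p_0+1$.

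Then I would set $c_0' := c_0 - c_0^{(p_0)} + c_{p_0+1}$ (working mod $2$, signs are irrelevant), so that $c_0 \simeq c_0' + c'$ with $\hat\mu(c')\subset\partial\Delta_{n,2}$; the chain $c_0'$ is supported over $\stackrel{\circ}{\Delta}_{n,2}$ and its lowest chamber dimension is now $\geq p_0+1$. The key check here is that passing from $c_0$ to $c_0'$ does not reintroduce the forbidden boundary behavior, i.e. that $\partial c_0'$ still contains no chamber of dimension $\leq p$ (indeed $\leq p_0$) over $\stackrel{\circ}{\Delta}_{n,2}$: this follows because $c_{p_0+1}$ has chambers only in dimension $p_0+1$, so $\partial c_{p_0+1}$ contributes chambers of dimension $p_0$ over $\stackrel{\circ}{\Delta}_{n,2}$ at worst, and one arranges as in the proof of Lemma~\ref{Th:HomologChain_n-2} that these cancel against what was already there — more precisely, $\partial(c_0 - c_0^{(p_0)} + c_{p_0+1}) = \partial c_0 - \partial c_0^{(p_0)} + \partial c_{p_0+1}$, and since $c_0^{(p_0)}\simeq c_{p_0+1}$ modulo boundaries and a $\partial\Delta_{n,2}$-chain, the interior part of $\partial c_0'$ agrees with that of $\partial c_0$ up to chambers of dimension $\geq p_0$. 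Iterating, after finitely many steps all interior chambers have dimension $\geq p+1$ (in fact one can continue until dimension $n-1$), which gives the claimed decomposition $c_0\simeq c_{\geq p+1} + c'$.

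The main obstacle I anticipate is bookkeeping the boundary terms through the induction: each application of Lemma~\ref{Th:HomologChain_n-2} spills a chain $c'$ over $\partial\Delta_{n,2}$, and one must make sure these accumulated boundary-supported pieces never feed back interior chambers of low dimension upon taking further boundaries — i.e. that the hypothesis ``$\partial c_0$ has no interior chamber of dimension $\leq p-1$'' is genuinely preserved at every stage rather than merely at the first. The cleanest way to handle this is to prove the slightly stronger inductive statement that at step $k$ one has $c_0 \simeq c_{\geq p_0+k} + c'_{(k)}$ with $\hat\mu(c'_{(k)})\subset\partial\Delta_{n,2}$ and with $\partial(c_{\geq p_0+k})$ containing no interior chamber of dimension $\leq p_0+k-1$, which is exactly the form needed to reapply Lemma~\ref{Th:HomologChain_n-2}; this is a straightforward strengthening since the lemma's conclusion already controls the dimensions of the chambers produced.
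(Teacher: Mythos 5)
Your proposal is correct and follows essentially the same route as the paper: decompose $c_0$ by chamber dimension, apply Lemma~\ref{Th:HomologChain_n-2} to the lowest-dimensional piece, absorb the resulting $(p_0+1)$-dimensional chain into the next stratum, and iterate. The extra care you take in verifying the third hypothesis of Lemma~\ref{Th:HomologChain_n-2} at each stage (that higher-dimensional pieces cannot cancel a $(p_0-1)$-dimensional interior boundary chamber) is a detail the paper leaves implicit, but it is the same argument.
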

	\begin{proof}
		Let $ q\leq d $ be the lowest dimension of $ C_{\omega} $ that contributes to $ c_{0}. $ We can write $ c_{0}=\sum_{q\leq i\leq n-1} c_{0}^{i} $, where all chambers from $ c_{0}^{i} $ are of the dimension $ i. $ The chain $ c_{0}^{q} $ satisfies conditions of Lemma \ref{Th:HomologChain_n-2} and it follows that $ c_{0}^{q}=\tilde{c}_{0}^{q+1}+\bar{c}_{1}+\partial d. $ Thus, 
		\[ c_{0}=\sum_{q+2\leq i\leq n-1} c_{0}^{i}+c_{0}^{q+1}+\tilde{c}_{0}^{q+1}+\bar{c}_{1}+\partial d. \]  By abuse of notation, we denote  $ \tilde{c}_{0}^{q+1}+c_{0}^{q+1} $ by $ c_{0}^{q+1}.$ It follows that $ c_{0}^{q+1} $ satisfies the conditions of Lemma \ref{Th:HomologChain_n-2}. We prove the statement repeating the argument.
\end{proof}

\begin{cor}\label{The:HomologyChain_n-2_general}
Let $ c= c_{0} + c_{1} $ be a chain  in $ X_{n}, $ such that there exists a chamber  in  $c_{0}$ which has  the  dimension $\leq n-2$ and $ \partial ^{0} c = 0$.   Then $ c $ is homologous to a chain $ c_{n-1} + c^{'}$ such that $\hat{\mu}(c^{'}) \subset \partial \Delta_{n,2} $ and all chambers in $c_{n-1}$ have dimension $n-1$.
\end{cor}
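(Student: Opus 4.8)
The plan is to obtain this as an essentially immediate consequence of Proposition~\ref{Cor:No_Chambers_leq_d-1} applied with the choice $p = n-2$; that proposition already encapsulates the inductive dimension-raising mechanism built from Lemma~\ref{Th:HomologChain_n-2}, so almost no new argument is needed beyond bookkeeping.

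First I would unwind the hypothesis $\partial^{0}c = 0$ at the level of the decomposition $c = c_{0} + c_{1}$, where $\hat{\mu}(c_{0})\subset \stackrel{\circ}{\Delta}_{n,2}$ and $\hat{\mu}(c_{1})\subset \partial\Delta_{n,2}$. The support of $\partial c_{1}$ is contained in the support of $c_{1}$, which is compact and hence closed, so $\hat{\mu}(\partial c_{1})\subset \partial\Delta_{n,2}$; thus $\partial c_{1}$ contributes nothing to the interior boundary component and $\partial^{0}c = \partial^{0}c_{0}$. Therefore $\partial^{0}c_{0} = 0$, which in particular means that $\partial c_{0}$ contains no chamber $C_{\omega^{'}}\subset \stackrel{\circ}{\Delta}_{n,2}$ whatsoever, and \emph{a fortiori} none of dimension $\leq n-3$.

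Next I would invoke Proposition~\ref{Cor:No_Chambers_leq_d-1} for the chain $c_{0}$ with $p = n-2$ (the hypothesis of that proposition — absence in $\partial c_{0}$ of interior chambers of dimension $\leq p-1 = n-3$ — was just verified; should $c_{0}$ contain no chamber of dimension $\leq n-2$, the case ruled out by the hypothesis of the present corollary, the conclusion holds trivially with $c^{'}=0$). This produces a chain homologous to $c_{0}$ of the form $c_{\geq n-1} + c^{'}$ with $\hat{\mu}(c^{'})\subset \partial\Delta_{n,2}$ and all chambers of $c_{\geq n-1}$ of dimension $\geq n-1$. Since $\dim\Delta_{n,2} = n-1$, dimension $\geq n-1$ forces dimension exactly $n-1$, so $c_{\geq n-1}$ is a chain $c_{n-1}$ of the required type. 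Consequently $c = c_{0} + c_{1}$ is homologous to $c_{n-1} + (c^{'}+c_{1})$, and $\hat{\mu}(c^{'}+c_{1})\subset \partial\Delta_{n,2}$, which is the assertion. I do not foresee a genuine obstacle: the substantive content — iterating Lemma~\ref{Th:HomologChain_n-2} to raise chamber dimensions one at a time while discarding the residue onto $\partial\Delta_{n,2}$ — is already carried out in the cited statements, and the only point requiring a line of care is the identity $\partial^{0}c = \partial^{0}c_{0}$, i.e. that the boundary of the boundary-supported summand $c_{1}$ cannot interfere with the interior boundary condition.
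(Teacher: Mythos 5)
Your argument is correct and is exactly the route the paper intends: the corollary is stated immediately after Proposition~\ref{Cor:No_Chambers_leq_d-1} as a direct consequence of it with $p=n-2$, and your only added observation — that $\partial c_{1}$ stays over $\partial\Delta_{n,2}$ so $\partial^{0}c=\partial^{0}c_{0}$, whence $\partial c_{0}$ contains no interior chambers of dimension $\leq n-3$ — is the right (and needed) bookkeeping step. Since $\dim\Delta_{n,2}=n-1$, the resulting $c_{\geq n-1}$ is indeed a chain of $(n-1)$-dimensional chambers, so the conclusion follows as you state.
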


Let $Y_n= \hat{\mu}^{-1}(\partial \Delta_{n,2})$.  We now relate  the  homology  groups of $ X_{n} $ and $Y_n$ in some dimensions.
\begin{theorem}\label{Cor:Homology_less_n-3}
$ H_{k}(X_{n};\Z _2)\simeq H_{k}(Y_{n};\Z _2)$ for $ k\leq n-3$.
\end{theorem}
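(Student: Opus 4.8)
The plan is to derive the isomorphism from the long exact sequence in $\Z_2$-homology of the pair $(X_n, Y_n)$. Since $Y_n = \hat{\mu}^{-1}(\partial\Delta_{n,2})$ is a subcomplex of $X_n$ in the cell structure coming from the model $(U_n, p_n)$, this sequence is available, and from the portion
\[
H_{k+1}(X_n, Y_n;\Z_2)\longrightarrow H_k(Y_n;\Z_2)\longrightarrow H_k(X_n;\Z_2)\longrightarrow H_k(X_n, Y_n;\Z_2)
\]
it suffices to show $H_j(X_n, Y_n;\Z_2) = 0$ for all $j\le n-2$: then for $k\le n-3$ both outer groups vanish and the inclusion-induced map $H_k(Y_n;\Z_2)\to H_k(X_n;\Z_2)$ is an isomorphism.

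To kill $H_j(X_n, Y_n;\Z_2)$ for $j\le n-2$, I would take a relative $j$-cycle, that is a $j$-chain $z$ in $X_n$ with $\partial z\subset Y_n$, and split it as $z = z_0 + z_1$ with $\hat{\mu}(z_0)\subset\stackrel{\circ}{\Delta}_{n,2}$ and $\hat{\mu}(z_1)\subset\partial\Delta_{n,2}$. If $z_0 = 0$ then $z$ already lies in $Y_n$ and is zero in $H_j(X_n, Y_n;\Z_2)$. Otherwise $\partial z\subset Y_n$ means $\partial^0 z = 0$, and since $\dim z = j\le n-2$ every chamber occurring in $z_0$ has dimension $\le n-2$; hence Corollary~\ref{The:HomologyChain_n-2_general} applies and gives a chain identity $z = z_{n-1} + z' + \partial u$ with $\hat{\mu}(z')\subset\partial\Delta_{n,2}$ and all chambers of $z_{n-1}$ of dimension $n-1$. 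But any cell $C_\omega\times e$ of $z_{n-1}$ has $\dim C_\omega = n-1$, hence total dimension $\ge n-1 > j$, so the $j$-chain $z_{n-1}$ must vanish. Therefore $z = z' + \partial u$ with $z'$ a chain in $Y_n$, which shows $[z] = 0$ in $H_j(X_n, Y_n;\Z_2)$, as required.

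The substantive work is thus already done in the chain-pushing results of this section --- Lemma~\ref{Th:HomologChain_n-2}, Proposition~\ref{Cor:No_Chambers_leq_d-1} and especially Corollary~\ref{The:HomologyChain_n-2_general} --- which raise the dimension of the chambers supporting a relative cycle until the only remaining part is carried by $\partial\Delta_{n,2}$. The only genuinely new point is the dimension bookkeeping: in the range $j\le n-2$ the ``raised'' summand $z_{n-1}$, whose chambers have dimension $n-1$, is forced to be trivial, so no residue is left in the interior. The step I expect to need the most care is making sure the conclusion of Corollary~\ref{The:HomologyChain_n-2_general} is used at chain level (the honest identity $z = z_{n-1} + z' + \partial u$, as produced in the proof of Lemma~\ref{Th:HomologChain_n-2} via explicit bounding chains) rather than merely up to homology in $X_n$, since this is exactly what lets us conclude that $z$ bounds \emph{relative} to $Y_n$; the degenerate case $z_0 = 0$ must also be handled separately, as above.
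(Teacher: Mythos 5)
Your proof is correct and rests on the same key ingredient as the paper's, namely Corollary~\ref{The:HomologyChain_n-2_general} applied to chains of dimension $\le n-2$ with $\partial^{0}=0$, with the interior residue $z_{n-1}$ forced to vanish for dimension reasons; the paper argues surjectivity and injectivity of $H_k(Y_n;\Z_2)\to H_k(X_n;\Z_2)$ directly (pushing a cycle of dimension $\le n-3$, and then a bounding chain of dimension $\le n-2$, into $Y_n$), while you package exactly these two steps as the vanishing of $H_j(X_n,Y_n;\Z_2)$ for $j\le n-2$ and invoke the long exact sequence of the pair. Your insistence on the chain-level identity $z=z_{n-1}+z'+\partial u$ is the correct reading of the corollary and, if anything, makes the injectivity direction more transparent than the paper's phrasing.
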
  
\begin{proof}
A cycle  $c$ in  $X_n$ such that  $\dim c\leq n-3$  is by Corollary~\ref{The:HomologyChain_n-2_general} homologous  to a cycle  $c^{'}$ such that $\hat{\mu}(c^{'})\subset \partial \Delta_{n,2}$. In addition, if $c=\partial d$ then $\dim d\leq n-2$, so again by Corollary~\ref{The:HomologyChain_n-2_general} we have that $d$ is homologous to a cycle $d^{'}$ such that $\hat{\mu}(d^{'})\subset \partial \Delta_{n,2}$. Therefore, the homology groups in dimensions $\leq n-3$ of $X_n$ and $Y_n$ are isomorphic.
\end{proof}
\begin{rem}\label{n-2}
We demonstrate that Theorem~\ref{Cor:Homology_less_n-3}  does not hold for $ k=n-2$.  Namely,  any  cycle of dimension $ n-2 $ is homologous to a cycle  $ c^{\prime}$ in $Y_{n}. $  
Consider the chain 
$c=\sum_{\forall\omega}C_{\omega}\quad \text{and}\quad  \dim C_{\omega}=n-1.
$
Then 
$
\partial c =\sum_{\forall C_{\omega}\in\partial \Delta_{n,2}}C_{\omega},  \quad  \dim(C_{\omega})=n-2.$
There is no $(n-1)$-dimensional chain  $ \tilde{c}\in Y_{n}$  such that $ \partial \tilde{c}=\partial c$.  Such chain $\tilde{c}$ should contain the summand of the  form $\tilde{c} = \sum_{\substack{\forall C_{\omega}\in\partial \Delta_{n,2} \\ \dim C_{\omega}=n-2}}C_{\omega}\times l_{\omega}$, where $\dim l_{\omega}=1$.  But for  $C_{\omega}=\stackrel{\circ}{\Delta}_{n-1,1}\in \partial \tilde{c} $, we have that $F_{\omega}$ is a point  so  an  $ 1 $-dimensional chain $ \tilde{l}_{\omega} $ does not exist.
\end{rem}
We extended the previous results for the cycles of dimensions $ k=n-1$ and $k=n$.


\begin{prop}\label{Th:Cycles_n-1}
Let $ c $ be a cycle $ c=\sum_{\omega}C_{\omega}\times l_{\omega} $ on $ X_{n} $ such that $ \dim c= n-1$ or $\dim c=n$. Then the cycle $ c $ is homologous to the cycle $ c^{\prime} $ such that $ \hat{\mu}(c^{\prime})\subset \partial{\Delta}_{n,2}.$
\end{prop}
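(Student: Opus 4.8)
The plan is to reduce $c$, by the reduction machinery already developed, to an interior chain built from point-fibre admissible polytopes, and then to dispose of that residual chain. Write $c=c_{0}+c_{1}$ with $\hat\mu(c_{0})\subset\stackrel{\circ}{\Delta}_{n,2}$ and $\hat\mu(c_{1})\subset\partial\Delta_{n,2}$. Since $c$ is a cycle and the boundary of $c_{1}$ stays over $\partial\Delta_{n,2}$, we get $\partial^{0}c_{0}=0$. If $c_{0}$ contains a chamber of dimension $\le n-2$, apply Corollary~\ref{The:HomologyChain_n-2_general} to replace $c$, up to homology, by a chain $c_{n-1}+c'$ whose interior part $c_{n-1}$ has all chambers of dimension $n-1$ and with $\hat\mu(c')\subset\partial\Delta_{n,2}$; this step preserves being a cycle and keeps $\partial^{0}c_{n-1}=0$ (if $c_{0}$ already had no chamber of dimension $\le n-2$, take $c_{n-1}=c_{0}$ and $c'=c_{1}$). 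Now Corollary~\ref{Cor:Chains_with_boundary_on_boundary} applies to $c_{n-1}$ and writes it as $\sum_{\sigma}P_{\sigma,n-1}\times l^{\sigma}$ with $\dim P_{\sigma}=n-1$ and $\dim l^{\sigma}=\dim F_{\sigma}$.

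The next step is a dimension and parity count. Each summand has dimension $(n-1)+\dim F_{\sigma}\le\dim c\in\{n-1,n\}$, so $\dim F_{\sigma}\le 1$. By Remark~\ref{cellspaceparam} every $F_{\sigma}$ is either a point or homeomorphic to some $F_{m}$, $4\le m\le n-1$, with $\dim F_{m}=2(m-3)$ even; hence $\dim F_{\sigma}$ is even, so $\dim F_{\sigma}=0$ and every $F_{\sigma}$ occurring in $c_{n-1}$ is a single point. Thus $c$ is homologous to $c_{n-1}+c'$, where $\hat\mu(c')\subset\partial\Delta_{n,2}$ and $c_{n-1}=\sum_{\sigma}P_{\sigma,n-1}\times\{pt_{\sigma}\}$ is an $(n-1)$-dimensional interior chain over admissible polytopes $P_{\sigma}$ of dimension $n-1$ whose space of parameters is a point, with $\partial^{0}c_{n-1}=0$. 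In particular, when $\dim c=n$ the genuinely $n$-dimensional part of $c$ has already been pushed over $\partial\Delta_{n,2}$, so both cases $\dim c=n-1$ and $\dim c=n$ reduce to the same residual problem: to show that a point-fibre chain $c_{n-1}$ as above is homologous to a chain supported over $\partial\Delta_{n,2}$.

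This residual problem is the step I expect to be the main obstacle. Here is the approach I would take. Since $\eta_{\sigma,\bar\sigma}\colon F_{\sigma}\to F_{\bar\sigma}$ is a continuous surjection (Remark~\ref{Nagranici}), $F_{\sigma}$ being a point forces $F_{\bar\sigma}$ to be a point for every face $P_{\bar\sigma}$ of $P_{\sigma}$; hence $\hat\mu^{-1}(P_{\sigma})$ is homeomorphic to the $(n-1)$-ball $P_{\sigma}$, and $\partial^{0}[\hat\mu^{-1}(P_{\sigma})]$ is the preimage of the union of the interior facets of $P_{\sigma}$, which by Theorem~\ref{Th:admissiblepolytope} lie on the hyperplanes $\sum_{i\in S}x_{i}=1$. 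The hypothesis $\partial^{0}c_{n-1}=0$ then says exactly that the balls $\hat\mu^{-1}(P_{\sigma})$ occurring in $c_{n-1}$ fit together along their interior facets into a sub-pseudomanifold of the contractible polytope $\Delta_{n,2}$ whose boundary lies in $\partial\Delta_{n,2}$. Using the contractibility of each ball $\hat\mu^{-1}(P_{\sigma})$ and of $\Delta_{n,2}$ relative to $\partial\Delta_{n,2}$, I would produce an $n$-chain $D$ over $\hat\mu^{-1}(\Delta_{n,2})$ with $\partial^{0}D=c_{n-1}$, whence $c_{n-1}=\partial D+(\text{a chain over }\partial\Delta_{n,2})$ and $c$ is homologous to a chain over $\partial\Delta_{n,2}$, as required. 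The delicate part is the combinatorics of how the point-fibre polytopes of dimension $n-1$ overlap along their interior facets and the consequent construction of $D$ (equivalently, proving that $c_{n-1}$ is a relative boundary in the pair $(X_{n},\hat\mu^{-1}(\partial\Delta_{n,2}))$, or that $c_{n-1}=0$ outright); the rest is a direct assembly of the corollaries already established.
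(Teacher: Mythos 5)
Your reduction of both cases to a residual interior chain $c_{n-1}=\sum_{\sigma}P_{\sigma,n-1}\times\{pt_{\sigma}\}$ with point fibres (via Corollaries~\ref{The:HomologyChain_n-2_general} and~\ref{Cor:Chains_with_boundary_on_boundary} and the parity of $\dim F_{\sigma}$) is sound, and your treatment of the case $\dim c=n$ agrees in substance with the paper's appeal to Corollary~\ref{odd}. The gap is in the step you yourself flag as the main obstacle, and it is not merely unfinished: the strategy you propose for it would fail. You suggest showing that $c_{n-1}$ is a relative boundary in $(X_n,\hat{\mu}^{-1}(\partial\Delta_{n,2}))$ by using contractibility of $\Delta_{n,2}$ rel its boundary to build an $n$-chain $D$ with $\partial^{0}D=c_{n-1}$. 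But an $n$-chain over $\stackrel{\circ}{\Delta}_{n,2}$ necessarily has $1$-dimensional fibre components $l_{\omega}$ over the maximal chambers, so its boundary is governed by the topology of the spaces $F_{\omega}$, not of $\Delta_{n,2}$; contractibility of the base gives you nothing. In fact the one nonzero candidate for $c_{n-1}$, namely the sum of \emph{all} maximal chambers with point fibres (forced by $\partial^{0}c_{n-1}=0$ and connectedness of the adjacency graph of maximal chambers), is demonstrably \emph{not} a relative boundary mod $Y_n=\hat{\mu}^{-1}(\partial\Delta_{n,2})$: its boundary is exactly the class producing the extra $\Z_2$ summand in $H_{n-2}(Y_n)\cong\Z_2\oplus H_{n-2}(X_n)$ (see Remark~\ref{n-2} and the theorem following Proposition~\ref{Th:Cycles_n-1}), so it cannot bound relatively.

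The paper closes the case $\dim c=n-1$ by a different mechanism: it shows that this candidate cannot occur as the interior part of a cycle at all. If $c_0=\sum_{\mathrm{all}\ \omega}C_{\omega}$, then $\partial c_0$ contains the top-dimensional chambers of the simplex facets $\Delta_{n-1,1}(i)$, whose spaces of parameters are points; hence no $(n-1)$-dimensional chain supported over $\partial\Delta_{n,2}$ can have these chambers in its boundary, contradicting $\partial c_0=\partial c_1$. Therefore $c_0$ must contain chambers of dimension $\le n-2$, and after applying Corollary~\ref{The:HomologyChain_n-2_general} the same obstruction forces $c_{n-1}=0$. Your proof needs this (or an equivalent) argument to replace the proposed relative-boundary construction.
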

\begin{proof}
Let us write a cycle $ c $ as $ c_{0}+c_{1} $ such that $ \hat{\mu}(c_{0})\subset \stackrel{\circ}{\Delta}_{n,2} $ and $  \hat{\mu}(c_{1})\subset \partial\Delta_{n,2} $.  
Let $\dim c=n-1$. If  all chambers $ C_{\omega}$ in $c_0$ have dimension $n-1$ then  $ c_{0}=\sum_{\omega}C_{\omega}$ is the sum of all chambers of the dimension $ n-1. $ It follows that $ \partial c_{0} $ consists of all chambers of top dimension $ n-2 $ on $ \partial \Delta_{n,2}. $ Since $ \partial c_{0}=\partial c_{1} $ and $ \Delta_{n-1,1}\in \partial c_{0} $ it follows that $\Delta_{n-1,1}$ should be a chamber in  $\partial c_{1}.  $ Since the corresponding space of parameters for simplex $ \Delta_{n-1,1} $ is a point, there does not exist a $( n-1 )$-dimensional   chain  $ \tilde{c}\in \partial \Delta_{n,2}  $ such that $\Delta_{n-1,1}$ contributes to  $\partial \tilde{c}$.  Thus,  there should exist  $  C_{\omega}$ of dimension $\leq n-2$ contributing to $c_0$. Now, by Corollary~\ref{The:HomologyChain_n-2_general} we have that $c$ is homologous to a cycle $c_{n-1}+c^{'}$,  so the previous implies that $c_{n-1}=0$. 
For $\dim c=n$, by Corollary~\ref{The:HomologyChain_n-2_general} we can assume that  $c_{0}= \sum_{\omega}C_{\omega}\times l_{\omega} $ such that $ \dim C_{\omega}=n-1 $  for all $\omega$. Then $\dim l_{\omega}=1$, so Corollary~\ref{odd} gives the $c_0$ is homologous to zero, which concludes the proof.  
\end{proof}

\begin{theorem}
 $H_{n-1}(X_n;\Z _2) \cong H_{n-1}(Y_n;\Z _2)$ and $H_{n-2}(Y_n; \Z _2)\cong \Z _2 \oplus H_{n-2}(X_n; \Z _2)$.
\end{theorem}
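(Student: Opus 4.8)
The plan is to run the long exact homology sequence of the CW pair $(X_n,Y_n)$, where $Y_n=\hat\mu^{-1}(\partial\Delta_{n,2})$ is a subcomplex of the model, and to read off both statements once the two relative groups $H_n(X_n,Y_n;\Z_2)$ and $H_{n-1}(X_n,Y_n;\Z_2)$ have been determined; I write $\iota_*$ for the map induced by the inclusion $Y_n\hookrightarrow X_n$. The key point is that the results of Section~\ref{Sec:On_Gn2_homology} about $\Z_2$-chains over $\stackrel{\circ}{\Delta}_{n,2}$ are exactly statements about relative chains: a $k$-chain $c=c_0+c_1$ with $\hat\mu(c_1)\subset\partial\Delta_{n,2}$ is a relative cycle iff $\partial^0 c=0$, and ``homologous to a chain supported over $\partial\Delta_{n,2}$'' means ``a relative boundary''.

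First I would show $H_n(X_n,Y_n;\Z_2)=0$. If a relative $n$-cycle $c$ has a chamber of dimension $\le n-2$ in its interior part, Corollary~\ref{The:HomologyChain_n-2_general} replaces $c$, modulo relative boundaries and chains in $Y_n$, by a chain $\sum C_\omega\times l_\omega$ all of whose chambers have dimension $n-1$, so with $\dim l_\omega=1$ (odd); if not, $c$ already has this form, and Corollary~\ref{odd} makes it a relative boundary. Since $H_n(X_n,Y_n;\Z_2)=0$, the connecting homomorphism out of it vanishes, so $\iota_*\colon H_{n-1}(Y_n;\Z_2)\to H_{n-1}(X_n;\Z_2)$ is injective; it is surjective by Proposition~\ref{Th:Cycles_n-1}, which gives the first isomorphism.

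For the second statement I would use that surjectivity of $\iota_*$ on $H_{n-1}$ forces the connecting map $\partial\colon H_{n-1}(X_n,Y_n;\Z_2)\to H_{n-2}(Y_n;\Z_2)$ to be injective, while $\iota_*$ on $H_{n-2}$ is surjective because an $(n-2)$-cycle has no chamber of dimension $n-1$, so in Corollary~\ref{The:HomologyChain_n-2_general} the component of dimension $n-1$ is forced to vanish and the cycle is homologous to one supported in $Y_n$. The exact sequence then collapses to $0\to H_{n-1}(X_n,Y_n;\Z_2)\to H_{n-2}(Y_n;\Z_2)\to H_{n-2}(X_n;\Z_2)\to 0$, so the whole theorem reduces to proving $H_{n-1}(X_n,Y_n;\Z_2)\cong\Z_2$; granting that, this is a short exact sequence of $\Z_2$-vector spaces and splits.

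To compute $H_{n-1}(X_n,Y_n;\Z_2)$: by Corollary~\ref{The:HomologyChain_n-2_general} every relative $(n-1)$-cycle is, modulo relative boundaries and chains in $Y_n$, of the form $c=\sum_{\omega\in T}C_\omega\times\{pt_\omega\}$ with $\dim C_\omega=n-1$. Since each $F_\omega$ contains the path-connected main-stratum fibre $F=F_n$, I would homotope every $pt_\omega$ into a fixed point $q\in F$; the correction is a relative boundary plus a chain over $\partial\Delta_{n,2}$, because along an interior facet two adjacent chambers carry the \emph{same} fibre cell, so the two corrections cancel mod $2$. With all fibre cells equal to $q\in F$, the condition $\partial^0 c=0$ says precisely that $T$ contains both or neither of the two top chambers meeting along each interior facet, i.e.\ $T$ is a union of connected components of the chamber adjacency graph of $\stackrel{\circ}{\Delta}_{n,2}$; as $\Delta_{n,2}$ is a ball, this graph is connected, so $T$ is empty or everything, and $H_{n-1}(X_n,Y_n;\Z_2)$ is generated by the single class of $\big(\sum_\omega C_\omega\big)\times\{q\}$. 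That class is nonzero by Remark~\ref{n-2} (its boundary $z$ does not bound in $Y_n$), so $H_{n-1}(X_n,Y_n;\Z_2)\cong\Z_2$ and the theorem follows. The main obstacle is exactly this last step: Corollary~\ref{The:HomologyChain_n-2_general} hands over the reduction to top-dimensional chambers and Remark~\ref{n-2} supplies the nontriviality, but the bookkeeping that moving the fibre points into $F$ alters the chain only by a relative boundary and a chain over $\partial\Delta_{n,2}$ — using the explicit boundary operator of Section~\ref{Sec:On_Gn2_homology} and the disjointness of the strata $F_\sigma$ in each $F_\omega$ — is the delicate part.
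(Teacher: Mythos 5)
Your proposal is correct in substance and uses exactly the same ingredients as the paper --- Corollary~\ref{The:HomologyChain_n-2_general}, Corollary~\ref{odd}, Proposition~\ref{Th:Cycles_n-1} and Remark~\ref{n-2} --- but packages them differently: you run the long exact sequence of the pair $(X_n,Y_n)$ and reduce everything to the two relative groups $H_n(X_n,Y_n;\Z_2)=0$ and $H_{n-1}(X_n,Y_n;\Z_2)\cong\Z_2$. The paper instead argues directly at chain level: for the first isomorphism it shows that any chain $\hat c$ in $X_n$ with $\partial\hat c\subset Y_n$ can be replaced by a chain in $Y_n$ with the same boundary (your ``$H_n(X_n,Y_n)=0$'' in disguise, proved by the same two corollaries), and for the second it exhibits the single class $d=\sum C_{\bar\omega}$ of all $(n-2)$-chambers of $\partial\Delta_{n,2}$, which bounds in $X_n$ via $\hat d=\sum C_\omega$ but not in $Y_n$ by Remark~\ref{n-2}. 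Your generator $\bigl(\sum_\omega C_\omega\bigr)\times\{q\}$ of $H_{n-1}(X_n,Y_n)$ is precisely the paper's $\hat d$, and the connecting map sends it to $[d]$. What your route buys is a cleaner account of why the kernel of $H_{n-2}(Y_n)\to H_{n-2}(X_n)$ is \emph{exactly} $\Z_2\cdot[d]$ and nothing more --- a point the paper's proof passes over rather quickly --- at the cost of having to compute $H_{n-1}(X_n,Y_n)$ honestly.

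That computation is where your argument is thinnest, as you acknowledge. A relative $(n-1)$-cycle has $0$-dimensional fibre chains $l_\omega$, and the paper's own normal form (Corollary~\ref{Cor:Chains_with_boundary_on_boundary} and part (3) of Theorem~\ref{main}) pushes such points \emph{into} the zero-dimensional strata $F_\sigma$ (e.g.\ for $n=5$ one gets relative cycles $K_{ijk,4}\times F_{ijk}$ and $K_{ij,pq,4}\times F_{ij,pq}$), whereas you pull them \emph{out} to a common basepoint $q\in F$; the claim that these normalizations agree up to relative boundaries --- i.e.\ that moving a point from $F_\sigma$ to $F$ along a path in $F_\omega$ produces only a relative boundary, uniformly across adjacent chambers whose fibre points may sit in different strata --- is not automatic from the stated boundary formulas and needs the kind of bookkeeping you flag. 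Once that is supplied, the connectivity argument over the interior chamber-adjacency graph and the nontriviality from Remark~\ref{n-2} do close the proof.
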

\begin{proof}
Let $c$ be a cycle in $Y_n$ of dimension $n-1$. If $\hat{c}$ is a chain in $X_n$ such that $\partial \hat{c} = c$ then $\hat{c}$ is by Corollary~\ref{The:HomologyChain_n-2_general} homologous to a chain $\hat{c}_{n-1}+\hat{c}^{'}$. If $\hat{c}_{n-1}\neq 0$ then   $\hat{c}_{n-1}=\sum C_{\omega}\times l_{\omega}$, where $\dim l_{\omega}=1$ and $\partial  ^{0} \hat{c}_{n-1}=0$. It follows from Corollary~\ref{odd} that $\hat{c}_{n-1}$ is homologous to zero. Thus,  $\hat{c}$ is homologous to a chain  from $Y_n$.
Let $\dim c=n-2$ and assume that $c$ contains a chamber of dimension $n-2$. Then $c$ contains the chain $d = \sum_{\forall C_{\omega}}C_{\omega}$, $C_{\omega}\subset \partial \Delta _{n,2}$ and $\dim C_{\omega} = n-2$. Since $d=\partial \hat{d}$, $\hat{d} =\sum_{\forall C_{\omega}}C_{\omega}$, $\dim C_{\omega}=n-1$ it follows that $c$ is homologous in $X_n$  to a cycle  $c^{'}$  from $\hat{\mu}^{-1}(\cup_{1\leq m\leq n}\partial O_m)$ not containing chambers of dimension $n-2$.  Using Remark~\ref{n-2} the second statement follows.
\end{proof}

	\begin{prop}\label{Th:Cycles_n-1_odd}
		Let $ c $ be a chain  on $ X_{n} $ such that $ \dim c= n-1+2k+1$  where $ \partial^0 c=0 $ and $ 1\leq k< n-3. $ Then the chain $ c $ is homologous to a chain  $ c^{\prime} $ such that $ \hat{\mu}(c^{\prime})\subset \partial{\Delta}_{n,2}.$
	\end{prop}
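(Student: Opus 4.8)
The plan is to derive this as a short consequence of Corollary~\ref{The:HomologyChain_n-2_general} together with Corollary~\ref{odd}, the only real content being a dimension count. First I would split $c = c_{0} + c_{1}$ with $\hat{\mu}(c_{0})\subset \stackrel{\circ}{\Delta}_{n,2}$ and $\hat{\mu}(c_{1})\subset \partial\Delta_{n,2}$. Since $\hat{\mu}(\partial c_{1})\subset \partial\Delta_{n,2}$, the interior part of its boundary vanishes, that is $\partial^{0}c_{1}=0$, so the hypothesis $\partial^{0}c=0$ yields $\partial^{0}c_{0}=0$ as well. If every chamber contributing to $c_{0}$ already has dimension $n-1$ one proceeds directly to the last step; otherwise $c$ contains a chamber of dimension $\leq n-2$ and satisfies $\partial^{0}c=0$, so Corollary~\ref{The:HomologyChain_n-2_general} applies and replaces $c$ by a homologous chain $c_{n-1}+c^{\prime}$ with $\hat{\mu}(c^{\prime})\subset \partial\Delta_{n,2}$ and all chambers of $c_{n-1}$ of dimension $n-1$.

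Next I would record the dimensions. Writing $c_{n-1}=\sum_{\omega}C_{\omega}\times l_{\omega}$ with $\dim C_{\omega}=n-1$, each fiber chain has $\dim l_{\omega}=(n-1+2k+1)-(n-1)=2k+1$, which is odd. The range $1\leq k<n-3$ is used precisely here: it guarantees $1\leq 2k+1\leq 2n-7<\dim F_{\omega}=2n-6$, so the $l_{\omega}$ are honest non-top chains in the spaces of parameters $F_{\omega}$ and $c_{n-1}$ is a genuine chain of dimension $n-1+2k+1$ to which Corollary~\ref{odd} may be applied. It then remains to check $\partial^{0}c_{n-1}=0$: from $c\sim c_{n-1}+c^{\prime}$ we get $\partial c=\partial c_{n-1}+\partial c^{\prime}$, and $\hat{\mu}(c^{\prime})\subset\partial\Delta_{n,2}$ forces $\partial^{0}c^{\prime}=0$, so together with $\partial^{0}c=0$ this gives $\partial^{0}c_{n-1}=0$.

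Finally, $c_{n-1}$ satisfies all hypotheses of Corollary~\ref{odd}: all its chambers have dimension $n-1$, $\partial^{0}c_{n-1}=0$, and $\dim l_{\omega}=2k+1$ is odd. Hence $c_{n-1}$ is homologous to zero, and therefore $c$ is homologous to $c^{\prime}$ with $\hat{\mu}(c^{\prime})\subset\partial\Delta_{n,2}$, which is the claim.

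I do not expect a genuine obstacle once Corollaries~\ref{The:HomologyChain_n-2_general} and~\ref{odd} are in hand; the proof is bookkeeping. The one point deserving care is the verification $\partial^{0}c_{n-1}=0$ after the reduction, since Corollary~\ref{odd} genuinely needs it: the chain $c^{\prime}$ obtained by pushing $c$ onto $\partial\Delta_{n,2}$ automatically has $\partial^{0}c^{\prime}=0$, so the leftover top-chamber part $c_{n-1}$ inherits the condition $\partial^{0}=0$ from $c$.
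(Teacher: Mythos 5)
Your proposal is correct and follows essentially the same route as the paper: split $c=c_{0}+c_{1}$ over the interior and boundary of $\Delta_{n,2}$, use Corollary~\ref{The:HomologyChain_n-2_general} to reduce to a chain $c_{n-1}$ with all chambers of dimension $n-1$, observe that the fiber chains then have odd dimension $2k+1$, and invoke Corollary~\ref{odd}. Your extra care in checking $\partial^{0}c_{n-1}=0$ and in locating where the range $1\leq k<n-3$ enters is a welcome tightening of the same argument.
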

	\begin{proof}  Let 
		$c=  c_{0}+c_{1} $, where $ \hat{\mu}(c_{0})\subset \stackrel{\circ}{\Delta}_{n,2} $ and $ \hat{\mu}(c_{1})\subset \partial\Delta_{n,2} $. The chain $ c_{0} $ cannot be a cycle, since by Corollary \ref{admcycle} all chambers $ C_{\omega} $ must be of the dimension $ n-1 $ and the dimension of $ l_{\omega} $ has to be even. Since $ \partial^0 c_{0}=0, $ by Corollary \ref{The:HomologyChain_n-2_general} we can write $ c_{0}=c_{n-1}+c^{\prime} $  where  $ c^{\prime}\in \hat{\mu}^{-1}(\partial \D{n}) $ and $ c_{n-1}=\sum C_{\omega}\times l_{\omega},\, \dim(C_{\omega})=n-1.$ It follows that $ \partial^{0} c_{n-1}=0 $ and $ \dim(l_{\omega})=2k+1 $ for any $ \omega $. By Corollary \ref{odd} we have that the chain $ c_{n-1} $ is homologous to zero. 
	\end{proof}
	\begin{prop}\label{Th:Cycles_n-1_even}
		Let $ c $ be a cycle on $ X_{n} $ of the dimension $ \dim c= n-1+2k,\, 1\leq k\leq n-3$. Then  $ c $ is homologous to a cycle  $ c^{\prime} $ of the form 
		\[ c^{\prime} =c_{0}^{\prime}+c_{1}^{\prime},\text{ where } c_{0}^{\prime} \text{ and } c_{1}^{\prime} \text{ are cycles}  \] such that $ c_{0}^{\prime}\in \hat{\mu}^{-1}(\Ins{\D{n}})$ and $ c_{1}^{\prime}\in \hat{\mu}^{-1}(\partial \D{n}) $. 
\end{prop}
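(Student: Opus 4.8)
The plan is to reduce $c$ to a standard chain over $\stackrel{\circ}{\Delta}_{n,2}$ plus a chain over $\partial\Delta_{n,2}$, and then to remove the boundary interaction of the standard part by an induction on $n$. First write $c=c_0+c_1$ with $\hat\mu(c_0)\subset\stackrel{\circ}{\Delta}_{n,2}$ and $\hat\mu(c_1)\subset\partial\Delta_{n,2}$. Since $c$ is a cycle, $\partial^{0}c_0=0$ and $\partial c_0=\partial^{'}c_0=\partial c_1=:b$, a cycle in $Y_n=\hat\mu^{-1}(\partial\Delta_{n,2})$. If $k\ge n-4$ then $\dim c=n-1+2k>3n-10=\dim Y_n$, so $c_1=0$ and $c=c_0$ is already a cycle over $\stackrel{\circ}{\Delta}_{n,2}$ (when $k=n-3$ this is Lemma~\ref{lemma_3n-7}); so assume $k\le n-5$. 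Applying Corollary~\ref{The:HomologyChain_n-2_general} and then Corollary~\ref{Cor:Chains_with_boundary_on_boundary}, we may take $c_0=\sum_\sigma P_{\sigma,n-1}\times F_\sigma$, where $\dim F_\sigma=2k$ and $F_\sigma$ denotes the fundamental cycle of the closed manifold $F_\sigma$, the extra chain over $\partial\Delta_{n,2}$ being absorbed into $c_1$.

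Next I would analyse $b$. In $\partial(P_{\sigma,n-1}\times F_\sigma)$ the term $P_{\sigma,n-1}\times\partial F_\sigma$ vanishes since $F_\sigma$ is closed, and over a facet $P_{\bar\sigma}$ the contribution is $P_{\bar\sigma,n-2}\times\eta^{\#}_{\sigma,\bar\sigma}(F_\sigma)$, which is zero whenever $\dim F_{\bar\sigma}<2k$, being a $2k$-chain in a space of smaller dimension; the same degeneracy kills the facets of simplex type (where $F_{\bar\sigma}$ is a point), while, since $\partial^{0}c_0=0$, the facets interior to $\Delta_{n,2}$ cancel among themselves. Hence, exactly as in the final part of the proof of Theorem~\ref{main}(3), $b$ is a cycle of admissible top form (in the sense of Corollary~\ref{admcycle}) supported over the union of the interiors $\stackrel{\circ}{\Delta}_{n-1,2}(i)$ of the faces of $\partial\Delta_{n,2}$, along which the relevant parameter spaces $F_{\bar\sigma}$ are unchanged. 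Since $b=\partial c_1$ with $c_1$ a chain in $Y_n$, the cycle $b$ is null-homologous in $Y_n$.

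The crux is to convert this nullhomology into an \emph{interior} correction of $c_0$. Here one uses the induction on $n$: applying the present proposition together with Corollary~\ref{admcycle} to the restrictions of $b$ to the Grassmannian faces $X_{n-1}(i)$, a nullhomology of $b$ may be taken to lie in $Y_n$ and to have admissible top form on its standard part; ``opening up'' the boundary facets $P_{\bar\sigma}$ to the adjacent full-dimensional admissible polytopes $P_\sigma\subset\stackrel{\circ}{\Delta}_{n,2}$, and $F_{\bar\sigma}$ to $F_\sigma$ under the identifications $F_\sigma\cong F_{\tilde\sigma}$ supplied by the proof of Theorem~\ref{main}(3), produces an interior chain $D$ in $X_n$ with $\partial^{0}D=0$ and $\partial^{'}D=b+b'$, where $\hat\mu(b')\subset\partial\Delta_{n,2}$ and $b'$ is carried by boundary strata of strictly smaller parameter dimension than $b$ (odd-dimensional fibres occurring along the way are discarded by Corollary~\ref{odd}). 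Replacing $c_0$ by $c_0+D$ and iterating---the fibre dimension over the boundary strictly decreases, so the process terminates---we obtain an interior cycle $c_0'$ with $\partial c_0'=0$ and $c_0'\equiv c_0$ modulo chains with image in $\partial\Delta_{n,2}$; then $c_1':=c+c_0'$ has image in $\partial\Delta_{n,2}$ and is a cycle, and $c=c_0'+c_1'$ is the desired decomposition. I expect the hard part to be precisely this last step: one must track which full-dimensional admissible polytopes enter $c_0$ over each shared facet, control the identifications among their parameter spaces, and verify that the opened-up chain $D$ reintroduces only strictly smaller boundary strata, so that both the nullhomology argument and the induction on $n$ close up.
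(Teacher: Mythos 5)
Your opening moves match the paper's: the splitting $c=c_{0}+c_{1}$, the observation that $\partial^{0}c_{0}=0$ and $\partial c_{0}=\partial c_{1}$, the dimension count disposing of $k\geq n-4$, and the normalization of $c_{0}$ via Corollaries~\ref{The:HomologyChain_n-2_general} and~\ref{Cor:Chains_with_boundary_on_boundary}. The genuine gap is exactly the step you flag as ``the hard part'': the existence of an interior chain $D$ with $\partial^{0}D=0$ and $\partial^{'}D=b+b'$, where $b'$ is carried by boundary strata of strictly smaller parameter dimension. As stated this is unjustified and, without further cancellation, false: if you ``open up'' a term $P_{\bar{\sigma},n-2}\times m_{\bar{\sigma}}$ supported on $\Delta_{n-1,2}(i)$ to an adjacent full-dimensional admissible polytope, say $P_{\sigma}=K_{ij}$, then $\partial^{'}(P_{\sigma,n-1}\times m_{\sigma})$ also contains the contribution over the other Grassmannian facet $K_{ij}(j)$, whose space of parameters is again homeomorphic to $F_{ij}$ --- the same dimension as the stratum you started from. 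So $b'$ contains terms of the \emph{same} parameter dimension as $b$, and the termination of your iteration is not established; proving that these terms cancel is precisely the bookkeeping you defer. The appeal to induction on $n$ is also not set up: $b$ is not a cycle in a single face $X_{n-1}(i)$ (it spreads over several faces $\Delta_{n-1,2}(i)$ and their intersections), so the proposition for $n-1$ does not apply to it directly.

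The paper closes this step by a direct construction rather than an induction. Using $\partial c=0$ and Proposition~\ref{Cor:No_Chambers_leq_d-1} it first shows that $c_{1}$ consists only of $(n-2)$-dimensional chambers $C_{\bar{\omega}}$ with fibres $l_{\bar{\omega}}$ satisfying $\partial^{\bar{\omega}}l_{\bar{\omega}}=\eta^{\#}_{\omega,\bar{\omega}}(l_{\omega})$; then Lemma~\ref{paramspaceboundary} (the identification $F_{\omega}\setminus F\cong F_{\bar{\omega}}$) allows one to lift $l_{\bar{\omega}}$ into $F_{\omega}$ and form the single chain $\bar{d}=\sum C_{\omega}\times l_{\bar{\omega}}$, whose boundary equals $c_{0}^{n-1}+c_{0}^{n-2}+c_{1}^{n-2}$ --- it cancels the interior part and the entire first boundary layer simultaneously, leaving a purely interior chain of lower chamber dimension; the residue on the deeper faces of $\partial\Delta_{n,2}$ is then removed by one more explicit descent of the same kind. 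To salvage your route you would in effect have to prove your strict-decrease claim, which amounts to reproducing this lifting argument.
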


For the proof we will make us of the following result.

\begin{lemma}\label{paramspaceboundary}
	Let $C_{\omega}\subset \stackrel{\circ}{\Delta}_{n,2}$ and $C_{\bar{\omega}}$ a facet of $ C_{\omega} $ such that $C_{\bar{\omega}}\subset \Delta _{n-1,2}(i)$ for some $1\leq i\leq n$. Then $F_{\omega}\setminus F$ is homeomorphic to $F_{\bar{\omega}}$.
\end{lemma}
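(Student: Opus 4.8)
The plan is to match the stratified structure of $F_{\omega}\setminus F$ with that of $F_{\bar{\omega}}$. Identify $\Delta_{n-1,2}(i)=\D{n}\cap\{x_{i}=0\}$ and $\Grs{n-1}(i)=\mu_{n}^{-1}(\Delta_{n-1,2}(i))$ inside $\Grs{n}$, so that $C_{\bar{\omega}}$ is a chamber for $\Grs{n-1}$ and $F_{\bar{\omega}}=\bigcup_{C_{\bar{\omega}}\subset\Ins{P}_{\bar{\tau}}}F_{\bar{\tau}}$ is its space of parameters. By the lemmas of Section~\ref{Sec:Definitions} we have $F_{\omega}=\bigcup_{C_{\omega}\subset\Ins{P}_{\sigma}}F_{\sigma}$, with the main stratum contributing $F=F_{n}$ as the open dense piece, so $F_{\omega}\setminus F=\bigcup_{\sigma\neq\text{main},\,C_{\omega}\subset\Ins{P}_{\sigma}}F_{\sigma}$. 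The first step is combinatorial: for $\sigma\neq\text{main}$ with $C_{\omega}\subset\Ins{P}_{\sigma}$, since $C_{\bar{\omega}}$ is a facet of $C_{\omega}$ lying on $\{x_{i}=0\}$ we get $\overline{C}_{\bar{\omega}}\subset\overline{C}_{\omega}\subset P_{\sigma}$, so $P_{\sigma}\cap\{x_{i}=0\}$ is a face of $P_{\sigma}$ whose relative interior contains $C_{\bar{\omega}}$; because a chamber is either contained in $\Ins{P}$ or disjoint from it for every admissible polytope $P$, this face is an admissible polytope $P_{\bar{\sigma}}=P_{\sigma}\cap\{x_{i}=0\}$ for $\Grs{n-1}$ with $C_{\bar{\omega}}\subset\Ins{P}_{\bar{\sigma}}$, and the associated stratum is $W_{\bar{\sigma}}=W_{\sigma}\cap\Grs{n-1}(i)$, obtained from the matrix record of $W_{\sigma}$ by removing the $i$-th row (cf.\ the proof of Theorem~\ref{main}). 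One then shows that $\sigma\mapsto\bar{\sigma}$ is a bijection onto the index set of $F_{\bar{\omega}}$, using Theorem~\ref{Th:admissiblepolytope} and the inductive description of $\partial\D{n}$ for surjectivity, and a side-of-hyperplane argument for injectivity.

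The second step upgrades this to a homeomorphism stratum by stratum: removing the $i$-th row from the matrix of a stratum $W_{\sigma}$ with $\sigma\neq\text{main}$ carries the defining relations of $F_{\sigma}$, in the form of Proposition~\ref{Prop:ArbitraryStratumG}, to those of $F_{\bar{\sigma}}$, and both spaces are, by Remark~\ref{cellspaceparam}, a point or homeomorphic to some $F_{m}$; checking that the induced map $F_{\sigma}\to F_{\bar{\sigma}}$ is a homeomorphism is then routine. Finally one must see that these stratum-wise homeomorphisms are compatible with the data gluing $F_{\omega}$ and $F_{\bar{\omega}}$: if $W_{\sigma'}$ lies in the closure of $W_{\sigma}$ in $\Grs{n}$ then $W_{\bar{\sigma}'}$ lies in the closure of $W_{\bar{\sigma}}$ in $\Grs{n-1}(i)$, and the continuous surjections $\eta_{\sigma,\sigma'}$ and the projections $p_{\sigma}$ assembling $F_{\omega}$ intertwine, via the row-deletion homeomorphisms, with the analogous maps assembling $F_{\bar{\omega}}$, because removing the $i$-th row commutes with all the degenerations $c_{lm}\to 0,\infty$ and $c_{lm}=c'_{lm}$ that define those maps. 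Patching the homeomorphisms $F_{\sigma}\xrightarrow{\ \sim\ }F_{\bar{\sigma}}$ over the matched stratifications then yields the desired homeomorphism $F_{\omega}\setminus F\xrightarrow{\ \sim\ }F_{\bar{\omega}}$.

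A parallel and possibly cleaner route uses Theorem~\ref{Th:ModuliUniversalSpace}: under $\mathcal{F}_{n}\cong\mathcal{M}_{0,n}$ the boundary facet $\{x_{i}=0\}$ of $\D{n}$ corresponds to a boundary divisor of $\mathcal{M}_{0,n}$ isomorphic to $\mathcal{M}_{0,n-1}\cong\mathcal{F}_{n-1}$; since $p_{\omega}\colon\mathcal{F}_{n}\to F_{\omega}$ is the identity on $F=F_{n}$ and collapses only the virtual spaces $\tilde{F}_{\sigma}$ with $\sigma\neq\text{main}$, one obtains $F_{\omega}\setminus F=p_{\omega}(\mathcal{F}_{n}\setminus F_{n})=p_{\omega}(\text{that divisor})$, and the restriction of $p_{\omega}$ to the divisor should coincide with $p_{\bar{\omega}}\colon\mathcal{F}_{n-1}\to F_{\bar{\omega}}$. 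Carrying this out requires identifying the divisor explicitly, in the spirit of Examples~\ref{ex:map05}--\ref{ex:map06}, and checking that the two reduction morphisms agree.

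I expect the main obstacle to be the gluing/compatibility step. Establishing the bijection of strata and the individual homeomorphisms $F_{\sigma}\cong F_{\bar{\sigma}}$ is essentially combinatorics together with Remark~\ref{cellspaceparam} and Proposition~\ref{Prop:ArbitraryStratumG}; the substantive part is to verify that the closure relations among the pieces $F_{\sigma}$ inside $F_{\omega}$ match those among the pieces $F_{\bar{\sigma}}$ inside $F_{\bar{\omega}}$, and that the attaching maps (the $\eta$-maps and the projections $p_{\sigma}$) are intertwined by row deletion, so that the patched bijection is continuous with continuous inverse — especially across strata of lower dimension, where the topology of these wonderful-type compactifications is most delicate.
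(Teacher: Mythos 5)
You follow the same basic route as the paper's own proof: decompose $F_{\omega}\setminus F$ into the pieces $F_{\sigma}$ with $\sigma$ non-main and $C_{\omega}\subset\stackrel{\circ}{P}_{\sigma}$, restrict each stratum to $\{x_i=0\}$ by deleting the $i$-th column of its matrix record, and match the result with the decomposition of $F_{\bar{\omega}}$. The paper carries this out only for strata with $P^{ij}=0$, $j\neq i$; your write-up makes the remaining claims explicit, and precisely there --- in the step you describe as ``essentially combinatorics'' --- the argument breaks down, not in the gluing step you flagged as the main obstacle.

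Two concrete problems. (i) The map $\sigma\mapsto\bar{\sigma}$ is not injective. If $\dim C_{\omega}=n-1$ and $C_{\bar{\omega}}$ is an $(n-2)$-dimensional facet on $\{x_i=0\}$, then $C_{\omega}\subset\stackrel{\circ}{K}_{ij}$ for \emph{every} $j\neq i$: were $C_{\omega}\subset\{x_i+x_j>1\}$, its closure would meet $\{x_i=0\}$ inside $\{x_j=1\}$, a set of dimension $n-3$, too small to contain $C_{\bar{\omega}}$. Hence all $n-1$ pairwise disjoint strata with $P_{\sigma}=K_{ij}$ contribute to $F_{\omega}$, each with $F_{\sigma}\cong F_{n-1}$ of dimension $2(n-4)$; yet $K_{ij}\cap\{x_i=0\}=\Delta_{n-1,2}(i)$ for every $j$, so all of them restrict to the single main stratum of $G_{n-1,2}(i)$. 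Thus $F_{\omega}\setminus F$ contains $n-1\geq 4$ disjoint top-dimensional copies of $F_{n-1}$, while $F_{\bar{\omega}}$ contains exactly one, dense and connected. (ii) The stratum-wise maps are not homeomorphisms: for $P_{\sigma}=K_{kl}$ with $k,l\neq i$ one has $F_{\sigma}\cong F_{n-1}$, but deleting the $i$-th column lands in the stratum of $G_{n-1,2}(i)$ with $P^{kl}=0$, whose space of parameters has dimension $2(n-5)$. Already for $n=5$ this makes $F_{\omega}\setminus F$ a union of at least four copies of $\C P^{1}_{A}$ meeting in finitely many points, which is either disconnected or has non-manifold points, hence cannot be homeomorphic to the connected surface $F_{\bar{\omega}}\subset \C P^{1}$. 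So the proposal cannot be completed as written; the statement itself only survives in a weakened form (for instance, that $F_{\bar{\omega}}$ embeds as one of the top-dimensional pieces of $F_{\omega}\setminus F$ compatibly with $\eta_{\omega,\bar{\omega}}$, which is what the later application in Proposition~\ref{Th:Cycles_n-1_even} actually uses), and the same criticism applies to the paper's own terse argument.
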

\begin{proof}
We first note that for any $P_{\bar{\sigma}}\subset \partial \Delta _{n,2}$, $\dim P_{\bar{\sigma}}=n-2$,  there exists $P_{\sigma}\neq \Delta _{n,2}$ such that $P_{\bar{\sigma}}$ is a facet of $P_{\sigma}$. Now, the chamber $ C_{\omega} $ must belong to some halfspace $ x_{i}+x_{j}<1,\,1\leq j\leq n,\,j\neq i. $ Otherwise, $C_{\omega}$ would belong to all halfspaces $x_i+x_j\geq1$ for $1\leq j\leq n$, $j\neq i$. If $C_{\omega}\subset \stackrel{\circ}{P}_{\sigma}\neq \Delta _{n,2}$ then Plücker coordinates $P^{kl}=0$ on the stratum $W_{\sigma}$ for all $1\leq k<l\leq n$, $k,l\neq i$, so $P_{\sigma}\subset \Delta_{n-1,1}(i)\subset \partial \Delta _{n,2}$, which is not the case.
\par Let $C_{\omega}$ belongs to a halfspace $x_i+x_j<1$. If $C_{\omega}\subset  \stackrel{\circ}{P}_{\sigma}\neq \stackrel{\circ}{\Delta}_{n,2}$,  then $P^{ij}=0$ on the stratum $W_{\sigma}$. In addition $W_{\sigma}$ belongs to a chart $M_{jl}$, otherwise $P^{jl}=0$ for all $j\neq l$, so $P_{\sigma}\subset \partial \Delta _{n,2}$. Let us take $j=1, i=3, l=2$. In the chart $M_{12}$ the stratum $W_{\sigma}$ is represented by the matrix
\begin{equation}
	\left(\begin{array}{cccccc}
		1 & 0 & b_3 & a_4& \ldots & a_n\\
		0 & 1 & 0& b_4 & \ldots & b_n,
	\end{array}
	\right)
\end{equation}
which implies $F_{\sigma} \cong F_{\bar{\sigma}}$, where $P_{\bar{\sigma}}\subset \partial \Delta _{n-1, 2}(i)$ is a facet of $P_{\sigma}$. This proves the statement.
\end{proof}

\begin{proof}{(Proposition \ref{Th:Cycles_n-1_even})}  Let  $c=c_{0}+c_{1}+c_{2}$, where  $ \hat{\mu}(c_{0})\subset \stackrel{\circ}{\Delta}_{n,2} $, $  \hat{\mu}(c_{1})\subset\bigcup_{1\leq m\leq n}\Ins{\Delta}_{n-1,2}(m)$ and $  \hat{\mu}(c_{2})\subset\bigcup_{1\leq m\leq n}\partial \Delta_{n-1,2}(m)$. By Corollary \ref{The:HomologyChain_n-2_general} the chain $ c_{0} $ is homologous to a chain $ c_{n-1}+c^{\prime}$, that is we can write $c = c_{0}^{n-1}+c_{1}+c_{2}$. 
\par Assume that $c_{0}^{n-1} = \sum C_{\omega}\times l_{\omega}$ is not a cycle. It implies that $\partial  c_{0}^{n-1} = \sum C_{\bar{\omega}}\times \eta ^{\#}_{\omega, \bar{\omega}}(l_{\omega}) \neq 0$, so $\dim l_{\omega} \leq 2n-8$.   
\par  By Proposition \ref{Cor:No_Chambers_leq_d-1}, the chain $ c_{1} $ contains only chambers of dimension $ n-2, $ since it can be written as \begin{equation}\label{Eq:Cycles_n-1_2k_Top_dim}
	 c_{1}=\sum_{1\leq m\leq n}c_{1}^{m}, 
\end{equation}where  $c_{1}^{m}\subset \hat{\mu}^{-1}(\Ins{\Delta}_{n-1,2}(m)) $ and  $ \partial c_{1}^{m} $ does not contain chambers of dimension $ \leq n-3 $.
\par Thus, $c_{1}= c_{1}^{n-2} = \sum C_{\bar{\omega}}\times l_{\bar{\omega}}$. Obviously, $ \partial c_{0}^{n-1}\subset \partial c_{1}^{n-2}$. If there would exist $C_{\bar{\omega}}\times l_{\bar{\omega}}$ in $c_{1}^{n-2}$ such that  $C_{\bar{\omega}}\times \partial^{\bar{ \omega}}  l_{\bar{\omega}}$ does not belong to $\partial c_{0}^{n-1}$, we would have that $\partial^{\bar{ \omega}} l_{\bar{\omega}} = 0$. Since $l_{\bar{\omega}}$ is of odd degree we have  $ l_{\bar{\omega}} = \partial^{\bar{ \omega}} \tilde{l}_{\bar{\omega}}$. Then $\partial (C_{\bar{\omega}} \times \tilde{l}_{\bar{\omega}}) = C_{\bar{\omega}}\times l_{\bar{\omega}} +c_{\bar{\omega}, n-3}$, where the second chain contains only the chambers of dimension $n-3$. Again using Proposition \ref{Cor:No_Chambers_leq_d-1}, $c_{1}^{n-2}$ is homologous to a chain for which it does not contain such chambers and it holds $\eta ^{\#}_{\omega, \bar{\omega}}(l_{\omega}) = \partial ^{\bar{\omega}}l_{\bar{\omega}}$.
\par By Lemma~\ref{paramspaceboundary} we deduce that  $(\eta ^{\#}_{\omega, \bar{\omega}})^{-1}(l_{\bar{\omega}}) = l_{\bar{\omega}}$ and $l_{\omega} = \partial ^{\bar{\omega}} l_{\bar{\omega}} = \partial ^{\omega}l_{\bar{\omega}} $. 
\par For  the chain $\bar{d} = \sum C_{\omega} \times l_{\bar{\omega}}$  we have $\partial \bar{d} = \sum C_{\omega}\times \partial ^{\omega}l_{\bar{\omega}} + \sum C_{\omega^{'}}\times l_{\bar{ \omega}^{'}} + \sum C_{\bar{\omega}}\times l_{\bar{\omega}} = c_{0}^{n-1} + c_{0}^{n-2} +c_{1}^{n-2}$. We can write $ c=c_{0}^{n-2}+ c^{\leq n-3}_{2}$, where all chambers in the second summand are belong to  $ \bigcup_{1\leq m\leq n}\partial\Delta_{n-1,2}(m) $ and are of dimension $ \leq n-3 .$ 
\par We write  $ c^{\leq n-3}_{2}=c_{2}+c_{3}+c_{4} $, where $ c_{2} $ is the chain from the union of all $ \Ins{\Delta}_{n-3,2}(ij) $, the chain $ c_{3} $ is from the union of all  $ \Ins{\Delta}_{n-3,2}(ijk)  $ and $ c_{4} $ is the rest. The claims that $ c_{2} $ contains only chambers of dimension $ n-3 $ and $ c_{3} $ contains only chambers of $ n-4 $ follow in the same way as for $ c_{1}. $ We denote them by $ c_{2}^{n-3}=\sum C_{\omega}\times l_{\omega} $ and $ c_{3}^{n-4}=\sum C_{\hat{\omega}}\times l_{\hat{\omega}} $ resp. Then :
\[ \partial c_{2}^{n-3}=\partial c_{0}^{n-2}+\sum C_{\hat{\omega}}\times l_{\hat{\omega}},\quad \partial c_{3}^{n-4}=\sum C_{\hat{\omega}}\times\partial l_{\hat{\omega}}+ \tilde{c}^{n-5}_{4}, \]
 where $ \tilde{c}^{n-5}_{4} $ is a chain of dimension $ n-5. $ It follows that $ \sum C_{\hat{\omega}}\times l_{\hat{\omega}}=\sum C_{\hat{\omega}}\times\partial l_{\hat{\omega}} $, so $ C_{\bar{\omega}}=C_{\hat{\omega}} $ and $ l_{\omega}=\partial \hat{l}_{\bar{ \omega}}. $ We consider the chain:
\[ \tilde{c}_{2}^{n-3}=\sum C_{\omega}\times \hat{l}_{\bar{ \omega}}.\] It holds:
\begin{align*}
	\partial \tilde{c}_{2}^{n-3}&=\sum C_{\omega}\times\partial\hat{l}_{\bar{ \omega}}+\sum C_{\bar{ \omega}}\times \hat{l}_{\bar{ \omega}}+\sum C^{\prime}_{\omega}\times\hat{l}^{\prime}_{\bar{ \omega}}=\\
	&= c_{2}^{n-3}+c_{3}^{n-4}+\tilde{c}_{2}^{n-4},
\end{align*} where $ \tilde{c}_{2}^{n-4} $ is a chain whose chambers are of dimension $ n-4 $ and from the union of $ \Ins{\Delta}_{n-3,2}(ij). $ We obtain that the cycle $ c=c_{0}^{n-2}+ c_{2}^{n-3}+c_{3}^{n-4}+c_{4} $ is homologous to:
\begin{equation}\label{Eq:Cycles_n-1_2k_Iterat}
	c^{\prime}=c_{0}^{n-2}+\tilde{c}_{2}^{n-4}+c^{\leq n-5},
\end{equation}
where a chain $ c^{\leq n-5}\subset \hat{\mu}^{-1}(\partial \D{n}) $ contains chambers of dimension $ \leq n-5 $. We conclude the proof if we set $ c_{0}^{\prime}=c_{0}^{n-2} $.
\end{proof}

	\begin{cor}\label{Cor:Spaces_Y_n}
		Let $ c $ be a cycle of dimension $ q $ in  $ Y_{n}\subset X_{n}.$
		\begin{itemize}
			\item If $ q=n-2+2k+1,\,k\geq0 $ or $ q\leq n-2 $, then $ c $ is homologous to a cycle from $ \hat{\mu}^{-1}(\bigcup_{1\leq m\leq n}\partial\Delta_{n-1,2}(m)). $
			\item If $q=n-2+2k,\,k\geq1 $ then $ c $ is homologous to a cycle $ c^{\prime} $ of the following form:
			\[ 	c^{\prime}=\sum_{1\leq m\leq n} c^{m}+c_{2}, \,   \text{ for }   c^{m}\in \hat{\mu}^{-1}(\Ins{\Delta}_{n-1,2}(m)) \text{ and } c_{2}\in \hat{\mu}^{-1}(\bigcup_{1\leq m\leq n}\partial\Delta_{n-1,2}(m)), \]
			where:
			\[ c^{m}=\sum_{\sigma_{m}}\Ins{P}_{\sigma_{m},n-2}\times l^{\sigma_{m}},\text{ where } \dim (l_{\sigma_{m}}^{n-2})=\dim ( F^{m}_{\sigma})\text{ and } \Ins{P}_{\sigma_{m},n-2}\subset \Ins{\Delta}_{n-1,2}(m).\]
		\end{itemize}
	\end{cor}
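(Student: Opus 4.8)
The plan is to deduce the statement from its analogues for $X_{n-1}$ proved in this section, by restricting the cycle $c$ to the faces of $\partial\Delta_{n,2}$. Recall from \eqref{Eq:space_X_n} that $\partial\Delta_{n,2}$ is covered by the $n$ hypersimplices $\Delta_{n-1,2}(m)$ and the $n$ simplices $\Delta_{n-1,1}(i)$, that $X_{n-1}(m):=\hat{\mu}^{-1}(\Delta_{n-1,2}(m))=\Grs{n-1}(m)/T^{n-1}$, and that $\hat{\mu}^{-1}(\Delta_{n-1,1}(i))$ is a single $(n-2)$-dimensional simplex over whose interior the space of parameters is a point. First I would push $c$ off the simplex faces: if $q\ge n-1$ then no summand $C_\omega\times l_\omega$ of $c$ can have $C_\omega\subset\Ins{\Delta}_{n-1,1}(i)$, since then $F_\omega$ is a point and $\dim(C_\omega\times l_\omega)\le n-2$; if $q\le n-3$ then the part of $c$ over $\bigcup_i\Ins{\Delta}_{n-1,1}(i)$ is a chain of dimension $<n-2$ inside the $(n-2)$-balls $\hat{\mu}^{-1}(\Delta_{n-1,1}(i))$ with boundary in $\bigcup_m X_{n-1}(m)$, hence homologous in $Y_n$, rel that boundary, into $\bigcup_m X_{n-1}(m)$; the borderline case $q=n-2$ is treated exactly as in Remark~\ref{n-2}. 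In all cases $c$ becomes homologous in $Y_n$ to a cycle $c'$ supported in $\bigcup_m X_{n-1}(m)$.

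Next write $c'=\sum_m c'_m$, where $c'_m$ is the part of $c'$ over $\Delta_{n-1,2}(m)$. Since the open regions $\Ins{\Delta}_{n-1,2}(m)$ are pairwise disjoint and disjoint from every other face of $\partial\Delta_{n,2}$, the identity $\partial c'=0$ forces $\partial^{0}c'_m=0$ in the sense of the moment map $\hat{\mu}\colon X_{n-1}(m)\to\Delta_{n-1,2}(m)$; thus $c'_m$ is a chain in $X_{n-1}(m)$ with $\hat{\mu}(\partial c'_m)\subset\partial\Delta_{n-1,2}(m)$. Now I would apply the results of the previous subsections to each $c'_m$ in $X_{n-1}(m)$, i.e. with $n$ replaced by $n-1$ (the ranges of $k$ then match the hypotheses of those statements). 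For the first bullet ($q=n-2+(2k+1)$, $k\ge 0$, or $q\le n-2$): Propositions~\ref{Th:Cycles_n-1} and~\ref{Th:Cycles_n-1_odd}, or else Corollary~\ref{The:HomologyChain_n-2_general} together with Theorem~\ref{Cor:Homology_less_n-3} and Corollary~\ref{odd}, applied to $c'_m$ using $\partial^{0}c'_m=0$, show that $c'_m$ is homologous in $X_{n-1}(m)$ to a chain with $\hat{\mu}$-image in $\partial\Delta_{n-1,2}(m)$; any surviving $(n-2)$-dimensional chambers would force $\dim l_\omega<0$. For the second bullet ($q=n-2+2k$, $k\ge 1$): Proposition~\ref{Th:Cycles_n-1_even} gives $c'_m\simeq(c'_m)_0+(c'_m)_1$, both cycles, with $(c'_m)_0$ over $\Ins{\Delta}_{n-1,2}(m)$ and $(c'_m)_1$ over $\partial\Delta_{n-1,2}(m)$, and then Corollary~\ref{admcycle} rewrites the cycle $(c'_m)_0$ as $\sum_{\sigma_m}\Ins{P}_{\sigma_m,n-2}\times l^{\sigma_m}$ with $\dim l^{\sigma_m}=\dim F^{m}_{\sigma}$. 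Summing over $m$ and collecting the boundary pieces into a single chain over $\bigcup_m\partial\Delta_{n-1,2}(m)$ yields the two claimed forms, with $c^m=(c'_m)_0$ and $c_2=\sum_m(c'_m)_1$.

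The step I expect to be the main obstacle is making the homologies produced separately in the pieces $X_{n-1}(m)$ compatible along the shared faces $\Delta_{n-1,2}(m)\cap\Delta_{n-1,2}(l)$ (hypersimplices $\Delta_{n-2,2}$), so that $\sum_m c'_m$, together with the chosen null-homologies, actually assembles into a chain and a homology in $Y_n$ rather than only piecewise. I would overcome this by the mechanism already used in the proof of Theorem~\ref{main}(3) and in Lemma~\ref{paramspaceboundary}: for a stratum $W_\sigma$ whose admissible polytope meets such a common facet, $F_\sigma$ is canonically identified with $F_{\tilde\sigma}$ of the neighbouring stratum by deleting the appropriate row of the matrix record, so the chains $l^\sigma$ and the auxiliary coning chains appearing in Propositions~\ref{Th:Cycles_n-1_odd}--\ref{Th:Cycles_n-1_even} can be chosen equal on the two sides, the surjections $\eta^{\#}_{\sigma,\bar\sigma}$ agree there, and iterating over the faces of $\partial\Delta_{n,2}$ produces a globally consistent homology in $Y_n$. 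The remaining verification --- that these reconciling modifications only add summands of strictly smaller parameter-space dimension or summands already lying over $\bigcup_m\partial\Delta_{n-1,2}(m)$ --- is routine and parallels the bookkeeping in the proofs of Theorem~\ref{main} and Proposition~\ref{Th:Cycles_n-1_even}.
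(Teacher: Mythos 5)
Your proposal is correct and follows essentially the same route as the paper: decompose the cycle over the facets of $\partial\Delta_{n,2}$, apply the interior results for $X_{n-1}(m)$ (Corollary~\ref{The:HomologyChain_n-2_general}, Corollary~\ref{odd}, Proposition~\ref{Th:Cycles_n-1_even}, Corollary~\ref{Cor:Chains_with_boundary_on_boundary}) to each piece $c^{m}$, and dispose of the borderline case $q=n-2$ via the nullhomologous cycle of all $(n-2)$-dimensional chambers from Remark~\ref{n-2}. You are in fact somewhat more explicit than the paper about removing the part of the cycle lying over the simplices $\Delta_{n-1,1}(i)$ and about gluing the facetwise homologies along the common faces, but these are refinements of, not departures from, the paper's argument.
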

	\begin{proof} Assume first that $ q\neq n-2. $
		Then \[ c=\sum_{1\leq m\leq n} c^{m}+c_{2},\,\text{ where } c^{m}\subset \hat{\mu}^{-1}(\Ins{\Delta}_{n-1,2}(m))\text{ and }c_{2}\subset\hat{\mu}^{-1}(\bigcup_{1\leq m\leq n}\partial \Delta_{n-1,2}(m)) .\] By Corollary \ref{The:HomologyChain_n-2_general} we can assume that $ c^{m} $ contains only chambers of the dimension $ n-2. $
		\par Obviously for $  q\leq n-3 $ the statement follows. Let $q=n-2+2k+1,\,k\geq0 $, so the dimension of $ l_{\omega} $  is odd. By Corollary \ref{odd} a chain $ c^{m} $ is homologous to a chain from $ \hat{\mu}^{-1}(\partial  \Delta_{n-1,2}(m)).$
		\par Let $ q=n-2+2k,\,k\geq1.$ If $  c^{m} $ is cycle then by Theorem \ref{Th:Cycles_n-1_even}, it is of the form:
		\[ \sum_{\sigma_{m}}\Ins{P}_{\sigma_{m},n-2}\times l_{\sigma_{m}}^{n-2} \] with the desired properties. If  $  c^{m} $ is not a cycle, then by Corollary \ref{Cor:Chains_with_boundary_on_boundary} the statement follows.
		Finally, we consider the case $ q=n-2. $ We can write $ c $ as \[ c=\sum_{1\leq m\leq n} c^{m}+c_{1}+c_{2},\,\text{ where } c_{1}\in\hat{\mu}^{-1}(\bigcup_{1\leq m\leq n} \Delta_{n-1,1}(m)) \] and $ c_{2} $ and $ c^{m} $ are as in the first case. If some $ c^{m}\neq0 $, it follows that \[ c^{m}=\sum_{\forall C_{\omega}}C_{\omega} ,  \quad  \dim(C_{\omega})=n-2.\]
		 Since a simplex $ \Delta_{n-2,1}(mr)\subset \partial c^{m} ,\, 1\leq r,m\leq n $ and $ r\neq m $ cannot be canceled by some boundary of $ (n-2) $-dimensional chain from $ c_{2} $, it follows that the chain $ \tilde{c}_{1}=\sum_{\forall s\neq m} C_{s} $, where $ C_{s} $ is a $ (n-2) $-dimensional chamber of the simplex $ \Delta_{n-1,1}(s) $  satisfies $ \tilde{c_{1}}\subset c_{1}. $ Similarly, any boundary $ C_{sl},\,l\neq m $ of $ C_{s} $ can only be canceled by the boundary of chain 
		\[ c^{l}=\sum_{\forall C_{\omega}}C_{\omega} ,  \quad  \dim(C_{\omega})=n-2.\]  We get the cycle \[ d=\sum_{\forall C_{\omega}\in\partial \Delta_{n,2}}C_{\omega},  \quad  \dim(C_{\omega})=n-2 \] in $ c. $ But the cycle $ d $ is homologous to zero on $ X_{n}. $
\end{proof}

\section{$\Z _{2}$-homology groups of $X_{5}$}\label{Sec:HomX5}
\par The integral homology groups of $X_{5}$ are calculated both in \cite{buchstaber2019toric} using the stratification of $ X_{5} $ given by \eqref{Eq:stratification} and in \cite{suss2020toric} using Geometric invariant theory. We present here the calculations for  $\Z _{2}$-homology of $X_{5}$ in different way, applying the model from Theorem \ref{Th:MainModel}, the existence of diffeomorphisam between $ \mathcal{M}_{0,5} $ and the universal space of parameters $ \mathcal{F}_{5} $ and relaying on the homology of $ \mathcal{M}_{0,5} $ from \cite{keel1992intersection}.
\par By Lemma \ref{lemma_3n-7} and Corollary \ref{lemma_3n-8},  $\Z _{2} $-homology groups in dimensions 8 and 7 are given by  
\[ H_{8}(X_{5};\Z _{2})\simeq \Z_{2},\quad H_{7}(X_{5};\Z _{2})\simeq 0. \] 

Let $ c=\sum C_{\omega}\times l_{\omega} $ be a cycle on $ X_{5} $ of the dimension $k,\, 0\leq k\leq 4 $. By Proposition~\ref{Th:Cycles_n-1} and Theorem \ref{Cor:Homology_less_n-3} we have that $ c $ is homologous to a cycle from $ Y_{5}. $ Then Corollary \ref{Cor:Spaces_Y_n} gives that $ c $ is homologous to a cycle from $ \hat{\mu}^{-1}(\bigcup_{1\leq m\leq 5}\partial \Delta_{4,2}(m)).$  For $ k=3 $ or $ k=4 $ it holds $ \dim(l_{\omega})\geq 1$ and since the space of parameters of $ \partial \Delta_{4,2}(m) $ is a point, it follows
\[ H_{k}(X_{5};\Z _{2})\simeq 0 \text{ for }k=3,4.\] 
For $ k\leq 2 $, we have $ c=\sum C_{\omega}.$ But $ \partial c\subset\partial \D{5}\simeq S^{3} $, so 

\[ H_{k}(X_{5};\Z _{2})\simeq 0 \quad \text{ for }1\leq k\leq2,\quad  H_{0}(X_{5};\Z _{2})\simeq \Z_{2}. \] 
\par We recall the description of an admissible polytopes, the spaces of parameters and the virtual spaces of parameters for $ X_{5} $ from \cite{buchstaber2019toric}. An admissible polytopes for $ X_{5}$ are: 

\begin{itemize}
	\item 4-dimensional
	\begin{itemize}
			\item $ \Delta_{5,2} $
		\item $ K_{ij}\text{ or }K_{ij}(9): \,\{\textbf{x}\in \D{5}\,|x_{i}+x_{j}\leq 1 \} $
		\item $ K_{rpq} $ or $ K_{ij}(7):\, \,\{\textbf{x}\in \D{5}\,|x_{i}+x_{j}\geq 1 \} =\{\textbf{x}\in \D{5}\,| x_{r}+x_{p}+x_{q}\leq 1\},$ where $ \overline{ij}=\{1,\dots,5\}\backslash rpq $
		\item $ K_{ij,pq}:\,=\{\textbf{x}\in \D{5}\,| x_{i}+x_{j}\leq 1,\,x_{p}+x_{q}\leq 1 ,\, \{i,j\}\cap \{p,q\}=\emptyset\} $
	\end{itemize}
\item 3-dimensional
	\begin{itemize}
	\item $ O_{i}:\, \{\textbf{x}\in \D{5}\,|x_{i}=0 \},\,  P_{ij}:\, \{\textbf{x}\in \D{5}\,|x_{i}+x_{j}= 1 \},\,T_{i}:\,\{\textbf{x}\in \D{5}\,|x_{i}=1 \}$ 
	
\end{itemize}
\end{itemize}
According to \eqref{Def:mainspaceG} in Section \ref{Sec:Definitions}.
\begin{cor}
	 The space of parameters  $ F_{5} $ of the main stratum $ W $ is homeomorphic to \[ \{((c_{12}:c_{12}^{\prime}),(c_{13}:c_{13}^{\prime}),(c_{23}:c_{23}^{\prime}))\in \CPA\times\CPA\times\CPA|\,c_{12}^{\prime}c_{13}c_{23}^{\prime}=c_{12}c_{13}^{\prime}c_{23} \} \]
\end{cor}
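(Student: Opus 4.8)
The plan is to read the statement off directly from the general description of the space of parameters of the main stratum recalled in Section~\ref{SSec:UniversalSpace}, specialised to $n=5$. Recall that $F_n$ is presented there as the open algebraic submanifold of $(\C P^1)^N$, with $N=\binom{n-2}{2}$, cut out by the cubic relations \eqref{Def:mainspaceG} together with the requirement that every homogeneous coordinate lie in $\CPA=\C P^1\setminus\{(1:0),(0:1),(1:1)\}$; here the $N$ coordinates $(c_{ij}:c_{ij}')$ are indexed by the $2$-element subsets $\{i,j\}$ of the index set complementary to the pair defining the chosen Pl\"ucker chart, and \eqref{Def:mainspaceG} runs over the $3$-element subsets $\{i,j,k\}$ of that complement. (The cubics hold automatically on $W$ once one substitutes the Pl\"ucker expressions $(c_{ij}:c_{ij}')=(a_ib_j:a_jb_i)$, which is precisely why they enter the defining equations of $F_n$.)

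The first step is to put $n=5$, so that $N=\binom{3}{2}=3$. The second step is a change of chart: since $F_\sigma=W_\sigma/(\C^\ast)^n$ is intrinsic to the stratum and, as noted in Section~\ref{Sec:Definitions}, its description is chart-independent up to homeomorphism, I would work in a Pl\"ucker chart whose complementary index set is $\{1,2,3\}$ (concretely $M_{45}$), or equivalently simply relabel the three parameters $c_{34},c_{35},c_{45}$ that arise in the chart $M_{12}$ by the pairs in $\{1,2,3\}$. The three coordinates then are $(c_{12}:c_{12}'),(c_{13}:c_{13}'),(c_{23}:c_{23}')$, each constrained to $\CPA$; note that this last condition is insensitive to interchanging $c_{ij}$ and $c_{ij}'$, since the excluded set $\{(1:0),(0:1),(1:1)\}$ is invariant under $(x:y)\mapsto(y:x)$, so it does not matter whether one phrases it for $(c_{ij}:c_{ij}')$ or for $(c_{ij}':c_{ij})$. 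The third step: the only $3$-element subset of $\{1,2,3\}$ is $\{1,2,3\}$ itself, so \eqref{Def:mainspaceG} collapses to the single cubic $c_{12}'c_{13}c_{23}'=c_{12}c_{13}'c_{23}$, which is exactly the asserted homeomorphic model of $F_5$.

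I do not expect a genuine obstacle here: the entire content is already contained in the general description of $F_n$. The only point requiring care is the index bookkeeping — verifying that passing to the chart $M_{45}$ (or carrying out the relabelling) is legitimate and that it yields precisely the label set $\{12,13,23\}$ together with the single relation displayed above, rather than some permuted variant of it.
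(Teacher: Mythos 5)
Your proposal is correct and matches the paper's intent exactly: the paper offers no separate argument, merely prefacing the corollary with ``According to \eqref{Def:mainspaceG}'', so the content is precisely the specialisation to $n=5$ (giving $N=\binom{3}{2}=3$ coordinates and the single cubic for the unique triple) together with the harmless relabelling of the index pairs $\{34,35,45\}$ as $\{12,13,23\}$. Your additional remarks on chart-independence and the symmetry of the excluded set $A$ under $(x:y)\mapsto(y:x)$ are correct and only make the read-off more careful than the paper's.
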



Theorem \ref{Th:UniversalSpaceG} for $n=5  $ gives, see also \cite{buchstaber2019toric}
\begin{cor}
	 The universal space of parameters $ \mathcal{F}_{5} $ for $ \Grs{5} $ is defined as the blowup of $ \overline{F} $ at the point $ ((1:1),(1:1),(1:1)). $
\end{cor}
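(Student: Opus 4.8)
The plan is to obtain the statement by specializing Theorem~\ref{Th:UniversalSpaceG} to $n=5$; the entire content is the identification of the building set of the wonderful compactification in this case. First I would record the ambient data: for $n=5$ one has $N=\binom{5-2}{2}=3$, so by the construction recalled in Section~\ref{SSec:UniversalSpace} the space of parameters $F_5$ is the locus in $(\CPA)^3$ cut out by the single cubic relation \eqref{Def:mainspaceG}, and $\overline{F}_5$ is its closure in $(\C P^1)^3$, a smooth complex surface (smoothness being exactly the statement quoted from~\cite{buchstaber2019toric}).

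Next I would enumerate the generating subvarieties $\overline{F}_I$ that enter Theorem~\ref{Th:UniversalSpaceG}. They are indexed by subsets $I$ of the set of triples $\{ikl \mid 3\le i<k<l\le 5\}$, and for $n=5$ this set has the single element $345$. Hence the only nontrivial choice is $I=\{345\}$, and $\overline{F}_{345}$ is by definition the locus in $\overline{F}_5$ where $(c_{34}:c_{34}')=(c_{35}:c_{35}')=(c_{45}:c_{45}')=(1:1)$; this is the single reduced point $p_0=((1:1),(1:1),(1:1))$, which lies on $\overline{F}_5$ because it satisfies \eqref{Def:mainspaceG} trivially. So the building set here consists of the one point $p_0$.

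Finally I would invoke the fact that the wonderful compactification of a smooth variety relative to a building set consisting of a single closed point is precisely the blow-up of the variety at that point; applied to $\overline{F}_5$ and $p_0$, this gives $\mathcal{F}_5=\mathrm{Bl}_{p_0}\overline{F}_5$. For a single blow-up the extension requirement in Theorem~\ref{Th:UniversalSpaceG}, namely that the transition homeomorphisms $f_{ij,kl}$ extend to $\mathcal{F}_5$, is then automatic.

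I do not expect a real obstacle here; the proof is essentially bookkeeping. The only steps needing any care — and the closest thing to a difficulty — are verifying that $\overline{F}_{345}$ is exactly one reduced point lying on $\overline{F}_5$ (immediate from the defining equations) and recalling the one-point case of the De~Concini--Procesi wonderful-compactification construction. As a sanity check one can compare the conclusion with Example~\ref{ex:map05}, where $\mathcal{F}_5$ is indeed obtained from $\overline{F}_5$ by a single blow-up at $(1:1)^3$.
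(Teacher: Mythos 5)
Your proposal is correct and is exactly the route the paper takes: the corollary is stated as the specialization of Theorem~\ref{Th:UniversalSpaceG} to $n=5$, where the only admissible triple is $345$, so the building set reduces to the single point $((1:1),(1:1),(1:1))$ and the wonderful compactification is the blow-up there. Your additional bookkeeping (computing $N=3$, checking the point lies on $\overline{F}_5$, and the cross-check against Example~\ref{ex:map05}) matches what the paper leaves implicit.
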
 \par	The virtual spaces of parameters are collected, see also Chapter 11 in \cite{buchstaber2019toric}, as follows:
\begin{cor}\label{Corr:VirtualspaceX5}
	For admissible polytopes $ P_{\sigma} $ mentioned above the spaces of parameters $ F_{\sigma} $ and virtual spaces of parameters $ \tilde{F}_{\sigma} $ are:   
	\begin{enumerate}
		\item  $ P_{\sigma}=\Delta_{5,2}\Rightarrow F_{\sigma}\simeq\tilde{F}_{\sigma}\simeq F; \quad P_{\sigma}=K_{ij} \Rightarrow F_{\sigma}\simeq\tilde{F}_{\sigma}\simeq \C P_{A}^{1}, $		 
		\item  $ P_{\sigma}=O_{i} \Rightarrow F_{\sigma}\simeq \C P_{A}^{1} $ and $ \tilde{F}_{\sigma}\simeq(\C P_{A}^{1}\times \C P^{1}\times \C P^{1})\cap \mathcal{F}_{5};$ 
		\item $ P_{\sigma}=K_{ij,pq}\Rightarrow F_{\sigma}=\tilde{F}_{\sigma} $ is a point;     $ P_{\sigma}=K_{ijk}\Rightarrow F_{\sigma}$ is a point and $\tilde{F}_{\sigma}\simeq \C P^{1}; $ 
		\item for others $ P_{\sigma}\Rightarrow F_{\sigma}$ is a point and $ \tilde{F}_{\sigma} $ is not a point.
		\end{enumerate}
\end{cor}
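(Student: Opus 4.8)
The plan is to verify the list case by case directly from the defining equations of the strata. For $n=5$ one has $N=\binom{3}{2}=3$, so the space of parameters of the main stratum is $F=F_{5}$, cut out by the single cubic \eqref{Def:mainspaceG} inside $\CPA\times\CPA\times\CPA$, of dimension $4$. For an admissible polytope $P_{\sigma}$ one first reads off from Proposition~\ref{Prop:ArbitraryStratumG} which Plücker coordinates vanish on the corresponding stratum $W_{\sigma}$ --- equivalently which of the local coordinates $a_{i},b_{j}$, or bilinear combinations $a_{p}b_{q}-a_{q}b_{p}$, are forced to zero --- and then computes $F_{\sigma}=W_{\sigma}/(\C^{*})^{n}$ by restricting \eqref{Def:mainspaceG} to the surviving $\C P^{1}$-factors, passing to a chart $M_{ij}$ with $W_{\sigma}\subset M_{ij}$ whenever $W_{\sigma}\cap M_{12}=\emptyset$. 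The virtual space $\tilde{F}_{\sigma}$ is then produced by the two-step recipe preceding Theorem~\ref{Th:VirtualSpace}: assign $\tilde{F}_{\sigma,ij}$ using \eqref{Def:mainspaceG}, and transport it to the reference chart by the transition homeomorphism $f_{ij,12}$ extended to $\mathcal{F}_{5}$ via Theorem~\ref{Th:UniversalSpaceG}.

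For the full-dimensional polytopes the computation is immediate. The polytope $\Delta_{5,2}$ is $P_{W}$, so $F_{\sigma}=\tilde{F}_{\sigma}=F$. For $K_{ij}$ exactly the vertex $\Lambda_{ij}$ is omitted, which makes exactly one of the three parameters degenerate; using the cubic \eqref{Def:mainspaceG} to eliminate a second coordinate leaves a single free factor, so $F_{\sigma}\simeq\CPA$, and since $K_{ij}$ is full-dimensional the stratum $W_{\sigma}$ sits in a suitable chart and the recipe gives $\tilde{F}_{\sigma}\simeq\CPA$ as well. For $K_{ijk}$ and $K_{ij,pq}$ two of the three parameters degenerate and $F_{\sigma}$ collapses to a point; here the virtual space differs, being the fibre of the projection $p_{\sigma}:\tilde{F}_{\sigma}\to F_{\sigma}$ of Theorem~\ref{Th:VirtualSpace}. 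Since $\mathcal{F}_{5}$ is the blow-up of $\overline{F}_{5}$ at $(1:1)^{3}$, the stratum associated to $K_{ijk}$ meets the exceptional $\C P^{1}$ of that blow-up in a line, giving $\tilde{F}_{\sigma}\simeq\C P^{1}$, whereas for $K_{ij,pq}$ no exceptional direction is picked up and $\tilde{F}_{\sigma}$ remains a point.

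For the boundary polytopes one uses that $\partial\Delta_{5,2}$ consists of copies of $\Delta_{4,2}$ and $\Delta_{4,1}$. The polytope $O_{i}=\{x_{i}=0\}$ is a copy of $\Delta_{4,2}$ whose stratum is the main stratum of $\Grs{4}(i)$, so by Remark~\ref{cellspaceparam} $F_{\sigma}\cong F_{4}\simeq\CPA$; carrying out the two-step recipe for $\tilde{F}_{\sigma}$ yields $(\CPA\times\C P^{1}\times\C P^{1})\cap\mathcal{F}_{5}$, the two unrestricted $\C P^{1}$-factors recording the coordinates that are free in the universal space but constrained on $W_{\sigma}$. The simplices $T_{i}=\{x_{i}=1\}$ and the facets $P_{ij}=\{x_{i}+x_{j}=1\}$ give strata whose spaces of parameters are points, while their virtual spaces are products of $\C P^{1}$'s intersected with $\mathcal{F}_{5}$ and hence are not points; this disposes of case~(4).

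The main obstacle is the chart-change bookkeeping for $\tilde{F}_{\sigma}$ when $W_{\sigma}\not\subset M_{12}$: one must track precisely how $f_{ij,12}$, extended to $\mathcal{F}_{5}$ by Theorem~\ref{Th:UniversalSpaceG}, carries the coordinate subvariety $\tilde{F}_{\sigma,ij}$, and in particular that it respects the single exceptional divisor defining $\mathcal{F}_{5}$. For $n=5$ this is tractable because only one blow-up is involved, and as a consistency check one verifies part~(1) of Theorem~\ref{Th:VirtualSpace}, namely that the virtual spaces listed above tile $\mathcal{F}_{5}$ disjointly; this pins down any remaining ambiguity. The detailed verification of all cases is carried out in Chapter~11 of~\cite{buchstaber2019toric}.
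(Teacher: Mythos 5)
Your case-by-case verification is correct and follows essentially the same route as the paper, which states this corollary without proof as a summary of Chapter~11 of~\cite{buchstaber2019toric}; your computations agree with the explicit chart calculations the paper performs later (e.g.\ the identifications of $\tilde{F}_{123}$ and $\tilde{F}_{345}$ in the proof of the lemma on $H_{2}(\mathcal{F}_{5})$). The only imprecision is your justification that $\tilde{F}_{ijk}\simeq \C P^{1}$ because the stratum \emph{meets the exceptional divisor}: only $\tilde{F}_{\overline{12}}=\tilde{F}_{345}$ is the exceptional $\C P^{1}$ in the chart $M_{12}$, while the remaining $\tilde{F}_{ijk}$ are (proper transforms of) coordinate or diagonal lines in $(\C P^{1})^{3}$, which are nonetheless all homeomorphic to $\C P^{1}$, so the conclusion stands.
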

Following notation above of $ P_{\sigma} $ by $ K_{J} $ we further denote $ F_{\sigma} $ and $ \tilde{F}_{\sigma} $ by $ F_{J} $ and $ \tilde{F}_{J} $ as well.
\subsection{ $ H_{6}(X_{5};\Z_{2}) $ and $ H_{5}(X_{5};\Z_{2}) $}
\par
We want to write relations \eqref{relacije} in the terms of the virtual spaces of parameters for $ X_{5}. $   

\begin{lemma}
	The homology group $ H_{2}(\mathcal{F}_{5}) $ is generated by the classes determined by the virtual spaces of parameters $ \tilde{F}_{\overline{ij}} $ and the following relations hold:
		\begin{equation}\label{Th:UrelationsX5}
		\tilde{F}_{\overline{ij}}+\tilde{F}_{\overline{pq}}=
		\tilde{F}_{\overline{ip}}+\tilde{F}_{\overline{jq}}=
		\tilde{F}_{\overline{iq}}+\tilde{F}_{\overline{jp}}	  
	\end{equation}
for every distinct $ \{i,j,p,q\}\subset \{1,\dots,5\} $, where $ \overline{ij}=\{1,\dots,5\}\backslash\{i,j\} $
\end{lemma}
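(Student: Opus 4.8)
The plan is to transport everything to the moduli space $\mathcal{M}_{0,5}$ via the diffeomorphism of Theorem~\ref{Th:ModuliUniversalSpace}, use Keel's presentation of $H_2(\mathcal{M}_{0,5})$ from~\cite{keel1992intersection}, and then translate it back into the language of virtual spaces of parameters. By~\cite{keel1992intersection} the group $H_2(\mathcal{M}_{0,5})$ --- which is free abelian, so reduction mod $2$ changes nothing --- is generated by the boundary divisor classes $D_I$, $I\subset\{1,\dots,5\}$, $2\le|I|\le 3$, subject to $D_I=D_{I^\complement}$; there are exactly ten of them, one for each $3$-element subset $\overline{ij}=\{1,\dots,5\}\setminus\{i,j\}$, and they satisfy the relations \eqref{relacije}.

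First I would pin down the identification of each $D_I$ with a virtual space of parameters, using the explicit form of the diffeomorphism $\mathcal{F}_5\cong\mathcal{M}_{0,5}$ recorded in Example~\ref{ex:map05} (following~\cite{buchstaber2021orbit}). By Corollary~\ref{Corr:VirtualspaceX5} the virtual spaces of parameters that are complex curves in the surface $\mathcal{F}_5$ --- hence carry a divisor class --- are precisely the $\tilde{F}_{\overline{ij}}\simeq\C P^1$ indexed by the $3$-element subsets. The combinatorial input is that the stratum $W_{\overline{ij}}$ is the one on which exactly the Plücker coordinates $P^{rp},P^{rq},P^{pq}$ vanish, for $\overline{ij}=\{r,p,q\}$; under the moduli interpretation this is the locus where the marked points labelled by $\overline{ij}$ come together on one bubble component, i.e.\ the boundary divisor $D_{\overline{ij}}$. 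This gives $[\tilde{F}_{\overline{ij}}]=D_{\overline{ij}}$ in $H_2(\mathcal{F}_5)$; since the $D_I$ generate $H_2(\mathcal{M}_{0,5})$, so do the ten classes $[\tilde{F}_{\overline{ij}}]$, which is the first assertion and also recovers the analogous statement of~\cite{buchstaber2019toric}.

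It then remains to rewrite \eqref{relacije}. I would fix four distinct elements $\{i,j,p,q\}\subset\{1,\dots,5\}$ and let $m$ be the fifth. The subsets $I$ with $i,j\in I$, $p,q\notin I$ and $2\le|I|\le 3$ are exactly $I=\{i,j\}$ and $I=\{i,j,m\}$; using $D_I=D_{I^\complement}$ together with the identification above, $D_{\{i,j\}}=[\tilde{F}_{\overline{ij}}]$ and $D_{\{i,j,m\}}=D_{\overline{pq}}=[\tilde{F}_{\overline{pq}}]$, so the first of the three sums in \eqref{relacije} equals $[\tilde{F}_{\overline{ij}}]+[\tilde{F}_{\overline{pq}}]$. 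Handling the other two sums in the same way turns the triple equality \eqref{relacije} into exactly \eqref{Th:UrelationsX5}; letting $\{i,j,p,q\}$ run over all four-element subsets produces all of Keel's relations, so this is in fact a complete presentation.

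I expect the only genuine difficulty to lie in the identification $[\tilde{F}_{\overline{ij}}]=D_{\overline{ij}}$: one must match the combinatorial index $\overline{ij}$ of a virtual space of parameters --- which is read off from the Plücker-vanishing pattern of a stratum of $\Grs{5}$ and a priori depends on the chart $M_{12}$ fixed in the construction of $\mathcal{F}_5$ --- with the index of the Keel divisor, by following the blow-up description of the diffeomorphism in Example~\ref{ex:map05}, and confirming along the way that the remaining virtual spaces (the open curves $\tilde{F}_{K_{ij}}\simeq\CPA$, the $\tilde{F}_{O_i}$, whose closures are among the ten curves above) yield no class outside the span of the $[\tilde{F}_{\overline{ij}}]$. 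Once this dictionary is in place, generation and the relations \eqref{Th:UrelationsX5} are a direct transcription of Keel's presentation.
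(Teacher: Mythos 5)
Your overall route is exactly the paper's: transport the problem to $\mathcal{M}_{0,5}$ through the diffeomorphism of Theorem~\ref{Th:ModuliUniversalSpace}, identify each Keel divisor $D_I$ with a virtual space of parameters, and transcribe the relations \eqref{relacije}. The conclusion is correct, and the reduction of \eqref{relacije} to the matching form $D_{ij}+D_{pq}=D_{ip}+D_{jq}=D_{iq}+D_{jp}$ via $D_I=D_{I^{\complement}}$ is carried out correctly. However, the one step you defer --- the dictionary between the $D_I$ and the $\tilde{F}_{\overline{ij}}$ --- is essentially the entire content of the paper's proof, and the dictionary you assert, $[\tilde{F}_{\overline{ij}}]=D_{\overline{ij}}=D_{ij}$, is not the one the explicit computation yields. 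Working in the chart $M_{12}$ and tracing the map of Example~\ref{ex:map05} stratum by stratum (via \eqref{Def:mainstratumG} and the transition homeomorphisms of Theorem~\ref{Th:UniversalSpaceG}), the paper finds, e.g., $\tilde{f}(D_{12})=\tilde{F}_{\overline{45}}$, $\tilde{f}(D_{13})=\tilde{F}_{\overline{14}}$, $\tilde{f}(D_{123})=\tilde{F}_{\overline{23}}$, $\tilde{f}(D_{124})=\tilde{F}_{\overline{12}}$; on two-element index sets this correspondence is induced by the permutation $(1\,4\,3)(2\,5)$ of $\{1,\dots,5\}$, not by the identity. Your heuristic (Pl\"ucker vanishing of $P^{rp},P^{rq},P^{pq}$ corresponds to collision of the marked points labelled by $\overline{ij}$) ignores the relabelling forced by fixing the chart $M_{12}$ and by the convention that the marked points $n-2,n-1,n$ play the special roles $0,1,\infty$ in Tavakol's model. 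You are saved only by the fact that Keel's relation set is invariant under the $S_5$-action on labels, so any dictionary induced by a permutation produces the same family of relations \eqref{Th:UrelationsX5} once $\{i,j,p,q\}$ ranges over all four-element subsets; had the bijection not been induced by a permutation, the argument as written would fail. So: same approach, correct endpoint, but the chart-by-chart verification that constitutes the actual proof is replaced by an identification that is stated without proof and is, under the paper's conventions, not the correct one.
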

\begin{proof}
	 We demonstrate the proof for the particular indices $ \{1,3,4,5\}.$ Let us fix the chart $ M_{12} $ and use the map $ \tilde{f}:\mathcal{M}_{0,5}\to \mathcal{F}_{5} $ from Example \ref{ex:map05}. We need to find the image by $ \tilde{f} $ of a divisor $ D_{I} $ from \eqref{relacije}. For example, the element $ D_{12} $ maps to $ \{((c:c^{\prime}),(c:c^{\prime}),(1:1))\}.$  We need to find a stratum $ W_{\sigma} $ for which this is virtual space of parameters in $ M_{12}. $  By \eqref{Def:mainstratumG} such stratum $ W_{\sigma} $ satisfies 
	$ 	c^{\prime}a_{1}b_{2}=cb_{1}a_{2},c^{\prime}a_{1}b_{3}=cb_{1}a_{3},a_{2}b_{3}=b_{2}a_{3}, $ where $ (c:c^{\prime})\in \C P^{1} $. It follows that $ P^{45}=(1:1)$ for the points from $ W_{\sigma}. $  The virtual space of parameters $ \tilde{F}_{123} $ is, in the chart $ M_{14} $, written as $ (1:0)\times (1:0)\times \C P^{1} $. Using the homeomorphism $ \tilde{f}^{-1}_{12,14} $, defined in Theorem \ref{Th:UniversalSpaceG}, we obtain the record $\tilde{F}_{123}  $ in the chart $ M_{12}$ to be $ \{((c:c^{\prime}),(c:c^{\prime}),(1:1))\}.$ This proves that  $ \tilde{f}(D_{12}) $ maps to $ \tilde{F}_{123}. $ 
\par The divisor $ D_{124} $ maps to the exceptional set $ \{(((1:1),(1:1),(1:1)),(c:c^{\prime}))\}.$ We need to find the corresponding stratum $ W_{\sigma} $ that is defined by $ P^{34}=P^{35}=P^{45}=0 $. If we write down the stratum whose admissible polytope is $ K_{12}(7)=K_{345} $, in the chart $ M_{13} $ we obtain that its coordinates are $ a_{1},a_{2}=a_{3}=0,b_{1},b_{2},b_{3}. $ Therefore, we have that $ \tilde{F}_{345}\simeq (1:0)\times(1:0)\times \C P^{1}\simeq \C P^{1}$  in the chart $ M_{13}. $ Using the homeomorphism $ \tilde{f}_{12,13}:\mathcal{F}_{12}\to \mathcal{F}_{13} $, defined in Theorem \ref{Th:UniversalSpaceG}, we deduce that $ D_{124} $ maps to $ \tilde{F}_{345}\simeq  \C P^{1}$.  Similarly, we proceed in other cases and obtain:
\begin{align}\label{Th:UconnectX5}
	\begin{split}
			D_{12}\to  \{((c:c^{\prime}),(c:c^{\prime}),(1:1))\}\simeq\tilde{F}_{\overline{45}}\quad D_{23}\to  \{((c:c^{\prime}),(0:1),(0:1))\}\simeq\tilde{F}_{\overline{15}}\\
		D_{13}\to  \{((0:1),(c:c^{\prime}),(1:0))\}\simeq\tilde{F}_{\overline{14}}\quad D_{24}\to  \{((c:c^{\prime}),(1:1),(c^{\prime}:c),)\}\simeq\tilde{F}_{\overline{35}}\\
		D_{14}\to  \{((1:1),(c:c^{\prime}),(c:c^{\prime}))\}\simeq\tilde{F}_{\overline{34}}\quad D_{25}\to  \{((c:c^{\prime}),(1:0),(1:0))\}\simeq\tilde{F}_{\overline{25}}\\
		D_{15}\to  \{((1:0),(c:c^{\prime}),(0:1))\}\simeq\tilde{F}_{\overline{24}}\quad D_{125}\to  \{((1:0),(1:0),(c:c^{\prime}))\}\simeq\tilde{F}_{\overline{13}}\\
		D_{123}\to  \{((0:1),(0:1),(c:c^{\prime}))\}\simeq\tilde{F}_{\overline{23}}\quad
		D_{124}\to  \{((1:1),(1:1),(1:1)),(c:c^{\prime})\}\simeq\tilde{F}_{\overline{12}}
	\end{split}
\end{align} where $ (c:c^{\prime})\in\C P^{1}. $ Finally, using relations \eqref{relacije} we get that in $ H_{2}(\mathcal{F}_{5}) $ for indices $ \{1,3,4,5\} $ it holds: \[ \tilde{F}_{\overline{45}}+\tilde{F}_{\overline{13}}=\tilde{F}_{\overline{14}}+\tilde{F}_{\overline{35}}=\tilde{F}_{\overline{15}}+\tilde{F}_{\overline{34}}. \]	
\end{proof}
\par We first consider the chamber $ C_{\omega_{0}} $ defined by the intersections of all admissible polytopes $ \Ins{K}_{ij}. $
 The union \[ \mathcal{F}_{\omega_{0}}= \bigcup_{C_{\omega_{0}}\in \overset{\circ}{P_{\sigma}} }\tilde{F}_{\sigma} \] represents a subdivision of the space $ \mathcal{F}_{5} $. We can express this union as: 
\[ \mathcal{F}_{\omega_{0}}= \bigcup_{ij}\tilde{F}_{ij}\cup\bigcup_{ij,pq} \tilde{F}_{ij,pq}, \] where the notation $ \tilde{F}_{ij,pq} $ here and further on assumes that $ ij\cap pq=\emptyset. $
\par Note that every virtual space of parameters $ \tilde{F}_{\overline{ij}}$ is the disjoint union of $ \CPA $ and three points:
\begin{equation}\label{X5Particija}
	\tilde{F}_{\overline{ij}}=\tilde{F}_{ij}\cup\bigcup_{pq }\tilde{F}_{ij,pq}\quad \text{ where }  \overline{ij}=\{1,\dots,5\}\backslash\{i,j\} 
\end{equation} 


Now, we use the projection $ p_{\omega_{0},12}:\mathcal{F}_{\omega_{0}}\to F_{\omega_{0}} $ defined in Lemma \ref{Lem:ProjG} to map the virtual spaces of parameters from relations \eqref{Th:UrelationsX5} to the space $ F_{\omega_{0}}.$ It directly follows from \eqref{X5Particija}. 
\begin{lemma}\label{L:relacijeG52}
	The homology group $ H_{2}(F_{\omega_{0}}) $ is generated by the classes determined by the virtual spaces of parameters $ F^{\prime}_{ij} $ and the following relations hold:
	\begin{equation*}
		 F^{\prime}_{ij}+F^{\prime}_{pq}=
		F^{\prime}_{ip}+F^{\prime}_{jq}=
		F^{\prime}_{iq}+F^{\prime}_{jp}
	\end{equation*}
for $ \{i,j,p,q\}\subset \{1,\dots,5\} $ where  \[ F^{\prime}_{ij}:=F_{ij}\cup\bigcup_{pq }F_{ij,pq}.\]
\end{lemma}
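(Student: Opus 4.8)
The plan is to obtain Lemma~\ref{L:relacijeG52} by transporting the Keel-type relations of the previous lemma along the canonical projection $p := p_{\omega_0,12}\colon \mathcal{F}_5 = \mathcal{F}_{\omega_0} \to F_{\omega_0}$ furnished by Theorem~\ref{Lem:ProjG}. Recall that by that theorem the domain $\mathcal{F}_{\omega_0}$ is exactly $\mathcal{F}_5$, stratified as the disjoint union $\tilde F \cup \bigcup_{ij}\tilde F_{ij} \cup \bigcup_{ij,pq}\tilde F_{ij,pq}$ recorded above, with $p$ restricting on each piece $\tilde F_\sigma$ to the canonical projection $p_{\sigma,12}\colon \tilde F_\sigma \to F_\sigma$ of Theorem~\ref{Th:VirtualSpace}; on the target side $F_{\omega_0} = F \cup \bigcup_{ij} F_{ij} \cup \bigcup_{ij,pq} F_{ij,pq}$. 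Thus $p$ induces $p_\ast\colon H_2(\mathcal{F}_5;\Z_2)\to H_2(F_{\omega_0};\Z_2)$, and everything will follow from computing it on generators and using functoriality.

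The first step is to evaluate $p_\ast$ on the divisor classes $[\tilde F_{\overline{ij}}]$. By the partition~\eqref{X5Particija}, $\tilde F_{\overline{ij}} = \tilde F_{ij}\sqcup\bigcup_{pq}\tilde F_{ij,pq}$ with $\tilde F_{ij}\cong\CPA$ open and dense together with the three remaining points, so $\tilde F_{\overline{ij}}$ is homeomorphic to $\C P^1$. By Corollary~\ref{Corr:VirtualspaceX5}(1) the projection restricts to a homeomorphism $\tilde F_{ij}\to F_{ij}$, and it carries each point $\tilde F_{ij,pq}$ to the point $F_{ij,pq}$; hence $p$ sends $\tilde F_{\overline{ij}}$ bijectively, and therefore (both spaces being compact Hausdorff) homeomorphically, onto $F'_{ij}=F_{ij}\cup\bigcup_{pq}F_{ij,pq}$, which is thus also homeomorphic to $\C P^1$. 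Consequently $p_\ast[\tilde F_{\overline{ij}}]=[F'_{ij}]$ in $H_2(F_{\omega_0};\Z_2)$. This bookkeeping — matching the stratum-wise pieces of $\tilde F_{\overline{ij}}$ with those of $F'_{ij}$ and checking that every point stratum is actually hit (which holds because all the relevant strata occur in the $C_{\omega_0}$-partition) — is the only place where there is any real content; it is the ``direct consequence of~\eqref{X5Particija}'' referred to in the text, and hence the main (minor) obstacle.

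Next I would check surjectivity of $p_\ast$. Since $\dim F_{\omega_0}=\dim F_5=4$, the group $H_2(F_{\omega_0};\Z_2)$ is of non-maximal dimension, so by Theorem~\ref{Th:homology_F} it is spanned by the classes of the divisorial outgrowths of $F$ in $F_{\omega_0}$, namely by the classes $[F'_{ij}]$; each of these lies in the image of $p_\ast$ by the previous paragraph. Combined with the fact, from the preceding lemma, that the classes $[\tilde F_{\overline{ij}}]$ generate $H_2(\mathcal{F}_5;\Z_2)$, this shows that the classes $[F'_{ij}]$ generate $H_2(F_{\omega_0};\Z_2)$.

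Finally, applying the homomorphism $p_\ast$ to the relations~\eqref{Th:UrelationsX5} and substituting $p_\ast[\tilde F_{\overline{ij}}]=[F'_{ij}]$ gives
\[
F'_{ij}+F'_{pq}=F'_{ip}+F'_{jq}=F'_{iq}+F'_{jp}
\]
in $H_2(F_{\omega_0};\Z_2)$ for every four distinct indices $\{i,j,p,q\}\subset\{1,\dots,5\}$, which is the asserted family of relations. The statement does not claim — and I would not try to establish here — that these exhaust all relations; that refinement, together with the precise rank of $H_2(F_{\omega_0})$, is what the subsequent computation of $H_6(X_5)$ and $H_5(X_5)$ will take up.
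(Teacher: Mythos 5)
Your proposal is correct and follows essentially the same route as the paper: the paper also obtains the lemma by pushing the relations \eqref{Th:UrelationsX5} forward along the projection $p_{\omega_0,12}\colon\mathcal{F}_{\omega_0}\to F_{\omega_0}$ of Theorem~\ref{Lem:ProjG} and identifying $p_{\omega_0,12}(\tilde F_{\overline{ij}})$ with $F'_{ij}$ via the partition \eqref{X5Particija}. Your write-up merely makes explicit the two points the paper leaves implicit (that $p$ restricts to a homeomorphism $\tilde F_{\overline{ij}}\to F'_{ij}$, and that generation follows from Theorem~\ref{Th:homology_F} together with surjectivity of the induced map on cycles), which is a faithful elaboration rather than a different argument.
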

\par Let $ C_{\omega_{1}} $ be a chamber of the dimension 4 which differs from $ C_{\omega_{0}} $ by one $ K_{lm}, $ that is \\$ C_{\omega_{1}}\subset \Ins{K}_{ij},\,$ for $ ij\neq lm $ and $ C_{\omega_{1}}\nsubseteq \Ins{K}_{lm}.$  We write $  \mathcal{F}_{\omega_{1}} $ as:\[ \mathcal{F}_{\omega_{1}}= \tilde{F}_{\overline{lm}}\cup\bigcup_{ ij\neq lm }\tilde{F}_{ij}\cup\bigcup_{ij,pq\neq lm}\tilde{F}_{ij,pq}. \] 
\begin{itemize}
	\item If $ \tilde{F}_{\overline{ij}} $ is such that $ ij\neq lm $ and $ ij\nsubseteq \overline{lm} $  then $ p_{\omega_{1},12}(\tilde{F}_{\overline{ij}})=p_{\omega_{0},12}(\tilde{F}_{\overline{ij}}).$
	\item If $ ij\subset \overline{lm} $ then $ \tilde{F}_{\overline{ij}} $ is represented in $ \mathcal{F}_{\omega_{1}} $ as 
	\[ \tilde{F}_{ij}\cup\bigcup_{pq\neq lm }\tilde{F}_{ij,pq}\cup \tilde{F}_{\overline{lm}} \] and  \[ p_{\omega_{1},12}(\tilde{F}_{\overline{ij}})=F_{ij}\cup\bigcup_{pq\neq lm }F_{ij,pq}\cup F_{\overline{lm}}\simeq \C P^{1}. \] For simplicity, we write $ F_{ij}^{\prime} $ for this last union.
\end{itemize}
\par	A virtual space of parameters $ \tilde{F}_{\overline{lm}} $ projects to the space $ F_{\overline{lm}} $ and it is a point by Corollary \ref{Corr:VirtualspaceX5}. Therefore, the relations in the homology group $  H_{2}(F_{\omega_{1}})  $ are as in the homology group $ H_{2}(F_{\omega_{0}}) $, where we replace the space of parameters $ F^{\prime}_{lm} $ by the zero class. In this way we get relations in any $  H_{2}(F_{\omega})  $ by substituting $ F^{\prime}_{ij} $ with the zero class in relations for $ H_{2}(F_{\omega_{0}}) $ where $ C_{\omega}\notin K_{ij}. $

\par Thus, we obtain the result analogous to that of Lemma \ref{L:relacijeG52} for an arbitrary chamber.
\begin{lemma}\label{Rem:X5Cwrelacije}
	Let $ C_{\omega}= \bigcap_{ij\notin S}K_{ij}\cap\bigcap_{ij\in S}K_{\overline{ij}} $ where $ S\subset \{ij|1\leq i <j\leq 5\} $. The homology group $ H_{2}(F_{\omega}) $ is generated by the classes determined by the spaces of parameters $ F_{ij} $ and the following relations hold:
	\begin{align*}
		F_{ij}+F_{pq}=  
		F_{ip}+F_{jq}=
		F_{iq}+F_{jp}		  
	\end{align*}
	for distinct $ \{i,j,p,q\}\subset \{1,\dots,5\} $, where if any two indices of $ i,j,p,q $ are in $ S $ then we replace the corresponding class by the zero class.
\end{lemma}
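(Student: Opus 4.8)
The plan is to derive the lemma from Lemma~\ref{L:relacijeG52} by induction on $|S|$, pushing the relations for $H_2(\mathcal F_5)$ forward along the canonical projections $p_{\omega,12}\colon\mathcal F_5\to F_\omega$ of Theorem~\ref{Lem:ProjG}. A preliminary point is the generation statement: by Theorem~\ref{Th:homology_F}, $H_2(F_\omega)$ is spanned by the two-dimensional outgrows $F_\sigma$, $\sigma\in\omega$, of the compactification of $F_5$ to $F_\omega$. Since $\dim F_5=4$, Corollary~\ref{Corr:VirtualspaceX5} shows that the only contributing strata are those with admissible polytope $K_{ij}$, for which $F_{ij}\cong\C P^1$, while every other $F_\sigma\subset F_\omega$ is a point. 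As $F_\omega=\bigcup_{C_\omega\subset\Ins{P}_\sigma}F_\sigma$ and $C_\omega\subset\Ins{K}_{ij}$ precisely when $ij\notin S$, it follows that $H_2(F_\omega)$ is generated by the classes $[F_{ij}]$ with $ij\notin S$.

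The base case $S=\emptyset$ is Lemma~\ref{L:relacijeG52}, after noting that $[F'_{ij}]=[F_{ij}]$ in $H_2$ because $F'_{ij}=F_{ij}\cup\bigcup_{pq}F_{ij,pq}$ differs from the two-dimensional cycle $F_{ij}\cong\C P^1$ only by finitely many points. For the inductive step I would fix $lm\in S$, put $S'=S\setminus\{lm\}$, and let $C_{\omega'}$ be the chamber determined by $S'$, so that $H_2(F_{\omega'})$ has the asserted presentation by the inductive hypothesis. Comparing $p_{\omega,12}$ with $p_{\omega',12}$ through the disjoint decomposition $\mathcal F_5=\bigsqcup_{C_\omega\subset\Ins{P}_\sigma}\tilde F_\sigma$, exactly as in the discussion preceding the lemma: for $ij\notin S$ the image $p_{\omega,12}(\tilde F_{\overline{ij}})$ is again a copy of $\C P^1$ carrying the class $[F_{ij}]$ (it coincides with $p_{\omega',12}(\tilde F_{\overline{ij}})$ when $\{i,j\}\cap\{l,m\}\neq\emptyset$, and equals $F_{ij}\cup\bigcup_{pq\neq lm}F_{ij,pq}\cup F_{\overline{lm}}$ when $\{i,j\}\cap\{l,m\}=\emptyset$), whereas $p_{\omega,12}(\tilde F_{\overline{lm}})=F_{\overline{lm}}$ is a single point by Corollary~\ref{Corr:VirtualspaceX5}. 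Since $p_{\omega,12}^{\#}$ is surjective on homology (part of the proof of Theorem~\ref{Th:homology_F}), $H_2(F_\omega)$ is a quotient of $H_2(F_{\omega'})$, hence of $H_2(\mathcal F_5)$ with its generators $[\tilde F_{\overline{ij}}]$ and relations~\eqref{Th:UrelationsX5}, by the single class $[F_{lm}]$; combined with the generation statement, this quotient is generated by the $[F_{ij}]$ subject to \eqref{Th:UrelationsX5} together with $[F_{ij}]=0$ for $ij\in S$. Iterating over the elements of $S$ proves the lemma.

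The delicate point is the very last step: one must check not only that $[F_{lm}]$ dies in $H_2(F_\omega)$, but that no further relation is introduced, i.e.\ that the kernel of $p_{\omega,12}^{\#}$ is generated, modulo \eqref{Th:UrelationsX5}, by exactly the classes $[\tilde F_{\overline{ij}}]$ with $ij\in S$. This is where the spanning set of $H_2(F_\omega)$ furnished by Theorem~\ref{Th:homology_F} must be matched against the size of the candidate presentation, so that no collapsing beyond $[F_{ij}]=0$, $ij\in S$, can occur. Carrying out this bookkeeping, together with the case analysis for the images $p_{\omega,12}(\tilde F_{\overline{ij}})$, is the main obstacle; everything else is a transcription of Lemma~\ref{L:relacijeG52} and the computations recorded in \eqref{Th:UconnectX5} and \eqref{X5Particija}.
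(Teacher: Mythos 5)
Your proposal follows essentially the same route as the paper: starting from the presentation of $H_2(F_{\omega_0})$ in Lemma~\ref{L:relacijeG52}, one removes one half-space $K_{lm}$ at a time and observes, via the decomposition of $\mathcal{F}_5$ into virtual spaces of parameters and the projection $p_{\omega,12}$, that $\tilde F_{\overline{lm}}$ now projects to the single point $F_{\overline{lm}}$ while the remaining $\tilde F_{\overline{ij}}$ still project onto copies of $\C P^1$ carrying the classes $[F_{ij}]$. The ``delicate point'' you flag --- that no relations beyond $[F_{lm}]=0$ are introduced --- is exactly the step the paper also handles only by the same surjectivity and generation arguments (Theorem~\ref{Th:homology_F} together with Corollary~\ref{Corr:VirtualspaceX5}) without further bookkeeping, so your argument matches the paper's.
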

\begin{definition}
	 The projection  $ p_{\omega_{0},\omega}:C_{2}(F_{\omega_{0}})\to C_{2}(F_{\omega}) $ is defined by:
	 \begin{equation}\label{eq:projomega0omega}
	 	p_{\omega_{0},\omega}(c)=\sum_{ij\in \hat{\omega}\cap \omega}F_{ij}
	 \end{equation}
 where  $ c=\sum_{ij\in \hat{\omega}} F_{ij} $ for $ \hat{\omega}\subset \omega_{0}.$
\end{definition}

\begin{lemma}\label{L:regionG52}
		If a cycle $ l $ is the zero class in $ H_{2}(F_{\omega_{0}}) $, then $ p_{\omega_{0},\omega}(l) $ is the zero class in $ H_{2}(F_{\omega}) $ for a chamber $ C_{\omega} $ of maximal dimension.

\end{lemma}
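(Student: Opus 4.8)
The plan is to recognize the chain map $p_{\omega_0,\omega}$, on chains of the special form occurring in~\eqref{eq:projomega0omega}, as realizing the obvious quotient homomorphism between the presentations of $H_2(F_{\omega_0})$ and $H_2(F_\omega)$ supplied by Lemmas~\ref{L:relacijeG52} and~\ref{Rem:X5Cwrelacije}, and then to observe that a null-homologous cycle maps to a null-homologous cycle under any homomorphism. Write $C_\omega=\bigcap_{ij\notin S}K_{ij}\cap\bigcap_{ij\in S}K_{\overline{ij}}$, so that $\omega_0$ corresponds to $S=\emptyset$. By Lemma~\ref{L:relacijeG52}, $H_2(F_{\omega_0})$ is presented by the ten generators $F'_{ij}$ together with the Keel-type relations over the four-element subsets of $\{1,\dots,5\}$; by Lemma~\ref{Rem:X5Cwrelacije}, $H_2(F_\omega)$ is presented by the generators $F_{ij}$ with $ij\notin S$ together with the images of those same relations under deletion of every term indexed by a pair of $S$. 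Denote by $R_{\omega_0}$ and $R_\omega$ the respective relation submodules of the free $\Z_2$-modules on the generators.

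First I would observe that, by the definition~\eqref{eq:projomega0omega}, on a chain $c=\sum_{ij\in\hat{\omega}}F'_{ij}$ the operator $p_{\omega_0,\omega}$ simply strikes out the summands indexed by pairs of $S$; this is $\Z_2$-linear in $c$, hence agrees on all such chains with the $\Z_2$-linear map $\pi$ of free modules given by $\pi(F'_{kl})=F_{kl}$ for $kl\notin S$ and $\pi(F'_{kl})=0$ for $kl\in S$. Next I would check that $\pi(R_{\omega_0})\subseteq R_\omega$: the image under $\pi$ of a Keel relation in the $F'$'s is that same relation with its $S$-indexed terms removed, which by Lemma~\ref{Rem:X5Cwrelacije} is precisely a defining relation of $H_2(F_\omega)$ (possibly the trivial one). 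Consequently $\pi$ descends to a well-defined homomorphism $\bar\pi\colon H_2(F_{\omega_0})\to H_2(F_\omega)$, and for every cycle $l$ of the above form one has $[p_{\omega_0,\omega}(l)]=\bar\pi([l])$. If $[l]=0$ in $H_2(F_{\omega_0})$, i.e.\ $l\in R_{\omega_0}$, then $p_{\omega_0,\omega}(l)=\pi(l)\in R_\omega$, so $p_{\omega_0,\omega}(l)$ represents the zero class in $H_2(F_\omega)$, as claimed.

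The only step that calls for a bit of care — the main obstacle, modest as it is — is verifying $\pi(R_{\omega_0})\subseteq R_\omega$ in the degenerate configurations, where two of the four indices $\{i,j,p,q\}$ already constitute a pair of $S$ and hence several of the six terms of a Keel relation collapse at once. I would dispose of this uniformly by quoting the explicit form of the relations of $H_2(F_\omega)$ from Lemma~\ref{Rem:X5Cwrelacije}, rather than by enumerating cases; after that the argument is purely formal. Note that the hypothesis that $C_\omega$ be of maximal dimension enters exactly at this point, since it is precisely when it applies that Lemma~\ref{Rem:X5Cwrelacije} supplies the needed presentation of $H_2(F_\omega)$.
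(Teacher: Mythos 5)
Your proof is correct and follows essentially the same route as the paper's: both rest on the presentations of $H_{2}(F_{\omega_{0}})$ and $H_{2}(F_{\omega})$ supplied by Lemmas~\ref{L:relacijeG52} and~\ref{Rem:X5Cwrelacije}, together with the observation that striking out the $S$-indexed terms carries the Keel relations of $F_{\omega_{0}}$ to defining relations of $F_{\omega}$. Your packaging of this as a well-defined induced homomorphism $\bar\pi$ on homology is a cleaner and more uniform rendering of the paper's brief case check on whether $pq\in\hat{\omega}$.
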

\begin{proof}
	Let a cycle $ l=\sum_{ij\in \hat{\omega}} F^{\prime}_{ij} $ be  zero in $ H_{2}(F_{\omega_{0}}) $ for some $ \hat{\omega}\subset\omega_{0}$. Let $ C_{\omega} $ be a chamber such that $ \omega\neq \omega_{0},$ that is  $ C_{\omega}\nsubseteq \Ins{K}_{pq} $ for some $ p,q\in \{1,\dots,5\} .$ Let $ pq\notin \hat{\omega}$, by Lemma \ref{L:relacijeG52} and \ref{Rem:X5Cwrelacije} we get that the cycle $ p_{\omega_{0},\omega}(l) $ is zero in $ H_{2}(F_{\omega})$. If $ pq\in \hat{\omega} $, then by Lemma \ref{Rem:X5Cwrelacije} we obtain that $ p_{\omega_{0},\omega}(l) =\sum_{\substack{ij\in \hat{\omega}\\ij\neq pq}} F_{ij} $, so it is zero $  H_{2}(F_{\omega}). $
\end{proof}
\begin{theorem}\label{Th:H6X5}
	 $ H_{6}(X_{5};\Z_{2})\simeq  \Z_{2}.$ 
\end{theorem}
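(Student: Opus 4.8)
The plan is to compute $H_6(X_5;\Z_2)=Z_6/B_6$, the group of $6$-cycles modulo $6$-boundaries, using the model $(U_5,p_5)$ and the homology of $\mathcal{F}_5\cong\mathcal{M}_{0,5}$. Since $\dim\hat{\mu}^{-1}(\partial\Delta_{5,2})=5$, any $6$-cycle $c$ satisfies $\hat{\mu}(c)\subset\stackrel{\circ}{\Delta}_{5,2}$, so by Corollary~\ref{admcycle} it has the form $c=\sum_{\sigma}P_{\sigma,4}\times l^{\sigma}$ with $\dim l^{\sigma}=2$; by Corollary~\ref{Corr:VirtualspaceX5} the only admissible polytopes of dimension $4$ whose stratum has a $2$-dimensional space of parameters are the $K_{ij}$ (with space of parameters $F_{ij}\cong\CPA$). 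By Corollary~\ref{Cycles_3n-9}, $\dim Z_6=\tfrac{(5-2)(5-1)}{2}=6$, and writing $c=\sum a_{ij}K_{ij}\times F_{ij}$ the cycle condition is $\sum_{j\ne i}a_{ij}=0$ for all $i$; hence $c\mapsto(a_{ij})$ identifies $Z_6$ with the $\Z_2$-cycle space of the complete graph on $\{1,\dots,5\}$.

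The heart of the argument is the criterion: $c=\sum a_{ij}K_{ij}\times F_{ij}$ bounds in $X_5$ if and only if the class $\sum a_{ij}[F'_{ij}]$ vanishes in $H_2(F_{\omega_0};\Z_2)$, where $\omega_0=\bigcap\stackrel{\circ}{K}_{ij}$ is the central chamber and $F'_{ij}=F_{ij}\cup\bigcup_{pq}F_{ij,pq}$, equivalently (Lemma~\ref{Rem:X5Cwrelacije}) the tuple $(a_{ij})$ satisfies the relations \eqref{Th:UrelationsX5}. For the ``if'' part I would lift $\sum a_{ij}F'_{ij}$ to the $2$-cycle $\tilde l=\sum a_{ij}\tilde F_{\overline{ij}}$ in $\mathcal{F}_5$ via the dictionary \eqref{Th:UconnectX5}; by \eqref{relacije} it is null-homologous in $\mathcal{F}_5$, say $\tilde l=\partial\tilde w$ for a $3$-chain $\tilde w$. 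Then $\hat d:=p_5(\Delta_{5,2}\times\tilde w)$ is a $7$-chain whose boundary is $p_5(\partial\Delta_{5,2}\times\tilde w)+p_5(\Delta_{5,2}\times\tilde l)$: the first term lies in $Y_5=\hat{\mu}^{-1}(\partial\Delta_{5,2})$ and vanishes for dimension reasons (a $6$-chain inside the $5$-dimensional $Y_5$), while the second equals $\sum_{\dim C_\omega=4}C_\omega\times p_{\omega,12}(\tilde l)=\sum a_{ij}K_{ij}\times F'_{ij}=c$ after grouping the chambers of each $\stackrel{\circ}{K}_{ij}$ and using Remark~\ref{Rem:ManjeDIm} (no facet on $\partial P_{K_{ij}}$ contributes to the boundary); hence $c=\partial\hat d$. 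For the ``only if'' part, if $c=\partial d$ then Proposition~\ref{Cor:No_Chambers_leq_d-1} lets us take $d=\sum_{\dim C_\omega=4}C_\omega\times m_\omega$ with $\partial^{0}d=c$, modulo a chain over $\partial\Delta_{5,2}$; comparing the terms attached to the central chamber $C_{\omega_0}$ gives $\partial^{\omega_0}m_{\omega_0}=\sum a_{ij}F'_{ij}$, so $\sum a_{ij}[F'_{ij}]=0$ in $H_2(F_{\omega_0};\Z_2)$.

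Granting the criterion, $H_6(X_5;\Z_2)\cong Z_6/R$, where $R$ is the subgroup of relations \eqref{Th:UrelationsX5}. Under the graph identification these are precisely the $4$-cycles $ij+jq+qp+pi$ of $K_5$, which all lie in $Z_6$, so $R\subset Z_6$; moreover $R$ is contained in the kernel of the linear functional ``number of edges mod $2$'' on $Z_6$, which is linear on the edge space, vanishes on every $4$-cycle, and equals $1$ on a triangle (so it is nonzero on $Z_6$). Since $H_2(F_{\omega_0};\Z_2)$ has the $10$ generators $[F'_{ij}]$ and, by Lemma~\ref{Rem:X5Cwrelacije}, the same presentation as $H_2(\mathcal{F}_5;\Z_2)\cong\Z_2^5$, the relation group has dimension $\dim R=10-5=5$; therefore $R$ equals that kernel and $H_6(X_5;\Z_2)\cong Z_6/R\cong\Z_2$.

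I expect the main obstacle to be the chain-level verification of the criterion, namely that $\partial\bigl(p_5(\Delta_{5,2}\times\tilde w)\bigr)$ equals $c$ modulo a chain over $\partial\Delta_{5,2}$: this needs the interior walls in this boundary to cancel in pairs, which is why one pushes forward a single $3$-chain $\tilde w$ from $\mathcal{F}_5$ (instead of choosing bounding chains chamber by chamber) and relies on the compatibility of the projections $p_{\omega,12}$ from Theorem~\ref{Lem:ProjG} and Lemma~\ref{L:regionG52}, together with the description in Remark~\ref{Rem:ManjeDIm} of which cells contribute to the boundary over $\stackrel{\circ}{\Delta}_{5,2}$.
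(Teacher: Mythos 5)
Your proposal is correct and follows essentially the same route as the paper: both reduce, via Corollary~\ref{Cycles_3n-9}, to the cycle space of $K_5$ spanned by the chains $K_{ij,4}\times F_{ij}$, and both decide which combinations bound by testing the class $\sum a_{ij}F_{ij}$ in $H_2(F_{\omega_0})$ against the Keel relations, using Lemma~\ref{L:regionG52} to propagate vanishing to all chambers. The only real difference is presentational: the paper shows the six generators $c_{mi}$ are pairwise homologous by explicit use of the relations, whereas you organize the same information as a dimension count $Z_6/R$ with the edge-parity functional — which has the small added benefit of making explicit that the surviving generator is nonzero, a point the paper leaves implicit.
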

\begin{proof} By Corollary \ref{Cycles_3n-9}  cycles of dimension 6 are generated by the cycles \[ c_{mi}:=e_{mi}+e_{m5}+e_{i5}, \] where $ e_{ij}=K_{ij,4}\times F_{ij} $ and $ 1\leq m \leq 3 $ and $ m+1 \leq i  < 5 $. 
	
	 
	  Let $ c=\sum c_{mi}=\sum e_{pq}.$ On the other hand, we can write the cycle $ c $ \[ c=\sum_{\omega}\sum_{ij\in \omega} C_{\omega}\times F_{ij}=\sum_{\omega} C_{\omega}\times (\sum_{ij\in \omega}F_{ij}). \]
	  If there exists a chain $ \tilde{c}=\sum C_{\tilde{\omega}}\times l_{\tilde{\omega}} $ such that $ \partial \tilde{c}=c $ we have that \[  \partial \tilde{l}_{\omega}= \sum_{ij\in \omega}F_{ij},\,\text{ for all }\omega. \] Thus, a cycle $ c $ is homologically zero if $ \sum_{ij\in \omega}F_{ij}=0 $ in $ H_{2}(F_{\omega}) $ for all $ \omega $ appearing in $ c. $  
	  \par In particular, consider $ c $ defined by \[  c=c_{12}+c_{13}=e_{12}+e_{13}+e_{25}+e_{35}=\sum_{\omega} C_{\omega}\times(F_{12}+F_{13}+F_{25}+F_{35}). \]
	   We have that $ C_{\omega}\in K_{ij,4}$ for at least one $ ij\in\{12,13,25,35\} $ and observe that  $ C_{\omega_{0}} $ participates in this sum. To prove that $ c $ is zero in $ H_{6}(X_{5};\Z_{2}) $, it is sufficient by Lemma \ref{L:regionG52} to prove that $ C_{\omega_{0}}\times (F_{12}+F_{13}+F_{25}+F_{35})$ is zero in $ H_{6}(X_{5};\Z_{2}) $, that is $ F_{12}+F_{13}+F_{25}+F_{35}=0 $ in $ H_{2}(F_{\omega_{0}}). $  This is true by Lemma \ref{L:relacijeG52} taking indices $ \{1,2,3,5\}$, so $ c_{12}=c_{13} $ in $ H_{6}(X_{5};\Z_{2}) $. Similarly, using other choices of indices  in Lemma \ref{L:relacijeG52}, we deduce that $ c_{13}=c_{14},\,c_{13}=c_{23},\,c_{23}=c_{24} $ and $ c_{24}=c_{34} $ in $ H_{6}(X_{5};\Z_{2}). $
\end{proof}

\begin{theorem}\label{Th:Homology_H5_X5}
	 $ H_{5}(X_{5};\Z_{2})\simeq \Z_{2}.$
\end{theorem}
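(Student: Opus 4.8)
The plan is to realize $H_5(X_5;\Z_2)$ as the quotient of the span of the fundamental classes of the boundary strata $X_4(m)=\hat\mu^{-1}(\Delta_{4,2}(m))\cong S^5$ by the relations coming from the chains $K_{ij}\times F_{ij}$. Since $\dim X_5=3n-7=8$, Lemma~\ref{lemma_3n-7}, Corollary~\ref{lemma_3n-8}, Theorem~\ref{Th:H6X5} and the low-degree computations above leave only $H_5$ undetermined. First I would push every $5$-cycle $c$ to the boundary: as $\dim c=n=5$, Proposition~\ref{Th:Cycles_n-1} makes $c$ homologous to a cycle $c'$ in $Y_5=\hat\mu^{-1}(\partial\Delta_{5,2})=\bigcup_{m}X_4(m)\cup\bigcup_{m}\Delta_{4,1}(m)$, where $X_4(m)\cong S^5$ by~\cite{buchstaber2014topology}, each $\Delta_{4,1}(m)$ is a $3$-simplex, and $X_4(m)\cap X_4(m')=\hat\mu^{-1}(\Delta_{3,2}(mm'))$ is at most $2$-dimensional since the main stratum of $G_{3,2}$ has a point as space of parameters. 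As $\dim c'=5=n-2+2$, Corollary~\ref{Cor:Spaces_Y_n} (the case $q=n-2+2k$ with $k=1$) writes $c'\sim\sum_m c^m+c_2$ with $c_2\subset\hat\mu^{-1}(\bigcup_m\partial\Delta_{4,2}(m))$, which is $2$-dimensional, so $c_2=0$; moreover $c^m=\Ins{P}_{\sigma_m,3}\times l^{\sigma_m}$ with $\dim l^{\sigma_m}=\dim F^{m}_{\sigma}=2$, which forces $\sigma_m$ to be the main stratum of $G_{4,2}(m)$ and $c^m$ to lie in $X_4(m)$. The chains $\partial c^m$ have pairwise disjoint support (a $4$-cell projecting into the $2$-dimensional ridge $\Delta_{3,2}(mm')$ would need a parameter factor of dimension $\ge2$, which does not occur), so $\partial c'=0$ forces each $c^m$ to be a $5$-cycle on $X_4(m)\cong S^5$; hence $c^m=a_m S_m$ for a fixed representative $S_m$ of the generator of $H_5(X_4(m);\Z_2)$, and the classes $[S_1],\dots,[S_5]$ span $H_5(X_5;\Z_2)$.

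Next I would extract the relations from the chains $K_{ij}\times F_{ij}$ appearing in Corollary~\ref{Cycles_3n-9}. For the $4$-dimensional admissible polytope $K_{ij}$ the two facets $\{x_i=0\}$ and $\{x_j=0\}$ are of type $\Delta_{4,2}$ and over them $F_{ij}\cong F_4$ is preserved, whereas every other facet of $K_{ij}$ --- its interior facet $\{x_i+x_j=1\}$, the facets $\{x_l=0\}\cap K_{ij}$ with $l\ne i,j$, and the facets $\{x_l=1\}\cap K_{ij}$ --- carries a space of parameters which is a point, so the $2$-chain $F_{ij}$ is collapsed there and does not contribute; hence $\partial(K_{ij}\times F_{ij})=S_i+S_j$. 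Therefore $[S_i]+[S_j]=0$ in $H_5(X_5;\Z_2)$ for all $i\ne j$, so $\big[\sum_m a_m S_m\big]$ depends only on $\sum_m a_m\in\Z_2$ and $H_5(X_5;\Z_2)$ is a quotient of $\Z_2$.

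Finally I would prove $[S_m]\ne0$, which yields $H_5(X_5;\Z_2)\cong\Z_2$. Assume $S_m=\partial D$ for a $6$-chain $D$; then $\hat\mu(\partial D)\subset\partial\Delta_{5,2}$, so $\partial^0D=0$, and by Corollary~\ref{The:HomologyChain_n-2_general} $D$ is homologous to $D_{n-1}+D'$ with $\hat\mu(D')\subset\partial\Delta_{5,2}$ and all chambers of $D_{n-1}$ of dimension $n-1=4$; since $Y_5$ is $5$-dimensional the $6$-chain $D'$ vanishes, and $\partial^0D_{n-1}=0$. By Corollary~\ref{Cor:Chains_with_boundary_on_boundary}, $D_{n-1}=\sum_\sigma P_{\sigma,4}\times l^\sigma$ with $\dim l^\sigma=\dim F_\sigma$, and $6=4+\dim F_\sigma$ forces $\dim F_\sigma=2$; by Corollary~\ref{Corr:VirtualspaceX5} the only $4$-dimensional admissible polytope of $G_{5,2}$ with $2$-dimensional space of parameters is $K_{ij}$, so $D\sim\sum_{ij}a_{ij}K_{ij}\times F_{ij}$ and $S_m=\sum_{ij}a_{ij}(S_i+S_j)$. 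Restricting this chain identity over the facet $\Delta_{4,2}(k)$ (summands over distinct facets have disjoint support, as above) gives $\sum_{j\ne k}a_{kj}=\delta_{km}$ for every $k$; summing over all $k$, the left side equals $2\sum_{i<j}a_{ij}=0$ while the right side equals $1$ in $\Z_2$ --- a contradiction. The main work, which I expect to be the real obstacle, is the exact identity $\partial(K_{ij}\times F_{ij})=S_i+S_j$ (showing that no facet of $K_{ij}$ beyond $\{x_i=0\}$ and $\{x_j=0\}$ contributes) and the reduction of an arbitrary $6$-chain bounding $S_m$ to a combination of the $K_{ij}\times F_{ij}$, which rests on the stratum/parameter bookkeeping for $G_{5,2}$ and on $Y_5$ carrying no $6$-chains.
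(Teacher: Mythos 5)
Your proposal is correct and follows essentially the same route as the paper: push a $5$-cycle to the boundary via Proposition~\ref{Th:Cycles_n-1} and Corollary~\ref{Cor:Spaces_Y_n}, identify the surviving generators as the fundamental classes $S_m$ of the $X_4(m)\cong S^5$, and realize the relations $S_i+S_j=0$ as boundaries of the chains $K_{ij}\times F_{ij}$. The only difference is that you spell out the parity argument showing a single $S_m$ cannot bound (any bounding $6$-chain reduces to a sum of the $e_{ij}=K_{ij}\times F_{ij}$, whose boundaries always involve an even number of the $S_m$), a point the paper leaves implicit.
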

\begin{proof}
	By Proposition \ref{Th:Cycles_n-1} cycles of the dimension 5 are homologous to cycles from $ Y_{5} $. By Corollary \ref{Cor:Spaces_Y_n} only possible cycles of dimension 5 are:
	\[ c=\sum_{\sigma_m}O_{m,3}\times l^{m},\] where $ l^{m} $ is a chain on $ \hat{\mu}^{-1}(O_{m})$ that is a top dimensional cycle in $ F_{4} $  for $ 1\leq m\leq 5.$ Thus, the potential generators are:
	 \[ g_{m}= O_{m,3}\times l,  \]
	  for $ 1\leq m\leq 5.$  \par Let $\tilde{c}$ be a chain of dimension $ 6 $ such that $ \partial \tilde{c}=\sum g_{m} $. By Corollary \ref{The:HomologyChain_n-2_general} we can assume that every $ C_{\omega} $ is of the dimension $ 4 .$ By Corollary \ref{Cor:Chains_with_boundary_on_boundary} the chain $ \tilde{c} $ is of the form $ e_{ij}=K_{ij,4}^{\circ}\times F_{ij} $ and $ \partial e_{ij}=g_{i}+g_{j}. $ We conclude that the sum of every two generators $ g_{i} $ is  homologically equals to zero.
\end{proof}
\section{$\Z _{2}$-homology groups of $ X_{6} $} \label{Sec:HomX6}
The $\Z _{2}$-homology groups of $ X_{6} $ are not known up to our knowledge.
 By Lemma \ref{lemma_3n-7} and Corollary \ref{lemma_3n-8},  $\Z _{2} $-homology groups in dimensions 11 and 10 are : 
\[ H_{11}(X_{6};\Z _{2})\simeq \Z_{2}\text{, } H_{10}(X_{6};\Z _{2})\simeq 0. \] 
\par  Theorem \ref{Cor:Homology_less_n-3} implies that $ H_{k}(X_{6};\Z _{2})\simeq H_{k}(Y_{6};\Z _{2}),$ for $ 1\leq k\leq3. $ By Corollary \ref{Cor:Spaces_Y_n} a cycle $ c $ of this dimension $ k $ is homologous to a cycle from $ \hat{\mu}^{-1}(\bigcup_{1\leq m\leq 6}\partial \Delta_{5,2}(m)).$ We write  $ c $ as:
	\[c=c_{1}+c_{2}+c_{3},   \]where 
	\[  c_{1}\in \hat{\mu}^{-1}(\bigcup_{1\leq p<q\leq 6}\Ins{\Delta}_{4,2}(pq)) ,\, c_{2}\in \hat{\mu}^{-1}(\bigcup_{\substack{1\leq p\leq 6\\1\leq q\leq 6}\\p\neq q}\Ins{\Delta}_{4,1}(pq))\text{ and }  c_{3}\in \hat{\mu}^{-1}(\bigcup_{1\leq p<q\leq 6}\partial \Delta_{4,2}(pq)). \]
	 By Corollary \ref{The:HomologyChain_n-2_general} we can assume that $ l_{\omega} $ is a point for every $ C_{\omega} $ in $ c. $ In this case $ c $ is a cycle on $ \partial \D{6} = S^{4}, $ so
	\[ H_{k}(X_{6};\Z _{2})\simeq 0 \quad \text{ for } 1\leq k\leq3.\]
\par We give the admissible polytopes, the space of parameters and the virtual space of parameters, defined in Section \ref{Sec:Definitions}, for $X_{6}.$

%
  For a non-orbit strata the admissible polytopes are given by:\begin{itemize}
	\item 5-dimensional
	\begin{itemize}
		\item $ \Delta_{6,2} $
		\item $ K_{ijk}: \,\{\textbf{x}\in \D{6}\,|x_{i}+x_{j}+x_{k}\leq 1 \};\quad  K_{ij}: \,\{\textbf{x}\in \D{6}\,| x_{i}+x_{j}\leq 1 \}  $  
		\item $ K_{ijpq}: \,\{\textbf{x}\in \D{6}\,|  x_{i}+x_{j}+x_{p}+x_{q}\leq 1 \}$
		\item$  K_{ij,pq}:\,\{\textbf{x}\in \D{6}\,|x_{i}+x_{j}\leq 1 ,\,x_{p}+x_{q}\leq 1 \} $ where $ \{i,j\}\cap\{p,q\}=\emptyset $ 
			\item $ K_{ij,pq,lm}:\,\{\textbf{x}\in \D{6}\,| x_{i}+x_{j}\leq 1,\, x_{p}+x_{q}\leq 1,\, x_{l}+x_{m}\leq 1\}$ where\\ $ \{i,j\}\cap\{p,q\}=\emptyset,\{i,j\}\cap\{l,m\}=\emptyset,\{p,q\}\cap\{l,m\}=\emptyset$
			\item $ K_{ij,pqr}:\,\{\textbf{x}\in \D{6}\,| x_{i}+x_{j}\leq 1,\, x_{p}+x_{q}+x_{r}\leq 1,\,\{i,j\}\cap\{p,q,r\}=\emptyset\} $  
	\end{itemize}

	\item 4-dimensional
	\begin{itemize}
		\item $ O_{m}: \,\{\textbf{x}\in \D{6}\,|x_{m}=0 \} $
			\end{itemize}
			\end{itemize}
		According to Subsection \ref{SSec:UniversalSpace} we have:
		\begin{cor}\label{Cor:MSpaceofparamX6}
			The space of parameters $ F_{6} $ of the main stratum $ W $ is homeomorphic to 
			\begin{align*}
				\{((c_{12}:c_{12}^{\prime}),(c_{13}:c_{13}^{\prime}),(c_{14}:c_{14}^{\prime}),(c_{23}:c_{23}^{\prime}),(c_{24}:c_{24}^{\prime}),(c_{34}:c_{34}^{\prime}))\in (\CPA)^{6}|\\\,c_{ij}^{\prime}c_{ik}c_{jk}^{\prime}=c_{ij}c_{ik}^{\prime}c_{jk},\,\text{ for } ijk\in\{123,124,134,234\}\}
			\end{align*} 
		\end{cor}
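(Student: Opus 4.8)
The plan is to obtain the statement as the specialization to $n=6$ of the general description of the space of parameters $F_n$ of the main stratum recorded in Subsection~\ref{SSec:UniversalSpace}. Recall from there that in the chart $M_{12}$ the main stratum $W$ is cut out by the equations~\eqref{Def:mainstratumG}, and that assigning to a point of $W$ its tuple of parameters $(c_{ij}:c_{ij}')$ is constant on $(\C^{*})^{n}$-orbits and induces a homeomorphism of $F_n=W/(\C^{*})^{n}$ onto the subset of $(\C P^{1})^{N}$, $N=\binom{n-2}{2}$, defined by the cubics~\eqref{Def:mainspaceG} together with the open condition $(c_{ij}:c_{ij}')\in\CPA$ imposed on all the relevant pairs; excluding $A=\{(1:0),(0:1),(1:1)\}$ is exactly the requirement that no Pl\"ucker coordinate of the corresponding point vanish, which is what characterizes the main stratum.

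First I would set $n=6$, so that $N=\binom{4}{2}=6$: the parameters are then indexed by the pairs $3\le i<j\le 6$ and the cubics~\eqref{Def:mainspaceG} by the triples $3\le i<j<k\le 6$. To match the indexing in the statement — and in agreement with the convention already used for $F_5$ in Section~\ref{Sec:HomX5} — I would relabel by $i\mapsto i-2$, carrying $\{3,4,5,6\}$ onto $\{1,2,3,4\}$. Under this bijection the six parameters become $c_{12},c_{13},c_{14},c_{23},c_{24},c_{34}$, the four cubics become the relations $c_{ij}'c_{ik}c_{jk}'=c_{ij}c_{ik}'c_{jk}$ for $ijk\in\{123,124,134,234\}$, and requiring $(c_{ij}:c_{ij}')\in\CPA$ over all six pairs is precisely membership of the tuple in $(\CPA)^{6}$. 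Assembling these identifications yields the asserted homeomorphism.

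I do not expect a genuine obstacle here; the argument is a direct unwinding of the description in Subsection~\ref{SSec:UniversalSpace}, and the only point needing care is the index bookkeeping — checking that the cubics~\eqref{Def:mainspaceG} for $n=6$ correspond bijectively to the $3$-element subsets of $\{3,4,5,6\}$, hence after relabeling to $\{123,124,134,234\}$, and that the relabeling $i\mapsto i-2$ is the same convention under which the $F_5$ corollary is stated, so that the two are consistent. As a sanity check, $\dim F_6=2(6-3)=6$ as a real dimension, i.e. complex dimension $3$, which agrees with $(\CPA)^{6}$ of complex dimension $6$ cut down by the four cubics, whose rank among the triples is $3$ (the relation for $\{2,3,4\}$ being a consequence of those for $\{1,2,3\},\{1,2,4\},\{1,3,4\}$).
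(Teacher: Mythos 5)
Your proposal is correct and is essentially the paper's own (implicit) argument: the paper states this corollary with no proof beyond the pointer "According to Subsection~\ref{SSec:UniversalSpace}," i.e.\ it is exactly the specialization of \eqref{Def:mainspaceG} and the condition $(c_{ij}:c_{ij}')\in\CPA$ to $n=6$, with the relabeling $\{3,4,5,6\}\to\{1,2,3,4\}$ that is also used for $F_5$ in Section~\ref{Sec:HomX5}. Your index bookkeeping and the dimension check (four cubics of rank three on $(\CPA)^6$) are both sound.
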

	The spaces of parameters of the strata we obtain by straightforward calculations.
	\begin{lemma}\label{Corr:SpaceofparametersX6}
		The spaces of parameters $ F_{\sigma} $ for the strata $ W_{\sigma} $ are:
		\begin{itemize}
			\item  $ P_{\sigma}=\Delta_{6,2} \Rightarrow F_{\sigma}\simeq F_{6};$
			\item $ P_{\sigma}=K_{ij} \Rightarrow F_{\sigma}\simeq F_{5};\quad P_{\sigma}=K_{ijp}\Rightarrow F_{\sigma}\simeq \CPA $
			\item  $ P_{\sigma}=K_{ij,pq} \Rightarrow F_{\sigma}\simeq \CPA ;\quad P_{\sigma}=O_{i} \Rightarrow F_{\sigma}\simeq F_{5} $
			\item  $ P_{\sigma}=K_{ijkl}\Rightarrow F_{\sigma} $  is a point;  $ P_{\sigma}=K_{ij,pq,lm}\Rightarrow F_{\sigma} $ is a point
			\item for others $ P_{\sigma}\Rightarrow F_{\sigma}$ is a point			
 		\end{itemize}			
	\end{lemma}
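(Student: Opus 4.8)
The plan is to go through the admissible polytopes enumerated above one at a time, using Proposition~\ref{Prop:ArbitraryStratumG} to describe the corresponding stratum $W_{\sigma}$ and then its space of parameters $F_{\sigma}=W_{\sigma}/(\C^{*})^{6}$. The first, combinatorial step is to record which Pl\"ucker coordinates vanish on $W_{\sigma}$: since $\mu_{6}(W_{\sigma})=\Ins{P}_{\sigma}$ and $P_{\sigma}$ is the convex hull of the $\Lambda_{kl}$ with $kl\in\sigma$, the coordinate $P^{kl}$ vanishes identically on $W_{\sigma}$ exactly when $\Lambda_{kl}$ is not a vertex of $P_{\sigma}$, and it is generically non-zero otherwise. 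Inspecting the defining inequalities, a half-space $\sum_{i\in S}x_{i}\le 1$ cuts off precisely the vertices $\Lambda_{kl}$ with $\{k,l\}\subseteq S$, and the hyperplane $x_{i}=0$ removes precisely the vertices with $i\in\{k,l\}$. Hence for each of $K_{ij}$, $K_{ijk}$, $K_{ijpq}$, $K_{ij,pq}$, $K_{ij,pq,lm}$, $K_{ij,pqr}$ the set of vanishing Pl\"ucker coordinates on $W_{\sigma}$ is of \emph{partition type}: it equals $\{\,P^{kl}=0\,:\,\{k,l\}\subseteq B_{t}\ \text{for some }t\,\}$ for a family $B_{1},\dots,B_{r}$ of pairwise disjoint blocks of $\{1,\dots,6\}$, each of size $\ge 2$; and in the remaining case $P_{\sigma}=O_{i}=\{x_{i}=0\}$ one gets $P^{ik}=0$ for all $k\neq i$, so $W_{\sigma}\subseteq\{z_{i}=0\}\cong\C^{5}$ is the main stratum of the coordinate subgrassmannian $\Grs{5}(i)$.

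The second step is the quotient computation for a stratum of partition type. Working in a chart $M_{kl}$ with $kl$ not among the vanishing pairs (such a pair exists, and since it straddles two blocks this is also the required relabelling, harmless because the whole construction is $S_{6}$-equivariant), $W_{\sigma}$ is recorded by a $2\times 6$ matrix with first two columns $e_{1},e_{2}$, its remaining columns $(a_{j}:b_{j})\in\C P^{1}$ being constrained only by $(a_{j}:b_{j})=(a_{j'}:b_{j'})$ whenever $j,j'$ lie in a common block. Using the $(\C^{*})^{6}$-action to make the columns within each block literally equal, the matrix, and hence all of~\eqref{Def:mainstratumG}, depends only on $m_{\sigma}$ distinct column vectors, where $m_{\sigma}$ is the number of blocks plus the number of indices in no block; after relabelling this is exactly the defining data of the main stratum of $\Grs{m_{\sigma}}$, so $F_{\sigma}\cong F_{m_{\sigma}}$. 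Since $F_{4}\cong\CPA$ and $F_{3}$ (for which $N=\binom{1}{2}=0$) is a point, counting blocks gives: $K_{ij}$ (one block of size $2$) yields $m_{\sigma}=5$, so $F_{\sigma}\cong F_{5}$; $K_{ijk}$ and $K_{ij,pq}$ yield $m_{\sigma}=4$, so $F_{\sigma}\cong\CPA$; $K_{ijpq}$, $K_{ij,pq,lm}$ and $K_{ij,pqr}$ yield $m_{\sigma}=3$, so $F_{\sigma}$ is a point; for $O_{i}$, $F_{\sigma}\cong F_{5}$ by the identification with $\Grs{5}(i)$; and $F_{\sigma}=F_{6}$ for the main stratum by definition. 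Every admissible polytope not in the above enumeration is the polytope of an orbit stratum, and the same count gives $m_{\sigma}\le 3$, hence $F_{\sigma}$ a point.

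I expect the only genuinely delicate points to be: (i) that the vanishing set of Pl\"ucker coordinates on $W_{\sigma}$ is exactly the partition-type locus above, with no further hidden relation — this is the precise content of Proposition~\ref{Prop:ArbitraryStratumG}, while Remark~\ref{cellspaceparam} already guarantees a priori that $F_{\sigma}$ is a point or one of $F_{4},F_{5}$; and (ii) the isomorphism $F_{\sigma}\cong F_{m_{\sigma}}$, i.e.\ checking that equating the block-columns identifies the residual quotient with the one defining the main stratum of $\Grs{m_{\sigma}}$ and is compatible with the transition functions of Theorem~\ref{Th:UniversalSpaceG}; for $O_{i}$ this is immediate, and in the remaining cases it reduces to computations already carried out for $\Grs{5}$ in Corollary~\ref{Corr:VirtualspaceX5}. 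Beyond this, the main obstacle is merely the bookkeeping of charts and block-collapses through all the cases, with no conceptual difficulty.
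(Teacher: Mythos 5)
Your argument is correct and supplies precisely the computation the paper omits: the authors state this lemma with only the remark that the spaces of parameters are obtained ``by straightforward calculations'', relying implicitly on Proposition~\ref{Prop:ArbitraryStratumG} and on the fact recorded in Remark~\ref{cellspaceparam} that every $F_{\sigma}$ is a point or homeomorphic to some $F_{m}$. Your bookkeeping of the vanishing Pl\"ucker coordinates via the blocks cut off by each half-space, followed by collapsing proportional columns to reduce to the main stratum of $\Grs{m_{\sigma}}$ (and to $\Grs{5}(i)$ in the case $O_{i}$), is the natural and evidently intended route, and all the resulting identifications agree with the statement.
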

\par According to Theorem \ref{Th:UniversalSpaceG}, see also \cite{buchstaber2019toric} it holds:
\begin{theorem}\label{Th:Universal_theorem}
	The universal space of parameters $ \mathcal{F}_{6} $  is the blowup of $ \overline{F_{6}} $ firstly at the point $ ((1:1)^{6})$ and then at the  submanifolds $ \overline{F}_{345},\overline{F}_{346},\overline{F}_{356},\overline{F}_{456} $
\end{theorem}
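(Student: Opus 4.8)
The plan is to obtain Theorem~\ref{Th:Universal_theorem} as the explicit form of Theorem~\ref{Th:UniversalSpaceG} in the case $n=6$, the only real work being to identify the generating set of subvarieties concretely. For $n=6$ the index set $\{ikl\mid 3\le i<k<l\le 6\}$ consists of exactly the four triples $345,346,356,456$, so the generating subvarieties are the $\overline{F}_{I}\subset\overline{F}_{6}$ for nonempty $I\subseteq\{345,346,356,456\}$, where $\overline{F}_{I}$ is cut out inside $\overline{F}_{6}\subset(\C P^{1})^{6}$ by imposing $(c_{ik}:c_{ik}^{\prime})=(c_{il}:c_{il}^{\prime})=(c_{kl}:c_{kl}^{\prime})=(1:1)$ for every $ikl\in I$ together with the cubic relations~\eqref{Def:mainspaceG} for $345,346,356,456$. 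The first step is to compute these loci.

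For a single triple, say $I=\{345\}$, imposing $(c_{34}:c_{34}^{\prime})=(c_{35}:c_{35}^{\prime})=(c_{45}:c_{45}^{\prime})=(1:1)$ and substituting into the three remaining cubics of~\eqref{Def:mainspaceG} (those for $346$, $356$, $456$) makes each of them degenerate to a relation — an equality, or an inversion, in $\C P^{1}$ — between two of the three coordinates $c_{36},c_{46},c_{56}$; these force $(c_{36},c_{46},c_{56})$ to trace out a single rational curve, so $\overline{F}_{345}$ is a smoothly embedded $\C P^{1}$, and likewise for $\overline{F}_{346},\overline{F}_{356},\overline{F}_{456}$. For $|I|\ge 2$ the same substitution forces all six coordinates to equal $(1:1)$, so $\overline{F}_{I}$ is the single point $p=((1:1)^{6})$; in particular each of the four curves passes through $p$ and any two of them meet precisely in $p$. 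Thus, up to repetitions, the generating set is the point $p$ together with the four smooth rational curves $\overline{F}_{345},\overline{F}_{346},\overline{F}_{356},\overline{F}_{456}$ through $p$, with no member of dimension $\ge 2$. The wonderful compactification attached to such a generating set is, by construction, the iterated blow-up that blows up the proper transforms of its members in order of increasing dimension: first the $0$-dimensional member $p$, then the four curves. After blowing up $p$ the four curves — which met only at $p$ — become pairwise disjoint smooth curves in the smooth manifold obtained, so the remaining four blow-ups are along disjoint smooth centres and may be taken in any order, and the result is exactly the manifold described in the statement. This is precisely the construction already indicated in Example~\ref{ex:map06}, and via Theorem~\ref{Th:ModuliUniversalSpace} it is consistent with the blow-up presentation of $\mathcal{M}_{0,6}$ from Example~\ref{example:M06}.

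The two points that need genuine care — and where I expect the main obstacle — are: (i) the substitution in~\eqref{Def:mainspaceG}, where the primed/unprimed bookkeeping must be handled correctly to check that each leftover cubic really collapses to a relation between two of the three free coordinates and that $\overline{F}_{ikl}$ is a smoothly embedded $\C P^{1}$ rather than something larger or singular; and (ii) the disjointness of the proper transforms after blowing up $p$, equivalently that the four curves have pairwise distinct tangent directions at $p$. Point (ii) follows from the explicit parametrisations produced in step (i): for instance, along $\overline{F}_{345}$ exactly the coordinates $c_{36},c_{46},c_{56}$ vary while $c_{34},c_{35},c_{45}$ stay equal to $(1:1)$, and the triple of varying coordinates — $\{36,46,56\}$, $\{35,45,56\}$, $\{34,45,46\}$, $\{34,35,36\}$ for the four curves respectively — is different in each case, so the four tangent directions at $p$ are supported on distinct index sets and hence pairwise distinct. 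Once (i) and (ii) are verified, Theorem~\ref{Th:Universal_theorem} follows.
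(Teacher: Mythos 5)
Your proposal is correct and follows exactly the route the paper intends: the paper offers no argument beyond the citation ``According to Theorem~\ref{Th:UniversalSpaceG}\dots'', and your computation supplies precisely the details that specialization to $n=6$ requires — identifying the generating set as the point $((1:1)^6)$ plus the four rational curves $\overline{F}_{345},\overline{F}_{346},\overline{F}_{356},\overline{F}_{456}$, and checking that after blowing up the point their proper transforms are disjoint so the wonderful compactification is the stated iterated blow-up. The tangent-direction argument and the collapse of the leftover cubics both check out against \eqref{Def:mainspaceG}, so no gap remains.
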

 The virtual spaces of parameters for the strata we obtain using the definitions from Subsection \ref{SSec:UniversalSpace} by direct computations:
\begin{lemma}\label{Corr:VirtualspaceX6}
	The virtual spaces of parameters are given by:
	\begin{enumerate}
		\item  $ P_{\sigma}=\Delta_{6,2} \Rightarrow F_{\sigma}\simeq\tilde{F}_{\sigma}\simeq F_{6};\quad P_{\sigma}=K_{ij} \Rightarrow F_{\sigma}\simeq\tilde{F}_{\sigma}\simeq F_{5}, $
		\item  $ P_{\sigma}=K_{ijp} \Rightarrow \tilde{F}_{\sigma}\simeq \C P^{1}\times \CPA;\quad  P_{\sigma}=K_{ijpq} \Rightarrow \tilde{F}_{\sigma}\simeq \mathcal{F}_{5} $

		\item  $ P_{\sigma}=K_{ij,pq} \Rightarrow F_{\sigma}=\tilde{F}_{\sigma}\simeq \CPA ;\quad P_{\sigma}=K_{ij,pq,lm} \Rightarrow F_{\sigma}=\tilde{F}_{\sigma} $ is a point,
		\item  $ P_{\sigma}=K_{ij,pqr}\Rightarrow\tilde{F}_{\sigma}\simeq \C P^{1} $ ,  
		\item  $ P_{\sigma}=O_{i} \Rightarrow \tilde{F}_{\sigma}\simeq \pi^{-1}(( F_{5}\times \C P^{1}\times \C P^{1}\times \C P^{1})\cap \bar{F}_{6})\subset \mathcal{F}_{6} $, where $ \pi:\mathcal{F}_{6}\to \bar{F}_{6}  $ is a natural projection morphism.
		\item for others $ P_{\sigma} \Rightarrow F_{\sigma}$ is a point and $ \tilde{F}_{\sigma} $ is not a point.
	\end{enumerate}
\end{lemma}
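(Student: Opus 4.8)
The plan is to go through the admissible polytopes $P_{\sigma}$ for $\Grs{6}$ one $S_{6}$--orbit at a time and, for each, to read off $\tilde{F}_{\sigma}$ directly from the two--step prescription for the virtual space of parameters in Subsection~\ref{SSec:UniversalSpace}. Since by Theorem~\ref{Th:UniversalSpaceG} the homeomorphism type of $\tilde{F}_{\sigma}$ does not depend on the Plücker chart used to produce it, for each $\sigma$ I would work in a chart $M_{kl}$ with $W_{\sigma}\subset M_{kl}$: there Proposition~\ref{Prop:ArbitraryStratumG} records which Plücker coordinates $P^{pq}$ vanish on $W_{\sigma}$, and $\tilde{F}_{\sigma}$ is obtained by substituting these vanishings into the main--stratum relations \eqref{Def:mainstratumG} for $\Grs{6}$, noting which coordinates $(c_{pq}:c_{pq}^{\prime})$ stay free and which get pinned, imposing the cubics \eqref{Def:mainspaceG} (equivalently, intersecting with $\overline{F}_{6}$, see Corollary~\ref{Cor:MSpaceofparamX6}), and finally passing to the wonderful compactification $\mathcal{F}_{6}$ of Theorem~\ref{Th:Universal_theorem}, that is, blowing up $\overline{F}_{6}$ at the point $(1:1)^{6}$ and then along the strict transforms of $\overline{F}_{345},\overline{F}_{346},\overline{F}_{356},\overline{F}_{456}$. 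The spaces $F_{\sigma}$ themselves are already recorded in Lemma~\ref{Corr:SpaceofparametersX6}, so the only new content is the identification of $\tilde{F}_{\sigma}$ inside $\mathcal{F}_{6}$.

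For the polytopes whose stratum can be placed inside $M_{12}$ after relabelling, this is a plain substitution and no compactification is involved. For $P_{\sigma}=\Delta_{6,2}$ this is the main stratum and $\tilde{F}_{\sigma}=F_{6}$; for $P_{\sigma}=K_{ij}$ only one Plücker coordinate vanishes, which pins one $(c_{pq}:c_{pq}^{\prime})$ to $(1:1)$, and the cubics then reduce the remaining coordinates to the three coordinates and single relation defining $F_{5}$, so $\tilde{F}_{\sigma}\cong F_{5}$; for $P_{\sigma}=K_{ij,pq}$ the vanishings leave a single $\CPA$--factor; and for $P_{\sigma}=K_{ij,pq,lm}$ everything is pinned, so $\tilde{F}_{\sigma}$ is a point. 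For the other five--dimensional polytopes $K_{ijpq}$, $K_{ijp}$, $K_{ij,pqr}$ the space of parameters collapses (respectively to a point, to $\CPA$, and to a point), but the associated locus in $\overline{F}_{6}$ is now the point $(1:1)^{6}$, a curve through it, and again a curve, so one must take its total transform in $\mathcal{F}_{6}$ using Theorem~\ref{Th:Universal_theorem}; exactly as for the exceptional divisors of $\mathcal{M}_{0,6}$ in Example~\ref{example:M06}, the blow--up at $(1:1)^{6}$ followed by the four blow--ups along $\overline{F}_{345},\overline{F}_{346},\overline{F}_{356},\overline{F}_{456}$ replaces the point $(1:1)^{6}$ by a del Pezzo surface isomorphic to $\mathcal{M}_{0,5}\cong\mathcal{F}_{5}$ (Theorem~\ref{Th:ModuliUniversalSpace}), the first curve by $\C P^{1}\times\CPA$, and the last by $\C P^{1}$. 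For every remaining admissible polytope --- the slices $\sum_{i\in S}x_{i}=1$ with $|S|\in\{2,3\}$ and all polytopes in $\partial\D{6}$ other than the facets $O_{i}$, which by Theorem~\ref{Th:admissiblepolytope} are of dimension $\le 4$ --- the substitution pins all the $c$--coordinates to finitely many values, so $F_{\sigma}$ is a point, while, exactly as in the $X_{5}$ computation recorded in Corollary~\ref{Corr:VirtualspaceX5}, the closure inside $\mathcal{F}_{6}$ still sweeps out a positive--dimensional locus, so $\tilde{F}_{\sigma}$ is not a point.

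The one case that genuinely uses the second step of the construction is $P_{\sigma}=O_{i}$, and this is where the main difficulty lies. Here $W_{O_{i}}$ is the main stratum of the copy $\Grs{5}(i)\subset\Grs{6}$ lying over the facet $O_{i}=\D{6}\cap\{x_{i}=0\}$, and it meets no chart $M_{kl}$ with $i\in\{k,l\}$, in particular not $M_{12}$ when $i\in\{1,2\}$. I would choose a chart $M_{kl}$ with $k,l\neq i$ in which $W_{O_{i}}$ has vanishing $i$--th column; the relations \eqref{Def:mainstratumG} involving that column become trivial, so the coordinates $(c_{ip}:c_{ip}^{\prime})$ are unconstrained and, after the cubics \eqref{Def:mainspaceG} and intersection with $\overline{F}_{6}$, sweep out the locus $(F_{5}\times\C P^{1}\times\C P^{1}\times\C P^{1})\cap\overline{F}_{6}$; I would then transport this subvariety to the fixed chart $M_{12}$ by the homeomorphism $f_{kl,12}$ of Theorem~\ref{Th:UniversalSpaceG} extended to $\mathcal{F}_{6}$, and identify $\tilde{F}_{O_{i}}$ with its preimage $\pi^{-1}\bigl((F_{5}\times\C P^{1}\times\C P^{1}\times\C P^{1})\cap\overline{F}_{6}\bigr)$ under the blow--up morphism $\pi\colon\mathcal{F}_{6}\to\overline{F}_{6}$. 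The points to verify are that all the blow--up centres of Theorem~\ref{Th:Universal_theorem} meet this locus --- so that the total transform, not a strict transform, is the correct object --- and that the transition map $f_{kl,12}$ indeed carries the described locus in $M_{kl}$ to the described locus in $M_{12}$. This is the analogue for $X_{6}$ of the computation of $\tilde{F}_{O_{i}}$ for $X_{5}$ in Corollary~\ref{Corr:VirtualspaceX5}, but now with three outgrowing $\C P^{1}$--factors instead of two and five iterated blow--up centres instead of one; the bookkeeping with the transition maps, together with keeping straight which of the iterated blow--ups act as total transforms and which as strict transforms on the restricted locus, is the main obstacle, the rest being routine substitution in \eqref{Def:mainstratumG} and \eqref{Def:mainspaceG}.
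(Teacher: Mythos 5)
Your proposal is correct and follows essentially the same route as the paper: the paper's own proof is a single sentence deferring items 2 and 5 to the iterated blow-up description of $\mathcal{F}_{6}$ in Theorem~\ref{Th:Universal_theorem}, with the remaining items declared to be direct computations from the definitions in Subsection~\ref{SSec:UniversalSpace} --- which is exactly the procedure you spell out (substitute the vanishing Pl\"ucker coordinates into \eqref{Def:mainstratumG} and \eqref{Def:mainspaceG}, locate the resulting locus in $\overline{F}_{6}$, track it through the wonderful compactification, and use the chart-transition homeomorphisms for strata not contained in $M_{12}$). Your write-up is in fact more detailed than what the paper records; the only minor quibble is that the blow-up centres of Theorem~\ref{Th:Universal_theorem} do not actually meet the locus $(F_{5}\times\C P^{1}\times\C P^{1}\times\C P^{1})\cap\overline{F}_{6}$ (all of them require some coordinate to equal $(1:1)$, which the $F_{5}$ factor excludes), so the $\pi^{-1}$ in item 5 restricts to a homeomorphism rather than a genuine total transform.
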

\begin{proof}
	The proof for 2. and 5. follows from the description of $ \mathcal{F}_{6} $ as iterated blow ups given by Theorem \ref{Th:Universal_theorem}.
\end{proof}
\par Following notation above of $ P_{\sigma} $ by $ K_{J} $ we further denote $ F_{\sigma} $ and $ \tilde{F}_{\sigma} $ by $ F_{J} $ and $ \tilde{F}_{J} $ as well.
\par We establish the generators and relations in the homology group of $ \mathcal{F}_{6} $ using the map defined in Example \ref{ex:map06}, and the description of homology for $ \mathcal{M}_{0,6} $
\begin{lemma}
	The homology group $ H_{4}(\mathcal{F}_{6}) $ is generated by $ \tilde{f}(D_{I}) $ and the following relations hold:
	\begin{align}\label{Th:UrelationsX6}
		\sum_{\substack{i,j\in I\\p,q\notin I} }\tilde{f}(D_{I})=\sum_{\substack{i,p\in I\\j,q\notin I} }\tilde{f}(D_{I})=\sum_{\substack{i,q\in I\\j,p\notin I} }\tilde{f}(D_{I})
	\end{align}
	
	
	where  $ \{i,j,p,q\}\subset \{1,\dots,6\} $, the map $ \tilde{f}:\mathcal{M}_{0,6}\to\mathcal{F}_{6}  $ is given in Example  \ref{ex:map06} and $ \tilde{f}(D_{I}) $ are given by:
		\begin{align}\label{Th:UconnectX6}
		\begin{split}
			D_{12}\to \tilde{F}_{1236}\quad D_{13}\to \tilde{F}_{1235}\quad D_{14}\to \tilde{F}_{2356}\quad D_{15}\to \tilde{F}_{1256}\quad D_{16}\to \tilde{F}_{1356}\\
			D_{23}\to \tilde{F}_{1234}\quad D_{24}\to \tilde{F}_{2346}\quad D_{25}\to \tilde{F}_{1246}\quad D_{26}\to \tilde{F}_{1346}\quad D_{34}\to \tilde{F}_{2345}\\
			D_{35}\to \tilde{F}_{1245}\quad D_{36}\to \tilde{F}_{1345}\\
			\hline
			D_{123}\to\tilde{F}_{456}\cup\tilde{F}_{456,12}\cup\tilde{F}_{456,13}\cup\tilde{F}_{456,23}\quad D_{124}\to\tilde{F}_{145}\cup\tilde{F}_{145,36}\cup\tilde{F}_{145,26}\cup\tilde{F}_{145,23}\\ D_{125}\to\tilde{F}_{345}\cup\tilde{F}_{345,16}\cup\tilde{F}_{345,26}\cup\tilde{F}_{345,12}\quad
			D_{126}\to\tilde{F}_{245}\cup\tilde{F}_{245,16}\cup\tilde{F}_{245,36}\cup\tilde{F}_{245,13}\\
			D_{134}\to\tilde{F}_{235}\cup\tilde{F}_{235,16}\cup\tilde{F}_{235,46}\cup\tilde{F}_{235,14}\quad
			D_{135}\to\tilde{F}_{346}\cup\tilde{F}_{346,15}\cup\tilde{F}_{346,25}\cup\tilde{F}_{345,12}\\ D_{136}\to\tilde{F}_{246}\cup\tilde{F}_{246,15}\cup\tilde{F}_{246,35}\cup\tilde{F}_{246,13}\quad D_{234}\to\tilde{F}_{156}\cup\tilde{F}_{156,34}\cup\tilde{F}_{156,24}\cup\tilde{F}_{156,23}\\ D_{235}\to\tilde{F}_{356}\cup\tilde{F}_{356,14}\cup\tilde{F}_{356,24}\cup\tilde{F}_{356,12}\quad D_{236}\to\tilde{F}_{256}\cup\tilde{F}_{256,14}\cup\tilde{F}_{256,34}\cup\tilde{F}_{256,13}\\
			\hline
			D_{1234}\to \tilde{F}_{1456}\quad D_{1235}\to\tilde{F}_{3456}\quad D_{1236}\to\tilde{F}_{2456}
		\end{split}
	\end{align}
	
\end{lemma}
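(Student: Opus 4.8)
The plan is to treat the lemma's two assertions separately and in order: first that $H_4(\mathcal{F}_6)$ is generated by the classes $\tilde{f}(D_I)$ with relations \eqref{Th:UrelationsX6} --- which will be a formal consequence of Keel's~\cite{keel1992intersection} computation of $H_\ast(\mathcal{M}_{0,6})$ together with the diffeomorphism $\tilde{f}$ --- and then the explicit dictionary \eqref{Th:UconnectX6} identifying each $\tilde{f}(D_I)$ with a union of virtual spaces of parameters, which carries the computational weight.

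For the first assertion, Theorem~\ref{Th:ModuliUniversalSpace} provides, through the explicit construction of Example~\ref{ex:map06}, a diffeomorphism $\tilde{f}\colon\mathcal{M}_{0,6}\to\mathcal{F}_{6}$, so $\tilde{f}_\ast$ is an isomorphism on homology in each degree. By Keel~\cite{keel1992intersection} the cycle map $A_\ast(\mathcal{M}_{0,6})\to H_\ast(\mathcal{M}_{0,6})$ is an isomorphism, and the degree-one part of Keel's ring presentation says that $A^1(\mathcal{M}_{0,6})\cong H_4(\mathcal{M}_{0,6})$ is generated by the boundary divisor classes $D_I$ (with $D_I=D_{I^{\complement}}$) subject to the linear relations \eqref{relacije}. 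Applying $\tilde{f}_\ast$ transports this to the statement that $H_4(\mathcal{F}_{6})$ is generated by the $\tilde{f}(D_I)$ and that \eqref{Th:UrelationsX6}, which is nothing but \eqref{relacije} pushed forward along $\tilde{f}$, holds; all remaining content is in knowing what $\tilde{f}(D_I)$ is.

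For the dictionary \eqref{Th:UconnectX6} I would mimic the $n=5$ computation of the preceding section. Fix the chart $M_{12}$. For a given $I$, start from the explicit model of $D_I\subset\mathcal{M}_{0,6}$ --- the exceptional divisor of the relevant blow-up when $|I|\ge 3$, the proper transform of $X_I$ when $|I|=2$, as set up in Examples~\ref{example:M06} and~\ref{ex:map06} --- push it forward by $\tilde{f}$, and read off the resulting subset of $\mathcal{F}_{6}$, which by Theorem~\ref{Th:Universal_theorem} is $\overline{F_6}$ blown up at $((1:1)^{6})$ and along $\overline{F}_{345},\overline{F}_{346},\overline{F}_{356},\overline{F}_{456}$. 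This subset is then matched, via the Plücker normal form of Proposition~\ref{Prop:ArbitraryStratumG}, with the stratum $W_\sigma$ whose moment image is the corresponding admissible polytope, and hence with the virtual space $\tilde{F}_\sigma$ listed in Lemma~\ref{Corr:VirtualspaceX6}; when $W_\sigma$ does not meet $M_{12}$ one first identifies $\tilde{F}_\sigma$ in a chart $M_{kl}$ where it does and transports it back by the transition homeomorphism $\tilde{f}_{12,kl}$ of Theorem~\ref{Th:UniversalSpaceG}, exactly as for $n=5$. Since the whole construction is $S_6$-equivariant (permuting the marked points, equivalently the Plücker indices), it suffices to run one representative computation in each of the three families: $|I|=2$, say $D_{12}$, where $\tilde{f}(D_{12})$ equals the virtual space $\tilde{F}_\sigma$ of a four-index polytope $P_\sigma=K_{ijpq}$, so $\tilde{F}_\sigma\cong\mathcal{F}_5$; $|I|=4$, say $D_{1234}$, where the exceptional $\C P^{2}$ --- itself blown up along the four lines through $X_{1234}$ --- again maps to such a $\tilde{F}_\sigma\cong\mathcal{F}_5$; and $|I|=3$, say $D_{123}$, where the $\C P^{1}$-bundle exceptional divisor maps onto $\tilde{F}_{456}\cong\C P^{1}\times\CPA$ together with the three one-dimensional components $\tilde{F}_{456,12},\tilde{F}_{456,13},\tilde{F}_{456,23}$, which is the union-type pattern occurring for all ten divisors with $|I|=3$ in \eqref{Th:UconnectX6}.

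I expect the dictionary step to be the main obstacle: it requires tracking homogeneous coordinates simultaneously through the blow-up tower defining $\mathcal{M}_{0,6}$ and the one defining $\mathcal{F}_{6}$, and through the chart transitions $\tilde{f}_{12,kl}$, in order to pin down exactly which $\tilde{F}_\sigma$ each $\tilde{f}(D_I)$ equals. The most delicate verifications are for the ten divisors with $|I|=3$, where one must confirm that the image is precisely $\tilde{F}_{ijk}$ together with the three indicated components $\tilde{F}_{ijk,pq}$ and nothing more; once this is checked for one such $I$, the remaining entries of \eqref{Th:UconnectX6} --- and hence the lemma --- follow by the $S_6$-symmetry.
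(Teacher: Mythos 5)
Your proposal is correct and follows essentially the same route as the paper: fix the chart $M_{12}$, transport Keel's generators and relations through the diffeomorphism $\tilde{f}$ of Example~\ref{ex:map06}, and establish the dictionary by computing one representative divisor in each family ($D_{12}$ for $|I|=2$, $D_{124}$ for $|I|=3$, with $|I|=4$ analogous), matching its image with a virtual space of parameters via the stratum equations and the chart-transition homeomorphisms of Theorem~\ref{Th:UniversalSpaceG}, then extending to all $I$ by the evident symmetry of the construction. The paper's proof is exactly this, with the two representative computations written out and the general rules then stated.
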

\begin{proof}
	Let us fix chart $ M_{12}. $ We consider the image $ \tilde{f}(D_{I})$ and we want to write it down as the union of a virtual spaces of parameters.
	\par  For example, the divisor $ D_{12} $ maps to the subset 
	\begin{align}\label{eq:divisorD12}
	&D_{12}=((c:c^{\prime}),(c:c^{\prime}),(c_{1}:c_{1}^{\prime}))\\ &\mapsto((c:c^{\prime}),(c:c^{\prime}),(c_{1}:c_{1}^{\prime}),(1:1),(cc_{1}^{\prime}:c^{\prime}c_{1}),(cc_{1}^{\prime}:c^{\prime}c_{1})).\nonumber
	\end{align} If $ \tilde{F}_{\sigma}\subseteq\tilde{f}(D_{12}) $ the points of its stratum $ W_{\sigma} $ satisfies $ P^{45}=0.$ On the other hand, the virtual space of parameters $ \tilde{F}_{1236} $ in the chart $ M_{14} $ is written as \[ ((c_{1}:c_{1}^{\prime}),(1:0),(c_{2}:c_{2}^{\prime}),(1:0),(c_{3}:c_{3}^{\prime}),(0:1)).\]
Then by the homeomorphism $ \tilde{f}_{14,12}:\mathcal{F}_{14}\to\mathcal{F}_{12} $ we get $  \tilde{F}_{1236}=\tilde{f}(D_{12}) $. 
\par The divisor $ D_{124} $ is defined as  
\[(0:1)\times(0:1)\times(c:c^{\prime})\times(0:0:1)\times\C P^{1}.  \] We determine the virtual spaces of parameters $ \tilde{F}_{\sigma}\subseteq\tilde{f}(D_{124}) $ for different value of $ (c:c^{\prime}) $ as:
\begin{itemize}
	\item If $ (c:c^{\prime})\notin A $ then we have \[ ((0:1),(0:1),(c:c^{\prime}),(x_{1}:x_{2}))\to ((0:1),(0:1),(c:c^{\prime}),(x_{1}:x_{2}),(1:0),(1:0)) \] as described in Example  \ref{ex:map06}. By \eqref{Def:mainstratumG}, we get $ \tilde{F}_{145}$.
	\item If $ (c:c^{\prime})=(1:0) $ we get  $ \tilde{F}_{145,26}. $
	\item If $ (c:c^{\prime})=(1:1) $ we get  $ \tilde{F}_{145,36}. $
	\item If $ (c:c^{\prime})=(0:1) $ we get  $ \tilde{F}_{145,23}. $
\end{itemize} Altogether, $ \tilde{f}(D_{124})=\tilde{F}_{145}\cup\tilde{F}_{145,26}\cup\tilde{F}_{145,36}\cup\tilde{F}_{145,23}. $
We deduce the rules for mapping the divisors:

\begin{itemize}
	\item divisor $ D_{I} $, where $ |I|=2 $ or $ |I|=4 $ maps to the corresponding virtual space of parameters $ \tilde{F}_{J}\simeq \C P^{1}\times \C P^{1} $ such that $ |J|=4 $ according to table \eqref{Th:UconnectX6},
	\item  divisor $ D_{I} $, where $ |I|=3 $  maps to 
	\[ \tilde{F}_{J}\cup\bigcup_{kl}\tilde{F}_{J,kl}\simeq (\C P^{1}\times \CPA) \cup (\C P^{1}\times A)\simeq \C P^{1}\times \C P^{1} \] where we determine the set of indices $ J\,,|J|=3 $ similarly as for $ D_{124} $.
\end{itemize}
The relations follow from the relations in homology of $ \mathcal{M}_{0,6} $ given by \eqref{relacije}.
\end{proof}
 A chamber $ C_{\omega} $ of the maximal dimension in $ \D{6} $ is obtained as $ C_{\omega}=\bigcap\Ins{K}_{1ij} \cap\bigcap \Ins{K}_{pq} $ for all $ 1ij\in\omega $ and $ pq\in\omega. $  Let us denote by $ C_{\omega_{0}} = \bigcap_{ ij}\Ins{K}_{1ij}.$  We have disjoint union \[ \mathcal{F}_{6}=\mathcal{F}_{\omega_{0}}= \bigcup_{C_{\omega_{0}}\in \overset{\circ}{P_{\sigma}} }\tilde{F}_{\sigma}. \]  It can be expressed in more details by
 
 \begin{equation}\label{X6eqF_omeqa0}
 	\mathcal{F}_{\omega_{0}}= \bigcup_{ij }\tilde{F}_{ij}\cup\bigcup_{pq,ij}\tilde{F}_{ij,pq}\cup\bigcup_{pq }\tilde{F}_{1pq}\cup\bigcup_{ ij,pq}\tilde{F}_{ij,1pq}\cup\bigcup_{ij,pq,lm}\tilde{F}_{ij,pq,lm},
 \end{equation} where all indices in each union are distinct according to the description of admissible polytopes from the beginnings of this section.
  
    Therefore, in $ \mathcal{F}_{\omega_{0}} $ we can decompose any $ \tilde{F}_{\sigma} $ contributing to \eqref{Th:UrelationsX6}, and  then, using the projection $ p_{\omega_{0},12}:\mathcal{F}_{\omega_{0}}\to F_{\omega_{0}} $ in the chart $ M_{12} $, we determine the relations in a homology group of $ F_{\omega_{0}} $.
\begin{prop}\label{Cor:relacijeG62}
	The homology group $ H_{4}(F_{\omega_{0}}) $ is generated by the homology classes $ F_{ij} $ and the following relations hold:
	\begin{align}\label{Cor:G62_4dimRelacije}
		F_{ij}+F_{pq} = F_{ip}+F_{jq}=F_{iq}+F_{jp}.  
	\end{align}
The homology group $ H_{2}(F_{\omega_{0}}) $ is generated by the homology classes $ F_{ij,kl}, F_{1kl} $ and the following relations hold :
		\begin{align}\label{Cor:G62_2dimRelacije}
		\sum_{sm} F_{ij,sm}+\sum_{sm} F_{pq,sm}=\sum_{sm} F_{ip,sm}+\sum_{sm} F_{jq,sm}=\sum_{sm} F_{iq,sm}+\sum_{sm} F_{jp,sm}
	\end{align}
	for distinct $ \{i,j,p,q\}\subset \{1,\dots,6\}.$
\end{prop}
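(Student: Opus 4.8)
The plan is to transport the presentation of $H_*(\mathcal{M}_{0,6})$ given by Keel's relations \eqref{relacije} (equivalently \eqref{Th:UrelationsX6}) along the projection $p_{\omega_0,12}:\mathcal{F}_6=\mathcal{F}_{\omega_0}\to F_{\omega_0}$, keeping track of the grading, exactly in the spirit of the $n=5$ argument of Lemma~\ref{L:relacijeG52}. First I would fix the generating sets: by Theorem~\ref{Th:homology_F} applied to the chamber $C_{\omega_0}$, the homology of $F_{\omega_0}$ in degrees below the top degree $\dim F_{\omega_0}=6$ is spanned by the classes of the positive-dimensional spaces of parameters $F_\sigma$ of strata with $C_{\omega_0}\subset\Ins{P}_\sigma$; by Lemma~\ref{Corr:SpaceofparametersX6} these are precisely the $F_{ij}\cong F_5$ in degree $4$, together with the $F_{ij,pq}$ and $F_{1pq}$ (each $\cong\CPA$) in degree $2$, all remaining such $F_\sigma$ being points. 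This already gives the asserted generating sets for $H_4(F_{\omega_0})$ and $H_2(F_{\omega_0})$.

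Next I would compute the relations. By the proof of Theorem~\ref{Th:homology_F} the map $p_{\omega_0,12}$ is surjective on homology, so every relation among these generators is the image of a relation in $H_4(\mathcal{F}_6)\cong H_4(\mathcal{M}_{0,6})$ (Theorem~\ref{Th:ModuliUniversalSpace}), i.e.\ of one of Keel's relations \eqref{Th:UrelationsX6}. To push \eqref{Th:UrelationsX6} forward I would, following the $n=5$ case, rewrite each divisor $\tilde{f}(D_I)$ via \eqref{Th:UconnectX6} as a union of virtual spaces of parameters, and then restratify it through \eqref{X6eqF_omeqa0} as a disjoint union of pieces $\tilde{F}_\sigma$, $\sigma\in\omega_0$ --- concretely, breaking up each $\tilde{F}_J$ with $|J|=4$ and each $\tilde{F}_{ijp}\cup\bigcup_{kl}\tilde{F}_{ijp,kl}$ appearing in \eqref{Th:UconnectX6} according to which admissible polytopes contain $C_{\omega_0}$ in their interior. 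Applying $p_{\omega_0,12}$ sends each $\tilde{F}_\sigma$ onto $F_\sigma$ and annihilates the pieces with $F_\sigma$ a point; the surviving degree-$4$ part of a projected divisor is then a sum of classes $F_{ij}$, and \eqref{Th:UrelationsX6} projects, in degree $4$, to \eqref{Cor:G62_4dimRelacije}. Retaining instead the degree-$2$ part of these images --- which for a divisor whose $\omega_0$-decomposition is of the form $\tilde{F}_{ij}\sqcup\bigcup_{sm}\tilde{F}_{ij,sm}\sqcup(\text{points})$ is the sum $\sum_{sm}F_{ij,sm}$, together with the $F_{1pq}$-terms coming from those $D_I$ whose $\omega_0$-decomposition meets the strata $K_{1pq}$ --- and using that within these chains the boundary operator does not couple the degree-$4$ pieces to the degree-$2$ pieces, one reads off \eqref{Cor:G62_2dimRelacije}. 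Surjectivity of $p_{\omega_0,12}$ on homology together with the fact that Keel's relations generate all relations in $H_*(\mathcal{M}_{0,6})$ then shows there are no further relations.

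I expect the main obstacle to be the combinatorial bookkeeping in the restratification step: for each divisor listed in \eqref{Th:UconnectX6} one must determine precisely which $\tilde{F}_\sigma$, $\sigma\in\omega_0$, occur in its $C_{\omega_0}$-decomposition and in which dimension (this uses the blow-up description of $\mathcal{F}_6$ in Theorem~\ref{Th:Universal_theorem} and the explicit map of Example~\ref{ex:map06}), and then one must verify that the degree-$4$ and the degree-$2$ parts of the projected Keel relations are separately cycles, so that \eqref{Cor:G62_4dimRelacije} and \eqref{Cor:G62_2dimRelacije} are genuine relations in $H_4(F_{\omega_0})$ and $H_2(F_{\omega_0})$ respectively.
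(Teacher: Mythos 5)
Your proposal is correct and follows essentially the same route as the paper: decompose each divisor $\tilde{f}(D_I)$ from \eqref{Th:UconnectX6} into the pieces $\tilde{F}_\sigma$, $\sigma\in\omega_0$, of the stratification \eqref{X6eqF_omeqa0}, push forward along $p_{\omega_0,12}$ (which kills everything projecting to a point), and separate Keel's relations by degree, with the $F_{1pq}$-type classes cancelling in pairs mod $2$ to leave \eqref{Cor:G62_2dimRelacije}. Your explicit appeal to Theorem~\ref{Th:homology_F} for the generating sets and to surjectivity on homology for completeness of the relations makes precise what the paper leaves implicit, but the argument is the same.
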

\begin{proof}
	Homology generators from relations \eqref{Th:UconnectX6} decompose in $ \mathcal{F}_{\omega_{0}} $ as follows:
	\begin{itemize}
		\item  $ \tilde{F}_{1jk} $ is an element of $ \mathcal{F}_{\omega_{0}} $,
		\item   For $ \tilde{F}_{ijk} $ , $ i,j,k\neq1 $ it holds: \[  \tilde{F}_{ijk}=\tilde{F}_{\overline{ijk}}\cup\tilde{F}_{\overline{ijk},ij}\cup\tilde{F}_{\overline{ijk},ik}\cup\tilde{F}_{\overline{ijk},jk} \]  
		\item For $ \tilde{F}_{1jpq} $ it holds \[ \tilde{F}_{1jpq}=\tilde{F}_{\overline{1jpq}}\cup\tilde{F}_{1\overline{1jpq}}\cup \bigcup_{ kl}\tilde{F}_{\overline{1jpq},kl}\cup\bigcup_{ kl}\tilde{F}_{\overline{1jpq},1kl}\cup\bigcup_{ kl}\tilde{F}_{1\overline{1jpq},kl}\cup\bigcup_{ kl,mn}\tilde{F}_{\overline{1jpq},kl,mn}   \]
		\item  For $ \tilde{F}_{ijpq} $ it holds \[ \tilde{F}_{ijpq}=\tilde{F}_{\overline{ijpq}}\cup\bigcup_{k\in\{i,j,p,q\}}\tilde{F}_{k\overline{ijpq}}\cup\bigcup_{ kl}\tilde{F}_{\overline{ijpq},kl}\cup\bigcup_{ k,lm}\tilde{F}_{k\overline{ijpq},lm} \cup\bigcup_{ kl,mn}\tilde{F}_{\overline{ijpq},kl,mn} \]
	\end{itemize}
	By Lemma \ref{Corr:VirtualspaceX6} an element $ \tilde{F}_{ij} $ projects to the 4-dimensional space of parameters $ F_{ij} $ and  $ \tilde{F}_{ij,pq}$, $\tilde{F}_{ijp} $ project to the 2-dimensional space of parameters $ F_{ij,pq} $,$ F_{ijp} $ respectively. Any other virtual space of parameters projects to a point. Substituting this in \eqref{Th:UrelationsX6} and grouping by dimension we deduce the result. We demonstrate calculations for $ I=\{2,3,5,6\}:$
			\begin{align}\label{Eq:X6relacije}
		\tilde{F}_{1456}+(\tilde{F}_{456}\cup\tilde{F}_{456,12}\cup\tilde{F}_{456,13}\cup\tilde{F}_{456,23})+(\tilde{F}_{156}\cup\tilde{F}_{156,34}\cup\tilde{F}_{156,24}\cup\tilde{F}_{156,23})+\tilde{F}_{1234}&= \\ \nonumber
		\tilde{F}_{1346}+(\tilde{F}_{346}\cup\tilde{F}_{346,15}\cup\tilde{F}_{346,25}\cup\tilde{F}_{346,12})+(\tilde{F}_{245}\cup\tilde{F}_{245,16}\cup\tilde{F}_{245,36}\cup\tilde{F}_{245,13})+\tilde{F}_{1245}&=\\
		\tilde{F}_{1246}+(\tilde{F}_{345}\cup\tilde{F}_{345,16}\cup\tilde{F}_{345,26}\cup\tilde{F}_{345,12})+(\tilde{F}_{246}\cup\tilde{F}_{246,15}\cup\tilde{F}_{246,35}\cup\tilde{F}_{246,13})+\tilde{F}_{1345}.	  \nonumber
	\end{align}

\par For example, $ p_{\omega_{0},12}(\tilde{F}_{1456}) $ is the union of the $4$=dimensional  cell $ F_{23} $, $ 2 $-dimensional cells $ F_{23,kl},F_{123} $ and points $ F_{123,kl},F_{23,1kl},F_{23,kl,mn}$ for all possible indices  $ kl,mn.$ We can denote this union as $ F_{23}^{\prime}\simeq \C P^{1}\times \C P^{1}. $
    We do the same for each element from~\eqref{Eq:X6relacije}.



 Taking only the 4-dimensional chains, that are obtained only by the spaces $ \tilde{F}_{J},\,|J|=4 $, we get:
\begin{align*}
	F_{23}+F_{56}=F_{25}+F_{36}=F_{35}+F_{26}.
\end{align*} The 2-dimensional chains $ F_{ijk} $ cancel out as it can be seen from relations:

\begin{align*}
	F_{123}+\sum_{kl}F_{23,kl}+F_{123}+F_{156}+F_{156}+\sum_{kl}F_{56,kl}&=\\
	F_{125}+\sum_{kl}F_{25,kl}+F_{125}+F_{136}+F_{136}+\sum_{kl}F_{36,kl}&=\\
	F_{135}+\sum_{kl}F_{35,kl}+F_{126}+F_{135}+F_{126}+\sum_{kl}F_{26,kl}.
\end{align*} It follows:

\begin{align*}
	\sum_{kl}F_{23,kl}+\sum_{kl}F_{56,kl}=\sum_{kl}F_{25,kl}+\sum_{kl}F_{36,kl}=\sum_{kl}F_{35,kl}+\sum_{kl}F_{26,kl}.
\end{align*} 

 Similarly, we proceed for any indices $ \{i,j,p,q\}\subset \{1,\dots,6\}. $
\end{proof}
\par	Let now $ C_{\omega} $ be an arbitrary  chamber. We note that every chamber of the maximal dimension is determined by the choice of $ K_{ijk} $ and $ K_{pq}$ where $ i $ is fixed. Let us generalize the  notation for $ C_{\omega_{0}} $ by $ C_{\omega_{s}} $ to denote the chamber given by $ \bigcap_{ij}K_{(s+1)ij} $ for fixed $ s\in\{1,\dots,5\} $.
\par Consider a chamber $ C_{\omega_{s}} $ and the decomposition of $ \mathcal{F}_{6} $ given by 
	\begin{equation}\label{X6eqF_omeqax}
		\mathcal{F}_{6}=\mathcal{F}_{\omega_{s}}= \bigcup_{ij }\tilde{F}_{ij}\cup\bigcup_{pq,ij}\tilde{F}_{ij,pq}\cup\bigcup_{ pq }\tilde{F}_{(s+1)pq}\cup\bigcup_{ ij,pq}\tilde{F}_{ij,(s+1)pq}\cup\bigcup_{ ij,pq,lm}\tilde{F}_{ij,pq,lm}
	\end{equation} The relations from  Corollary \ref{Cor:relacijeG62} for $ F_{\omega_{0}} $ are true for the space $ F_{\omega_{s}} $ as well. This can be proved similarly if we replace index $ 1 $ by $ s+1 $.
	\par Now, it is enough to take the chamber $ C_{\omega}= \bigcap_{ij\notin S}K_{1ij}\cap \bigcap_{pq\notin D}K_{pq} $ where $ S,D\subset \{ij|1\leq i <j\leq 6\} $ are chosen such that $ C_{\omega}\neq \emptyset. $ Then $ \mathcal{F}_{\omega } $ is equal to
	\begin{align*}
		 \mathcal{F}_{\omega}=\bigcup_{ij\notin D }\tilde{F}_{ij}\cup\bigcup_{ ij\in D }\tilde{F}_{\overline{ij}}\cup\bigcup_{\substack{pq\notin D\\ ij\notin D}}\tilde{F}_{ij,pq}\cup\bigcup_{ij,pq,lm\notin D }\tilde{F}_{ij,pq,lm}\cup\\
		 \cup\bigcup_{\substack{pq\notin S\\ij\notin D}}\tilde{F}_{ij,1pq}
		 \cup\bigcup_{\substack{ pq\in S\\ ij\notin D}}\tilde{F}_{ij,\overline{1pq}}\cup\bigcup_{ pq\notin S }\tilde{F}_{1pq}\cup\bigcup_{pq\in S }\tilde{F}_{\overline{1pq}}.
	\end{align*}
	 
\par We consider relations in $ H_{4}(F_{\omega}). $ By Corollary \ref{Corr:VirtualspaceX6}, we can get a 4-dimensional elements $ F_{ij} $ only as projection of the virtual spaces of parameters $ \tilde{F}_{ij} $ . Since $ \tilde{F}_{ij}\in\mathcal{F}_{\omega} $ if $ ij\notin D $, it follows that $ F_{ij} $ will participate in the relations for  $ H_{4}(F_{\omega})  $  if $ ij\notin D. $ 
Then the relations in $ H_{4}(F_{\omega}) $ are the same as \eqref{Cor:G62_4dimRelacije}, where we replace the space of parameters $ F_{ij} $ by the zero class for $ ij\in D $ .

  \par  The relations in $  H_{2}(F_{\omega}) $ are affected by the projection of the virtual spaces of parameters of the form $ \tilde{F}_{ijp} $ and $ \tilde{F}_{ij,pq}. $ Applying previous procedure we also have that every element $ F_{ijk} $ cancels out. If $ \tilde{F}_{(1pq}\in\mathcal{F}_{\omega}$ it will be canceled out as in the calculations proceeding \eqref{Cor:G62_2dimRelacije}. If $ \tilde{F}_{1pq}\notin\mathcal{F}_{\omega}$ then $ \tilde{F}_{\overline{1pq}}\in\mathcal{F}_{\omega} $ and the element $ F_{\overline{1pq}} $ stands in the relations of $ H_{2}(F_{\omega}) $ exactly as the element $ F_{1pq}   $ stands in the relations of $  H_{2}(F_{\omega_{0}}).$   We demonstrate calculations in a particular example as the general calculation requires a cumbersome using of indices. 
  \par Consider the chamber $ C_{\omega}=\bigcap_{ij\notin S}K_{1ij}\cap\bigcap_{ij\notin S}K_{ij}, $ for the set $ S=\{26,46\}. $ By decomposing any element from \eqref{Eq:X6relacije} in $ \mathcal{F}_{\omega} $ and  projecting it to $ F_{\omega} $, we obtain:
  \begin{itemize}
  	\item the relation in $ H_{4}(F_{\omega}) $:
  	\begin{align*}
  		F_{23}+F_{56}=F_{25}+F_{36}=F_{35},
  	\end{align*} where the element $ F_{26} $ is missing out since $ \tilde{F}_{1345} $ is an element of $ \mathcal{F}_{\omega} $ and it is projected to a  a point,
  \item the relation in $ H_{2}(F_{\omega}) $:
      \begin{align*}
  	F_{123}+F_{235}+\sum_{kl\notin S}F_{23,kl}+F_{123}+F_{156}+F_{156}+\sum_{kl\notin S}F_{56,kl}&=\\
  	=F_{125}+F_{235}+\sum_{kl\notin S}F_{25,kl}+F_{125}+F_{136}+F_{136}+\sum_{kl\notin S}F_{36,kl}&=\\
  	=F_{135}+F_{345}+F_{235}+\sum_{kl\notin S}F_{35,kl}+F_{345}+F_{135}.
  \end{align*}

It follows:
  
  \begin{align*}
  &\sum_{kl\notin S}F_{23,kl}+\sum_{kl\notin S}F_{56,kl}=\sum_{kl\notin S}F_{25,kl}+\sum_{kl\notin S}F_{36,kl}=\sum_{kl\notin S}F_{35,kl}.
  \end{align*}
  \end{itemize}
%

Therefore, an analogous result to those for $ C_{\omega_{0}} $ can be obtained for arbitrary chamber $ C_{w} $.

\begin{prop}\label{Cor:relacijeG62regioni}
	Let $ s \in \{0,\dots,5\} $ and let  $ C_{\omega} $ be the chamber defined by $ C_{\omega}= \bigcap_{ij\notin S}K_{(s+1)ij}\cap \bigcap_{pq\notin D}K_{pq} $ for $ S,D\subset \{ij|1\leq i <j\leq 6\} $. The homology group $ H_{4}(F_{\omega}) $ is generated by the homology classes $F_{ij},\,ij\in \omega\backslash D$ and the following relations hold:
	\begin{align}\label{Cor:G62_4dimRelacije_reg}
		F_{ij}+F_{pq}= 
		F_{ip}+F_{jq}=
		F_{iq}+F_{jp}
	\end{align} for $ \{i,j,p,q\}\subset \{1,\dots,6\} $, where we substitute for any element from $ D $ the corresponding class by the zero class.
\par  The homology group $ H_{2}(F_{\omega}) $ is generated by the homology classes $F_{(s+1)ij}$ for $(s+1)ij\in \omega ,\,ij\notin S$ and $ F_{ij,sm}$ for $ ij,sm\in \omega\backslash D $ and the following relations hold:
	\begin{align}\label{Cor:G62_2dimRelacije_reg}
	\sum_{sm\notin D}F_{ij,sm}+\sum_{sm\notin D}F_{pq,sm}=
	\sum_{sm\notin D}F_{ip,sm}+\sum_{sm\notin D}F_{jq,sm}=
	\sum_{sm\notin D}F_{iq,sm}+\sum_{sm\notin D}F_{jp,sm}
\end{align}
	for $ \{i,j,p,q\}\subset \{1,\dots,6\} $, where $ \sum_{sm\notin D}F_{rl,sm} $ is  substituted by the zero class for any $ rl $ such that $ r,l\in\{i,j,p,q\} $ and $ rl\in D. $
\end{prop}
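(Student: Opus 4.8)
The plan is to carry out, for an arbitrary maximal chamber $C_\omega$, the same computation that proves Proposition~\ref{Cor:relacijeG62} for $C_{\omega_0}$, keeping careful track of how the data $(s,S,D)$ modifies the stratification. First I would reduce to the case $s=0$: substituting the distinguished index $1$ by $s+1$ everywhere exchanges $K_{1ij}\leftrightarrow K_{(s+1)ij}$ and $\tilde F_{1pq}\leftrightarrow\tilde F_{(s+1)pq}$, so any relation established for a chamber of the form $\bigcap_{ij\notin S}K_{1ij}\cap\bigcap_{pq\notin D}K_{pq}$ transports verbatim to $C_{\omega_s}$; and for $S=D=\emptyset$ the chamber is $C_{\omega_0}$, where the assertion is Proposition~\ref{Cor:relacijeG62} itself.

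Next, for the chamber $C_\omega=\bigcap_{ij\notin S}K_{1ij}\cap\bigcap_{pq\notin D}K_{pq}$, I would start from the relations~\eqref{Th:UrelationsX6} in $H_{\ast}(\mathcal F_6)$, decompose every summand $\tilde f(D_I)$ inside the stratification of $\mathcal F_\omega=\mathcal F_6$ written just above the statement, apply the projection $p_{\omega,12}:\mathcal F_6\to F_\omega$ of Theorem~\ref{Lem:ProjG}, and sort the resulting identity by dimension, exactly as in~\eqref{Eq:X6relacije} and in the displayed computation for $S=\{26,46\}$. Using Lemma~\ref{Corr:SpaceofparametersX6} and Lemma~\ref{Corr:VirtualspaceX6} one records which virtual spaces of parameters survive under $p_{\omega,12}$ with positive dimension: the $\tilde F_{ij}$ with $ij\in\omega\setminus D$ map onto the $4$-dimensional $F_{ij}$, while the $\tilde F_{ij,pq}$ with $ij,pq\in\omega\setminus D$, the $\tilde F_{1pq}$ with $pq\notin S$, and the substitutes $\tilde F_{\overline{1pq}}$ with $pq\in S$ map onto $2$-dimensional spaces of parameters; every remaining piece projects to a point. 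When $ij\in D$ the factor $\tilde F_{ij}$ is replaced in $\mathcal F_\omega$ by $\tilde F_{\overline{ij}}$, which is of type $K_{klmn}$ and hence has a point as its space of parameters; this forces the class $F_{ij}$ to be set to zero, and the $4$-dimensional part of the projected relation is precisely~\eqref{Cor:G62_4dimRelacije_reg}.

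For the $2$-dimensional part I would argue, as in the proof of Proposition~\ref{Cor:relacijeG62} and in the sample computation, that after projection the triple-index contributions $F_{ijk}$ — and likewise, whenever they occur twice, the classes $F_{1pq}$ together with their substitutes $F_{\overline{1pq}}$ — cancel pairwise, so that each of the fifteen relations~\eqref{Th:UrelationsX6} collapses to an identity between the sums $\sum_{sm\in\omega\setminus D}F_{rl,sm}$, with a term omitted whenever $rl\in D$ since then $F_{rl,sm}$ lives over a point. Collecting these identities over all $\{i,j,p,q\}\subset\{1,\dots,6\}$ gives~\eqref{Cor:G62_2dimRelacije_reg}, and the identification of generators of $H_4(F_\omega)$ and $H_2(F_\omega)$ follows as above.

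The part I expect to be most delicate is purely combinatorial rather than conceptual: one must confirm, for every pair $(S,D)$ for which $C_\omega\neq\emptyset$, that the pairwise cancellation of the triple-index terms in the $2$-dimensional part still goes through after the substitutions $\tilde F_{ij}\mapsto\tilde F_{\overline{ij}}$ and $\tilde F_{1pq}\mapsto\tilde F_{\overline{1pq}}$, so that no spurious generator is left over and the surviving terms are exactly the sums $\sum_{sm\notin D}F_{rl,sm}$ prescribed in the statement. This is precisely the bookkeeping illustrated by the worked example $S=\{26,46\}$, now performed once and for all, simultaneously for every admissible index pattern.
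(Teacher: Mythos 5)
Your proposal follows essentially the same route as the paper: reduce to $s=0$ by relabelling the distinguished index, decompose each $\tilde f(D_I)$ from the relations \eqref{Th:UrelationsX6} inside the stratification of $\mathcal F_\omega$ determined by $(S,D)$, project via $p_{\omega,12}$, sort by dimension, and observe that the classes killed by $D$ project to points while the triple-index terms cancel from the relations. The paper likewise stops short of a fully index-by-index verification (it substitutes a worked example for $S=\{26,46\}$), so your explicit flagging of the remaining combinatorial bookkeeping matches the level of detail actually supplied there.
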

\begin{definition}
	Let $ c $ be a chain of the form $ \sum_{ij\in \hat{\omega}} F_{ij} $ or $ \sum_{\{ij,pq\}\in \hat{\omega}} F_{ij,pq} $ in $ F_{\omega_{0}}$ where $ \hat{\omega}\subset \omega_{0} $ and let us fix a chamber $ C_{\omega}=\bigcap_{ij\notin S}K_{sij}\cap\bigcap_{ij\notin D}K_{ij} $ for fixed $ s\in\{1,2,3,4,5,6\}. $ \textit{The projection} $ p_{\omega_{0},\omega}:H_{k}(F_{\omega_{0}})\to H_{k}(F_{\omega}) $ is defined by
	\begin{itemize}
		\item if $ c=\sum_{ij\in \hat{\omega}} F_{ij} $ and $ k=4 $ then \begin{equation}\label{eq:projomega0omegaX6}
			p_{\omega_{0},\omega}(c)=\sum_{ij\in \hat{\omega} \backslash D}F_{ij}
		\end{equation}
		
		\item if $ c=\sum_{\{ij,pq\}\in \hat{\omega}} F_{ij,pq} $ and $ k=2 $ then
		\begin{equation}\label{eq:projomega02omegaX6}
			p_{\omega_{0},\omega}(c)=\sum_{ij,pq\in \hat{\omega}\backslash D} F_{ij,pq}.
		\end{equation}
	\end{itemize}

\end{definition}

By Proposition \ref{Cor:relacijeG62regioni} this projection is well defined and satisfies the following:
\begin{lemma}\label{Lem:omega0_to_all}
	If a cycle $ l $ is the zero class in $ H_{4}(F_{\omega_{0}}) $ or $ H_{2}(F_{\omega_{0}})  $ then the cycle $ p_{\omega_{0},\omega}(l) $ is the zero class in $ H_{4}(F_{\omega}) $ or $ H_{2}(F_{\omega}) $ for any chamber $ C_{\omega} $ respectively. 
\end{lemma}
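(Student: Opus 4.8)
The plan is to run the argument of Lemma~\ref{L:regionG52} in the two relevant degrees, using the presentations of $H_{4}(F_{\omega_{0}})$ and $H_{2}(F_{\omega_{0}})$ from Proposition~\ref{Cor:relacijeG62} and those of $H_{4}(F_{\omega})$, $H_{2}(F_{\omega})$ from Proposition~\ref{Cor:relacijeG62regioni}. The key point is that, by these two propositions, the homology of $F_{\omega_{0}}$ and of an arbitrary maximal chamber $C_{\omega}=\bigcap_{ij\notin S}K_{(s+1)ij}\cap\bigcap_{pq\notin D}K_{pq}$ in degrees $4$ and $2$ is presented by the \emph{same} generators and relations, the only difference being that on the $F_{\omega}$ side the classes indexed by the elements of $D$ are imposed to vanish; and the projection $p_{\omega_{0},\omega}$ of~\eqref{eq:projomega0omegaX6}--\eqref{eq:projomega02omegaX6} is exactly the $\Z_{2}$-linear map deleting the summands with index in $D$. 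So the whole statement reduces to checking that $p_{\omega_{0},\omega}$ carries each defining relation of $F_{\omega_{0}}$ into the span of the defining relations of $F_{\omega}$.

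First I would take a $4$-cycle $l=\sum_{ij\in\hat\omega}F_{ij}$ with $[l]=0$ in $H_{4}(F_{\omega_{0}})$. Reading Proposition~\ref{Cor:relacijeG62} as a presentation of $H_{4}(F_{\omega_{0}})$ — legitimate since $F_{\omega_{0}}$ is, by~\cite{buchstaber2024weighted}, a moduli space $\mathcal{M}_{\mathcal{A}}$ of weighted stable genus zero curves, whose homology is completely described in~\cite{ceyhan2009chow} — this says $l$ is a $\Z_{2}$-combination of the relation vectors $F_{ij}+F_{pq}+F_{ip}+F_{jq}$ coming from~\eqref{Cor:G62_4dimRelacije}. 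Applying the $\Z_{2}$-linear map $p_{\omega_{0},\omega}$ turns each such vector into the same expression with the summands indexed by $D$ deleted, and by Proposition~\ref{Cor:relacijeG62regioni} this is exactly one of the relations~\eqref{Cor:G62_4dimRelacije_reg} of $H_{4}(F_{\omega})$ (or the zero chain, when all four of $ij,pq,ip,jq$ lie in $D$). Hence $p_{\omega_{0},\omega}(l)$ is a $\Z_{2}$-combination of relations of $H_{4}(F_{\omega})$, so $[p_{\omega_{0},\omega}(l)]=0$ there. The degree $2$ case is handled identically: a cycle $l=\sum F_{ij,pq}$ with $[l]=0$ in $H_{2}(F_{\omega_{0}})$ is a $\Z_{2}$-combination of the vectors $\sum_{sm}F_{ij,sm}+\sum_{sm}F_{pq,sm}+\sum_{sm}F_{ip,sm}+\sum_{sm}F_{jq,sm}$ of~\eqref{Cor:G62_2dimRelacije} (the triple-index contributions $F_{1kl}$ having already cancelled among themselves in the proof of Proposition~\ref{Cor:relacijeG62}), and $p_{\omega_{0},\omega}$ sends each of these, via~\eqref{eq:projomega02omegaX6}, to a relation~\eqref{Cor:G62_2dimRelacije_reg} of $H_{2}(F_{\omega})$.

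The hard part is the input I am quoting as already available, namely the passage from Proposition~\ref{Cor:relacijeG62} to Proposition~\ref{Cor:relacijeG62regioni}: one must track how the stratification~\eqref{X6eqF_omeqa0} of $\mathcal{F}_{6}$ into virtual spaces of parameters is modified when $C_{\omega_{0}}$ is replaced by $C_{\omega}$ — with some $\tilde F_{\sigma}$ absorbed into larger ones of the form $\tilde F_{\overline{\sigma}}$ — and verify, via Lemma~\ref{Corr:VirtualspaceX6}, that each virtual space indexed by an element of $D$ then projects under $p_{\omega,12}$ to a point, so that its class drops out of the defining relations and one recovers precisely the substitution rules of~\eqref{Cor:G62_4dimRelacije_reg} and~\eqref{Cor:G62_2dimRelacije_reg}. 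This is also exactly what makes $p_{\omega_{0},\omega}$ well defined on homology, and once it is in hand the lemma is purely formal, requiring no further computation.
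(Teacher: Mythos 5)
Your proposal is correct and follows essentially the same route as the paper: both arguments rest on the observation that, by Propositions~\ref{Cor:relacijeG62} and~\ref{Cor:relacijeG62regioni}, the presentations of $H_{4}$ and $H_{2}$ of $F_{\omega}$ are obtained from those of $F_{\omega_{0}}$ by setting the classes indexed by $D$ to zero, and that $p_{\omega_{0},\omega}$ is exactly the $\Z_{2}$-linear map deleting those summands, hence carries each defining relation of $F_{\omega_{0}}$ to a relation (or zero) in $F_{\omega}$. Your explicit treatment of the degree-$2$ case and your flagging of where the real work lies (in Proposition~\ref{Cor:relacijeG62regioni}) only make the same argument slightly more complete than the paper's own write-up.
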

\begin{proof}
By Proposition \ref{Cor:relacijeG62regioni}, if $ l $ is the zero class in $  H_{4}(F_{\omega_{0}}) $ then it is also the zero class in $   H_{4}(F_{\omega_{k}}),\, k\in\{1,\dots,5\} $ . Thus, we can assume that $ C_{\omega}= \bigcap_{ij\notin S}K_{1ij}\cap \bigcap_{pq\notin D}K_{pq} $. We note that $ 	p_{\omega_{0},\omega}(l) $ does not contain an element $ F_{\sigma} $ for $ \sigma\in D $ and $ F_{\sigma} $ neither participates in relations \eqref{Cor:G62_4dimRelacije_reg} in $  H_{4}(F_{\omega}) $. Then by Corollary \ref{Cor:relacijeG62regioni} the cycle $ p_{\omega_{0},\omega}(l) $ can be spanned by relations \ref{Cor:G62_4dimRelacije_reg} and therefore it is the zero in $  H_{4}(F_{\omega}). $
\end{proof}
\subsection{Homology groups $ H_{k}(X_{6};\Z_{2}) $ for $ 4\leq k\leq9 $}
\begin{theorem}
 $ H_{9}(X_{6};\Z_{2})\simeq \Z_{2}. $
\end{theorem}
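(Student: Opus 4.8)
The plan is to follow the strategy of Theorem~\ref{Th:H6X5}, now for $n=6$ (so $\dim X_6=11$ and the relevant degree is $9=3n-9$). By Corollary~\ref{Cycles_3n-9} the group of $9$-cycles has a basis consisting of the $\tfrac{(n-2)(n-1)}{2}=10$ cycles
\[
c_{mi}:=e_{mi}+e_{m6}+e_{i6},\qquad e_{ij}:=K_{ij,5}\times F_{ij},\quad 1\le m\le 4,\ m+1\le i\le 5,
\]
where $K_{ij}=\{x_i+x_j\le 1\}$ is the admissible polytope missing only the vertex $\Lambda_{ij}$, $F_{ij}\cong F_5$ is the top-dimensional cycle in its space of parameters, and $\partial e_{ij}=S_i+S_j$ with $S_k$ a top-dimensional chain in $G_{5,2}(k)/T^5$. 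It therefore suffices to show that all ten $c_{mi}$ are homologous to one another and that one of them is not a boundary.

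For the first part I would argue as in Theorem~\ref{Th:H6X5}. Write $c_{mi}+c_{mi'}=e_{mi}+e_{mi'}+e_{i6}+e_{i'6}=\sum_\omega C_\omega\times g_\omega$, where $g_\omega$ is the partial sum of the classes $F_{mi},F_{mi'},F_{i6},F_{i'6}$ corresponding to those polytopes among $K_{mi},K_{mi'},K_{i6},K_{i'6}$ whose interior contains $C_\omega$. This chain bounds as soon as $g_\omega=0$ in $H_4(F_\omega)$ for every $\omega$ occurring in it, and by Lemma~\ref{Lem:omega0_to_all} it is enough to verify this at the chamber $\omega_0=\bigcap_{1<i<j}\stackrel{\circ}{K}_{1ij}$, which one checks is contained in $\stackrel{\circ}{K}_{pq}$ for every pair $pq$. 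There $g_{\omega_0}=F_{mi}+F_{mi'}+F_{i6}+F_{i'6}$, which vanishes in $H_4(F_{\omega_0})$ by the Keel-type relation of Proposition~\ref{Cor:relacijeG62} for the four distinct indices $\{m,i,i',6\}$; hence $c_{mi}\sim c_{mi'}$. The symmetric computation, now with the four indices $\{m,m',i,6\}$, gives $c_{mi}\sim c_{m'i}$. Together these homologies connect all ten generators, so $H_9(X_6;\Z_2)$ is a quotient of $\Z_2$.

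For the nontriviality, the same reduction shows that $c_{12}=e_{12}+e_{16}+e_{26}$ bounds if and only if $F_{12}+F_{16}+F_{26}=0$ in $H_4(F_{\omega_0})$: the ``only if'' holds because the part of any bounding $10$-chain lying over the top-dimensional interior chamber $C_{\omega_0}$ is a product $C_{\omega_0}\times l_{\omega_0}$, and the only summands of $\partial(\text{such a chain})$ over $C_{\omega_0}$ must equal $C_{\omega_0}\times(F_{12}+F_{16}+F_{26})$. By Proposition~\ref{Cor:relacijeG62}, $H_4(F_{\omega_0})$ is presented by the generators $F_{ij}$ subject to relations each of which has an even number of terms; consequently the ``total coefficient mod $2$'' functional $\lambda\colon\bigoplus_{i<j}\Z_2\,F_{ij}\to\Z_2$ sending every $F_{ij}\mapsto 1$ descends to $H_4(F_{\omega_0})$, and $\lambda(F_{12}+F_{16}+F_{26})=1\ne 0$. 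Hence $c_{12}$ is not a boundary and $H_9(X_6;\Z_2)\cong\Z_2$.

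I expect the main technical point to be, in both parts, the reduction from ``$c$ is a boundary'' to the pointwise conditions ``$g_\omega=0$ in $H_4(F_\omega)$ for all $\omega$''. This rests on the normal-form results for chains mapping into $\stackrel{\circ}{\Delta}_{6,2}$ (Corollaries~\ref{admcycle}, \ref{Cor:Chains_with_boundary_on_boundary}, \ref{The:HomologyChain_n-2_general}) together with Lemma~\ref{Lem:omega0_to_all}, which let one spread the local primitive $l_{\omega_0}$ consistently over all chambers and control the contributions landing in $\hat{\mu}^{-1}(\partial\Delta_{6,2})$; this bookkeeping, rather than the elementary linear algebra above, is where the bulk of the verification lies.
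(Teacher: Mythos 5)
Your proposal follows essentially the same route as the paper: the basis of $9$-cycles from Corollary~\ref{Cycles_3n-9}, the reduction of ``$c$ bounds'' to the vanishing of $\sum F_{ij}$ in $H_{4}(F_{\omega})$ via the product structure of a bounding $10$-chain, and the Keel-type relations of Proposition~\ref{Cor:relacijeG62} together with Lemma~\ref{Lem:omega0_to_all} to identify all the generators $c_{mi}$ with one another (your pairs $\{m,i,i',6\}$ and $\{m,m',i,6\}$ are exactly the paper's choices of four indices). The one place you go beyond the paper is the explicit non-triviality check via the parity functional $\lambda(F_{ij})=1$, which is legitimate at $\omega_{0}$ because every relation in \eqref{Cor:G62_4dimRelacije} is a four-term $\Z_2$-relation; the paper leaves this step implicit, and your argument correctly fills it in, granted that Proposition~\ref{Cor:relacijeG62} is read as a complete presentation of $H_{4}(F_{\omega_{0}})$.
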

\begin{proof}
	 By Corollary \ref{Cycles_3n-9} the group of cycles of dimension 9 has basis of the form \[ c_{mi}:=e_{mi}+e_{m6}+e_{i6}, \] where $ e_{ij}=K_{ij,5}\times F_{ij} $, $ 1\leq m \leq 4 $ and $ m+1 \leq i  < 6 $. \par Let  $ \tilde{c}= \sum C_{\omega}\times l_{\omega} $ be a 10-dimensional chain such that $ \partial \tilde{c}=c $ for some cycle $ c=\sum c_{mi}=\sum_{\omega} C_{\omega}\times (\sum F_{ij}) $. A chain $ l_{\omega} $ must be of the dimension 5, so it is a chain in $ F_{6}. $  We get that $ \partial \tilde{c}= \sum C_{\omega}^{\prime}\times l_{\omega}+\sum C_{\omega}\times \partial l_{\omega}.$ Since the first sum must be zero we have that $ l_{\omega}=l ,\,\forall\omega$ and $ \bigcup C_{\omega}=\Delta_{6,2,5}.  $ Thus, $ \partial l=\sum F_{ij},$ so it is homology zero $ H_{4}(F_{\omega}) $.
	 \par We consider the sum $ c_{12}+c_{13}=e_{12}+e_{13}+e_{26}+e_{36}=\sum C_{\omega}\times(F_{12}+F_{13}+F_{26}+F_{36}). $ By \eqref{Cor:G62_4dimRelacije} for the indices $ \{1,2,3,6\} $  we get that $ F_{12}+F_{13}+F_{26}+F_{36} $ is zero  in $ H_{4}(F_{\omega_{0}}) $. We get by Lemma \ref{Lem:omega0_to_all} that the chain $ p_{\omega_{0},\omega}(F_{12}+F_{13}+F_{26}+F_{36}) $ is  zero in $ H_{4}(F_{\omega}) $ for all $ C_{\omega}. $ Therefore, the sum $ c_{12}+c_{13} $ is zero in $ H_{9}(X_{6};\Z_{2}).$ Similarly, we prove for every sum $ c_{ij}+c_{ik} $ to be zero class.
\end{proof}
\begin{theorem}
	$ H_{8}(X_{6};\Z_{2})\simeq \Z_{2}. $
\end{theorem}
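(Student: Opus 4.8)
The plan is to run the same reduction used for $H_{5}(X_{5};\Z_{2})$ and $H_{9}(X_{6};\Z_{2})$, collapsing an $8$-cycle first onto the boundary $Y_{6}=\hat\mu^{-1}(\partial\D{6})$ and then onto the fundamental classes of the six subspaces $X_{5}(m)=\hat\mu^{-1}(\D{6}\cap\{x_{m}=0\})\cong G_{5,2}/T^{5}$. First I would note that a cycle $c$ of dimension $8$ has $8=(n-1)+2k+1$ with $n=6$, $k=1$ and $1\le k<n-3$, so Proposition~\ref{Th:Cycles_n-1_odd} applies (a cycle has $\partial^{0}c=0$) and $c$ is homologous to a cycle $c'$ with $\hat\mu(c')\subset\partial\D{6}$, i.e.\ $c'\subset Y_{6}$. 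Then $c'$ is a cycle of dimension $8=(n-2)+2k$ with $k=2\ge 1$ in $Y_{6}$, so by Corollary~\ref{Cor:Spaces_Y_n} it is homologous in $X_{6}$ to $\sum_{1\le m\le 6}c^{m}+c_{2}$, where $c^{m}=\sum_{\sigma_{m}}\Ins{P}_{\sigma_{m},4}\times l^{\sigma_{m}}$ with $\dim l^{\sigma_{m}}=\dim F^{m}_{\sigma}$ and $\Ins{P}_{\sigma_{m},4}\subset\Ins{\Delta}_{5,2}(m)$, and $c_{2}\subset\hat\mu^{-1}(\bigcup_{m}\partial\Delta_{5,2}(m))$.

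The key step is the dimension bookkeeping that fixes the shape of $c'$. Inside $\Delta_{5,2}(m)=\D{6}\cap\{x_{m}=0\}$ the only stratum of $G_{5,2}(m)$ whose space of parameters has real dimension $4$ is the main stratum, whose admissible polytope is $\Delta_{5,2}(m)$ itself and whose space of parameters is $\cong F_{5}$; every other stratum over $\Ins{\Delta}_{5,2}(m)$ has parameter space of dimension $\le 2$ (Lemma~\ref{Corr:SpaceofparametersX6} and Corollary~\ref{Corr:VirtualspaceX5}). Hence each $c^{m}$ reduces to $a_{m}g_{m}$ with $a_{m}\in\Z_{2}$, where $g_{m}:=[X_{5}(m)]$ is the $\Z_{2}$-fundamental cycle of $X_{5}(m)$, which exists and generates $H_{8}(X_{5}(m);\Z_{2})\cong\Z_{2}$ by Lemma~\ref{lemma_3n-7}. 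Moreover $\hat\mu^{-1}(\partial\Delta_{5,2}(m))$ has dimension $\le 5$ --- it is covered by copies of $G_{4,2}/T^{4}\cong X_{4}$ of dimension $5$ and of $\C P^{3}/T^{3}\cong\Delta_{4,1}$ of dimension $3$ --- so it cannot support an $8$-cycle and $c_{2}=0$. Thus $H_{8}(X_{6};\Z_{2})$ is generated by $g_{1},\dots,g_{6}$.

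For the relations I would use the nine-dimensional admissible chains $e_{ij}=K_{ij,5}\times F_{ij}$ from Corollary~\ref{Cycles_3n-9}, with $K_{ij}=\{x\in\D{6}:x_{i}+x_{j}\le 1\}$ and $F_{ij}=F_{K_{ij}}\cong F_{5}$. As in the proof of Corollary~\ref{Cycles_3n-9} (using Lemma~\ref{paramspaceboundary}), $\partial e_{ij}=g_{i}+g_{j}$: over the two facets $\{x_{i}=0\}$ and $\{x_{j}=0\}$ of $K_{ij}$ the degeneration map $\eta$ carries $F_{ij}$ homeomorphically onto the main-stratum parameter space $F_{5}$ of $G_{5,2}(i)$, resp.\ $G_{5,2}(j)$, producing $g_{i}$, resp.\ $g_{j}$, whereas over the remaining facets of $K_{ij}$ (the hyperplane facet $\{x_{i}+x_{j}=1\}$, the $\{x_{k}=0\}$ with $k\ne i,j$, and the simplicial facets $\{x_{k}=1\}$) the corresponding parameter space has dimension $\le 2$, so $\eta_{\#}$ annihilates the four-dimensional fundamental cycle $F_{ij}$ and contributes nothing; the term $C_{\omega}\times\partial F_{ij}$ vanishes as in the proof of Lemma~\ref{lemma_3n-7}. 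Hence $g_{i}=g_{j}$ in $H_{8}(X_{6};\Z_{2})$ for all $i,j$, so $H_{8}(X_{6};\Z_{2})$ is cyclic, generated by $g:=g_{1}$; what is left is to show $g\ne 0$.

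The hard part will be precisely this non-vanishing, and I would settle it through the long exact sequence of the pair $(X_{6},Y_{6})$. By the first two paragraphs the map $H_{8}(Y_{6};\Z_{2})\to H_{8}(X_{6};\Z_{2})$ is surjective, hence $H_{8}(X_{6};\Z_{2})\cong H_{8}(Y_{6};\Z_{2})/\Im\bigl(\partial\colon H_{9}(X_{6},Y_{6};\Z_{2})\to H_{8}(Y_{6};\Z_{2})\bigr)$. On one side, $Y_{6}=\bigcup_{m=1}^{6}X_{5}(m)\cup\bigcup_{m}\hat\mu^{-1}(\Delta_{5,1}(m))$, where the pieces $\hat\mu^{-1}(\Delta_{5,1}(m))$ are $4$-dimensional and all pairwise and higher intersections of the listed pieces have dimension $\le 5$; since $H_{8}(X_{5};\Z_{2})\cong\Z_{2}$ (Lemma~\ref{lemma_3n-7}), the closed-cover (Mayer--Vietoris type) spectral sequence collapses in total degree $8$ and yields $H_{8}(Y_{6};\Z_{2})\cong\bigoplus_{m=1}^{6}H_{8}(X_{5}(m);\Z_{2})\cong\Z_{2}^{6}$ with basis $g_{1},\dots,g_{6}$. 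On the other side, a relative $9$-cycle $D$ has $\partial^{0}D=0$, so by Corollaries~\ref{The:HomologyChain_n-2_general} and~\ref{Cor:Chains_with_boundary_on_boundary}, together with the fact (Lemma~\ref{Corr:SpaceofparametersX6}) that $K_{ij}$ is the unique $5$-dimensional admissible polytope whose parameter space has dimension $4$, $D$ is homologous to $\sum_{i<j}a_{ij}e_{ij}$ plus a chain in $Y_{6}$; therefore $\Im(\partial)=\langle g_{i}+g_{j}:i<j\rangle=\{\sum_{m}b_{m}g_{m}:\sum_{m}b_{m}=0\}$, a subspace of $\Z_{2}^{6}$ of dimension $5$. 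It follows that $H_{8}(X_{6};\Z_{2})\cong\Z_{2}^{6}/\Z_{2}^{5}\cong\Z_{2}$, which in particular shows $g\ne 0$.
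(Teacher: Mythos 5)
Your proposal is correct, and its core coincides with the paper's proof: you reduce an $8$-cycle to $Y_{6}$ via Proposition~\ref{Th:Cycles_n-1_odd}, use Corollary~\ref{Cor:Spaces_Y_n} together with the dimension count on parameter spaces to land on the candidates $g_{m}=O_{m,4}\times F_{5}^{m}$, and obtain the relations $g_{i}+g_{j}=0$ from $\partial e_{ij}$ with $e_{ij}=K_{ij,5}\times F_{ij}$, exactly as in the paper. Where you genuinely go beyond the paper is the last paragraph: the paper stops after exhibiting the relations $g_{i}+g_{j}\sim 0$ and implicitly takes for granted that these are \emph{all} the relations and that a single $g_{m}$ is not itself a boundary. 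You close that gap by running the long exact sequence of the pair $(X_{6},Y_{6})$, computing $H_{8}(Y_{6};\Z_{2})\cong\Z_{2}^{6}$ from the closed cover of $Y_{6}$ by the $X_{5}(m)$ and the low-dimensional simplicial pieces, and identifying $\Im\bigl(\partial\colon H_{9}(X_{6},Y_{6})\to H_{8}(Y_{6})\bigr)$ with the sum-zero subspace via Corollary~\ref{Cor:Chains_with_boundary_on_boundary}. This buys an actual proof of non-vanishing of the generator, at the cost of the extra Mayer--Vietoris-type computation; the paper's version is shorter but leaves that step to the reader.
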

\begin{proof}
	By Theorem \ref{Th:Cycles_n-1_odd} a cycle $ c $ of dimension 8 is homologous to a cycle from $ Y_{6}.$ Since there are no chains of dimension $ 8 $ in $ \hat{\mu}^{-1}(\partial O_{m}) $, by Corollary \ref{Cor:Spaces_Y_n} it follows that the cycle $ c $ is of the form:
	\[ c=\sum_{1\leq m\leq 6} a_{m}(O_{m,4}\times F^{m}_{5}),\, a_{m}\in \Z_{2} . \]
	  Every chain $ O_{m,4}\times F^{m}_{5} $ is a cycle that we denote by $ g_{m}. $
	\par Let $ p=\sum C_{\omega}\times l_{\omega} $ be a chain of the dimension 9 such that $ \partial p=l $ where $ l=\sum g_{m}$. It follows that $ C_{\omega} $ is a 5-dimensional and $ l_{\omega} $ is a 4-dimensional chain. Following Corollary \ref{Cor:Chains_with_boundary_on_boundary} we consider the chains $ e_{ij}=K_{ij,5}\times F_{ij}$ . It follows that \[  \partial e_{ij}=O_{i,4}\times F^{i}_{5}+O_{j,4}\times F^{j}_{5}=g_{i}+g_{j}. \] We conclude that the sum of every two generators $ g_{i} $ is zero in $ H_{8}(X_{6};\Z _{2}). $
\end{proof}
\begin{lemma}\label{Th:H7X6}
	Generators of $ H_{7}(X_{6};\Z_{2}) $ are from the following set of 7-dimensional cycles:
	\[ g_{ijkl}=e_{ijk}+e_{ijl}+e_{ikl}+e_{jkl}+e_{ij,kl}+e_{ik,jl}+e_{il,jk} \]
	where $ e_{ijk}=K_{ijk,5}\times F_{ijk} $ and $ e_{ij,kl}=K_{ij,kl,5}\times F_{ij,kl} $ for distinct $ i,j,k,l\in \{1,\dots,6\}. $
\end{lemma}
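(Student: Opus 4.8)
The plan is to reduce an arbitrary $7$-cycle first to the interior of the hypersimplex, then to the normal form of Corollary~\ref{admcycle}, and finally to identify the resulting cycle condition with the Keel relations on $\mathcal{F}_6$.

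\textbf{Step 1: reduction to the interior.} Since $\dim c=7=(n-1)+2$ for $n=6$, Proposition~\ref{Th:Cycles_n-1_even} gives $c\simeq c_0'+c_1'$ with $c_0'$ a cycle in $\hat{\mu}^{-1}(\Ins{\D{6}})$ and $c_1'$ a cycle in $Y_6$. A $7$-cycle in $Y_6$ has dimension $n-2+2k+1$ with $k=1$, so by the first item of Corollary~\ref{Cor:Spaces_Y_n} it is homologous to a cycle supported in $\hat{\mu}^{-1}\bigl(\bigcup_{1\le m\le 6}\partial\Delta_{5,2}(m)\bigr)$. That space has dimension $5$: over a facet of type $\Delta_{4,2}$ it is $\hat{\mu}^{-1}(\Delta_{4,2})\cong\Grs{4}/T^{4}\cong S^{5}$, and over a facet of type $\Delta_{4,1}$ the space of parameters is a point. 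Hence $c_1'\simeq 0$ and $c\simeq c_0'$.

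\textbf{Step 2: normal form.} By Corollary~\ref{admcycle} I may assume $c_0'=\sum_\sigma P_{\sigma,5}\times l^\sigma$ with $\dim l^\sigma=\dim F_\sigma=2$. By Lemma~\ref{Corr:SpaceofparametersX6} the strata with $2$-dimensional spaces of parameters are exactly those with admissible polytope $K_{ijk}$ or $K_{ij,pq}$, so
\[
c_0'=\sum_{\{i,j,k\}}a_{ijk}\,e_{ijk}+\sum_{\{ij,pq\}}b_{ij,pq}\,e_{ij,pq},\qquad a_{ijk},\,b_{ij,pq}\in\Z_2 .
\]

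\textbf{Step 3: the cycle condition.} I will compute $\partial e_{ijk}$ and $\partial e_{ij,pq}$ just as in the proof of Corollary~\ref{Cycles_3n-9} for $X_5$: among the facets of $K_{ijk}$, resp.\ of $K_{ij,pq}$, only those lying in a hypersimplicial facet $\{x_m=0\}$ and preserving $\dim F_\sigma$ contribute to $\partial$ in degree $6$, while the interior facets $\{x_i+x_j+x_k=1\}$, $\{x_i+x_j=1\}$ and the facets $\{x_m=1\}$ carry point spaces of parameters and so drop dimension. Writing $d^{(m)}_{ab}$ for the top chain of the copy of $K_{ab}\times F_{ab}$ inside $\hat{\mu}^{-1}(\Delta_{5,2}(m))$, this gives
\[
\partial e_{ijk}=\sum_{l\in\{i,j,k\}} d^{(l)}_{\{i,j,k\}\setminus\{l\}},\qquad
\partial e_{ij,pq}=d^{(i)}_{pq}+d^{(j)}_{pq}+d^{(p)}_{ij}+d^{(q)}_{ij}.
\]
The chains $d^{(m)}_{ab}$ have pairwise disjoint supports (cells over distinct strata), so $\partial c_0'=0$ is equivalent to
\[
a_{abm}+\sum_{c\notin\{a,b,m\}} b_{ab,mc}=0\qquad\text{for all pairs }\{a,b\}\text{ and all }m\notin\{a,b\}.
\]
By the symmetry of $a_{ijk}$ in its three indices this is in turn equivalent to the relations $\sum_c b_{ab,mc}=\sum_c b_{am,bc}=\sum_c b_{bm,ac}$ on the $b$'s, with the $a$'s then determined; these are exactly the relations~\eqref{Cor:G62_2dimRelacije}, i.e.\ the Keel relations~\eqref{Th:UrelationsX6} transported through~\eqref{Th:UconnectX6}. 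In particular each $g_{ijkl}$ obeys them: there $a_{abm}=1$ iff $\{a,b,m\}\subset\{i,j,k,l\}$, and $\sum_c b_{ab,mc}$ counts the unique completion of $\{a,b,m\}$ inside $\{i,j,k,l\}$, so the equations hold identically and $g_{ijkl}$ is a $7$-cycle.

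\textbf{Step 4: conclusion via $\mathcal{F}_6$-homology.} I will read $c_0'$ off its fibre $l_{\omega_0}$ over the generic chamber $C_{\omega_0}=\bigcap\Ins{K}_{1ij}\cap\bigcap\Ins{K}_{pq}$: this $l_{\omega_0}$ is a $2$-cycle in $F_{\omega_0}=\mathcal{F}_6$ whose class lies in $H_2(\mathcal{F}_6)=H_2(\mathcal{M}_{0,6})$, described by Proposition~\ref{Cor:relacijeG62}. Following the argument of Theorem~\ref{Th:H6X5} --- spreading an $\mathcal{F}_6$-nullhomology of $l_{\omega_0}$ over all chambers by Lemma~\ref{Lem:omega0_to_all}, and absorbing the summands over chambers meeting $\partial\D{6}$ by Proposition~\ref{Cor:relacijeG62regioni} --- one gets that $c_0'\simeq 0$ in $X_6$ whenever $[l_{\omega_0}]=0$ in $H_2(\mathcal{F}_6)$. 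Since the generators $F_{ij,kl}$ and $F_{1kl}$ of $H_2(\mathcal{F}_6)$ all occur in the fibres of the $g_{ijkl}$, every such $c_0'$ is homologous to a $\Z_2$-combination $\sum_{S}\epsilon_S\,g_S$ over the $4$-subsets $S\subset\{1,\dots,6\}$, which is the assertion.

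\textbf{Main obstacle.} The real work is Step~4. One needs the $X_6$-analogue of Lemmas~\ref{L:regionG52} and~\ref{Lem:omega0_to_all} in this degree, so that a null fibre class in $H_2(\mathcal{F}_6)$ forces the whole $7$-cycle --- including the summands over chambers that touch $\partial\D{6}$ --- to be a boundary; and one must check that the $15$ cycles $g_{ijkl}$ already span the solution space of the cycle equations modulo homology. Both reduce to the combinatorics of the Keel relations for $n=6$, equivalently the structure of $H_2(\mathcal{M}_{0,6})$, which is appreciably heavier than the $n=5$ case treated in Section~\ref{Sec:HomX5}.
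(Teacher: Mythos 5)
Your Steps 1--3 reproduce the paper's reduction faithfully: the passage to $\hat{\mu}^{-1}(\Ins{\Delta}_{6,2})$ via Proposition~\ref{Th:Cycles_n-1_even} and Corollary~\ref{Cor:Spaces_Y_n}, the normal form $\sum a_{ijk}e_{ijk}+\sum b_{ij,kl}e_{ij,kl}$ coming from Corollary~\ref{admcycle} and Lemma~\ref{Corr:SpaceofparametersX6}, and the boundary formulas $\partial e_{ijk}=e_{ij}^{k}+e_{ik}^{j}+e_{jk}^{i}$, $\partial e_{ij,kl}=e_{ij}^{k}+e_{ij}^{l}+e_{kl}^{i}+e_{kl}^{j}$ are all correct and are exactly what the paper uses. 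The derived cycle condition $a_{abm}=\sum_{c}b_{ab,mc}$ for every splitting of the triple $\{a,b,m\}$ is also right.

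The gap is in what you do with that condition. First, your claim that these cycle equations ``are exactly the relations~\eqref{Cor:G62_2dimRelacije}'' conflates two different things: the cycle equations arise from the boundary over $\partial\Delta_{6,2}$ and determine which chains are cycles, whereas~\eqref{Cor:G62_2dimRelacije} are relations in $H_{2}(F_{\omega})$ that determine which cycles are \emph{homologous}; they even have a different index structure (three fixed indices with one free summation index, versus four fixed indices with summation over pairs), and in the paper they are used only in the subsequent theorem to cut the $15$ classes $g_{ijkl}$ down to $\Z_{2}^{11}$. Second, and more importantly, the actual content of the lemma --- that every solution of the cycle equations is a $\Z_{2}$-combination of the $g_{ijkl}$ --- is never established: you defer it to Step~4, which you yourself flag as the ``real work,'' and the inference there (``the generators of $H_{2}(\mathcal{F}_{6})$ all occur in the fibres of the $g_{ijkl}$, hence every $c_{0}'$ is homologous to a combination of the $g_{S}$'') is a non sequitur --- occurrence of generators in fibres does not show that every fibre class realized by a cycle is realized by a combination of the $g$'s, nor that a null fibre class over every chamber forces the $7$-cycle to bound. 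The paper avoids all of this by settling the question at the level of the chain group: one solves the linear system $a_{abm}=\sum_{c}b_{ab,mc}$ over $\Z_{2}$ directly and checks that its solution space is spanned by the $g_{ijkl}$ (each $g_{ijkl}$ satisfies it, and no cycle with strictly smaller support exists), with linear independence read off from the fact that each $e_{ij,kl}$ occurs in exactly one $g_{ijkl}$. You should carry out that linear algebra explicitly rather than route the argument through $H_{2}(\mathcal{F}_{6})$.
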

\begin{proof}
	By Theorem \ref{Th:Cycles_n-1_even} a cycle $ c $ of the dimension 7 is homologous to cycle $ c_{0}^{\prime}+c_{1}^{\prime}  $ where $ c_{0} $ is a cycle in $\hat{\mu}^{-1}(\Ins{\D{6}}) $ and $ c_{1}$ is a cycle in $\hat{\mu}^{-1}(\partial \D{6}).$. By Corollary \ref{Cor:Spaces_Y_n} a cycle $ c_{1}^{\prime}$ is in $\hat{\mu}^{-1}(\bigcup_{1\leq m\leq 6}\partial O_{m})$. Since there are no 7 dimensional chains on $ \partial O_{m} $, a cycle $ c=c_{0}^{\prime}. $ In addition, by Corollary \ref{Cor:Spaces_Y_n} and Lemma \ref{Corr:SpaceofparametersX6} any cycle must be written in the form $ \sum e_{ij,kl}+\sum e_{ijk}.$ We get
	\begin{align*}
		\partial_{5}e_{ij,kl}=e_{ij}^{k}+e_{ij}^{l}+e_{kl}^{i}+e_{kl}^{j},\quad
		\partial_{5}e_{ijk}=e_{ij}^{k}+e_{ik}^{j}+e_{jk}^{i}
	\end{align*} where $ e_{ij}^{k}$ denotes the chain $ e_{ij}=K_{ij,4}(k)\times F_{ij}(k) $ for $ K_{ij}(k)\subset O_{k}. $ We check at once that a chain $ g_{ijkl} $ is the cycle and cannot be obtained as the sum of some cycles of less number of summands and it follows that every cycle is generated by the set of $ g_{ijkl}.$ A summand $ e_{ij,kl} $ appears just in one element $ g_{ijkl},$ therefore the set of $ g_{ijkl} $ is linearly independent.
\end{proof}
\begin{theorem}
	 $ H_{7}(X_{6};\Z_{2}) \simeq (\Z_{2})^{11}. $
\end{theorem}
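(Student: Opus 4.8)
By Lemma~\ref{Th:H7X6} the group of $7$-cycles in $X_6$ is the free $\Z_2$-module on the $\binom{6}{4}=15$ cycles $g_{ijkl}$, so it suffices to identify the subspace of relations — linear combinations $\sum_{T}a_T\,g_T$, over $4$-subsets $T\subset\{1,\dots,6\}$, that bound — and to show it has dimension $4$; then $H_7(X_6;\Z_2)\cong\Z_2^{15}/\Z_2^{4}\cong(\Z_2)^{11}$.

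The plan is to imitate the proofs of the two preceding theorems. Since each $g_T$ lies over $\Ins{\Delta}_{6,2}$ and is built from chambers of dimension $n-1=5$, a chain $\tilde c$ with $\partial\tilde c=\sum_T a_T g_T$ can, by the reduction results of Section~\ref{Sec:On_Gn2_homology} (Proposition~\ref{Cor:No_Chambers_leq_d-1} and Corollary~\ref{Cor:Chains_with_boundary_on_boundary}) and up to a boundary and a chain over $\partial\Delta_{6,2}$, be taken with all chambers of dimension $5$. One then argues exactly as for $H_9(X_6)$ and $H_8(X_6)$ that $\sum_T a_T g_T$ bounds precisely when the associated $2$-cycle $\sum_T a_T\bar g_T$ vanishes in the space of parameters $F_{\omega_0}$ of the chamber $C_{\omega_0}=\bigcap_{ij}\Ins{K}_{1ij}$, where $\bar g_T$ is the sum of the fundamental classes of the $2$-dimensional parameter spaces occurring in $g_T$ whose admissible polytope contains $C_{\omega_0}$; the passage to all other chambers is then automatic by Lemma~\ref{Lem:omega0_to_all}. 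Concretely, for $1\in T$ the class $\bar g_T$ is the sum of the three triple-classes $F_{1uv}$ with $uv\subset T\setminus\{1\}$ together with the three pairing-classes $F_{ab,cd}$ coming from the splittings of $T$ into two pairs, while for $1\notin T$ only the three pairing-classes occur. Thus the relation space is $\ker\!\bigl(\Z_2^{15}\to H_2(F_{\omega_0}),\ T\mapsto\bar g_T\bigr)$.

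To evaluate this kernel one uses the presentation of $H_2(F_{\omega_0})$ from Proposition~\ref{Cor:relacijeG62}. Since the triple-classes $F_{1uv}$ enter no Keel relation, the triple part of $\sum_T a_T\bar g_T$ must vanish on its own; its coefficient on $F_{1uv}$ equals $\sum_{w}a_{\{1,u,v,w\}}$, which forces the array $(a_{\{1,u,v,w\}})$, viewed as a $\Z_2$-chain of $2$-simplices on the vertex set $\{2,\dots,6\}$, to be a cycle of the $4$-simplex $\Delta^4$ — a space of dimension $4$, spanned by the boundaries $\partial[F]$ of the $3$-simplices $F\subset\{2,\dots,6\}$, i.e. by the relations $\sum_{t\subset F,\,|t|=3}g_{\{1\}\cup t}=0$ for $4$-subsets $F\subset\{2,\dots,6\}$ (five of them, with one dependence). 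One then checks, using~\eqref{Cor:G62_2dimRelacije}, that for each such combination the remaining pairing-class contribution is already null in $H_2(F_{\omega_0})$, so these $4$ combinations do bound; conversely, any combination whose triple part is nonzero cannot bound (the $F_{1uv}$ being free generators), and the $\bar g_T$ with $1\notin T$ remain independent modulo the above, so the $11$ complementary classes survive in $H_7$. Hence $H_7(X_6;\Z_2)\cong(\Z_2)^{11}$.

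The main obstacle is this last step: carrying out the rank computation for $\Z_2^{15}\to H_2(F_{\omega_0})$ and, in particular, reconciling the ``simplicial'' relations coming from the triple strata with the Keel relations~\eqref{Cor:G62_2dimRelacije} among the pairing strata, so that the relation subspace comes out exactly $4$-dimensional — neither larger (which would happen if the pairing-classes forced an extra relation) nor smaller (which would fail if some $\sum_{t}g_{\{1\}\cup t}$ did not in fact bound).
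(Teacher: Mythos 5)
Your reduction to the $15$ generators $g_{ijkl}$ of Lemma~\ref{Th:H7X6} and the idea of computing the relation subspace inside $\Z_2^{15}$ match the paper, but your identification of that subspace is incorrect, and the error is structural: you test the vanishing of $\sum_T a_T\bar g_T$ only in $H_2(F_{\omega_0})$ and then invoke Lemma~\ref{Lem:omega0_to_all} to pass to all other chambers. That lemma (and the projection $p_{\omega_0,\omega}$ it rests on) is defined only for chains built from the pairing classes $F_{ij,kl}$; it says nothing about the triple classes. Over $C_{\omega_0}=\bigcap_{ij}\Ins{K}_{1ij}$ only the triples $F_{1uv}$ are visible, so your triple-part condition $\sum_w a_{\{1,u,v,w\}}=0$ records just those constraints. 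A null-homologous cycle, however, must restrict to zero over \emph{every} maximal chamber, and over a chamber contained in $\Ins{K}_{uvw}$ with $1\notin uvw$ the class $F_{uvw}$ --- a free generator of $H_2(F_\omega)$ by Proposition~\ref{Cor:relacijeG62regioni} --- appears with coefficient $\sum_x a_{\{u,v,w,x\}}$. The correct condition is therefore that \emph{all} twenty triple cells cancel, i.e.\ $(a_T)$ is a $3$-cycle of the full $5$-simplex on $\{1,\dots,6\}$, a $5$-dimensional space spanned by the $g_s=\sum_{T\not\ni s}g_T$ of \eqref{Eq:H7X6} --- not your $(4+5)$-dimensional space.

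Concretely, your proposed relations $\sum_{t\subset P,\,|t|=3}g_{\{1\}\cup t}$ do not bound: for $P=\{2,3,4,5\}$ the cells $e_{234},e_{235},e_{245},e_{345}$ each survive with coefficient $1$, and over $C_{\omega_2}=\bigcap_{ij}\Ins{K}_{3ij}$ the resulting class contains $F_{234}+F_{235}+F_{345}$, which is nonzero since these are free generators of $H_2(F_{\omega_2})$. Conversely, your kernel omits the genuine relations: $g_5+g_6$ contains $g_{2345}+g_{2346}$ and so is not supported on the $T\ni1$. The paper's order of operations is essential: first force global cancellation of the triple cells, reducing the candidate relations to the span of $g_1,\dots,g_6$ (with $\sum_s g_s=0$); only then are the surviving cycles pure pairing chains $\sum e_{ij,kl}$, to which the Keel relations \eqref{Cor:G62_2dimRelacije} in $H_2(F_{\omega_0})$ and Lemma~\ref{Lem:omega0_to_all} legitimately apply, showing each $g_s+g_r$ bounds and yielding exactly $4$ relations. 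That your claimed relation subspace also has dimension $4$ is a numerical coincidence; as written the argument does not establish $H_7(X_6;\Z_2)\cong(\Z_2)^{11}$.
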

\begin{proof}
	Let $ c=\sum g_{ijkl}=\sum_{\omega} C_{\omega}\times(\sum_{\sigma\in\omega} F_{\sigma}) $ be a cycle. It will be the zero class in $ H_{7}(X_{6};\Z_{2}) $ only if $ \sum F_{\sigma} $ is zero in $ H_{2}(F_{\omega},\Z_{2})$ for every $ C_{\omega}. $ It follows from Relations \ref{Cor:G62_2dimRelacije_reg} that the cycle $ c $ is zero class only if it is of the form $ \sum e_{ij,kl}. $
	\par Let us consider the cycles 
	\begin{equation}\label{Eq:H7X6}
	g_{s}=\sum_{\substack{ijkl\\s\notin ijkl}} g_{ijkl}=\sum_{\substack{ijkl\\s\notin ijkl}} e_{ij,kl}.	
	\end{equation}
	  for $ s\in\{1,\dots,6\}. $ Note that if we remove a summand $ g_{ijkl} $ from the sum \eqref{Eq:H7X6}, what is left cannot be expressed in terms of $ e_{ij,kl} $.  We show that $ g_{6} $ is homologically equal to $ g_{5}. $ The chamber $ C_{\omega_{0}} $ participates in the cycles $ g_{6} $ and $ g_{5}.$ Firstly we show that  \[ \sum_{ijkl\neq6}F_{ij,kl}=\sum_{ijkl\neq5}F_{ij,kl} \quad \text{in }  H_{2}(F_{\omega_{0}},\Z_{2}). \] It holds in $ H_{2}(F_{\omega_{0}},\Z_{2})$ :
	\begin{align*}
		&\sum_{ijkl\neq6}F_{ij,kl}=\sum_{kl\in\{34,35,45\}}F_{12,kl}+\sum_{kl\in\{24,25,45\}}F_{13,kl}+\sum_{kl\in\{15,35\}}F_{24,kl}+\sum_{kl\in\{15,25\}}F_{34,kl}+\sum_{kl\in\{25,35\}}F_{14,kl}+\\
		&+\sum_{kl\in\{15,14,45\}}F_{23,kl}=\\
		&=\sum_{kl\in\{36,46,56\}}F_{12,kl}+\sum_{kl\in\{26,46,56\}}F_{13,kl}+\sum_{kl\in\{13,16,36,56\}}F_{24,kl}+\sum_{kl\in\{12,16,26,56\}}F_{34,kl}+\sum_{kl\in\{25,35\}}F_{14,kl}+\\
		&+\sum_{kl\in\{15,14,45\}}F_{23,kl}=\\
		&=\sum_{kl\in\{36,46\}}F_{12,kl}+\sum_{kl\in\{26,46\}}F_{13,kl}+\sum_{kl\in\{13,16,36\}}F_{24,kl}+\sum_{kl\in\{12,16,26\}}F_{34,kl}+\sum_{kl\in\{25,35\}}F_{14,kl}+\\
		&+\sum_{kl\in\{15,14,45\}}F_{23,kl}+\sum_{kl\in\{12,13,24,34\}}F_{56,kl}=\\
		&=\sum_{kl\in\{36,46\}}F_{12,kl}+\sum_{kl\in\{26,46\}}F_{13,kl}+\sum_{kl\in\{13,16,36\}}F_{24,kl}+\sum_{kl\in\{12,16,26\}}F_{34,kl}+\sum_{kl\in\{26,36\}}F_{14,kl}+\\
		&+\sum_{kl\in\{14,16,46\}}F_{23,kl}=\\
		&=\sum_{kl\in\{23,24,34\}}F_{16,kl}+\sum_{kl\in\{13,14,34\}}F_{26,kl}+\sum_{kl\in\{12,14,24\}}F_{36,kl}+\sum_{kl\in\{12,13,23\}}F_{46,kl}+F_{12,34}+F_{13,24}+F_{14,23}=\\
		&=\sum_{ijkl\neq5}F_{ij,kl},
	\end{align*}
	where the third and the fifth equality are obtained just regrouping of a summands, the second equality follows from the relation
	\[ \sum_{kl}F_{12,kl}+\sum_{kl}F_{34,kl}=\sum_{kl}F_{13,kl}+\sum_{kl}F_{24,kl} \] and the fourth equality follows from the relation \[ \sum_{kl\in\{25,26,35,36\}}F_{14,kl}+\sum_{kl\in\{14,15,16,45,46\}}F_{23,kl}+\sum_{kl\in\{12,13,24,34\}}F_{56,kl}=0. \] By Lemma \ref{Lem:omega0_to_all} it follows that $ g_{6}=g_{5} $ in $ H_{7}(X_{6};\Z_{2}).$ Similarly, we can show that any two $ g_{s} $ and $ g_{r} $ are homologous. 
		\par Next, we want to eliminate linearly dependent generators among $ g_{ijkl} $ : 
	\begin{align*}
		g_{6}=g_{5}\implies \quad &g_{2345}=g_{1235}+g_{1245}+g_{1345}+g_{1236}+g_{1246}+g_{1346}+g_{2346}:=T_{1}+g_{2346}\\
			g_{5}=g_{4}\implies \quad &g_{2356}=g_{1234}+g_{1246}+g_{1346}+g_{2346}+g_{1235}+g_{1256}+g_{1356}:=T_{2}+g_{2346}\\
			g_{4}=g_{3}\implies \quad &g_{2456}=g_{1235}+g_{1236}+g_{1356}+g_{2356}+g_{1245}+g_{1246}+g_{1456}:=T_{3}+g_{2356}=\\
			&=T_{3}+T_{2}+g_{2346}=T_{3}+T_{2}+T_{1}+g_{2345}\\
		g_{3}=g_{2}\implies \quad &g_{3456}=g_{1245}+g_{1246}+g_{2456}+g_{1256}+g_{1345}+g_{1346}+g_{1356}:=T_{4}+g_{2456}\\
			g_{2}=g_{1}\implies \quad &g_{1345}+g_{1346}+g_{1356}+g_{1456}=g_{2345}+g_{2346}+g_{2356}+g_{2456}=\\
			&=g_{2345}+g_{2346}+T_{2}+g_{2346}+T_{3}+T_{2}+T_{1}+g_{2345}=T_{3}+T_{1}\\
			&=g_{1345}+g_{1346}+g_{1356}+g_{1456},			
	\end{align*} where it can be seen that the last implication follows from the previous ones. 
We get that $ g_{2345},g_{2356},g_{2456} $ and $ g_{3456} $ are linearly dependent.
\end{proof}

\begin{theorem}
	$ H_{6}(X_{6};\Z_{2})\simeq (\Z_{2})^{3}. $
\end{theorem}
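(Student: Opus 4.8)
The plan is to follow the pattern of the computations of $H_9$, $H_8$ and $H_7$: normalise a $6$-cycle to a form supported near $\partial\Delta_{6,2}$, read off the group of such cycles from the combinatorics of the hypersimplex, and then kill the redundant classes by producing explicit $7$-chains, using the relations in the spaces of parameters. First I would reduce. A $6$-cycle $c$ has dimension $n=6$, so by Proposition~\ref{Th:Cycles_n-1} it is homologous to a cycle lying in $Y_{6}$, and Corollary~\ref{Cor:Spaces_Y_n} (case $q=n-2+2k$ with $k=1$) writes that cycle as $c=\sum_{m=1}^{6}c^{m}+c_{2}$, where $c^{m}$ is supported over $\Ins{\Delta}_{5,2}(m)$ and assembled from $4$-dimensional chambers with top-dimensional $2$-dimensional moduli chains, and $c_{2}$ is supported over $\bigcup_{m}\partial\Delta_{5,2}(m)$. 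Since $\hat{\mu}^{-1}$ of any face of $\partial\Delta_{5,2}(m)$ has dimension at most $5$, the term $c_{2}$ vanishes; and since among the $4$-dimensional admissible polytopes of $G_{5,2}$ only the $K_{ij}$ carry a $2$-dimensional space of parameters (namely $\CPA$), I would obtain
\[
c^{m}=\sum_{i,j\neq m}a^{m}_{ij}\,e^{m}_{ij},\qquad e^{m}_{ij}:=K_{ij}(m)\times F_{ij}(m),\quad a^{m}_{ij}\in\Z_{2},
\]
where $K_{ij}(m)=\Delta_{6,2}\cap\{x_{m}=0,\ x_{i}+x_{j}\le 1\}$ is the relevant $4$-dimensional admissible polytope of $G_{5,2}(m)$, filled with its $4$-dimensional chambers, and $F_{ij}(m)\cong\CPA$.

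Next I would compute $\partial e^{m}_{ij}$ and extract the cycle condition. The only surviving boundary components of $e^{m}_{ij}$ are over the two $\Delta_{4,2}$-type facets $\{x_{i}=0\}$ and $\{x_{j}=0\}$ of $K_{ij}(m)$: over the facet $\{x_{i}+x_{j}=1\}$, over the facets $\{x_{l}=1\}$, and over the remaining facets $\{x_{l}=0\}$ the relevant space of parameters is a point, so the corresponding $2$-chain in $\CPA$ degenerates. Hence $\partial e^{m}_{ij}=g_{mi}+g_{mj}$, where $g_{ml}$ is the fundamental $5$-cycle of $G_{4,2}(ml)/T^{4}\cong S^{5}$ sitting over $\Delta_{6,2}\cap\{x_{m}=x_{l}=0\}$, and $g_{ml}=g_{lm}$. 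Putting $A^{m}_{l}:=\sum_{j\neq m,l}a^{m}_{lj}$, the chain $c$ is a cycle if and only if $A^{m}_{l}=A^{l}_{m}$ for all $m\neq l$; this exhibits the group of $6$-cycles supported in $Y_{6}$ as an explicit $\Z_{2}$-subspace of the space of coefficient tuples $(a^{m}_{ij})$.

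Then I would determine the boundaries. Two families of $7$-chains enter. Inside a single copy $X_{5}(m)=G_{5,2}(m)/T^{5}\cong X_{5}$, the $7$-chains realising $H_{6}(X_{5})\cong\Z_{2}$ (Theorem~\ref{Th:H6X5}) make every $6$-cycle of $X_{5}(m)$ (that is, every $c^{m}$ with $A^{m}_{l}=0$ for all $l$) a boundary, apart from one $1$-dimensional class per copy. Over $\Ins{\D{6}}$, the $7$-chains $K_{ijk,5}\times F_{ijk}$ and $K_{ij,kl,5}\times F_{ij,kl}$ already used for $H_{7}$: the part of $\partial(K_{ijk,5}\times F_{ijk})$ lying in $Y_{6}$ equals $e^{i}_{jk}+e^{j}_{ik}+e^{k}_{ij}$, while the complementary part is a $6$-cycle supported over $\Ins{\D{6}}$, which is homologous to zero by Corollary~\ref{admcycle} (such a cycle has chamber dimension $n-1$ and even moduli dimension, so total dimension in $\{5,7,9,11\}$, never $6$). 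Thus $e^{i}_{jk}+e^{j}_{ik}+e^{k}_{ij}$ is a boundary, and likewise $e^{i}_{kl}+e^{j}_{kl}+e^{k}_{ij}+e^{l}_{ij}$; I would then use the relations in $H_{\ast}(F_{\omega})$ of Proposition~\ref{Cor:relacijeG62regioni} together with Lemma~\ref{Lem:omega0_to_all} to verify that the remaining $7$-chains — over the other strata of the $\Delta_{5,2}(m)$'s and over the shared facets $\Delta_{4,2}(ml)$ — produce no further relations.

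Finally I would feed the cycle condition of the second step and the relations of the third into a finite $\Z_{2}$-linear system and reduce it, eliminating dependent generators in the spirit of the $g_{s}=g_{r}$ manipulations of the $H_{7}$ proof and organised around the symmetric-matrix structure of $(A^{m}_{l})$; the quotient has rank $3$, and it remains to display three explicit surviving cycles built from the $e^{m}_{ij}$ and to check their independence via the spaces-of-parameters relations. The hard part will be exactly this last stage: proving that the list of relations is complete — that every $6$-cycle supported in $Y_{6}$ is accounted for, and that no $7$-chain, whether contained in a copy $X_{5}(m)$, sitting over a shared facet $\Delta_{4,2}(ml)$, or lying over $\Ins{\D{6}}$, yields a relation beyond those above — and then carrying out the rank count. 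This demands the same meticulous index bookkeeping as the $H_{7}$ computation, now distributed over the six copies $X_{5}(m)$ and the two families $K_{ijk}$, $K_{ij,kl}$ of interior admissible polytopes.
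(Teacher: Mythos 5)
Your reduction steps track the paper's own proof closely: Proposition~\ref{Th:Cycles_n-1} and Corollary~\ref{Cor:Spaces_Y_n} push a $6$-cycle into $\bigcup_{m}\hat{\mu}^{-1}(\Ins{O}_{m})$, the only $4$-dimensional admissible polytopes of $G_{5,2}(m)$ carrying a $2$-dimensional space of parameters are the $K_{ij}(m)$, and the candidate relations come from the interior $7$-chains $e_{ijk}=K_{ijk,5}\times F_{ijk}$ and $e_{ij,kl}=K_{ij,kl,5}\times F_{ij,kl}$ together with the internal homologies of each copy $X_{5}(m)$ supplied by Theorem~\ref{Th:H6X5}. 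Two small inaccuracies along the way: by \eqref{Eq:seven_dimensional_chains} the boundary of $e_{ijk}$ lies entirely in $Y_{6}$, so there is no ``complementary part over $\Ins{\D{6}}$'' to dispose of; and since all triangles $e^{m}_{pq}+e^{m}_{pr}+e^{m}_{qr}$ inside a fixed $X_{5}(m)$ are homologous there, the generators you arrive at are exactly the six classes $g_{m}$, one per facet $O_{m}$, as in the paper.

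The genuine gap is the final stage, which you explicitly defer and which is where the entire content of the theorem lives: nothing in the proposal actually produces the number $3$. Worse, the relations you list do not mechanically lead to rank $3$. For a four-subset $S=\{i,j,k,l\}$ the chain $e_{ij,kl}+e_{ik,jl}+e_{il,jk}$ has boundary equal to a sum of four triangles, one in each of $X_{5}(i),X_{5}(j),X_{5}(k),X_{5}(l)$, hence $g_{i}+g_{j}+g_{k}+g_{l}=0$ in $H_{6}(X_{6};\Z_{2})$; this construction exists for every one of the $15$ four-subsets. Over $\Z_{2}$ these combine (take $S=\{i,j,k,l\}$ and $S'=\{i,j,k,m\}$) to give $g_{l}+g_{m}=0$, so the relation subspace they span is the $5$-dimensional even-weight subspace of $\Z_{2}^{6}$ and the span of $g_{1},\dots,g_{6}$ in homology would collapse to at most $\Z_{2}$. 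To reach $(\Z_{2})^{3}$ you would have to either exhibit generators beyond the six $g_{m}$, or show that some of these $15$ relations are not in fact realized by $7$-chains in $X_{6}$, or otherwise refine the identification of the cycle and boundary groups; the proposal offers no mechanism for any of these, so ``feed the relations into a linear system and reduce'' cannot be completed as stated.
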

\begin{proof}
		By Proposition \ref{Th:Cycles_n-1} a cycle of dimension $ 6 $ is homologous to a cycle in $ Y_{6}.$ Such cycle is by Corollary \ref{Cor:Spaces_Y_n} homologous to a cycle \[ 	c^{\prime}=\sum_{1\leq m\leq 6} c^{m}+c_{2}, \text{ where }  c_{m}\in \hat{\mu}^{-1}(\Ins{O}_{m}) \text{ and } c_{2}\in \hat{\mu}^{-1}(\bigcup_{1\leq m\leq 6}\partial O_{m}).\] Since there are no 6-dimensional chains on $ \hat{\mu}^{-1}(\partial O_{m}) $ we have that $ c_{2}=0. $ A chain $ c^{m} $ has to be of the form:  
		\[ c^{m}=\sum_{pq}e_{pq}^{m}, \] where $ e_{pq}^{m}$ is a chain in $ \hat{\mu}^{-1}(\Ins{O}_{m}) $ defined in Theorem  \ref{Th:H6X5}. If $ c^{m} $ is a cycle, by Theorem \ref{Th:H6X5} we get $ c^{m}= e_{12}^{m}+e_{13}^{m}+e_{23}^{m}.$
		\par If $ c^{m} $ is not a cycle, then $ \partial c^{m}\in \hat{\mu}^{-1}(\partial O_{m}),$ which must be canceled by some chain $ \partial\sum_{r\neq m} c^{r} $. It holds $ \partial e_{pq}^{m}=O_{pm,3}\times l_{pm}+O_{qm,3}\times l_{qm} $, where $ O_{ij}=(O_{i}\cap O_{j})\simeq\D{4} $ for $ 1\leq i<j\leq 6 $. It can be easily seen that the only cycles of this type are:
		\begin{align}\label{Eq:seven_dimensional_chains}
			e_{ij}^{k}+e_{ij}^{l}+e_{kl}^{i}+e_{kl}^{j}=\partial_{5}e_{ij,kl},\quad 
			e_{ij}^{k}+e_{ik}^{j}+e_{jk}^{i}=\partial_{5}e_{ijk}.
		\end{align} They are homologous to zero.  It follows that there are six potentially generators for $ H_{6}(X_{6};\Z_{2}): $ 
	\[ g_{i}=e_{12}^{i}+e_{13}^{i}+e_{23}^{i}  \text{ for each complex }   O_{i},\, 1\leq i\leq 6.  \]
		\par We need to check if a combination of $ g_{i} $ is homologous to zero. Let $ c=\sum C_{\omega}\times l_{\omega} $ such that $ c\in\hat{\mu}^{-1}(\Ins{\D{6}}) $  and $ \partial^0 c=0 $ be a 7-dimensional chain, where $ \partial c $ is a combination of $ g_{i}. $ By Corollary \ref{The:HomologyChain_n-2_general} every $ C_{\omega} $ of $ c $ is a  chamber of maximal dimension and by Corollary \ref{Cor:Chains_with_boundary_on_boundary} a chain $ l_{\omega} $ is a combination of $ F_{ij,kl} $ and $ F_{ijk}. $ It follows that $ c $ is a sum of $ e_{ij,kl} $ and $ e_{ijk}. $ For example, by \eqref{Eq:seven_dimensional_chains} we obtain: \[ \partial (e_{12,36}+e_{13,26}+e_{16,23})=g_{6}+g_{3}+g_{1}+g_{2}=\partial(e_{123}+e_{126}+e_{136}+e_{236}) . \] In this way we deduce that the sum of every four $ g_{i} $'s is linearly dependent in $ H_{6}(X_{6};\Z_{2}). $
\end{proof}
\begin{theorem}
	 $ H_{5}(X_{6};\Z_{2})\simeq \Z_{2}. $
\end{theorem}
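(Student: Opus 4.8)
The plan is to run the argument of the proof of Theorem~\ref{Th:Homology_H5_X5} one step up in $n$. Since $\dim c=5=n-1$ for $n=6$, Proposition~\ref{Th:Cycles_n-1} lets me replace a $5$-cycle $c$ in $X_6$ by a homologous cycle in $Y_6=\hat\mu^{-1}(\partial\D{6})$, and a $6$-chain bounding a combination of such cycles is, after reducing its chambers to dimension $n-1$ by Corollary~\ref{The:HomologyChain_n-2_general} and then discarding the top part by Corollary~\ref{odd} (its fibre is one-dimensional, hence of odd dimension), homologous to a $6$-chain in $Y_6$; so it suffices to compute $H_5(Y_6;\Z_2)$, which also matches the isomorphism $H_{n-1}(X_n;\Z_2)\cong H_{n-1}(Y_n;\Z_2)$ obtained above. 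Corollary~\ref{Cor:Spaces_Y_n}, in the case $q=(n-2)+1$, pushes a $5$-cycle of $Y_6$ further into $\hat\mu^{-1}\bigl(\bigcup_{1\le m\le 6}\partial\Delta_{5,2}(m)\bigr)$. In this subspace the only cells of dimension $5$ are the top cells of the pieces $X_4(mi):=\hat\mu^{-1}\bigl(\D{6}\cap\{x_m=x_i=0\}\bigr)$, each a copy of $X_4\cong S^5$ by~\cite{buchstaber2014topology}, and any two of them meet in dimension at most $2$. Hence the group of $5$-cycles here is freely spanned over $\Z_2$ by the $\binom{6}{2}=15$ fundamental classes $g_{mi}=[X_4(mi)]$ (each is a cycle, as $X_4(mi)$ is closed), and $c$ is homologous to some $\sum_{m<i}a_{mi}\,g_{mi}$.

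Next I would identify all relations among the $g_{mi}$, i.e. all $\Z_2$-combinations that bound a $6$-chain in $X_6$. By the reduction above such a bounding chain is homologous to a $6$-chain $D'$ in $Y_6$, whose $6$-cells can only lie in the facet preimages $X_5(m)=\hat\mu^{-1}(\D{6}\cap\{x_m=0\})$ (all other pieces of $Y_6$ being at most $5$-dimensional). Write $D'=\sum_m D'_m$ with $D'_m$ a $6$-chain in $X_5(m)$; since $\Ins{X}_5(m)$ is disjoint from every $X_5(k)$, $k\ne m$, the interior contributions of $\partial D'_m$ cannot cancel against the other summands, so $\partial^{0}D'_m=0$ in $X_5(m)$. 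Now $X_5(m)$ is a copy of $X_5$ and $\partial X_5(m)=\hat\mu^{-1}(\partial\Delta_{5,2}(m))$ is $5$-dimensional, hence carries no $6$-chains; applying Corollaries~\ref{The:HomologyChain_n-2_general} and~\ref{Cor:Chains_with_boundary_on_boundary} inside $X_5(m)$, together with the description of top-dimensional bounding chains from the proof of Theorem~\ref{Th:Homology_H5_X5}, $D'_m$ is homologous in $X_5(m)$ to a sum of the standard chains $e^{(m)}_{ij}=K_{ij,4}^{\circ}(m)\times F_{ij}(m)$, for which $\partial e^{(m)}_{ij}=g_{mi}+g_{mj}$. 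Therefore the group of $5$-boundaries among the $g_{mi}$ is exactly the subspace $R\subset\Z_2^{15}$ spanned by the vectors $g_{mi}+g_{mj}$, $1\le m\le 6$, $\{i,j\}\subset\{1,\dots,6\}\setminus\{m\}$.

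Finally I compute $H_5(X_6;\Z_2)=\Z_2^{15}/R$. Identify $g_{mi}$ with the edge $\{m,i\}$ of the complete graph $K_6$; each generator of $R$ identifies two edges sharing a vertex, and since $K_6$ is connected all $g_{mi}$ become equal modulo $R$, so $H_5(X_6;\Z_2)$ has $\Z_2$-rank at most $1$. On the other hand every generator of $R$ has Hamming weight $2$, so $R$ lies in the even-weight subspace of $\Z_2^{15}$ and no single $g_{mi}$ belongs to $R$; hence the quotient is exactly $\Z_2$, as claimed.

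I expect the crux to be the second paragraph: proving that the relation module is precisely $R$, i.e. that every bounding $6$-chain reduces, facet by facet, to a sum of the standard chains $e^{(m)}_{ij}$ inside the copies $X_5(m)$. This requires the chain-level machinery of Corollaries~\ref{odd}, \ref{The:HomologyChain_n-2_general} and~\ref{Cor:Chains_with_boundary_on_boundary} applied first in $X_6$ and then recursively inside each $X_5(m)$, and the fact that the boundary strata $X_4(mi)$ are $\Z_2$-homology $5$-spheres with no cells above dimension $5$.
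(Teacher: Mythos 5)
Your proposal is correct and follows essentially the same route as the paper: push a $5$-cycle into $\hat{\mu}^{-1}\bigl(\bigcup_{m}\partial O_{m}\bigr)$ via Proposition~\ref{Th:Cycles_n-1} and Corollary~\ref{Cor:Spaces_Y_n}, identify the cycle group with the classes carried by the pieces $X_4(mi)\cong S^5$, and show that the only relations come from the chains $e^{(m)}_{ij}=K_{ij,4}\times F_{ij}$ inside the facets $X_5(m)$, whose boundaries are $g_{mi}+g_{mj}$. Your explicit bookkeeping of the relation module via the complete graph $K_6$ and the even-weight argument is a cleaner way of packaging the paper's conclusion that all generators coincide and the remaining one does not bound.
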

\begin{proof}
	By Proposition \ref{Th:Cycles_n-1} a cycle $ c $ of dimension $ 5 $ is homologous to a cycle in $Y_{6},$ so by Corollary \ref{Cor:Spaces_Y_n} it is homologous to a cycle from $ \hat{\mu}^{-1}(\bigcup_{1\leq m\leq 6}\partial O_{m}).$  It follows that $ \dim(l_{\omega})\geq 1 $ for all $ \omega $  and we can write $ c $ as:
	\[ c=\sum c_{ij}\text{ where } c_{ij}\in \hat{\mu}^{-1}(\Ins{O}_{ij}) \] and $ O_{ij}=O_{i}\cap O_{j}\simeq\D{4} $ for $ 1\leq i<j\leq 6 .$  By Theorem \ref{Th:Homology_H5_X5}  we have that $ H_{5}(X_{5}(j),\Z_{2})\simeq \Z_{2} $ is spanned by any of $ g^{j}_{i}= O_{ij,3}\times l_{ij},\,1\leq i\leq 6 $ and $ i\neq j $ for $ 1\leq j\leq 6 $. Since $ g_{i}^{j}=g_{j}^{i} $ we are left with one generator that we denote by $ g. $ Such generator cannot be obtained as the boundary of some $ 6 $-dimensional chain $ \tilde{c}\in \hat{\mu}^{-1}(\Ins{\Delta_{6,2}}),\,\tilde{c}=\sum C_{\omega}\times l_{\omega} $, where $ \dim(C_{\omega})=4 $ and $ \dim(l_{\omega})=2 $. Since $ \partial^0 \tilde{c}=0,$ by Corollary \ref{The:HomologyChain_n-2_general} the chain $ \tilde{c} $ is homologous to a chain from $ Y_{6} $ and $ \tilde{c}  $ cannot be a chain in $\hat{\mu}^{-1}(O_{m})  $ for any $ 1\leq m\leq 6. $ Only possibility is that it is the sum of two chains from $ \Ins{O}_{i} $ and $ \Ins{O}_{j} $, that is $ \tilde{c}=\tilde{c}_{i}+\tilde{c}_{j}.$ Since $ \partial \tilde{c}=g  $ by Corollary \ref{Cor:Chains_with_boundary_on_boundary} the chain $ \tilde{c}_{i}=e_{pq}^{i}$, where $ e_{pq}^{i}$ is the chain from $ \hat{\mu}^{-1}( O_{i}) $ defined in Theorem $ \ref{Th:H6X5}.$ We conclude that such chain $ \tilde{c} $ does not exist.
\end{proof}

\begin{theorem}
	$ H_{4}(X_{6};\Z_{2})\simeq 0.$ 
\end{theorem}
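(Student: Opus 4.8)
The plan is to descend a $4$-dimensional cycle of $X_6$ to ever smaller boundary strata, exactly as in the computations of $H_k(X_5;\Z_2)$ and of $H_k(X_6;\Z_2)$ for $k\le 3$; the only new feature is that, since $4=n-2$ now sits on the borderline where Theorem~\ref{Cor:Homology_less_n-3} fails (Remark~\ref{n-2}), one more descent step is needed before the carrying stratum becomes too small to support a $4$-cycle.

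First I would reduce to the boundary. Let $c$ be a $4$-cycle in $X_6$ and write $c=c_0+c_1$ with $\hat\mu(c_0)\subset\Ins{\D{6}}$ and $\hat\mu(c_1)\subset\partial\D{6}$. If $c_0\neq 0$ then all its chambers have dimension $\le 4=n-2$ and $\partial^{0}c=0$, so Corollary~\ref{The:HomologyChain_n-2_general} gives $c\sim c_{n-1}+c'$ with $\hat\mu(c')\subset\partial\D{6}$ and all chambers of $c_{n-1}$ of dimension $n-1$; for dimension reasons $c_{n-1}=0$, so $c\sim c'\in Y_6$ (and if $c_0=0$ already $c\in Y_6$). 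As $q=4\le n-2$, the first bullet of Corollary~\ref{Cor:Spaces_Y_n} then moves $c'$, up to homology in $X_6$, to a cycle $c_2$ with $\hat\mu(c_2)\subset\bigcup_{1\le m\le 6}\partial\D{5}(m)$, i.e.\ $c_2$ is supported over the codimension‑two faces of $\D{6}$.

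Next I would push $c_2$ one step further, now inside the facets. Applying Corollary~\ref{Cor:Spaces_Y_n} at level $n=5$ inside each $X_5(m)=\hat\mu^{-1}(\D{5}(m))\cong G_{5,2}/T^5$ — where the relevant dimension is $q=4=(n-2)+1$, the odd case — the part of $c_2$ over $\partial\D{5}(m)$ is homologous, within $X_5(m)\subset X_6$, to a chain supported over $\bigcup_k\partial\D{4}(mk)$; running this over $m=1,\dots,6$ one gets that $c$ is homologous to a cycle $c_3$ with $\hat\mu(c_3)\subset\bigcup_{m,k}\partial\D{4}(mk)$. Every face occurring here is of type $\Delta_{3,2}$, $\Delta_{3,1}$ or lower, all with point spaces of parameters, so $\hat\mu^{-1}\big(\bigcup_{m,k}\partial\D{4}(mk)\big)$ is homeomorphic to that union of boundaries of the $2$‑dimensional polytopes $\D{4}$ — a space of dimension $2$ — on which a $4$‑cycle is necessarily zero; hence $c_3=0$ and $H_4(X_6;\Z_2)=0$. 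A cleaner but less self‑contained variant replaces the last two steps by the observation that $Z:=\hat\mu^{-1}\big(\bigcup_m\partial\D{5}(m)\big)$ is a finite union of closed pieces, the maximal ones being $\hat\mu^{-1}(\overline{\D{4}(mk)})\cong G_{4,2}/T^4\cong S^5$ and the rest of dimension $\le 3$, whose mutual intersections are $\hat\mu$‑preimages of faces of dimension $\le 2$ and hence themselves of dimension $\le 2$; since $H_4(S^5;\Z_2)=0$ and $H_3$ vanishes on all such intersections, a Mayer--Vietoris induction gives $H_4(Z;\Z_2)=0$, so $c_2$ bounds in $Z\subset X_6$.

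The main obstacle I expect is the bookkeeping of overlaps in the chain‑level third step: a codimension‑two face $\D{4}(mk)$ is shared by the facets $\D{5}(m)$ and $\D{5}(k)$, so when pushing the part of $c_2$ lying over it toward $\partial\D{4}(mk)$ inside $X_5(m)$ one must make sure this does not re‑introduce chambers in the relative interior of $\D{5}(k)$ — which is automatic, since $\partial\D{4}(mk)\subset\partial\D{5}(k)$ — and one must arrange the fillings compatibly over the two facets sharing the face, precisely as in the proofs of Theorem~\ref{main}(3) and Lemma~\ref{Th:HomologChain_n-2}. The Mayer--Vietoris route avoids this entirely, its essential input being the already‑recorded homeomorphism $G_{4,2}/T^4\cong S^5$.
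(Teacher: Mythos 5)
Your main argument follows the paper's proof essentially step for step: reduce to $Y_6$ via Corollary~\ref{The:HomologyChain_n-2_general}, descend to $\hat{\mu}^{-1}(\bigcup_{m}\partial\Delta_{5,2}(m))$ via Corollary~\ref{Cor:Spaces_Y_n}, decompose the resulting cycle over the codimension-two faces $O_{mk}\cong\Delta_{4,2}$, and kill each piece because the preimage of $\partial\Delta_{4,2}$ carries only point spaces of parameters and is therefore $2$-dimensional, too small to support a $4$-cycle. Your Mayer--Vietoris variant using $G_{4,2}/T^4\cong S^5$ is a reasonable alternative route, but the primary argument is the one the paper gives.
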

\begin{proof}
	By Corollary \ref{The:HomologyChain_n-2_general} cycles $ c $ of the dimension $ 4 $ are homologous to cycles from $ Y_{6}. $ By Corollary \ref{Cor:Spaces_Y_n} a cycle of the dimension $ 4 $ is homologous to a cycle from $ \hat{\mu}^{-1}(\bigcup_{1\leq m\leq 6}\partial O_{m}).$ It follows that $ \dim(l_{\omega})\geq 1 $ for all $ \omega $  and we can write a cycle $ c $ as:
	\[ c=\sum c_{ij}\text{ where } c_{ij}\in \hat{\mu}^{-1}(\Ins{O}_{ij}) \] and $ O_{ij}=O_{i}\cap O_{j}\simeq\D{4} $ for $ 1\leq i<j\leq 6 .$ It holds $ \partial c_{ij}\in \hat{\mu}^{-1}(\partial O_{ij}) $ and by Corollary \ref{The:HomologyChain_n-2_general} it follows that $ c_{ij} $ is homologous to a chain from $ \hat{\mu}(\partial O_{ij}) $ which has to be zero.
\end{proof}

	\nocite{*}
	\printbibliography
 \begingroup
\parindent 0pt
\parskip 2ex
\def\enotesize{\normalsize}
\theendnotes
\endgroup
 \end{document}